\documentclass[11pt,a4paper]{amsart}

\usepackage{amsmath,amssymb,amsthm,mathtools,mathrsfs}
\usepackage{scalerel,stackengine}
\stackMath
\newcommand{\reallywidehat}[1]{%
  \savestack{\tmpbox}{\stretchto{%
    \scaleto{%
      \scalerel*[\widthof{\ensuremath{#1}}]{\kern-.6pt\bigwedge\kern-.6pt}%
        {\rule[-\textheight/2]{1ex}{\textheight}}%
    }{\textheight}%
  }{0.5ex}}%
  \stackon[1pt]{#1}{\tmpbox}%
}
\usepackage[margin=1.in]{geometry}
\usepackage{graphicx}
\graphicspath{{figures/}}
\usepackage[colorlinks=true,linkcolor=blue,citecolor=blue,urlcolor=blue]{hyperref}
\usepackage{cleveref}

\newtheorem{theorem}{Theorem}[section]
\newtheorem{proposition}[theorem]{Proposition}
\newtheorem{lemma}[theorem]{Lemma}

\theoremstyle{definition}
\newtheorem{definition}[theorem]{Definition}
\newtheorem{remark}[theorem]{Remark}

\newcommand{\R}{\mathbb{R}}

\newcommand{\Es}{\mathcal{E}}
\newcommand{\Phis}{\Phi_{s}}
\newcommand{\Lop}{\mathcal{L}_{s}}
\newcommand{\cs}{c_{1,s}}

\newcommand{\fl}{(-\Delta)^{s}}
\newcommand{\Lodd}{L^{2}_{\mathrm{odd}}}
\DeclareMathOperator{\sgn}{sgn}

\title[Energy of fractional Allen--Cahn layers in dimension one]{The energy of fractional Allen--Cahn layers in dimension one}

\author{Alvaro Carballeira}
\address{Department of Mathematics, Brown University, 151 Thayer Street, Providence, RI 02912, USA}
\email{alvaro\_carballeira@brown.edu}

\author{Damien Galant}
\address{Department of Mathematics, Brown University, 151 Thayer Street, Providence, RI 02912, USA}
\email{damien\_galant@brown.edu}

\author{Javier G\'omez-Serrano}
\address{Department of Mathematics, Brown University, 151 Thayer Street, Providence, RI 02912, USA}
\email{javier\_gomez\_serrano@brown.edu}

\date{\today}

\begin{document}

\begin{abstract}
	We study the energy $\Es(s) := E_{s}[\Phi_s]$ of the one-dimensional fractional Allen--Cahn layer solution $\Phis$, defined as the unique odd, increasing solution of $(-\Delta)^{s} \Phis = \Phis - \Phis^{3}$ with $\Phis(\pm\infty)=\pm 1$ and $\Phis(0)=0$, for $s\in(1/2,1]$. Our main results are a sharp qualitative and quantitative description of the energy $\Es$ on this interval. We show that the energy is continuous and strictly decreasing, and we obtain explicit asymptotic expansions at both endpoints. At the upper endpoint we prove $\Es(s) = \frac{2\sqrt{2}}{3} + \kappa_1 (1-s) + o(1-s)$ with an explicit formula for $\kappa_1$. At the lower endpoint we prove that the energy goes to infinity as $\Es(s)=\frac{1}{\pi(s-1/2)}+O(1)$. The strict decrease is proved with computer assistance. On an interior subinterval it is reduced to finitely many inequalities verified by interval-arithmetic computations. The proofs combine the Cabré--Sire construction of the layer, the minimality theorem of Palatucci--Savin--Valdinoci, an identity for the $s$ derivative of the energy, and a computer-assisted coercivity estimate at explicit approximate layers.
\end{abstract}

\maketitle

\tableofcontents

\section{Introduction}\label{sec:intro}

The Allen--Cahn equation
\begin{equation*}
	-\Delta u = u - u^{3}\qquad\text{on }\R^{n},
\end{equation*}
introduced in~\cite{AllenCahn1979} in its time-dependent form to model the motion of antiphase boundaries in crystalline solids, is the Euler--Lagrange equation of the energy functional
\begin{equation*}
	\mathcal{F}[u] = \int_{\R^{n}}\left(\frac{1}{2}|\nabla u|^{2}+W(u)\right)\,dx
\end{equation*}
associated with the double-well potential $W(t)=(1/4)(1-t^{2})^{2}$. Its sharp-interface limit, identified rigorously through the celebrated work of Modica--Mortola and Modica~\cite{Modica1987,ModicaMortola1977} via $\Gamma$-convergence, reproduces the perimeter functional and provides the classical bridge between the diffuse and sharp formulations of phase transitions.

The fractional analog
\begin{equation}\label{eq:fAC}
	\fl u = u - u^{3}\qquad\text{on }\R^{n},\quad s\in(0,1),
\end{equation}
where $\fl$ denotes the fractional Laplacian, encodes long-range interactions between the phases and arises as the Euler--Lagrange equation of the energy
\begin{equation*}
	E_{s}[u]
	= \frac{\cs}{4}\iint_{\R^{n}\times\R^{n}}\frac{(u(x)-u(y))^{2}}{|x-y|^{n+2s}}\,dx\,dy
	\;+\;\int_{\R^{n}}W(u)\,dx.
\end{equation*}
The normalizing constant $\cs$ is fixed in~\eqref{eq:cs-def} below. The analytical study of~\eqref{eq:fAC} was initiated in the works of Caffarelli--Souganidis~\cite{CaffarelliSouganidis2010}, Sire--Valdinoci~\cite{SireValdinoci2009}, Cabré--Sire~\cite{CabreSire2014I,CabreSire2015II}, Cabré--Sol\`a-Morales~\cite{CabreSolaMorales2005}, Palatucci--Savin--Valdinoci~\cite{PalatucciSavinValdinoci2013}, and Savin--Valdinoci~\cite{SavinValdinoci2012,SavinValdinoci2014}, among others. Three distinct regimes emerge according to the value of~$s$: for $s>1/2$ the leading-order $\Gamma$-limit is again the classical perimeter (after the appropriate renormalization of the energy), for $s=1/2$ the limiting functional is a fractional perimeter, and for $s<1/2$ the relevant scaling is different and the limit is given by an $s$-perimeter. The threshold $s=1/2$ appears as a critical exponent throughout the analysis.

The object of this paper is the energy of a single interface in dimension one in \eqref{eq:fAC}. By the symmetry of the problem this is captured by the unique odd, monotone increasing solution $ \Phis:\R\to(-1,1), $ called the \emph{layer}, that satisfies
\begin{equation*}
	\fl \Phis = \Phis-\Phis^{3}\quad\text{on }\R,
	\qquad \Phis(\pm\infty)=\pm 1,\qquad \Phis(0)=0.
\end{equation*}
Its corresponding energy will be denoted by
\begin{equation*}
	\Es(s):=E_{s}[\Phis],\qquad s\in(1/2,1].
\end{equation*}
Existence, uniqueness, monotonicity, and a sharp algebraic decay rate for the layer were established by Cabré and Sire~\cite{CabreSire2015II}. The precise statements we use are recalled in Section~\ref{ss:prelim}. The case of the endpoint $s=1$ is exactly solvable. The layer is $ \Phi_{1}(x)=\tanh(x/\sqrt 2), $ and an elementary calculation yields $ \Es(1)=2\sqrt 2/3. $

\subsection{Main results}\label{ss:main}
For each $s\in(1/2,1]$, $\Es(s)$ is the energy of the unique odd, monotone increasing layer $\Phi_s$. The map $s\mapsto\Es(s)$ therefore records how the energy of this one-dimensional transition profile depends on the fractional exponent. It records the energy cost of a single phase transition as the order of nonlocality varies. Despite a substantial body of work on related qualitative questions, sharp quantitative information on~$\Es$ is, to our knowledge, absent from the literature beyond the endpoint value $\Es(1)$. Our two main results are the following.

\begin{theorem}[Asymptotic regimes]\label{thm:asymp}
	The function $\Es$ admits the following asymptotic expansions.

	\smallskip\noindent (i) Expansion at $s=1$: There exists a constant $\kappa_{1}\in\R$ such that
	\begin{equation}\label{eq:asymp-s1}
		\Es(s) \;=\; \frac{2\sqrt{2}}{3}\;+\;\kappa_{1}\,(1-s)\;+\;o(1-s)
		\qquad\text{as }s\to 1^{-}.
	\end{equation}
	The constant $\kappa_{1}$ is given by the explicit integral
	\begin{equation*}
		\kappa_{1}:=-2\pi\int_{0}^{\infty}
		\frac{\xi^{2}\log\xi}{\sinh^{2}(\pi\xi/\sqrt2)}\,d\xi
		\;\approx\;1.073.
	\end{equation*}

	\smallskip\noindent (ii) Pole at $s=1/2$: The function $\Es$ satisfies
	\begin{equation}\label{eq:asymp-s12-sharp}
		\Es(s) \;=\; \frac{1/\pi}{\,s-1/2\,}\;+\;O(1)\qquad\text{as }s\to (1/2)^{+}.
	\end{equation}
	In particular, no logarithmic intermediate term arises between the pole and the bounded remainder.
\end{theorem}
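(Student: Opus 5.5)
The plan rests on the Fourier form of the energy. For $u$ with $u'\in L^{1}\cap L^{2}$, we use $|\xi|^{2s}|\hat u|^{2}=|\xi|^{2s-2}|\widehat{u'}|^{2}$ to write
\begin{equation*}
	E_{s}[u]=\frac{1}{4\pi}\int_{\R}|\xi|^{2s-2}\,|\widehat{u'}(\xi)|^{2}\,d\xi+\int_{\R}W(u)\,dx .
\end{equation*}
The normalization $\cs$ is exactly what makes the Dirichlet part equal to $\frac12\|(-\Delta)^{s/2}u\|^{2}$. In this form the $s$-dependence is explicit, and
\begin{equation*}
	\partial_{s}E_{s}[u]=\frac{1}{2\pi}\int_{\R}\log|\xi|\,|\xi|^{2s-2}|\widehat{u'}|^{2}\,d\xi .
\end{equation*}
Both expansions will come from comparing $\Es(s)$ with $E_{s}$ evaluated at fixed test profiles. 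The comparison uses the minimality theorem of Palatucci--Savin--Valdinoci: $\Phi_s$ minimizes $E_s$ among odd monotone competitors with the same limits at $\pm\infty$.

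\emph{Part (i).} For the upper bound I take $\Phi_1=\tanh(x/\sqrt2)$ as competitor, giving $\Es(s)\le E_{s}[\Phi_1]$.

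1. I compute $\widehat{\Phi_1'}(\xi)=\sqrt2\,\pi\xi/\sinh(\pi\xi/\sqrt2)$, which decays exponentially.
2. Hence $s\mapsto E_s[\Phi_1]$ is smooth up to $s=1$, and $E_s[\Phi_1]=\tfrac{2\sqrt2}{3}-(1-s)\,\partial_sE_s[\Phi_1]|_{s=1}+O((1-s)^2)$.
3. Substituting the transform, $\partial_sE_s[\Phi_1]|_{s=1}=\frac{1}{2\pi}\int_{\R}\log|\xi|\,\frac{2\pi^2\xi^2}{\sinh^2(\pi\xi/\sqrt2)}\,d\xi=-\kappa_1$, which is exactly the constant in the statement.

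For the lower bound I use $\tfrac{2\sqrt2}{3}=\Es(1)\le E_1[\Phi_s]$ and expand in the other direction:
\begin{equation*}
	\Es(s)=E_s[\Phi_s]=E_1[\Phi_s]-\int_s^1\partial_\sigma E_\sigma[\Phi_s]\,d\sigma .
\end{equation*}
It then suffices to show $\partial_\sigma E_\sigma[\Phi_s]\to-\kappa_1$ uniformly for $\sigma\in[s,1]$ as $s\to1$.
1. This follows once $\Phi_s'\to\Phi_1'$ in a weighted norm controlling $\int(1+|\log|\xi||)(|\xi|^{-1+}+\xi^2)|\widehat{\Phi_s'}-\widehat{\Phi_1'}|^2$.
2. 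That convergence I obtain from uniform $H^{1}$ bounds on $\Phi_s'$ (Cabré--Sire regularity), uniform tail decay of $\Phi_s$, and uniqueness of the limit layer.

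This compactness step is the main technical point of (i).

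\emph{Part (ii).} The pole comes from the jump at low frequencies. For any odd profile with $u(\pm\infty)=\pm1$ we have $\widehat{u'}(0)=2$, so
\begin{equation*}
	\frac{1}{4\pi}\int_{|\xi|<1}4|\xi|^{2s-2}\,d\xi=\frac{1/\pi}{s-1/2}.
\end{equation*}

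For the upper bound I fix a smooth competitor such as $\Phi_1$ and show $E_s[\Phi_1]-\frac{1/\pi}{s-1/2}=O(1)$.
1. Split the frequency integral at $|\xi|=1$.
2. On $|\xi|<1$, use $|\widehat{\Phi_1'}(\xi)|^2=4+O(\xi^2)$; the correction $\int_0^1\xi^{2s}\,d\xi$ stays bounded as $s\to\frac12$.
3. The high frequencies are bounded uniformly, so no logarithm appears.

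For the lower bound, let $L$ denote the length of $\{|\Phi_s|<1/2\}$. Monotonicity gives $\int W(\Phi_s)\ge cL$. I then bound the nonlocal part from below using only that $\Phi_s$ is odd, monotone, and has transition width $L$:
1. Monotonicity gives $|\widehat{\Phi_s'}(\xi)|\ge 2-C\min(1,L|\xi|)$, up to a correction handled via the tails $|\Phi_s|\ge1/2$ outside the transition region.
2. This yields $\frac{1}{4\pi}\int|\xi|^{2s-2}|\widehat{\Phi_s'}|^2\ge\frac{1/\pi}{s-1/2}-C\log(2+L)-C$, with $C$ uniform in $s$ near $\frac12$.
3. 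Combining with the potential bound, $\Es(s)\ge\frac{1/\pi}{s-1/2}+\inf_{L}\bigl(cL-C\log(2+L)\bigr)-C$, and the infimum is finite.

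The main obstacle is making step 1 rigorous uniformly in $s$. The tails of $\Phi_s$ decay only like $|x|^{-2s}$, so the first moment of $\Phi_s'$ is borderline-infinite and a naive Taylor bound on $\widehat{\Phi_s'}$ fails. The first thing I would try is a rearrangement argument: replace $\Phi_s$ by the truncation $\max(-1/2,\min(1/2,\Phi_s))$ together with $\pm$ tail pieces, and use that truncation does not increase the Gagliardo seminorm. This reduces the lower bound to an explicit profile supported on an interval of length $L$, for which the frequency estimate is elementary.
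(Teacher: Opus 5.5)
Your part~(i) is correct, and it is softer than the paper's route. You run the minimality sandwich at the endpoint ($\Es(s)\le E_s[\Phi_1]$ and $\Es(1)\le E_1[\Phi_s]$). This reduces everything to $J(\sigma,\Phi_s)\to J(1,\Phi_1)=-\kappa_1$ uniformly for $\sigma\in[s,1]$. That convergence follows from the proof of Proposition~\ref{prop:equicoerc} carried out at $s_*=1$, where the limit is identified as $\tanh(x/\sqrt2)$, together with dominated convergence using the majorant coming from $|\widehat{\Phi_s'}(\xi)|\le\min\{2,4|\xi|^{-2s}\}$. No appeal to Cabr\'e--Sire regularity is needed; its constants are not uniform in $s$ anyway. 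Your route yields exactly the $o(1-s)$ statement. The paper's corrector and fixed-point construction is heavier but buys the $O((1-s)^2)$ remainder and an explicit bound on $\Es'$, which it needs for monotonicity on $[0.98,1]$. Your upper bound in~(ii), with $\Phi_1$ as competitor, is also fine.

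The gap is the lower bound in~(ii). Your step~1 does not follow from monotonicity and the core width $L$. Half of the mass of $\Phi_s'$ lies where $|\Phi_s|\ge 1/2$, and your argument says nothing about where that mass sits. Suppose it sat near $|x|\approx R$. Then $\widehat{\Phi_s'}(\xi)\approx 1+\cos(R\xi)$ for small $|\xi|$, which vanishes at $\xi=\pi/R$ whatever $L$ is. In that case the kinetic deficit $\frac{1}{4\pi}\int_{|\xi|<1}|\xi|^{2s-2}(4-|\widehat{\Phi_s'}|^2)\,d\xi$ is of order $\min(\log R,1/\varepsilon)$.

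The truncation idea does not repair this. The function $\max(-1/2,\min(1/2,\Phi_s))$ has jump $1$, so its seminorm carries only the pole $\frac{1}{4\pi\varepsilon}$, a quarter of $\frac{1}{\pi\varepsilon}$. The other three quarters sit in the tail pieces and in their (nonnegative) cross term with the core. The Fourier transforms of those pieces depend on exactly the spread you have not controlled. Nor can a contraction restore the full jump in finite distance: the only $1$-Lipschitz self-map of $[-1,1]$ fixing $\pm1$ is the identity. The tails must be paid for by the potential, not only the core.

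The missing ingredient is a uniform tail bound extracted from the potential. Write $\mathcal P:=\int W(\Phi_s)\,dx$.
\begin{itemize}
\item Since $W(t)\ge\frac14(1-|t|)^2$, monotonicity gives $1-\Phi_s(R)\le(2\mathcal P/R)^{1/2}$.
\item The layer-cake formula then gives $\int\log(e+|x|)\,\Phi_s'\,dx\le C(1+\log(2+\mathcal P))$.
\item Use the identity $4-|\widehat{\Phi_s'}(\xi)|^2=\iint(1-\cos\xi(x-y))\,\Phi_s'(x)\Phi_s'(y)\,dx\,dy$ together with the bound $\int_{|\xi|<1}|\xi|^{-1+2\varepsilon}(1-\cos\xi z)\,d\xi\le C\log(e+|z|)$, uniform in $\varepsilon$.
\item These bound the kinetic deficit by $C(1+\log(2+\mathcal P))$. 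Adding $\mathcal P$ absorbs this, so $\Es(s)\ge\frac{1}{\pi\varepsilon}-C$.
\end{itemize}

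The paper uses the same mechanism at the level of the derivative. It bounds $\mathcal P$ uniformly via the Pohozaev identity of Lemma~\ref{lem:pohozaev} and the arctan competitor. Lemma~\ref{lem:tail} then gives $1-\Phi_s(R)\le\frac32R^{-1/2}$. A $\log^2$-moment bound shows that $\Es'(s)+\frac{1}{\pi\varepsilon^2}$ is bounded (Lemma~\ref{lem:deriv-remainder}), and this is integrated. Your energy-level version, once repaired, is slightly more direct for the $O(1)$ statement. The paper's derivative version additionally gives strict decrease on $(1/2,0.53]$.
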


\begin{theorem}[Existence, continuity, and strict monotonicity]\label{thm:mono}
	The function $\Es:(1/2,1]\to\R$ is well-defined, continuous, and strictly decreasing. Combined with Theorem~\ref{thm:asymp}, it extends to a continuous strictly decreasing homeomorphism
	\begin{equation}\label{eq:homeo}
		\Es: [1/2,1]\;\longrightarrow\;\Bigl[\tfrac{2\sqrt{2}}{3},\;+\infty\Bigr].
	\end{equation}
\end{theorem}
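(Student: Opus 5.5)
Theorem~\ref{thm:mono} has three parts: well-definedness, continuity, and strict decrease; the homeomorphism \eqref{eq:homeo} then follows from Theorem~\ref{thm:asymp}.

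\textbf{Well-definedness.} For each $s\in(1/2,1)$, Cabré--Sire give existence and uniqueness of the odd increasing layer $\Phis$, together with the decay $1-|\Phis(x)|\sim C|x|^{-2s}$. Since $2s>1$, this decay makes $W(\Phis)$ integrable. Using $\Phis'\lesssim |x|^{-1-2s}$, we split the Gagliardo double integral into the regions $|x-y|<1$ and $|x-y|\ge 1$, and both pieces are finite. Hence $E_s[\Phis]<\infty$. At $s=1$ the value is explicit.

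\textbf{Continuity.} By Palatucci--Savin--Valdinoci, $\Phis$ minimizes $E_s$ among monotone profiles with limits $\pm1$, after renormalization on intervals. In $s$ we then argue by $\Gamma$-type stability.
\begin{itemize}
\item First, we bound the decay constants and the Hölder norms of $\Phis$ locally uniformly in $s$ on compact subsets of $(1/2,1]$.
\item By compactness and uniqueness of the layer, this gives $\Phi_{s_n}\to\Phi_{s}$ locally uniformly as $s_n\to s$.
\item Together with uniform tail control, dominated convergence gives convergence of both energy terms.
\end{itemize}
Near $s=1$, we use $\cs/(1-s)$-type normalization so that the kernel term converges to $\frac12\int|u'|^2$ for fixed smooth $u$. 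The upper bound $\limsup\Es(s_n)\le \Es(s)$ follows from testing with $\Phi_s$ and using minimality. The lower bound follows from lower semicontinuity.

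\textbf{Strict decrease.} We use the derivative identity. Since $\Phis$ is a critical point, the envelope theorem gives
\[
\Es'(s)=\partial_s E_s[u]\big|_{u=\Phis}
=\frac{1}{2}\int_\R \partial_s\bigl(|\xi|^{2s}\bigr)\,|\widehat{\Phis}(\xi)|^2\,d\xi .
\]
This is justified through the distributional derivative of $\Phis$, with Fourier normalization matched to $\cs$, and through differentiability of $s\mapsto\Phis$ obtained from nondegeneracy of the linearized operator $\Lop=\fl-1+3\Phis^2$ on $\Lodd$. The resulting integrand $\log|\xi|\,|\xi|^{2s}|\widehat{\Phis'}|^2|\xi|^{-2}$ (schematically) has no definite sign. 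Negativity of $\Es'$ therefore must be proved, and it is not automatic.

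\begin{itemize}
\item \textbf{Near the endpoints.} Negativity follows from the asymptotics: near $s=1$ via $-\kappa_1<0$, and near $1/2$ via the pole $-\pi^{-1}(s-1/2)^{-2}$. This requires upgrading the expansions of Theorem~\ref{thm:asymp} to derivative statements, for example by differentiating the expansion arguments themselves.
\item \textbf{On the remaining compact interior interval.} We cover the interval by finitely many subintervals $[s_j,s_{j+1}]$. On each one we construct an explicit approximate layer $\tilde\Phi_j$ with computable Fourier transform. Using interval arithmetic, we verify a coercivity bound $\langle\Lop v,v\rangle\ge\lambda\|v\|^2$ on odd functions at $\tilde\Phi_j$, together with a small residual. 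A Newton–Kantorovich argument then gives $\|\Phis-\tilde\Phi_j\|$ small uniformly on the subinterval. Finally, we bound $\Es'(s)$ by its value computed at $\tilde\Phi_j$ plus an error term, and check that the total is negative.
\end{itemize}

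\textbf{Homeomorphism.} Continuity and strict decrease, combined with $\Es(s)\to\infty$ as $s\to1/2$ (Theorem~\ref{thm:asymp}(ii)) and $\Es(1)=2\sqrt2/3$, give \eqref{eq:homeo}.

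\textbf{Main obstacle.} The hardest step is the computer-assisted coercivity and residual estimate, kept uniform in $s$ across each subinterval. A second difficulty is making the endpoint derivative bounds rigorous enough to meet the computed interior interval.
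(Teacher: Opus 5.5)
Your outline has the same architecture as the paper. You prove finiteness, continuity and the identity $\Es'(s)=J(s,\Phi_s)$. You get negativity near the endpoints from quantitative derivative bounds, and on the middle interval from a computer-assisted argument: approximate layer, certified odd coercivity, contraction, and transfer of $J$ by polarization. The homeomorphism then comes from the endpoint behaviour.

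The differences are in the analytic tools, and in each case the paper replaces your PDE argument by a minimality argument.

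\emph{Continuity.} You rely on uniform-in-$s$ decay and H\"older bounds, which you assert but do not derive. The paper instead sandwiches $E_s[\Phi_s]-E_r[\Phi_s]\le\Es(s)-\Es(r)\le E_s[\Phi_r]-E_r[\Phi_r]$ using Palatucci--Savin--Valdinoci. It bounds both sides by $C_a|s-r|$ using only $|\widehat{\Phi_\sigma'}(\xi)|\le\min\{2,4|\xi|^{-2\sigma}\}$, which follows from the differentiated equation and $\int\Phi_\sigma'=2$. This gives local Lipschitz continuity with no uniform information on the profiles. Incidentally, with the paper's $\cs$ no extra $1-s$ renormalization is needed as $s\to1$.

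\emph{Derivative identity.} You need differentiability of $s\mapsto\Phi_s$ via nondegeneracy of $\Lop$ on $\Lodd$. That nondegeneracy is true, but the implicit-function argument is awkward, because $\fl$ maps $H^{2s_0}$ into $L^2$ only for $s\le s_0$. The paper divides the same sandwich by $h$. It then needs only pointwise convergence $\widehat{\Phi_{s_n}'}\to\widehat{\Phi_{s_*}'}$, obtained from Helly's theorem, the energy-based tail bound and uniqueness (Proposition~\ref{prop:equicoerc}).

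\emph{Interior, the largest difference.} You require a Newton--Kantorovich estimate uniform in $s$ over each cell, so residual, coercivity and inverse bounds would all have to be validated over parameter intervals. The paper avoids this entirely. The profile-free bound $\partial_s^2E_s[\Phi_\sigma]\le M^{\ast}_{\mathrm{sc}}([a,b])$ together with minimality makes $\Es$ semiconcave (Lemma~\ref{lem:semiconcave}). This gives $J(s,\Phi_s)\le J(s_i,\Phi_{s_i})+M^{\ast}_{\mathrm{sc}}\,(s-s_i)$, so a certified sign at the left grid point propagates across each cell. All rigorous numerics are therefore done at $56$ fixed values of $s$. There, odd coercivity is certified by a Schur test on $qA^{-1}q$ with $A=\fl+\tfrac{11}{10}$, a device your proposal leaves unspecified. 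Your route would give more, namely closeness of $\Phi_s$ to explicit profiles on whole cells, but at a much higher analytic and computational cost; the paper's gives a purely finite certificate.

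\emph{Endpoints.} As you note, Theorem~\ref{thm:asymp} alone gives no sign for $\Es'$. In the paper the logic runs the other way. The derivative bounds $|\Es'(s)+\kappa_1+2\kappa_2(1-s)|\le240(1-s)^2$ on $[49/50,1)$ and $|\Es'(s)+\pi^{-1}(s-1/2)^{-2}|\le532/\pi$ on $(1/2,53/100]$ are proved first, using $\kappa_1>119/900$ and $\kappa_2\ge0$, and are then integrated to obtain the expansions.
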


\begin{figure}[b]
	\centering
	\includegraphics[width=0.78\textwidth]{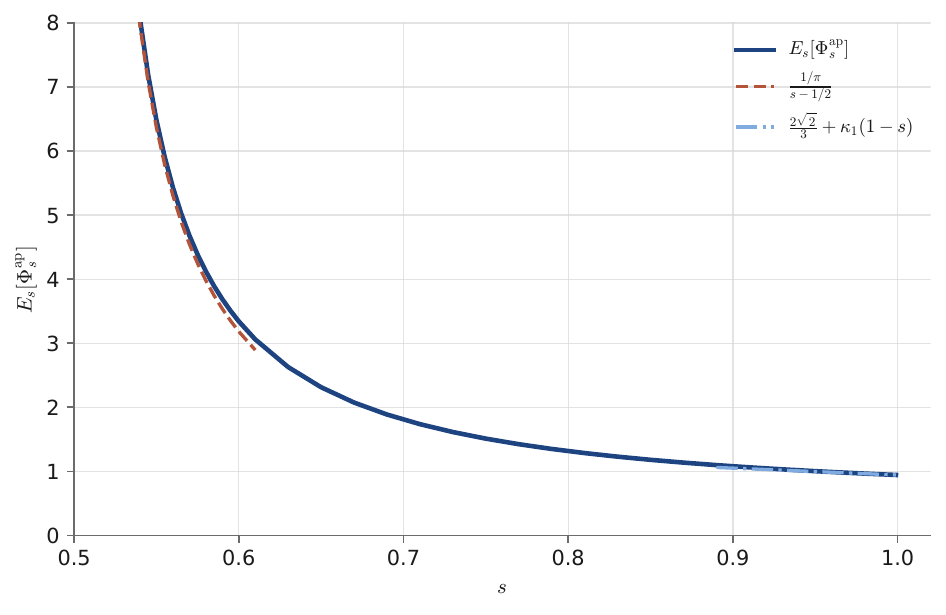}
	\caption{The energy $\Es(s)$ on $(1/2,1]$, together with the two endpoint
		regimes of Theorem~\ref{thm:asymp}. The solid
		curve is $E_{s}[\Phi^{\mathrm{ap}}_{s}]$, the energy of the explicit
		approximate layer of Appendix~\ref{app:layer}.
		It is displayed for illustration and is not part of any proof.}
	\label{fig:energy}
\end{figure}

The strict monotonicity in Theorem~\ref{thm:mono} is, in our view, the most delicate result of the paper. Continuity follows directly by comparing minimizing layers at two different exponents, whereas strict decrease rests on an identity expressing the derivative $\Es'(s)$ as a spectral integral, whose sign is controlled near the boundary by the endpoint expansions of Theorem~\ref{thm:asymp} and on an interior interval by computer-assisted techniques. The argument is outlined in Section~\ref{ss:mono-strategy}. Figure~\ref{fig:energy} shows the plot of the curve $\Es(s)$.

\subsection{Preliminaries}\label{ss:prelim}

Throughout the paper we work in dimension $n=1$ and with the parameter $s\in(1/2,1]$ unless otherwise stated. We write $C$ for a positive constant that may change from line to line but is independent of any quantity declared variable in its context.

We use the Fourier transform and inverse transform
\begin{equation*}
	\widehat u(\xi)=\int_{\R} u(x)\,e^{-i\xi x}\,dx,
	\qquad u(x)=\frac{1}{2\pi}\int_{\R}\widehat u(\xi)\,e^{i\xi x}\,d\xi.
\end{equation*}
The fractional Laplacian of order $s\in(0,1)$ is the Fourier multiplier
\begin{equation}\label{eq:fl-fourier}
	\widehat{\fl u}(\xi) = |\xi|^{2s}\,\widehat u(\xi),
\end{equation}
defined initially on the Schwartz class and extended by duality to tempered distributions for which the right-hand side makes sense. For $s\in(0,1)$ the operator~\eqref{eq:fl-fourier} admits the equivalent singular-integral representation
\begin{equation}\label{eq:fl-singular}
	\fl u(x) = \cs\,\mathrm{P.V.}\int_{\R}\frac{u(x)-u(y)}{|x-y|^{1+2s}}\,dy,
\end{equation}
where the constant $\cs$ is fixed by the requirement that the two definitions~\eqref{eq:fl-fourier} and~\eqref{eq:fl-singular} agree. In dimension one this gives
\begin{equation}\label{eq:cs-def}
	\cs \;=\; \frac{4^{s}\,s\,\Gamma(s+\tfrac{1}{2})}{\sqrt{\pi}\,\Gamma(1-s)},
\end{equation}
see, e.g.,~\cite{DiNezzaPalatucciValdinoci2012,Stein1970}. With~\eqref{eq:cs-def} in force, the Gagliardo seminorm satisfies the Plancherel identity
\begin{equation}\label{eq:plancherel}
	\frac{\cs}{2}\iint_{\R\times\R}\frac{(u(x)-u(y))^{2}}{|x-y|^{1+2s}}\,dx\,dy
	\;=\;\frac{1}{2\pi}\int_{\R}|\xi|^{2s}|\widehat u(\xi)|^{2}\,d\xi.
\end{equation}

\begin{lemma}\label{lem:sobolev-constants}
	For $\sigma>1/2$, the embedding $H^{\sigma}(\R)\hookrightarrow L^{\infty}(\R)$ satisfies
	\begin{equation*}
		\|u\|_{L^{\infty}(\R)}\le C^{\ast}(\sigma)\|u\|_{H^{\sigma}(\R)},
		\qquad
		C^{\ast}(\sigma)^{2}
		=\frac{\Gamma(\sigma-\tfrac12)}{2\sqrt{\pi}\,\Gamma(\sigma)}.
	\end{equation*}
\end{lemma}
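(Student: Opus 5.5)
The argument is Fourier inversion followed by Cauchy--Schwarz, with the norm fixed by the paper's convention $\widehat u(\xi)=\int u e^{-i\xi x}dx$ and
\begin{equation*}
\|u\|_{H^{\sigma}}^{2}=\frac{1}{2\pi}\int_{\R}(1+\xi^{2})^{\sigma}|\widehat u(\xi)|^{2}\,d\xi.
\end{equation*}
This is the normalization that agrees with the $L^2$ norm at $\sigma=0$ by Plancherel. It is also the one for which the stated constant is the sharp one, so I will take it as the definition and verify the constant against it.

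\textbf{Step 1 (pointwise bound).} Take $u$ in the Schwartz class first. Since $\sigma>1/2$, $\widehat u\in L^1$, and the inversion formula gives for every $x$
\begin{equation*}
|u(x)|\le\frac{1}{2\pi}\int_{\R}|\widehat u(\xi)|\,d\xi
=\frac{1}{2\pi}\int_{\R}(1+\xi^{2})^{-\sigma/2}\,(1+\xi^{2})^{\sigma/2}|\widehat u(\xi)|\,d\xi.
\end{equation*}
Applying Cauchy--Schwarz to the product on the right yields
\begin{equation*}
|u(x)|^{2}\le\frac{1}{2\pi}\int_{\R}(1+\xi^{2})^{-\sigma}\,d\xi\cdot\frac{1}{2\pi}\int_{\R}(1+\xi^{2})^{\sigma}|\widehat u|^{2}\,d\xi .
\end{equation*}
Hence $C^*(\sigma)^2=\frac{1}{2\pi}\int_{\R}(1+\xi^2)^{-\sigma}d\xi$, and this integral is finite exactly because $2\sigma>1$.

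\textbf{Step 2 (evaluating the integral).} Substitute $\xi=\tan\theta$, or equivalently use the Beta function with $t=\xi^2$:
\begin{equation*}
\int_{\R}(1+\xi^{2})^{-\sigma}d\xi=B\bigl(\tfrac12,\sigma-\tfrac12\bigr)=\frac{\sqrt\pi\,\Gamma(\sigma-\tfrac12)}{\Gamma(\sigma)}.
\end{equation*}
Dividing by $2\pi$ gives $\Gamma(\sigma-\tfrac12)/(2\sqrt\pi\,\Gamma(\sigma))$, which is the claimed value.

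\textbf{Step 3 (extension and sharpness).} By density of Schwartz functions in $H^\sigma$, the bound extends to all of $H^\sigma(\R)$, with $u$ identified with its continuous representative. Sharpness is not needed for the lemma, but it follows from the equality case of Cauchy--Schwarz: take $\widehat u=(1+\xi^2)^{-\sigma}$, evaluated at $x=0$.

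The only real obstacle is fixing the normalization of the $H^\sigma$ norm compatible with the $2\pi$ convention above, since the constant depends on it. Everything else is routine.
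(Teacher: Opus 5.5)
Your proposal is correct and matches the paper's proof: Fourier inversion, Cauchy--Schwarz with the weights $(1+\xi^{2})^{\pm\sigma/2}$, and the Beta integral $\int_{\R}(1+\xi^{2})^{-\sigma}\,d\xi=\sqrt{\pi}\,\Gamma(\sigma-\tfrac12)/\Gamma(\sigma)$. Your normalization $\|u\|_{H^{\sigma}}^{2}=\frac{1}{2\pi}\int_{\R}(1+\xi^{2})^{\sigma}|\widehat u|^{2}\,d\xi$ is the one the paper uses implicitly when it identifies the last factor with $(2\pi)^{1/2}\|u\|_{H^{\sigma}}$, and your density and sharpness remarks are harmless additions that the paper does not need.
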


\begin{proof}
	By Fourier inversion and Cauchy--Schwarz,
	\begin{equation*}
		|u(x)|\le \frac{1}{2\pi}\|\widehat u\|_{L^{1}}
		\le \frac{1}{2\pi}
		\biggl(\int_{\R}(1+|\xi|^{2})^{-\sigma}\,d\xi\biggr)^{1/2}
		\biggl(\int_{\R}(1+|\xi|^{2})^{\sigma}|\widehat u(\xi)|^{2}\,d\xi\biggr)^{1/2}.
	\end{equation*}
	By Plancherel, the last factor is $(2\pi)^{1/2}\|u\|_{H^{\sigma}}$. The beta integral gives
	\begin{equation*}
		\int_{\R}(1+|\xi|^{2})^{-\sigma}\,d\xi
		=\sqrt{\pi}\,\frac{\Gamma(\sigma-\tfrac12)}{\Gamma(\sigma)},
	\end{equation*}
	which yields the stated constant.
\end{proof}

We work with the double-well potential $W(t) = (1/4) (1-t^2)^{2}$, whose two non-degenerate minima are $t=\pm 1$. The fractional Allen--Cahn energy in dimension one is
\begin{equation}\label{eq:Es}
	E_{s}[u]
	:=\frac{\cs}{4}\iint_{\R\times\R}\frac{(u(x)-u(y))^{2}}{|x-y|^{1+2s}}\,dx\,dy
	\;+\;\int_{\R}W(u(x))\,dx,
\end{equation}
defined for measurable $u:\R\to\R$ for which both terms are finite. At the endpoint $s=1$ we use the classical energy
\begin{equation}\label{eq:E1}
	E_{1}[u]:=\frac12\int_{\R}|u'(x)|^{2}\,dx+\int_{\R}W(u(x))\,dx,
\end{equation}
which is the limit of~\eqref{eq:Es} on profiles with derivative in $L^{2}(\R)$.

\begin{definition}\label{def:admissible}
	For each $s\in(1/2,1]$ the admissible class $\mathcal A$ consists of all measurable functions $u:\R\to\R$ such that $|u|\leq 1$ almost everywhere, $E_{s}[u]<+\infty$, and the continuous representative of~$u$ satisfies
	\begin{equation*}
		\lim_{x\to-\infty}u(x)=-1,
		\qquad
		\lim_{x\to+\infty}u(x)=1.
	\end{equation*}
	The continuous representative exists because finite energy and $s>1/2$ imply $u\in H^{s}_{\mathrm{loc}}(\R)\hookrightarrow C^{0}(\R)$.
\end{definition}

The class $\mathcal A$ is independent of~$s$ at the level of the boundary conditions, and the dependence on~$s$ enters only through the integrability requirement. For $s\leq 1/2$ the constraint $u(\pm\infty)=\pm 1$ is incompatible with finiteness of the Gagliardo seminorm in~\eqref{eq:Es}, so $\mathcal A=\emptyset$ in that range. This is the analytic reason for restricting our discussion to $s\in(1/2,1]$.

We state now the existence, uniqueness, and decay properties of the layer.
\begin{theorem}[Cabré--Sire~\cite{CabreSire2015II}]\label{thm:CS}
	Let $s\in(0,1)$. There exists a unique function $\Phis\in C^{\infty}(\R)$ such that
	\begin{equation*}
		\fl \Phis = \Phis - \Phis^{3}\quad\text{on }\R,\qquad \Phis(\pm\infty)=\pm 1,\qquad \Phis(0)=0.
	\end{equation*}
	The layer~$\Phis$ is odd and strictly increasing on~$\R$ with $|\Phis|<1$. Moreover, there exist constants $0<c\leq C<\infty$, depending on~$s$, such that
	\begin{equation}\label{eq:CS-tail}
		c|x|^{-2s}\;\leq\;1-\Phis^{2}(x)\;\leq\;C|x|^{-2s},
		\qquad |x|>1.
	\end{equation}
\end{theorem}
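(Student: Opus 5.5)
This is a result quoted from Cabré--Sire, so the plan reconstructs their argument rather than deriving anything from earlier results here. The approach goes through the Caffarelli--Silvestre extension. Write $a=1-2s$ and consider the half-plane $\R^{2}_{+}=\{(x,y):y>0\}$. The equation $\fl u=u-u^{3}$ becomes the degenerate elliptic problem
\begin{equation*}
\operatorname{div}(y^{a}\nabla U)=0\ \text{in }\R^{2}_{+},
\qquad
-d_{s}\lim_{y\to0}y^{a}\partial_{y}U=U-U^{3}\ \text{on }\partial\R^{2}_{+},
\end{equation*}
with $U(\cdot,0)=u$. The plan has four steps.

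\textbf{Existence.} I would minimize the extended energy
\begin{equation*}
\frac{d_{s}}{2}\int_{\R^{2}_{+}}y^{a}|\nabla U|^{2}+\int_{\R}W(U(x,0))\,dx
\end{equation*}
on large half-balls $B_R^+$. The boundary data are odd in $x$, equal to $\pm1$ far away, and the traces are constrained to be nondecreasing in $x$. Monotonicity can be imposed without loss by monotone rearrangement, since that does not increase either term. Uniform local bounds, namely $|U|\le1$ and Hölder regularity for the $A_{2}$-weighted problem, then allow passage to the limit $R\to\infty$. This yields an odd, nondecreasing solution with limits $L^{\pm}$ at $\pm\infty$.

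\textbf{Identifying the limits and strict monotonicity.} The limits $L^{\pm}$ must be zeros of $u-u^{3}$. Excluding $0$ is done either by an energy comparison with the constant or by a stability argument: $0$ is an unstable zero, since $f'(0)=1>0$ and small bumps lower the energy. Strict monotonicity follows from the strong maximum principle applied to $\partial_x U\ge0$, which solves the linearized problem. The bound $|\Phis|<1$ comes from the same maximum principle. Smoothness follows by bootstrapping the fractional Schauder estimates, since $f$ is smooth.

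\textbf{Uniqueness.} This is done by a sliding argument in the extension. Given two layers $u,v$, compare $u$ with the translate $v(\cdot+t)$ for large $t$, and decrease $t$ until first contact. The nondegeneracy $f'(\pm1)=-2<0$ gives a maximum principle near the ends, where both functions are close to $\pm1$. This prevents contact at infinity, so contact must be interior, which the strong maximum principle forbids unless $u\equiv v(\cdot+t)$. The normalization $\Phis(0)=0$ then fixes $t=0$.

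\textbf{The decay estimate \eqref{eq:CS-tail}.} I expect this to be the main obstacle. The mechanism is that near $+\infty$ we have $1-\Phis\approx\tfrac12\fl(\Phis)$, and $\fl$ of a step-like function decays exactly like $|x|^{-2s}$ because of the nonlocal tail of the kernel \eqref{eq:fl-singular}. The plan is to construct explicit barriers: sub- and supersolutions of the form $\pm\bigl(1-\lambda|x|^{-2s}\bigr)$ for $|x|$ large, adjusted to be Lipschitz near the origin. For these barriers, one can compute
\begin{equation*}
\fl\bigl(\sgn(x)(1-\lambda\langle x\rangle^{-2s})\bigr)\sim c_{s}|x|^{-2s}.
\end{equation*}
Combining this with the linearization $f'(1)=-2$ lets one match constants. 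The maximum-principle comparison on $\{x>R\}$ then gives two-sided bounds $c|x|^{-2s}\le1-\Phis\le C|x|^{-2s}$. By oddness, the same bounds hold for $1+\Phis$ on the negative side, so $1-\Phis^{2}$ is comparable to $|x|^{-2s}$ there too.

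The delicate points in this last step are the careful tail computation of $\fl$ of the barrier and handling the nonlocal contribution from the region $x<0$, where $\Phis\approx-1$.
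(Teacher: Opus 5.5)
The paper does not prove this theorem. It cites Theorems~2.4 and~2.9 of Cabr\'e--Sire for existence, uniqueness and monotonicity, and Theorem~2.7 for the decay. It adds only that $1-\Phi_s^2=(1-\Phi_s)(1+\Phi_s)$, where on each side one factor stays between $1$ and $2$. Your outline of existence, monotonicity and uniqueness follows the standard extension, minimization and sliding scheme, and is fine as a sketch.

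The decay step, however, fails as written, exactly at the point you flagged. A comparison principle for $(-\Delta)^s$ on $\{x>R\}$ needs the ordering on the whole complement $\{x\le R\}$. For your odd barriers $\sgn(x)(1-\lambda\langle x\rangle^{-2s})$, oddness turns the ordering on $\{x<-R\}$ into the \emph{reverse} bound at $+\infty$ with the same $\lambda$. Since $(-\Delta)^s\sgn(x)=\frac{c_{1,s}}{s}x^{-2s}$ for $x>0$ and $f'(1)=-2$, the threshold is $\lambda=c_{1,s}/(2s)$:
\begin{itemize}
\item The subsolution needs $\lambda>c_{1,s}/(2s)$, and would then require $1-\Phi_s(|x|)\ge\lambda|x|^{-2s}$ far to the left.
\item The supersolution needs $\lambda<c_{1,s}/(2s)$, and would then require $1-\Phi_s(|x|)\le\lambda|x|^{-2s}$ there.
\end{itemize}
Both requirements are circular. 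Both are also false, since the true tail constant is exactly the critical value $c_{1,s}/(2s)$.

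The missing ingredient is the antisymmetric maximum principle. For odd $h$ and $x>0$,
\begin{equation*}
(-\Delta)^s h(x)=c_{1,s}\,\mathrm{P.V.}\!\int_0^\infty\bigl(h(x)-h(y)\bigr)\Bigl(\frac{1}{|x-y|^{1+2s}}-\frac{1}{(x+y)^{1+2s}}\Bigr)dy+\frac{c_{1,s}}{s}\,x^{-2s}h(x).
\end{equation*}
Hence $(-\Delta)^s h<0$ at a negative minimum of $h$ over $(0,\infty)$, whatever happens on $(-\infty,0)$. This is the same odd half-line structure as the kernel $K_s$ in Section~\ref{ss:inverse-ap}. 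Apply it to $h=\Phi_s-\underline w$ or $h=\overline w-\Phi_s$, with $R$ large enough that $f'<0$ between $\Phi_s$ and the barrier on $(R,\infty)$. Then you only need the ordering on $(0,R]$.

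The order in which you choose the parameters then matters.
\begin{itemize}
\item For the upper bound on $1-\Phi_s$, take $\lambda=\eta R^{2s}$ with $\eta$ small and $R$ large. The barrier then lies below $\Phi_s$ on $(0,R]$, which uses only $\Phi_s\to1$. Meanwhile $\lambda$ is large and $\lambda R^{-2s}=\eta$ is small, so the subsolution inequality holds on $(R,\infty)$. The error $(-\Delta)^s[\sgn(x)\langle x\rangle^{-2s}]=O(x^{-4s})$ is of lower order.
\item For the lower bound, fix $R$ first, then take $\lambda\le\min_{[0,R]}\langle x\rangle^{2s}(1-\Phi_s)$.
\end{itemize}

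A smaller point concerns uniqueness. The statement asserts it among \emph{all} smooth solutions with limits $\pm1$ and $\Phi_s(0)=0$. To pin $t=0$ from the normalization, you need strict monotonicity of an arbitrary such solution. You can get this by sliding the solution against itself: a positive critical translation would make it periodic, which the distinct limits rule out.
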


The existence, uniqueness and monotonicity statements of Theorem~\ref{thm:CS} are Theorems~2.4 and~2.9 of~\cite{CabreSire2015II}. The decay estimate~\eqref{eq:CS-tail} follows from Theorem~2.7 in the same paper, since $1-\Phi_s^2=(1-\Phi_s)(1+\Phi_s)$ and each of these factors is bounded above and below.

\begin{theorem}[Palatucci--Savin--Valdinoci~\cite{PalatucciSavinValdinoci2013}]\label{thm:PSV}
	Let $s\in(1/2,1)$ and let $\Phis$ be the Cabré--Sire layer from Theorem~\ref{thm:CS}. Then for every $u\in\mathcal A$,
	\begin{equation}\label{eq:PSV}
		E_{s}[\Phis]\;\leq\;E_{s}[u].
	\end{equation}
\end{theorem}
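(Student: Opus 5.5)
Theorem~\ref{thm:PSV} is a result quoted from the literature, so the plan is to reconstruct the route of Palatucci--Savin--Valdinoci rather than to find something new. There are three steps: show that a minimizer of $E_s$ over $\mathcal A$ exists, show that it may be taken monotone, and identify it with $\Phis$ through the uniqueness part of Theorem~\ref{thm:CS}.

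\emph{Existence.} I would apply the direct method to $E_s$ on $\mathcal A$. The class is nonempty with finite infimum: a truncated linear profile, equal to $\pm1$ outside a compact set, has finite energy because $s<1$. The seminorm and $\int W(u)$ are both lower semicontinuous under locally uniform convergence, by Fatou. The difficulty is the translation and loss-of-mass problem, since a minimizing sequence could drift off or split into several transitions. Two normalizations handle this.

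\begin{itemize}
\item \textbf{Monotone rearrangement.} I would first replace each $u_k$ by a monotone profile with no larger energy. For two-sided boundary data the clean tool is a polarization (two-point rearrangement) argument. In this one-dimensional setting one can instead use the coarea and layer-cake representation of the Gagliardo seminorm, $(u(x)-u(y))^2=2\iint_{t<\tau}\bigl|\chi_{\{u>t\}}(x)-\chi_{\{u>t\}}(y)\bigr|\bigl|\chi_{\{u>\tau\}}(x)-\chi_{\{u>\tau\}}(y)\bigr|\,dt\,d\tau$. One then observes that replacing each superlevel set by a half-line of the same ``excess'' decreases the interaction terms, while $\int W(u)$ is preserved by equimeasurability of the rearranged $1-u^2$ distribution.
\item \textbf{Translation.} I would then translate so that $u_k(0)=0$.
\end{itemize}

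With monotone, uniformly $C^{0,s-1/2}$-bounded profiles, Arzelà--Ascoli (via the $H^s_{\rm loc}$ bound and Lemma~\ref{lem:sobolev-constants}) gives a monotone locally uniform limit $u$ with $u(0)=0$. The limits $u(\pm\infty)$ exist by monotonicity, and finiteness of $\int W(u)$ forces them to lie in $\{\pm1\}$. Monotonicity together with $u(0)=0$ then gives $u(-\infty)=-1$ and $u(+\infty)=1$, so $u\in\mathcal A$ is a minimizer.

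\emph{Identification.} A minimizer satisfies the Euler--Lagrange equation $\fl u=u-u^3$ weakly, using compactly supported variations, which preserve $\mathcal A$. The constraint $|u|\le1$ is inactive by truncation, since truncation does not increase either term. Regularity bootstraps to $C^\infty$. After translating so that $u(0)=0$, uniqueness in Theorem~\ref{thm:CS} gives $u=\Phis$. Translation invariance of $E_s$ then yields $E_s[\Phis]=\min_{\mathcal A}E_s\le E_s[v]$ for every $v\in\mathcal A$, which is \eqref{eq:PSV}.

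\emph{Main obstacle.} I expect the rearrangement step to be hardest: proving that the monotone rearrangement does not increase the nonlocal seminorm when the boundary values are $-1$ and $+1$ rather than decaying to $0$. If the level-set argument is delicate, I would fall back on the extension method of Cabré--Sire together with a sliding/comparison argument. That route shows any minimizer is monotone after fixing the zero, because translates of a minimizer are ordered by the strong maximum principle for the linearized operator.
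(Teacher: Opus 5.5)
Your outline is correct in substance, but it takes a different route from the paper. The paper proves nothing here. It invokes Theorem~2 and Remark~2 of \cite{PalatucciSavinValdinoci2013} with the rescaled potential $\widetilde W_s=(2/\cs)W$, so that the functional there equals $(2/\cs)E_s$. Remark~2 supplies a nondecreasing minimizer over all profiles with limits $-1$ and $+1$, Theorem~2 gives uniqueness up to translation in the monotone class, and centering plus Theorem~\ref{thm:CS} identifies that minimizer with $\Phis$.

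You instead rebuild the content of that remark. You first rearrange monotonically, then run the direct method with the normalization $u_k(0)=0$, and finiteness of $\int W$ forces the limits $\pm1$. This is the same compactness mechanism as in Proposition~\ref{prop:equicoerc}, where Helly's theorem already suffices, so you need no H\"older bound. Then, like the paper, you identify the minimizer through the uniqueness in Theorem~\ref{thm:CS}, after deriving the Euler--Lagrange equation yourself; your truncation remark correctly frees the constraint $|u|\le1$ so that two-sided variations are allowed.

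The citation buys brevity: the only checks are the normalization constant and that $\mathcal A$ lies in the class treated there. Your reconstruction buys transparency, because it isolates the single non-elementary input. That input is $[u^{*}]\le[u]$ for the increasing rearrangement $u^{*}$ of a profile with limits $-1$ and $+1$, where $[\cdot]$ is the Gagliardo seminorm. Your equimeasurability argument for $\int W(u^{*})=\int W(u)$ is fine.

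That inequality, however, is asserted rather than proved, and it is the whole difficulty. In your level-set form it amounts to the following: for nested $E_\tau\subset E_t$, the interaction $\iint_{E_\tau\times E_t^{c}}|x-y|^{-1-2s}\,dx\,dy$ must not increase when $E_\tau,E_t$ are replaced by right half-lines with the same positions. This is true, and the cleanest proof is by polarization. For a reflection $x\mapsto 2c-x$, moving $E_\tau$ to the right and $E_t^{c}$ to the left anti-sorts the two indicators on each reflected pair. Since the aligned pairs $(x,y),(2c-x,2c-y)$ carry the larger kernel value, this can only lower the interaction. One then passes to the limit along a sequence of polarizations, or cites a Garsia--Rodemich type rearrangement theorem.

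Your sliding fallback cannot substitute for this step. It shows that a minimizer, once it exists, is monotone, but it gives no existence. For non-monotone minimizing sequences, the zero you normalize at need not belong to the transition that survives in the limit. The limit can then have equal limits at both ends, so you would need a concentration-compactness argument instead.

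Finally, a small slip: the truncated linear profile has finite energy because $1/2<s<1$, not merely $s<1$. The interaction between its two plateaus is finite only for $s>1/2$.
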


The case $s=1$ of~\eqref{eq:PSV} is the classical minimality of the $\tanh$ profile and goes back to~\cite{Modica1987}. For $s\in(1/2,1)$, apply Theorem~2 and Remark~2 of~\cite{PalatucciSavinValdinoci2013} with the rescaled potential $ \widetilde W_{s}:=(2/\cs)W. $ The corresponding functional in that paper is
\begin{equation*}
	\frac12\iint_{\R\times\R}\frac{(u(x)-u(y))^{2}}{|x-y|^{1+2s}}\,dx\,dy
	+\int_{\R}\widetilde W_{s}(u)\,dx
	=\frac{2}{\cs}E_{s}[u].
\end{equation*}
Remark~2 shows that its infimum over functions with limits $-1$ and $+1$ at the two ends is attained by a nondecreasing function, and Theorem~2 gives uniqueness up to translation in the monotone class. After centering at the zero level, the minimizer is $\Phis$ by Theorem~\ref{thm:CS}, which yields~\eqref{eq:PSV} for the admissible class of Definition~\ref{def:admissible}.

Let us explain why minimality does not yield a soft proof of strict monotonicity. If $r<s$, using $\Phi_{r}$ and $\Phi_{s}$ as competitors in~\eqref{eq:PSV} gives inequalities~\eqref{eq:sandwich-lem}. For a fixed profile $u$, however, the Fourier representation~\eqref{eq:fixed-layer-fourier-energy} of the energy contains the multiplier $|\xi|^{2s-2}$, which decreases with $s$ on $\{|\xi|<1\}$ and increases with $s$ on $\{|\xi|>1\}$. Neither outer difference in~\eqref{eq:sandwich-lem} therefore has a prescribed sign. At the infinitesimal level, Lemma~\ref{lem:envelope-paper} makes the same obstruction explicit. We have $\Es'(s)=J(s,\Phis)$, and the multiplier $\log|\xi|\,|\xi|^{2s-2}$ in $J(s, \cdot)$ changes sign at $|\xi|=1$. Thus the standard variational and PDE estimates give continuity and differentiability, but do not decide the sign of the derivative. Near the endpoints we can carry out asymptotic expansions, whereas on the interior interval there is no small parameter and $\Phis$ is not known in closed form.

\subsection{Strategy of the proof}\label{ss:mono-strategy}

The proof of the strict monotone decrease asserted in Theorem~\ref{thm:mono} is organized around the cover
\begin{equation}\label{eq:three-interval-cover}
	(1/2,1]
	=\bigl(1/2,53/100\bigr]
	\cup[53/100,49/50]
	\cup[49/50,1].
\end{equation}
The strict decrease on the right endpoint interval is proved in Proposition~\ref{prop:mono-upper} in Section~\ref{sec:s1}, and that on the left endpoint interval in Proposition~\ref{prop:mono-lower} in Section~\ref{sec:s12}. Both arguments consist of asymptotic expansions. Section~\ref{sec:mono-interior} treats the middle interval $I_{0}=[0.530,0.980]$. The endpoint widths $1/50$ and $3/100$ are deliberately not optimized. They are chosen to meet exactly the endpoints of~$I_{0}$ but the endpoint estimates used are generally not sharp.

The derivative functional in these arguments is
\begin{equation}\label{eq:J-functional}
	J(s,u):=\frac{1}{2\pi}\int_{\R}\log|\xi|\,|\xi|^{2s-2}
	|\widehat{u'}(\xi)|^{2}\,d\xi,
\end{equation}
defined for $s\in(1/2,1]$ and odd $u$ for which the integral is finite. As we prove in Section~\ref{sec:cont} (Lemma~\ref{lem:envelope-paper}), the function $\Es$ belongs to $C^{1}((1/2,1))$ with
\begin{equation*}
	\Es'(s)\;=\;J(s,\Phis)\qquad(s\in(1/2,1)).
\end{equation*}
Strict decrease is therefore equivalent to the strict negativity of $J(s,\Phis)$, which we establish separately on the three intervals of the cover~\eqref{eq:three-interval-cover}.

Near $s=1$ we linearize the layer equation in the parameter $s$ around the profile $\Phi_{1}=\tanh(\cdot/\sqrt2)$. A single corrector, obtained by inverting the linearized operator on odd functions, produces a profile that solves the layer equation up to $O((1-s)^{2})$. A contraction argument places the true layer within $O((1-s)^{2})$ of it in $H^{2s}$, and the expansion~\eqref{eq:asymp-s1} is derived from $J$. The constant $\kappa_{1}$ emerges as an explicit one-dimensional integral of $\Phi_{1}'$ against a logarithmic weight. Finally, carrying the expansion to second order, with explicit remainders, yields the strict sign of $\Es'$ for $s \in [49/50,1]$. Throughout this analysis, we track explicit constants in the estimates to control the errors in the asymptotic expansion and thereby prove strict monotonicity on that interval.

Near $s=1/2$ the asymptotics rest on two ingredients. The one-sided minimality of Theorem~\ref{thm:PSV} gives a matching upper bound on $\Es(s)$ through a carefully chosen competitor, while a Fourier-side analysis of the Gagliardo seminorm of a competitor with controlled tails produces a residual term of order $(s-1/2)^{-1}$. The absence of a logarithmic correction in~\eqref{eq:asymp-s12-sharp} follows from a refined comparison between the competitor and the true layer using the sharp tail bound~\eqref{eq:CS-tail} of Cabré--Sire. Finally, the same expansion, differentiated, yields the strict sign of $\Es'$ in the interval $s \in (1/2, 53/100]$. Again, explicit constants need to be tracked to make sure the asymptotic expansions are valid throughout this interval.

Away from the endpoints $s=1/2$ and $s=1$, neither endpoint expansion is applicable, and the layer $\Phi_s$ is not available explicitly. We therefore construct a numerical approximate layer and complete the monotonicity proof using a computer-assisted argument. This is why the asymptotic analysis carried out at the endpoints must cover explicit intervals.

\subsection{Computer-assisted techniques}\label{ss:cap}

We first recall interval arithmetic, which provides rigorous upper and lower bounds. A real number $x$ is represented by an enclosure $[x]=[\underline x,\overline x]$, with $\underline x\le x\le\overline x$, stored internally as a ball $m\pm r$. Each arithmetic operation and elementary function used in the computation returns an enclosure guaranteed to contain the exact image of the input enclosures.

On the interior interval $I_{0}=[0.530,0.980]$ the layer is not available in closed form, so $J(s,\Phis)$ is never estimated directly. Instead we prove the strict negativity of $J(s,\Phis)$ at a finite grid of values of $s$ using rigorous numerics. This involves evaluating $J(s,\cdot)$ at explicit approximate layers and then controlling the distance between the true layer and the numerical approximation. We then transfer the value of $J$ from the approximate to the true profile and propagate the sign from the grid to all of $I_{0}$ using a bound on the second derivative of $\Es$. All numerical bounds entering this argument are certified using interval arithmetic in the \texttt{Arb} library~\cite{ArbJohansson} at $200$-bit working precision.

The computer-assisted argument completes the monotonicity proof on the interior interval. The broader strategy of combining quantitative asymptotic analysis with computer-assisted estimates has precedents in related PDE problems~\cite{BuckmasterCaoLaboraGomezSerrano,CaoLaboraColomboDolceVentura,CastroGomezSerranoPascualCaballo,ChenHouI,ChenHouII,DahneFKdV2024,DahneFigueras,DahneGomezSerrano2023,DahneGomezSerranoPechAlberich2026,DonningerSchorkhuber}. In the present paper, computer assistance enters at the following two points.
\begin{itemize}
	\item We certify, in interval arithmetic, the quantities entering the conditions~\textup{(C1)--(C4)} of Proposition~\ref{prop:cells-I0}: the value of $J(s_{i},\cdot)$ at the approximate layer, the radius $\tau(s_{i})$ produced by the fixed-point argument, the Lipschitz constant $L_{J}(s_{i})$ that transfers this value to the true layer, and the curvature constant $M^{\ast}_{\mathrm{sc}}([a_{i},b_{i}])$.
	\item We validate the Schur inequality~\eqref{eq:Gprime-cert-BS}, on which Proposition~\ref{prop:Gprime-coercivity-cert} rests, and which yields the inverse estimate used in the fixed-point argument of Proposition~\ref{prop:layer-radius}. The pointwise inequality is recast as a weighted integral bound, in the spirit of the operator-norm estimates of~\cite{CastroCordobaGomezSerranoSQG} for the SQG equation.
\end{itemize}
Related fixed-point validations of numerical approximations for nonlocal equations and semilinear PDEs on unbounded domains can be found in~\cite{CadiotWhitham,CadiotLessardNave,CadiotHaziot}. We refer to the book~\cite{Tucker2011} for an introduction to validated numerics, and to the survey~\cite{GomezSerranoSurvey2019} and the book~\cite{NakaoPlumWatanabe2019} for a more specific treatment of computer-assisted proofs in PDE.

\noindent\textbf{Organization of the paper.} Section~\ref{sec:cont} proves the local Lipschitz continuity of $\Es$ on $(1/2,1]$ and establishes the identity for $\Es'(s)$ and the Lipschitz bound for $J(s, \cdot)$ used throughout. Sections~\ref{sec:s1} and~\ref{sec:s12} establish the endpoint expansions of Theorem~\ref{thm:asymp} and the corresponding strict decrease near $s=1$ and $s=1/2$, respectively. Section~\ref{sec:mono-interior} treats the remaining interior interval and assembles the three regimes into the strict monotonicity of Theorem~\ref{thm:mono}. Appendix~\ref{app:implementation} explains the details of the computer-assisted argument used in the interior analysis. The code is attached as supplementary material and can also be found in the GitHub repository~\cite{GithubRepo}.

\noindent\textbf{Usage of Large Language Models.} The authors have used a combination of the LLMs GPT-5.5 and Claude Opus 4.7 during the development of this work. These models have been employed for mathematical exploration, including suggesting proof strategies and assisting with calculations. The implementation of the code in the supplementary material was also done with the assistance of the same models. All mathematical statements, proofs and code scripts have been checked by the authors, who take full responsibility. All proofs and code scripts influenced by LLMs have been substantially rewritten. In the last stage of the project, GPT-5.6 Sol and Claude Opus 5 were used for proofreading.

\section{Continuity and differentiability of the energy}\label{sec:cont}
The continuity of the energy $\Es$ admits a direct proof that uses only minimality and a uniform Fourier estimate for the layers. Compactness of the profiles enters later, when we differentiate the energy and must pass to the limit in the Fourier transform of~$\Phis'$. The solution of the classical, local Allen--Cahn equation is explicit; we write
\begin{equation*}
	\Phi_1=\tanh(x/\sqrt2).
\end{equation*}

\subsection{Lipschitz continuity}\label{ss:cont-lipschitz}

The first step is to prove a Fourier-side representation for the kinetic energy term in $E_s$. Since the layer solutions $\Phi_s$ do not decay at infinity, taking their Fourier transforms directly is not possible. The next lemma solves this problem using difference quotients.

\begin{lemma}\label{lem:fourier-energy-nondecaying}
	Let $u\in W^{1,1}_{\mathrm{loc}}(\R)$ satisfy $u'\in L^{1}(\R)$. Then, for every $s\in(0,1)$,
	\begin{equation*}
		\frac{\cs}{4}\iint_{\R\times\R}
		\frac{|u(x)-u(y)|^{2}}{|x-y|^{1+2s}}\,dx\,dy
		=
		\frac{1}{4\pi}\int_{\R}
		|\xi|^{2s-2}|\widehat{u'}(\xi)|^{2}\,d\xi .
	\end{equation*}
\end{lemma}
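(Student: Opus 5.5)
The plan is to substitute $y=x+h$, write the Gagliardo double integral as an integral over $h$ of the $L^{2}$ norm of the difference $\delta_h u(x):=u(x+h)-u(x)$, and evaluate that norm on the Fourier side. Concretely, by Tonelli,
\begin{equation*}
	\iint_{\R\times\R}\frac{|u(x)-u(y)|^{2}}{|x-y|^{1+2s}}\,dx\,dy
	=\int_{\R}\frac{\|\delta_h u\|_{L^{2}}^{2}}{|h|^{1+2s}}\,dh .
\end{equation*}
The point is that $\delta_h u$ is a genuinely decaying function even though $u$ is not. Since $u\in W^{1,1}_{\mathrm{loc}}$ is absolutely continuous, we have $\delta_h u(x)=\int_0^h u'(x+t)\,dt$. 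It follows that $\delta_h u\in L^{1}(\R)$ with $\|\delta_h u\|_{L^1}\le |h|\,\|u'\|_{L^1}$, and its Fourier transform is computed directly via Fubini as
\begin{equation*}
	\widehat{\delta_h u}(\xi)=\widehat{u'}(\xi)\,\frac{e^{i\xi h}-1}{i\xi}.
\end{equation*}
Here $\widehat{u'}$ is a bounded continuous function, since $u'\in L^1$.

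Next I apply Plancherel to $\delta_h u$. There is a subtlety: a priori $\delta_h u$ is only in $L^{1}$, not $L^2$. I would handle this by noting that the right-hand side above is in $L^{2}(d\xi)$. Indeed, it is bounded by $\|u'\|_{L^1}\min(|h|,2/|\xi|)$. A function in $L^{1}$ whose Fourier transform lies in $L^{2}$ is in $L^{2}$, with the Plancherel identity holding; one can see this by mollifying, or by the $L^1\cap L^2$ density argument. Thus
\begin{equation*}
	\|\delta_h u\|_{L^{2}}^{2}=\frac{1}{2\pi}\int_{\R}|\widehat{u'}(\xi)|^{2}\,\frac{2-2\cos(\xi h)}{\xi^{2}}\,d\xi ,
\end{equation*}
and the identity holds in $[0,\infty]$ for every $h$. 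Inserting this and using Tonelli again (everything is nonnegative) gives
\begin{equation*}
	\frac{1}{2\pi}\int_{\R}\frac{|\widehat{u'}(\xi)|^{2}}{\xi^{2}}\left(\int_{\R}\frac{2-2\cos(\xi h)}{|h|^{1+2s}}\,dh\right)d\xi .
\end{equation*}

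The inner integral scales as $|\xi|^{2s}$: substitute $h\mapsto h/|\xi|$, which is legitimate for $s\in(0,1)$ where the integral converges. So it equals $A_s|\xi|^{2s}$ with $A_s=\int_{\R}(2-2\cos t)|t|^{-1-2s}\,dt$. I avoid computing $A_s$ explicitly by invoking the normalization: $\cs$ was fixed so that \eqref{eq:fl-fourier} and \eqref{eq:fl-singular} agree, equivalently so that \eqref{eq:plancherel} holds. Applying the same computation to a Schwartz function $v$ in place of $u$, and replacing $\widehat{v'}/\xi$ by $\widehat v$, shows that $\cs A_s=2$. Hence
\begin{equation*}
	\frac{\cs}{4}\cdot\frac{A_s}{2\pi}\int_{\R}|\xi|^{2s-2}|\widehat{u'}|^{2}\,d\xi=\frac{1}{4\pi}\int_{\R}|\xi|^{2s-2}|\widehat{u'}(\xi)|^{2}\,d\xi,
\end{equation*}
which is the claim. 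Both sides may be $+\infty$ simultaneously; for example, when $s\le 1/2$ and $\widehat{u'}(0)\neq0$ both sides are infinite, which is consistent with the statement.

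The main obstacle I expect is the Plancherel step for $\delta_h u$, namely justifying the $L^2$ identity for a function known only to be in $L^{1}$ with an $L^{2}$ Fourier transform. My first approach would be to convolve with an approximate identity $\rho_\varepsilon$. Then $\rho_\varepsilon*\delta_h u\in L^1\cap L^\infty\subset L^2$, so ordinary Plancherel applies, and I pass to the limit via Fatou on the physical side and dominated convergence on the Fourier side, since $|\widehat{\rho_\varepsilon}|\le1$. Everything else is Tonelli together with the scaling identity.
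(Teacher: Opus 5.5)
Your proposal is correct and follows essentially the same route as the paper. You write the seminorm as $\int_{\R}|h|^{-1-2s}\|u-u(\cdot-h)\|_{L^2}^2\,dh$, compute the difference on the Fourier side as $\widehat{u'}$ times the transform of an indicator, apply Tonelli and the scaling $\eta=h\xi$, and fix the constant by testing on a Schwartz function; your $\cs A_s=2$ is exactly the paper's $\cs K_s=1$.

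The step you flag as the main obstacle is actually immediate, so the mollification argument is unnecessary. The paper writes the difference as $\pm\mathbf 1_{I_h}*u'$ and gets $L^2$ membership from Young's inequality. Equivalently, $|\delta_h u|\le\|u'\|_{L^1}$, so $\delta_h u\in L^1\cap L^\infty\subset L^2$ and standard Plancherel applies directly.
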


\begin{proof}
	For $h\ne0$, let $(\tau_hu)(x):=u(x-h)$ and set $I_h:=(0,h)$ if $h>0$ and $I_h:=(h,0)$ if $h<0$. The fundamental theorem of calculus gives
	\begin{equation*}
		u-\tau_hu=(\sgn h)\int_{I_h}u'(x-t)\,dt
		=(\sgn h)\,\mathbf 1_{I_h}*u'.
	\end{equation*}
	Since $\mathbf 1_{I_h}\in L^{2}(\R)$ and $u'\in L^{1}(\R)$, Young's inequality shows that $u-\tau_hu\in L^{2}(\R)$. Thus Plancherel's theorem and the convolution formula apply to this difference and give
	\begin{equation}\label{eq:translation-difference-plancherel}
		\int_{\R}|u(x)-u(x-h)|^{2}\,dx
		=
		\frac{1}{2\pi}\int_{\R}
		\frac{2(1-\cos(h\xi))}{\xi^{2}}
		|\widehat{u'}(\xi)|^{2}\,d\xi ,
	\end{equation}
	where we have used $|\widehat{\mathbf 1_{I_{h}}}(\xi)|^{2}=2\bigl(1-\cos h\xi\bigr)/\xi^{2}$.

	Changing variables $h=x-y$ in the Gagliardo seminorm, substituting \eqref{eq:translation-difference-plancherel}, and applying Tonelli's theorem, we obtain
	\begin{align*}
		\iint_{\R\times\R}
		\frac{|u(x)-u(y)|^{2}}{|x-y|^{1+2s}}\,dx\,dy
		=
		\int_{\R}\frac{1}{|h|^{1+2s}}
		\int_{\R}|u(x)-u(x-h)|^{2}\,dx\,dh
		=
		\frac{K_s}{\pi}\int_{\R}
		|\xi|^{2s-2}|\widehat{u'}(\xi)|^{2}\,d\xi,
	\end{align*}
	where we have also applied the change of variable $\eta=h\xi$ and the constant $K_s$ is
	\begin{equation*}
		K_s:=\int_{\R}\frac{1-\cos\eta}{|\eta|^{1+2s}}\,d\eta.
	\end{equation*}
	The constant $K_s$ is finite and positive for $s\in(0,1)$. Applying the identity just obtained to a Schwartz function and comparing it with the Plancherel identity~\eqref{eq:plancherel} yields $\cs K_s=1$.
\end{proof}

We now control the variation of the energy exponent while keeping the profile fixed. The point is that the estimate is uniform over all layers in a compact subinterval of~$(1/2,1]$.

\begin{lemma}\label{lem:layer-energy-variation}
	Let $a\in(1/2,1)$. For every $\sigma,s\in[a,1]$, the quantity $E_s[\Phi_\sigma]$ is finite. Moreover, the map $s\mapsto E_s[\Phi_\sigma]$ is continuously differentiable on $[a,1]$, with one-sided derivatives at the endpoints, and there is a constant $C_a<+\infty$ such that
	\begin{equation}\label{eq:fixed-layer-Lipschitz}
		|J(s,\Phi_\sigma)|\le C_a,
		\qquad
		|E_s[\Phi_\sigma]-E_r[\Phi_\sigma]|\le C_a|s-r|
	\end{equation}
	for all $\sigma,s,r\in[a,1]$.
\end{lemma}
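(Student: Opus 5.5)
We use Lemma~\ref{lem:fourier-energy-nondecaying} to write, for every $s\in[a,1]$ and $\sigma\in[a,1]$,
\begin{equation*}
	E_s[\Phi_\sigma]=\frac{1}{4\pi}\int_{\R}|\xi|^{2s-2}|\widehat{\Phi_\sigma'}(\xi)|^{2}\,d\xi+\int_{\R}W(\Phi_\sigma)\,dx .
\end{equation*}
The hypothesis of Lemma~\ref{lem:fourier-energy-nondecaying} holds because $\Phi_\sigma$ is smooth, increasing and bounded, so $\Phi_\sigma'\ge0$ and $\|\Phi_\sigma'\|_{L^1}=2$. For $s=1$ this is just Plancherel for $\tfrac12\int|u'|^2$. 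The potential term is finite by the tail bound~\eqref{eq:CS-tail}, since $W(\Phi_\sigma)\le C|x|^{-4\sigma}$ and $4\sigma>2$. Everything then reduces to the uniform bound
\begin{equation*}
	M_a:=\sup_{\sigma\in[a,1]}\int_{\R}\bigl(|\xi|^{2a-2}+|\xi|^{2}\bigr)\bigl(1+|\log|\xi||\bigr)|\widehat{\Phi_\sigma'}(\xi)|^{2}\,d\xi<\infty .
\end{equation*}
Once $M_a<\infty$ is known, the rest is routine. For $s\in[a,1]$ we have $|\xi|^{2s-2}\le|\xi|^{2a-2}+|\xi|^{2}$, which gives finiteness. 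Differentiating under the integral in $s$ is justified by dominated convergence with the dominating function $(|\xi|^{2a-2}+|\xi|^2)|\log|\xi|||\widehat{\Phi_\sigma'}|^2$. This shows $\partial_sE_s[\Phi_\sigma]=J(s,\Phi_\sigma)$, continuous in $s$, with $|J|\le M_a/(2\pi)=:C_a$. The Lipschitz bound then follows from the mean value theorem.

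For low frequencies, $|\widehat{\Phi_\sigma'}|\le\|\Phi_\sigma'\|_{L^1}=2$. Hence $\int_{|\xi|<1}|\xi|^{2a-2}(1+|\log|\xi||)\cdot4\,d\xi$ is finite because $2a-2>-1$, and this holds uniformly in $\sigma$. For high frequencies, the weight $|\xi|^2(1+|\log|\xi||)$ is dominated by $|\xi|^{2+\epsilon}$. It therefore suffices to have a uniform bound for $\Phi_\sigma'$ in $H^{1+\epsilon/2}$, or more conveniently a uniform bound on $\|\Phi_\sigma''\|_{L^2}+\|\Phi_\sigma'''\|_{L^2}$.

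The main obstacle is this uniform high-frequency regularity over $\sigma\in[a,1]$. The constants in Theorem~\ref{thm:CS} depend on $\sigma$, so we need an argument that is uniform in $\sigma$. I would first get a uniform energy bound by minimality (Theorem~\ref{thm:PSV}) with a fixed competitor such as $\Phi_1$. Then
\begin{equation*}
	E_\sigma[\Phi_\sigma]\le E_\sigma[\Phi_1]\le \frac{1}{4\pi}\int_{\R}\bigl(|\xi|^{2a-2}+|\xi|^{2}\bigr)|\widehat{\Phi_1'}|^2\,d\xi+\int_{\R}W(\Phi_1)\,dx,
\end{equation*}
and the right-hand side is independent of $\sigma$. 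This gives $\int|\xi|^{2\sigma-2}|\widehat{\Phi_\sigma'}|^2\le C$, so $\Phi_\sigma'$ is bounded in $\dot H^{\sigma-1}$ uniformly in $\sigma$.

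The equation then bootstraps regularity. Differentiating it gives $(-\Delta)^\sigma\Phi_\sigma'=(1-3\Phi_\sigma^2)\Phi_\sigma'$. Hence $|\xi|^{2\sigma}|\widehat{\Phi_\sigma'}|=|\widehat{f_\sigma}|$, where $f_\sigma=(1-3\Phi_\sigma^2)\Phi_\sigma'$. Because $|\Phi_\sigma|<1$, the $H^{r}$ norm of $f_\sigma$ is controlled by that of $\Phi_\sigma'$ together with products of derivatives. Each step of the iteration gains $2\sigma-1\ge 2a-1>0$ derivatives at high frequency. Starting from $\dot H^{\sigma-1}$, plus $L^1$ control at low frequency, finitely many steps, with the number depending only on $a$, reach $H^{2}$ uniformly in $\sigma$. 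If the products cause trouble, I would instead work with $u\mapsto\Phi_\sigma-\Phi_\sigma^3$. This map is controlled in $H^r_{\mathrm{loc}}$-type norms via Moser-type estimates together with the uniform bound $|\Phi_\sigma|\le1$. Combining the high-frequency bound with the low-frequency estimate gives $M_a<\infty$, which completes the proof.
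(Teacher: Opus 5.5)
Your overall architecture is the same as the paper's: the Fourier representation from Lemma~\ref{lem:fourier-energy-nondecaying}, a $\sigma$-uniform integrable majorant, differentiation under the integral sign, and the mean value theorem. Your low-frequency bound $|\widehat{\Phi_\sigma'}|\le 2$ is also exactly the paper's. The gap is in the step you call the main obstacle. The uniform high-frequency bound behind $M_a<\infty$ is only sketched, and the sketch does not work as written. You start the bootstrap from the energy bound $\Phi_\sigma'\in\dot H^{\sigma-1}$, which has negative regularity. To pass to $f_\sigma=(1-3\Phi_\sigma^2)\Phi_\sigma'$ you need multiplication by $1-3\Phi_\sigma^2$ to be bounded on $H^{\sigma-1}$, or equivalently by duality on $H^{1-\sigma}$. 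The natural estimate for this requires the coefficient to be $C^{0,\gamma}$ with $\gamma>1-\sigma$. The energy bound only gives $\gamma=\sigma-1/2$, so the first step is unjustified for $\sigma\le 3/4$. Your fallback via Moser-type estimates in $H^r_{\mathrm{loc}}$ norms cannot control a global Fourier integral either. As it stands, $M_a<\infty$ is not proved.

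The obstacle is also self-inflicted. For $|\xi|\ge 1$ and $s\le 1$ you have $|\xi|^{2s-2}\le 1$, so the weight $|\xi|^{2}$ is never needed, and neither are uniform $L^2$ bounds on $\Phi_\sigma''$ or $\Phi_\sigma'''$. Only $\int_{|\xi|\ge1}|\log|\xi||\,|\widehat{\Phi_\sigma'}|^{2}\,d\xi$ must be bounded uniformly in $\sigma$. The missing idea is to apply to $f_\sigma$ the same $L^1\to L^\infty$ Fourier bound you already used for $\Phi_\sigma'$. Since $|1-3\Phi_\sigma^2|\le 2$ and $\|\Phi_\sigma'\|_{L^1}=2$, you get $|\widehat{f_\sigma}|\le 4$. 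The differentiated equation then gives $|\widehat{\Phi_\sigma'}(\xi)|\le 4|\xi|^{-2\sigma}\le 4|\xi|^{-2a}$ for $|\xi|\ge1$. This yields the majorant $16|\log|\xi||\,|\xi|^{-4a}$ on $\{|\xi|\ge 1\}$, which is integrable because $4a>2$ and is uniform in $\sigma,s\in[a,1]$. That is precisely the paper's function $G_a$, and it needs no energy bound, no minimality, and no bootstrap. With this replacement, the rest of your argument goes through unchanged.
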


\begin{proof}
	Theorem~\ref{thm:CS} gives monotonicity and $\Phi_\sigma(\pm \infty) = \pm 1$, so the measure $\Phi_\sigma'\,dx$ is nonnegative and has total mass~$2$. Hence $|\widehat{\Phi_\sigma'}(\xi)|\le2$. Differentiating the layer equation,
	\begin{equation*}
		(-\Delta)^{\sigma} \Phi_\sigma' = (1 - 3\Phi_\sigma^{2})\Phi_\sigma'.
	\end{equation*}
	Then, passing to Fourier variables gives
	\begin{equation*}
		|\xi|^{2\sigma}\widehat{\Phi_\sigma'}(\xi)
		=\reallywidehat{(1-3\Phi_\sigma^2)\Phi_\sigma'}(\xi).
	\end{equation*}
	Since $|1-3\Phi_\sigma^2|\le2$ and $\|\Phi_\sigma'\|_{L^1}=2$, we obtain, using $| \reallywidehat{(1-3\Phi_\sigma^2)\Phi_\sigma'}(\xi) | \leq \|1-3\Phi_\sigma^2\|_{L^{\infty}} \| \Phi_\sigma' \|_{L^{1}}$,
	\begin{equation}\label{eq:hf-fourier-bound-lem}
		|\widehat{\Phi_\sigma'}(\xi)|
		\le \min\{2,4|\xi|^{-2\sigma}\}
		\qquad(\sigma \le 1,\ \xi\ne0).
	\end{equation}

	For $\sigma,s\in[a,1]$, it follows from \eqref{eq:hf-fourier-bound-lem} that
	\begin{equation}\label{eq:dom-lem}
		|\log|\xi||\,|\xi|^{2s-2}|\widehat{\Phi_\sigma'}(\xi)|^2
		\le G_a(\xi),
	\end{equation}
	where
	\begin{equation*}
		G_a(\xi):=
		4|\log|\xi||\,|\xi|^{2a-2}\mathbf 1_{\{|\xi|<1\}}
		+16|\log|\xi||\,|\xi|^{-4a}\mathbf 1_{\{|\xi|\ge1\}}.
	\end{equation*}
	The function $G_a$ belongs to $L^1(\R)$. Omitting the logarithmic factor gives an integrable majorant for the spectral kinetic-energy integrand. Since $\Phi_\sigma'\in L^1(\R)$, Lemma~\ref{lem:fourier-energy-nondecaying}, applied to $\Phi_\sigma$, gives
	\begin{equation}\label{eq:fixed-layer-fourier-energy}
		E_s[\Phi_\sigma]
		=\frac{1}{4\pi}\int_\R |\xi|^{2s-2}
		|\widehat{\Phi_\sigma'}(\xi)|^2\,d\xi
		+\int_\R W(\Phi_\sigma)\,dx
	\end{equation}
	for every $\sigma\in[a,1]$ and $s\in[a,1)$. Here the case $s=1$ is precisely the classical energy~\eqref{eq:E1}. The potential term is finite by the Cabr\'e--Sire tail estimate~\eqref{eq:CS-tail} when $\sigma<1$, and by the explicit formula for~$\Phi_1$ when $\sigma=1$.

	Differentiating~\eqref{eq:fixed-layer-fourier-energy} under the integral sign, using~\eqref{eq:dom-lem}, yields
	\begin{equation}\label{eq:D1-lem}
		\frac{d}{ds}E_s[\Phi_\sigma]
		=\frac{1}{2\pi}\int_\R\log|\xi|\,|\xi|^{2s-2}
		|\widehat{\Phi_\sigma'}(\xi)|^2\,d\xi
		=J(s,\Phi_\sigma).
	\end{equation}
	The dominated differentiation theorem and the same majorant give continuity of the derivative up to the endpoints. Taking $C_a:=(2\pi)^{-1}\|G_a\|_{L^1}$ and integrating \eqref{eq:D1-lem} between $r$ and~$s$ proves \eqref{eq:fixed-layer-Lipschitz}.
\end{proof}

The uniformity in both the exponent and the layer allows us to compare two minimizers directly.

\begin{theorem}\label{thm:cont}
	For every $a\in(1/2,1)$ there exists $C_a<+\infty$ such that
	\begin{equation}\label{eq:energy-local-Lipschitz}
		|\Es(s)-\Es(r)|\le C_a|s-r|
		\qquad(r,s\in[a,1]).
	\end{equation}
	Consequently, $\Es:(1/2,1]\to\R$ is locally Lipschitz continuous.
\end{theorem}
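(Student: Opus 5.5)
The plan is the standard sandwich argument: apply minimality at each exponent, using the other exponent's layer as the competitor, and then invoke the uniform fixed-profile Lipschitz bound of Lemma~\ref{lem:layer-energy-variation}. Fix $a\in(1/2,1)$ and $r,s\in[a,1]$.

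First, each layer is admissible at the other exponent. By Lemma~\ref{lem:layer-energy-variation}, $E_s[\Phi_r]<+\infty$ and $E_r[\Phi_s]<+\infty$. By Theorem~\ref{thm:CS} (and the explicit form of $\Phi_1$ when an exponent equals $1$), each layer is continuous, satisfies $|\Phi_\sigma|<1$, and has limits $\pm1$ at $\pm\infty$. Hence $\Phi_r,\Phi_s\in\mathcal A$ for both exponents.

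Second, apply Theorem~\ref{thm:PSV} at exponent $s$ with competitor $\Phi_r$, and at exponent $r$ with competitor $\Phi_s$. When an exponent equals $1$, use the classical minimality of $\tanh$ instead. This gives
\begin{equation*}
E_s[\Phi_s]-E_r[\Phi_s]\;\le\;\Es(s)-\Es(r)\;\le\;E_s[\Phi_r]-E_r[\Phi_r].
\end{equation*}
For the upper bound, note that $\Es(s)=E_s[\Phi_s]\le E_s[\Phi_r]$ and subtract $\Es(r)=E_r[\Phi_r]$. For the lower bound, note that $\Es(r)\le E_r[\Phi_s]$.

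Third, both outer differences involve a single fixed profile $\Phi_\sigma$ with $\sigma\in[a,1]$. The second inequality in \eqref{eq:fixed-layer-Lipschitz} bounds each of them in absolute value by $C_a|s-r|$, with $C_a$ independent of $\sigma$. This yields \eqref{eq:energy-local-Lipschitz}. Local Lipschitz continuity on $(1/2,1]$ follows because every compact subset of $(1/2,1]$ lies in some $[a,1]$.

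The only delicate point is the endpoint $s=1$, or $r=1$. There the energy is the local functional \eqref{eq:E1}, and the minimality and admissibility of $\Phi_1$ must be checked separately. Lemma~\ref{lem:layer-energy-variation} already handles this: its Fourier representation covers $s=1$, and it states continuity of $s\mapsto E_s[\Phi_\sigma]$ up to that endpoint, so no further obstacle is expected. The real content lies in the $\sigma$-uniformity of the Fourier bound \eqref{eq:hf-fourier-bound-lem}, which has already been established.
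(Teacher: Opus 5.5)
Your proposal is correct and is essentially the paper's own proof. It uses the same three ingredients: admissibility of each layer at both exponents via Lemma~\ref{lem:layer-energy-variation}, the cross-competitor minimality sandwich \eqref{eq:sandwich-lem} (with classical minimality of $\Phi_1$ at the endpoint), and the $\sigma$-uniform bound \eqref{eq:fixed-layer-Lipschitz} applied to the two outer differences.
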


\begin{proof}
	Let $a\le r<s\le1$. By Lemma~\ref{lem:layer-energy-variation}, each of $\Phi_r$ and $\Phi_s$ is admissible for both energies. Theorem~\ref{thm:PSV}, together with the classical minimality at $s=1$ recalled after that theorem, therefore gives $\Es(s)\le E_s[\Phi_r]$ and $\Es(r)\le E_r[\Phi_s]$. Subtracting these inequalities in the two possible directions yields
	\begin{equation}\label{eq:sandwich-lem}
		E_s[\Phi_s]-E_r[\Phi_s]
		\le \Es(s)-\Es(r)
		\le E_s[\Phi_r]-E_r[\Phi_r].
	\end{equation}
	Both outer terms are bounded in absolute value by $C_a(s-r)$, by \eqref{eq:fixed-layer-Lipschitz}.
\end{proof}

\begin{remark}\label{rmk:endpoint-limit}
	Taking $r=1$ in~\eqref{eq:energy-local-Lipschitz} gives, for every $a\in(1/2,1)$,
	\begin{equation*}
		|\Es(s)-\Es(1)|\le C_a(1-s)\quad(s\in[a,1]),
		\qquad
		\lim_{s\to1^-}\Es(s)=\Es(1)=\frac{2\sqrt2}{3}.
	\end{equation*}
	To see the last equality note that, since $\frac12|\Phi_1'|^2=W(\Phi_1)=\frac14\mathrm{sech}^4(x/\sqrt2)$, we can compute
	\begin{equation*}
		\Es(1)
		=\frac12\int_{\mathbb R}\mathrm{sech}^4(x/\sqrt2)\,dx
		=\frac{\sqrt2}{2}\int_{\mathbb R}\mathrm{sech}^4 (y)\,dy
		=\frac{\sqrt2}{2} \frac43
		=\frac{2\sqrt2}{3}.
	\end{equation*}
\end{remark}

\subsection{The derivative of the energy}\label{ss:cont-deriv}

The preceding argument does not require any continuity of the profiles with respect to~$s$. Such continuity is needed, however, to identify the derivative of the minimizing energy. We record it now in the precise form used below.

\begin{proposition}\label{prop:equicoerc}
	Let $s_n\to s_*\in(1/2,1)$. Then
	\begin{equation*}
		\Phi_{s_n}\longrightarrow\Phi_{s_*}
		\quad\text{pointwise on $\R$ and in $L^1_{\mathrm{loc}}(\R)$.}
	\end{equation*}
	Moreover,
	\begin{equation*}
		\widehat{\Phi_{s_n}'}(\xi)
		\longrightarrow\widehat{\Phi_{s_*}'}(\xi)
		\qquad(\xi\in\R).
	\end{equation*}
\end{proposition}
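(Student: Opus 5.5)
The argument is a compactness argument followed by identification of the limit. Uniqueness in Theorem~\ref{thm:CS} turns subsequential convergence into convergence of the whole sequence.

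\emph{Step 1: uniform bounds.} Fix $a\in(1/2,s_*)$ and assume $s_n\in[a,b]$ with $b<1$. The layers are monotone with values in $(-1,1)$, so Helly's selection theorem gives, from any subsequence, a further subsequence converging pointwise everywhere to a nondecreasing odd function $\Psi$ with $|\Psi|\le1$. For equicontinuity I would use \eqref{eq:hf-fourier-bound-lem}: $|\widehat{\Phi_{s_n}'}(\xi)|\le\min\{2,4|\xi|^{-2a}\}$, which is integrable because $2a>1$. Fourier inversion then bounds $\|\Phi_{s_n}'\|_{L^\infty}$ uniformly, so the $\Phi_{s_n}$ are uniformly Lipschitz. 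Hence the convergence is locally uniform, $\Psi$ is continuous, $\Psi(0)=0$, and bounded convergence gives $L^1_{\mathrm{loc}}$ convergence.

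\emph{Step 2: identify the limit.} I would pass to the limit in the weak form of the equation. For $\varphi\in C_c^\infty$ we have $\int\Phi_{s_n}(-\Delta)^{s_n}\varphi=\int(\Phi_{s_n}-\Phi_{s_n}^3)\varphi$. The functions $(-\Delta)^{s_n}\varphi$ converge to $(-\Delta)^{s_*}\varphi$ pointwise and are uniformly dominated by $C(1+|x|)^{-1-2a}$, and the $\Phi_{s_n}$ are bounded. Dominated convergence therefore shows that $\Psi$ is a bounded weak solution at $s_*$, monotone, odd, with $\Psi(0)=0$.

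The main obstacle is ruling out degenerate limits, namely showing $\Psi(\pm\infty)=\pm1$ rather than $\Psi\equiv0$ or intermediate values. Monotone bounded solutions have limits $\ell_\pm=\pm\ell$ that are zeros of $t-t^3$, so $\ell\in\{0,1\}$. To exclude $\ell=0$ I would use energy. By Theorem~\ref{thm:cont} and Lemma~\ref{lem:layer-energy-variation}, $E_{s_*}[\Phi_{s_n}]\le\Es(s_n)+C_a|s_n-s_*|$ is uniformly bounded. Lower semicontinuity (Fatou in both terms) then gives $E_{s_*}[\Psi]<\infty$. If $\Psi(\pm\infty)=0$, the kinetic term is finite, and I would need a quantitative argument showing that this forces $\Psi\equiv0$. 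An alternative I would try first is simpler: by Fatou,
\[
\int W(\Psi)\le\liminf\int W(\Phi_{s_n})<\infty,
\]
and $W(\Psi)\to W(0)=1/4$ at infinity if $\ell=0$, which contradicts integrability. Thus $\ell=1$, and uniqueness in Theorem~\ref{thm:CS} gives $\Psi=\Phi_{s_*}$. Since every subsequence has a sub-subsequence with this same limit, the full sequence converges.

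\emph{Step 3: Fourier convergence.} We have $\widehat{\Phi_{s_n}'}(\xi)=\int e^{-i\xi x}\,d\mu_n$, where $\mu_n=\Phi_{s_n}'\,dx$ are probability-type measures of mass $2$. Pointwise convergence of the distribution functions $\Phi_{s_n}\to\Phi_{s_*}$, with the limit continuous and of total mass $2$ attained, gives weak convergence of $\mu_n$ to $\mu_*$. Tightness follows because the limit already has full mass $2$ at $\pm\infty$, together with monotonicity. Since $e^{-i\xi x}$ is bounded and continuous, the Fourier transforms converge for each $\xi$. For $s_*$ near $1$, where $b$ may equal $1$, the same argument applies with $\Phi_1$, but the statement only requires $s_*<1$.
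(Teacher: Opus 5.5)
Your proof is correct. It differs from the paper's in how it controls the behaviour at infinity. The paper extracts from the energy bound $M_{a,b}$ a quantitative tail estimate $2(1-\Phi_s(R))\le 4\sqrt{M_{a,b}}\,R^{-1/2}$, uniform in $s\in[a,b]$, which comes from $\int W(\Phi_s)\ge R\,\alpha_s(R)^2/16$. It uses this one estimate twice: to show that the Helly limit has limits $\pm1$, and to make the truncation error in $\widehat{\Phi_{s_n}'}(\xi)$ uniformly small after integrating by parts on $[-R,R]$.

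You need no uniform tail estimate. Fatou gives $\int W(\Psi)<\infty$, which forces $\Psi(\pm\infty)=\pm1$. Once the limit is identified, tightness of $\mu_n=\Phi_{s_n}'\,dx$ follows from the equal total masses and the properness of the limit, since $\mu_n(\{|x|>R\})=2(1-\Phi_{s_n}(R))\to2(1-\Phi_{s_*}(R))$.

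Your use of \eqref{eq:hf-fourier-bound-lem} for a uniform Lipschitz bound is also new relative to the paper. It gives locally uniform convergence to a continuous limit and removes the continuity-point bookkeeping in Helly's theorem. The paper's route is more quantitative, and the same tail mechanism reappears in Lemma~\ref{lem:tail}. Yours is softer and gives a slightly stronger mode of convergence.

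Two small repairs are needed. First, you assert without proof that the limits of monotone bounded solutions are zeros of $t-t^{3}$, but you do not need this. Your Fatou argument already shows $W(\ell)=0$ for the limit $\ell\in[0,1]$ (note $\ell\ge\Psi(0)=0$), hence $\ell=1$. The kinetic-energy detour can also be dropped. Second, Theorem~\ref{thm:CS} asserts uniqueness among smooth solutions. Before invoking it, upgrade the Lipschitz distributional solution $\Psi$ to a classical one by interior regularity, as the paper does.
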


\begin{proof}
	Choose $[a,b]\subset(1/2,1)$ with $s_*\in(a,b)$ and $s_n\in[a,b]$ for all sufficiently large~$n$. Theorem~\ref{thm:cont} gives
	\begin{equation}\label{eq:uniform-energy}
		M_{a,b}:=\sup_{s\in[a,b]}\Es(s)<+\infty.
	\end{equation}
	Since every $\Phi_s$ is odd, increasing, and takes values in $[-1,1]$, Helly's selection theorem gives, along a subsequence, a nondecreasing function $\Phi_\infty:\R\to[-1,1]$ such that $\Phi_{s_n}\to\Phi_\infty$ at every continuity point of~$\Phi_\infty$. In particular, the convergence holds almost everywhere and in $L^1_{\mathrm{loc}}(\R)$, and the limit is odd with $\Phi_\infty(0)=0$, after changing it on a countable set if necessary.

	The energy bound prevents the transition from spreading out. Indeed, for $R\ge1$ set $\alpha_s(R):=2(1-\Phi_s(R))$. The monotonicity and oddness of $\Phi_s$, together with the fact that $W$ is even and decreasing on $[0,1]$, give
	\begin{equation*}
		\int_\R W(\Phi_s)\,dx
		\geq \int_{-R}^{R} W(\Phi_s)\,dx
		\ge 2R\,W(1-\alpha_s(R)/2)
		\ge \frac{R\alpha_s(R)^2}{16},
	\end{equation*}
	where in the last step we have also used $W(t) \ge (1/4)(1-t)^{2}$. It follows from~\eqref{eq:uniform-energy} that
	\begin{equation}\label{eq:uniform-layer-tail}
		2(1-\Phi_s(R))\le4\sqrt{M_{a,b}}\,R^{-1/2}
		\qquad(s\in[a,b],\ R\ge1).
	\end{equation}
	Passing to the limit at continuity points of $\Phi_\infty$, and then letting $R\to\infty$, shows that $\Phi_\infty(\pm\infty)=\pm1$.

	It remains to identify the limit. For $\varphi\in C_c^\infty(\R)$, the singular-integral representation~\eqref{eq:fl-singular} gives, uniformly for $s\in[a,b]$,
	\begin{equation*}
		|(-\Delta)^s\varphi(x)|
		\le C(\varphi)(1+|x|)^{-1-2a}.
	\end{equation*}
	Moreover, $(-\Delta)^{s_n}\varphi(x)\to(-\Delta)^{s_*}\varphi(x)$ pointwise. Testing the layer equation against~$\varphi$ and using dominated convergence therefore yields
	\begin{equation*}
		\int_\R\Phi_\infty(-\Delta)^{s_*}\varphi\,dx
		=\int_\R(\Phi_\infty-\Phi_\infty^3)\varphi\,dx.
	\end{equation*}
	Thus $\Phi_\infty$ is a bounded distributional solution of the layer equation at~$s_*$. Interior regularity~\cite{CabreSire2014I,CabreSire2015II} makes it a classical solution, and the uniqueness statement in Theorem~\ref{thm:CS}, together with its oddness, normalization, and limits at infinity, gives $\Phi_\infty=\Phi_{s_*}$. Every subsequence has a further subsequence with this same limit, hence the full sequence converges pointwise and in $L^1_{\mathrm{loc}}$.

	It remains to prove the convergence of the Fourier transforms. Fix $\xi\in\R$. For every $R\ge1$, integration by parts gives
	\begin{equation*}
		\int_{-R}^{R}e^{-i\xi x}\Phi_{s_n}'(x)\,dx
		=e^{-i\xi R}\Phi_{s_n}(R)-e^{i\xi R}\Phi_{s_n}(-R)
		+i\xi\int_{-R}^{R}e^{-i\xi x}\Phi_{s_n}(x)\,dx.
	\end{equation*}
	The pointwise convergence at $\pm R$ and the convergence in $L^1(-R,R)$ show that the truncated integral converges to the corresponding integral for~$\Phi_{s_*}'$. On the other hand, since $\Phi_s'\ge0$,
	\begin{equation*}
		\left|\widehat{\Phi_s'}(\xi)
		-\int_{-R}^{R}e^{-i\xi x}\Phi_s'(x)\,dx\right|
		\le\int_{\{|x|>R\}}\Phi_s'(x)\,dx
		=2(1-\Phi_s(R))
		\le4\sqrt{M_{a,b}}\,R^{-1/2}
	\end{equation*}
	for every $s\in[a,b]$, by~\eqref{eq:uniform-layer-tail}. Consequently,
	\begin{equation*}
		\limsup_{n\to\infty}
		\left|\widehat{\Phi_{s_n}'}(\xi)
		-\widehat{\Phi_{s_*}'}(\xi)\right|
		\le8\sqrt{M_{a,b}}\,R^{-1/2}.
	\end{equation*}
	Letting $R\to\infty$ proves the convergence.
\end{proof}

We now combine the fixed-layer differentiation formula with the compactness just proved. Recall the functional~$J$ from~\eqref{eq:J-functional}.

\begin{lemma}\label{lem:envelope-paper}
	The function $\Es$ belongs to $C^1((1/2,1))$, and
	\begin{equation}\label{eq:envelope-identity}
		\Es'(s)=J(s,\Phis)
		=\frac{1}{2\pi}\int_\R\log|\xi|\,|\xi|^{2s-2}
		|\widehat{\Phis'}(\xi)|^2\,d\xi
		\qquad(s\in(1/2,1)).
	\end{equation}
\end{lemma}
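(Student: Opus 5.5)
The plan is to exploit the sandwich inequality~\eqref{eq:sandwich-lem} from the proof of Theorem~\ref{thm:cont}, together with the fixed-layer differentiability of Lemma~\ref{lem:layer-energy-variation}, and then to use Proposition~\ref{prop:equicoerc} to show that the two outer bounds have the same derivative. Fix $s\in(1/2,1)$ and choose $a\in(1/2,s)$ and $b\in(s,1)$. For $r\in[a,b]$ with $r\neq s$ we divide~\eqref{eq:sandwich-lem} by $s-r$; when $r>s$ the roles of $r$ and $s$ are interchanged. This gives the following bounds for the difference quotient $Q(r):=(\Es(s)-\Es(r))/(s-r)$, valid for $r<s$:
\begin{equation*}
\frac{E_s[\Phi_s]-E_r[\Phi_s]}{s-r}\le Q(r)\le \frac{E_s[\Phi_r]-E_r[\Phi_r]}{s-r}.
\end{equation*}
For $r>s$ the analogous inequalities hold, with the lower and upper bounds built from the same two fixed-layer quotients. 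In both cases $Q(r)$ lies between the two fixed-layer difference quotients.

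The lower bound is the easy side. By~\eqref{eq:D1-lem}, the map $t\mapsto E_t[\Phi_s]$ is $C^1$, so its difference quotient tends to $J(s,\Phi_s)$ as $r\to s$. For the upper bound, the mean value theorem applied to $t\mapsto E_t[\Phi_r]$ gives a point $t_r$ between $r$ and $s$ with
\begin{equation*}
\frac{E_s[\Phi_r]-E_r[\Phi_r]}{s-r}=J(t_r,\Phi_r).
\end{equation*}
It therefore suffices to show that $J(t_r,\Phi_r)\to J(s,\Phi_s)$ whenever $r\to s$ and $t_r\to s$. Once this holds, both bounds converge to $J(s,\Phi_s)$, so $\Es$ is differentiable at $s$ and $\Es'(s)=J(s,\Phis)$.

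The key step, and essentially the only nontrivial one, is this joint continuity of $(t,\sigma)\mapsto J(t,\Phi_\sigma)$ at $(s,s)$. We take sequences $r_n\to s$ and $t_n\to s$ in $[a,b]$. By Proposition~\ref{prop:equicoerc}, $\widehat{\Phi_{r_n}'}(\xi)\to\widehat{\Phi_s'}(\xi)$ for every $\xi$, and clearly $|\xi|^{2t_n-2}\to|\xi|^{2s-2}$, so the integrand of $J(t_n,\Phi_{r_n})$ converges pointwise. The integrable majorant $G_a$ from~\eqref{eq:dom-lem} dominates it uniformly in $t_n$ and $r_n$, since~\eqref{eq:hf-fourier-bound-lem} holds for every $\sigma\in[a,1]$. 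Dominated convergence then gives $J(t_n,\Phi_{r_n})\to J(s,\Phi_s)$, which is the required convergence.

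Finally, the same argument with $t_n=r_n$ shows that $s\mapsto J(s,\Phis)$ is continuous on $(1/2,1)$, which upgrades differentiability to $\Es\in C^1((1/2,1))$. The only point needing care is that the majorant is uniform in both variables. This is exactly what Lemma~\ref{lem:layer-energy-variation} provides, because the Fourier bound~\eqref{eq:hf-fourier-bound-lem} depends only on the layer equation and on the total mass of $\Phi_\sigma'$.
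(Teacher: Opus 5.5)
Your proposal is correct and follows essentially the same route as the paper. Both divide the minimality sandwich~\eqref{eq:sandwich-lem} by the increment and treat the quotient at the frozen layer $\Phi_s$ via~\eqref{eq:D1-lem}. Both handle the other quotient by the mean value theorem together with the joint continuity of $(t,\sigma)\mapsto J(t,\Phi_\sigma)$ at $(s,s)$, obtained from Proposition~\ref{prop:equicoerc} and dominated convergence with the majorant $G_a$. Finally, both take the diagonal case $t_n=r_n$ to get continuity of $\Es'$.
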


\begin{proof}
	Fix $s_*\in(1/2,1)$, and choose $a\in(1/2,s_*)$. We first note the joint continuity needed below. If $\sigma_k\to s_*$ and $\tau_k\to s_*$, then Proposition~\ref{prop:equicoerc} gives
	\begin{equation*}
		\widehat{\Phi_{\tau_k}'}(\xi)
		\longrightarrow\widehat{\Phi_{s_*}'}(\xi)
		\qquad(\xi\in\R).
	\end{equation*}
	For all sufficiently large~$k$, the integrands below are dominated by the function $G_a$ from~\eqref{eq:dom-lem}. Hence dominated convergence gives
	\begin{equation}\label{eq:D2-lem}
		\frac{1}{2\pi}\int_\R\log|\xi|\,|\xi|^{2\sigma_k-2}
		|\widehat{\Phi_{\tau_k}'}(\xi)|^2\,d\xi
		\longrightarrow J(s_*,\Phi_{s_*}).
	\end{equation}

	For $h>0$ sufficiently small, divide~\eqref{eq:sandwich-lem}, with $r=s_*$ and $s=s_*+h$, by~$h$. This gives
	\begin{equation*}
		\frac{E_{s_*+h}[\Phi_{s_*+h}]-E_{s_*}[\Phi_{s_*+h}]}{h}
		\le\frac{\Es(s_*+h)-\Es(s_*)}{h}
		\le\frac{E_{s_*+h}[\Phi_{s_*}]-E_{s_*}[\Phi_{s_*}]}{h}.
	\end{equation*}
	By~\eqref{eq:D1-lem}, the right-hand quotient tends to $J(s_*,\Phi_{s_*})$. The mean value theorem writes the left-hand quotient as $J(\widetilde s_h,\Phi_{s_*+h})$ for some $\widetilde s_h\in(s_*,s_*+h)$, and~\eqref{eq:D2-lem} gives the same limit. Thus the right derivative of~$\Es$ at~$s_*$ equals $J(s_*,\Phi_{s_*})$. Applying the same argument to $h<0$ gives the left derivative and proves~\eqref{eq:envelope-identity}. Finally, \eqref{eq:D2-lem} with $\sigma_k=\tau_k$ shows that $s\mapsto J(s,\Phis)$ is continuous, and therefore $\Es\in C^1((1/2,1))$.
\end{proof}

The monotonicity arguments of Sections~\ref{sec:s1} and~\ref{sec:mono-interior} never estimate $J(s,\Phis)$ directly. The layer $\Phis$ is not available in an explicit expression, so instead we evaluate $J(s,\cdot)$ at an explicit profile close to $\Phis$ in $H^{2s}(\R)$ and then transfer the result to $\Phis$. The error is measured in the weighted seminorm
\begin{equation}\label{eq:starnorm}
	\|u\|^{2}_{\star}(s)\;:=\;\frac{1}{2\pi}\int_{\R}|{\log|\xi|}|\;|\xi|^{2s-2}\,|\widehat{u'}(\xi)|^{2}\,d\xi.
\end{equation}
This is the natural seminorm for $J$, obtained by replacing $\log|\xi|$ in~\eqref{eq:J-functional} by its absolute value. The associated Sobolev constant of this seminorm is
\begin{equation}\label{eq:Cstar-LJ-def}
	C_{\star}(s)\;:=\;\Bigl(\sup_{t>0}\;\frac{|\log t|\,t^{s}}{2(1+t)^{2s}}\Bigr)^{1/2}.
\end{equation}
We record the error inequality once here, in the generality in which Sections~\ref{sec:s1} and~\ref{sec:mono-interior} use it.

\begin{lemma}\label{lem:LJ-polarization}
	Let $s\in(1/2,1]$. Then $C_{\star}(s)<\infty$, and the following hold.

	\smallskip\noindent (i) For every $w\in H^{2s}(\R)$,
	\begin{equation}\label{eq:star-sobolev}
		\|w\|_{\star}(s)\;\le\;C_{\star}(s)\,\|w\|_{H^{2s}(\R)}.
	\end{equation}

	\smallskip\noindent (ii) If $u_{1}$ and $u_{2}$ have finite seminorms
	$\|u_{1}\|_{\star}(s)$ and $\|u_{2}\|_{\star}(s)$, then
	\begin{equation}\label{eq:LJ-bound}
		\bigl|J(s,u_{1})-J(s,u_{2})\bigr|\;\le\;\bigl(\|u_{1}\|_{\star}+\|u_{2}\|_{\star}\bigr)\,\|u_{1}-u_{2}\|_{\star}.
	\end{equation}
\end{lemma}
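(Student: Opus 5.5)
Both parts reduce to pointwise Fourier-side computations. The relevant fact is that $\widehat{w'}(\xi)=i\xi\,\widehat w(\xi)$ for $w\in H^{2s}$, which links the $\star$-seminorm to the $H^{2s}$ norm.

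For finiteness of $C_\star(s)$, I will look at the function $g(t)=|\log t|\,t^{s}(1+t)^{-2s}$ on $(0,\infty)$. It is continuous. As $t\to0^+$ we have $g(t)\sim|\log t|\,t^{s}\to0$, since $s>0$. As $t\to\infty$ we have $g(t)\sim \log t\cdot t^{-s}\to0$. So $g$ is bounded and the supremum defining $C_\star(s)^2$ is finite.

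For (i), take $w\in H^{2s}$ and compute directly:
\begin{equation*}
	\|w\|_\star^2=\frac{1}{2\pi}\int_\R|\log|\xi||\,|\xi|^{2s}|\widehat w(\xi)|^2\,d\xi .
\end{equation*}
Put $t=\xi^2$. Then $|\log|\xi||=\tfrac12|\log t|$ and $|\xi|^{2s}=t^{s}$. I will insert the factor $(1+\xi^2)^{2s}/(1+\xi^2)^{2s}$ and bound
\begin{equation*}
	|\log|\xi||\,|\xi|^{2s}=\frac{|\log t|\,t^{s}}{2(1+t)^{2s}}\,(1+\xi^2)^{2s}\le C_\star(s)^2(1+\xi^2)^{2s}.
\end{equation*}
With the convention $\|w\|_{H^{\sigma}}^2=\frac1{2\pi}\int(1+\xi^2)^{\sigma}|\widehat w|^2\,d\xi$, which is the one implicit in Lemma~\ref{lem:sobolev-constants}, integrating this bound gives (i) with exactly the stated constant. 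The only point to watch is the normalization: the factor $\tfrac12$ in the definition of $C_\star$ is precisely the one coming from $\log|\xi|=\tfrac12\log t$, so the constants match once the convention is fixed.

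For (ii), I will treat $J(s,\cdot)$ as the quadratic form $B(v_1,v_2)=\frac{1}{2\pi}\int m(\xi)\,\widehat{v_1'}\,\overline{\widehat{v_2'}}\,d\xi$ with the real weight $m(\xi)=\log|\xi|\,|\xi|^{2s-2}$, so that $J(s,u)=B(u,u)$. Since $|m|$ is the weight of the $\star$-seminorm, both $B(u_1,u_2)$ and $B(u_2,u_1)$ are absolutely convergent by Cauchy--Schwarz. Write $d=u_1-u_2$. Then
\begin{equation*}
	J(s,u_1)-J(s,u_2)=\frac1{2\pi}\int m\bigl(|\widehat{u_1'}|^2-|\widehat{u_2'}|^2\bigr)\,d\xi ,
\end{equation*}
and pointwise $\bigl||a|^2-|b|^2\bigr|=\bigl||a|-|b|\bigr|\,(|a|+|b|)\le|a-b|(|a|+|b|)$. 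Using this and then Cauchy--Schwarz in $L^2(|m|\,d\xi/2\pi)$, I get
\begin{equation*}
	|J(s,u_1)-J(s,u_2)|\le\frac1{2\pi}\int|m|\,|\widehat{d'}|\,\bigl(|\widehat{u_1'}|+|\widehat{u_2'}|\bigr)\,d\xi\le\|d\|_\star\bigl(\|u_1\|_\star+\|u_2\|_\star\bigr).
\end{equation*}

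I expect no real obstacle here. The only care needed is in (ii), where the difference may be taken at the level of the Fourier transforms of derivatives, because the layers themselves do not decay. This is fine, since $\widehat{(u_1-u_2)'}=\widehat{u_1'}-\widehat{u_2'}$ whenever both transforms exist, for instance when $u_i'\in L^1$, or as tempered distributions that are functions.
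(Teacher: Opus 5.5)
Your proposal is correct and follows the paper's proof essentially step for step: the same continuity-and-limits argument for $C_\star(s)<\infty$, the same pointwise multiplier bound with $t=|\xi|^2$ (and the same $H^{\sigma}$ normalization) for (i), and Cauchy--Schwarz in $L^2(|m|\,d\xi/2\pi)$ for (ii). The only cosmetic difference is in (ii): you apply the pointwise bound $\bigl||a|^2-|b|^2\bigr|\le|a-b|(|a|+|b|)$ before Cauchy--Schwarz. The paper instead writes the polarization identity $|a|^2-|b|^2=\operatorname{Re}\bigl((a+b)\overline{(a-b)}\bigr)$ and applies the triangle inequality for $\|\cdot\|_\star$ afterwards.
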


\begin{proof}
	The supremum in~\eqref{eq:Cstar-LJ-def} is finite because $t\mapsto|\log t|\,t^{s}/(1+t)^{2s}$ is continuous on $(0,\infty)$ and tends to $0$ as $t\to0^{+}$ and as $t\to\infty$.

	\smallskip
	\noindent\emph{(i).}  Let $w\in H^{2s}(\R)$.  Then $\widehat{w'}=i\xi\widehat w$
	in the sense of distributions, so $|\xi|^{2s-2}|\widehat{w'}(\xi)|^{2}
		=|\xi|^{2s}|\widehat w(\xi)|^{2}$ and
	\begin{equation*}
		\|w\|_{\star}(s)^{2}
		=\frac{1}{2\pi}\int_{\R}\bigl|\log|\xi|\bigr|\,|\xi|^{2s}\,|\widehat w(\xi)|^{2}\,d\xi .
	\end{equation*}
	For $\xi\ne0$, applying the definition~\eqref{eq:Cstar-LJ-def} with $t=|\xi|^{2}$ gives the pointwise multiplier bound
	\begin{equation*}
		\bigl|\log|\xi|\bigr|\,|\xi|^{2s}
		=\frac{\bigl|\log(|\xi|^{2})\bigr|\,\bigl(|\xi|^{2}\bigr)^{s}}{2}
		\le C_{\star}(s)^{2}\,(1+|\xi|^{2})^{2s},
	\end{equation*}
	and integrating against $|\widehat w|^{2}/(2\pi)$ yields~\eqref{eq:star-sobolev}.

	\smallskip
	\noindent\emph{(ii).}  The functional $J(s,\cdot)$ is the quadratic form
	associated with the real weight $\log|\xi|\,|\xi|^{2s-2}$, so
	\begin{equation*}
		J(s,u_{1})-J(s,u_{2})=\frac{1}{2\pi}\operatorname{Re}\int_{\R}\log|\xi|\,|\xi|^{2s-2}\,\widehat{(u_{1}+u_{2})'}\,\overline{\widehat{(u_{1}-u_{2})'}}\,d\xi .
	\end{equation*}
	Now, applying Cauchy--Schwarz,
	\begin{equation*}
		\frac{1}{2\pi}\int_{\R}\bigl|\log|\xi|\bigr|\,|\xi|^{2s-2}\,
		\bigl|\widehat{(u_{1}+u_{2})'}\bigr|\,
		\bigl|\widehat{(u_{1}-u_{2})'}\bigr|\,d\xi
		\le \|u_{1}+u_{2}\|_{\star}\,\|u_{1}-u_{2}\|_{\star},
	\end{equation*}
	which is finite by the triangle inequality for $\|\cdot\|_{\star}$. Taking real parts and using $\|u_{1}+u_{2}\|_{\star}\le\|u_{1}\|_{\star}+\|u_{2}\|_{\star}$ gives~\eqref{eq:LJ-bound}.
\end{proof}

\section{The endpoint \texorpdfstring{$s=1$}{s=1}: expansion and strict decrease}\label{sec:s1}
\providecommand{\Lone}{\mathcal{L}_{1}}
\providecommand{\Ltwo}{\mathcal{L}_{2}}
\providecommand{\Jop}{\mathcal{J}}
\providecommand{\F}{\mathcal{F}}
\providecommand{\Gs}{\mathcal{G}_{s}}
\providecommand{\sech}{\operatorname{sech}}

For the case $s=1$ the layer is the profile
\begin{equation*}
	\Phi_1(x)=\tanh(x/\sqrt{2}),
\end{equation*}
and Remark~\ref{rmk:endpoint-limit} shows $\Es(1) = 2\sqrt{2}/3$. This section expands $\Es$ around that value. A single expansion, carried to second order with explicit remainders on $[49/50,1]$, yields both results we need at this endpoint. First we obtain the first-order expansion, which is the $s\to1^{-}$ statement of Theorem~\ref{thm:asymp}, and then we obtain the strict decrease on the right endpoint interval of the cover~\eqref{eq:three-interval-cover}. For the asymptotic statement alone, qualitative remainders would suffice. We retain explicit constants because the endpoint argument must give a quantitative sign on the whole interval $[0.980,1]$, so that it meets the computer-assisted analysis on $I_{0}=[0.530,0.980]$ without leaving a gap.

\begin{theorem}\label{thm:kappa1}
	Let $\kappa_{1}$ and $\kappa_{2}$ be the constants defined later in~\eqref{eq:kappa-def}, and write $\varepsilon:=1-s$. Then $\kappa_{2}\ge0$, and the following hold for $s\in[49/50,1]$.

	\smallskip\noindent (i) Expansion:
	\begin{equation}\label{eq:URem2-integrated}
		\Es(s) \;=\; \frac{2\sqrt{2}}{3} \;+\; \kappa_{1}\varepsilon
		\;+\;\kappa_{2}\varepsilon^{2}\;+\;R_{3}(s),
		\qquad |R_{3}(s)|\;\le\;80\,\varepsilon^{3}.
	\end{equation}

	\smallskip\noindent (ii) Derivative: $\Es\in C^{1}([49/50,1))$ and
	\begin{equation}\label{eq:URem2-paper}
		\Es'(s) \;=\; -\kappa_{1}-2\kappa_{2}\varepsilon+R_{2}(s),
		\qquad |R_{2}(s)|\;\le\;240\,\varepsilon^{2}
		\qquad (s\in[49/50,1)).
	\end{equation}
\end{theorem}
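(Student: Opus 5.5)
The plan is to prove (ii) first and then obtain (i) by integrating (ii) from $s$ to $1$. Integrating gives $\Es(s)-\Es(1)=\int_s^1(\kappa_1+2\kappa_2(1-t)-R_2(t))\,dt=\kappa_1\varepsilon+\kappa_2\varepsilon^2+O(80\varepsilon^3)$. Here we use $\Es(1)=2\sqrt2/3$ and the continuity at $s=1$ from Remark~\ref{rmk:endpoint-limit}, so the constant $240/3=80$ matches.

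For (ii) we use the envelope identity of Lemma~\ref{lem:envelope-paper}, $\Es'(s)=J(s,\Phis)$. The goal is to replace $\Phis$ by an explicit approximate layer $\Phi^{\mathrm{ap}}_s=\Phi_1+\varepsilon\psi$. The corrector $\psi$ is odd and solves the linearized problem
\[
\Lone\psi:=-\psi''-(1-3\Phi_1^2)\psi=-\partial_s(-\Delta)^s\Phi_1\big|_{s=1},
\]
where the right-hand side is the Fourier multiplier $2\xi^2\log|\xi|$ applied to $\Phi_1$. The kernel of $\Lone$ is spanned by the even function $\Phi_1'$. On odd functions $\Lone$ is therefore invertible: it is a Pöschl--Teller operator, with spectrum in $\{0\}\cup\{3/2\}\cup[2,\infty)$ and the eigenvalue $3/2$ eigenfunction odd, so the odd-sector inverse has norm at most $2/3$ in $L^2$. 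This is what lets us track constants explicitly. With this choice the residual $\Gs[\Phi^{\mathrm{ap}}_s]:=\fl\Phi^{\mathrm{ap}}_s-\Phi^{\mathrm{ap}}_s+(\Phi^{\mathrm{ap}}_s)^3$ is $O(\varepsilon^2)$ in $L^2$. We will bound it using Taylor's formula in $s$ for $|\xi|^{2s}$ with remainder $\tfrac{(2\varepsilon)^2}{2}\log^2|\xi|\,|\xi|^{2\tilde s}$, together with the explicit Fourier transform $\widehat{\Phi_1'}(\xi)=\pi\sqrt2\,\xi/\sinh(\pi\xi/\sqrt2)$, which decays exponentially.

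Next, a contraction argument in odd $H^{2s}$ around $\Phi^{\mathrm{ap}}_s$ gives a fixed point within distance $\tau\le C\varepsilon^2$. We invert the linearization $\Lop=\fl-(1-3(\Phi^{\mathrm{ap}}_s)^2)$ on odd functions; its coercivity is inherited from that of $\Lone$ because the perturbation is $O(\varepsilon)$ in multiplier and potential. The cubic nonlinearity is controlled by Lemma~\ref{lem:sobolev-constants}. By the uniqueness in Theorem~\ref{thm:CS}, this fixed point is $\Phis$; to invoke uniqueness we must check monotonicity and the limits, or alternatively identify the fixed point through continuity in $s$ via Proposition~\ref{prop:equicoerc}. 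Lemma~\ref{lem:LJ-polarization} then gives
\[
|J(s,\Phis)-J(s,\Phi^{\mathrm{ap}}_s)|\le\bigl(2\|\Phi^{\mathrm{ap}}_s\|_\star+C_\star\tau\bigr)C_\star\tau=O(\varepsilon^2).
\]

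Finally, $J(s,\Phi_1+\varepsilon\psi)$ is expanded in $\varepsilon$ explicitly. The zeroth-order term $J(1,\Phi_1)=\frac1{2\pi}\int\log|\xi|\,|\widehat{\Phi_1'}|^2$ equals $-\kappa_1$, consistent with the formula for $\kappa_1$. The order-$\varepsilon$ terms are $-2\cdot\frac1{2\pi}\int\log^2|\xi|\,|\widehat{\Phi_1'}|^2$ plus the cross term $2\cdot\frac{1}{2\pi}\int\log|\xi|\,\mathrm{Re}(\widehat{\Phi_1'}\overline{\widehat{\psi'}})$. We define $-2\kappa_2$ as their sum, as in~\eqref{eq:kappa-def}. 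To prove $\kappa_2\ge0$, note that $\kappa_2$ should equal half the second derivative of the energy. Using minimality (Theorem~\ref{thm:PSV}), the envelope structure yields $\kappa_2=\tfrac12\bigl(2\int\log^2|\xi||\widehat{\Phi_1'}|^2/2\pi-\langle\Lone\psi,\psi\rangle\bigr)$-type expressions. I expect the sign to follow from combining these with the self-adjointness of $\Lone^{-1}$, or else directly from a numerical evaluation of the explicit integrals.

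The main obstacle is the bookkeeping of explicit constants. This includes the coercivity constant of the linearized operator uniformly for $s\in[49/50,1]$, the residual bound, and keeping the final constants below $240$ and $80$. Establishing $\kappa_2\ge0$ rigorously (sign of a difference of integrals) is the second delicate point.
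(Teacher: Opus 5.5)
Your architecture is the paper's: the envelope identity $\Es'(s)=J(s,\Phis)$, and the profile $u_s=\Phi_1+\varepsilon\psi$ with $\psi$ obtained from the odd inverse of $\mathcal{J}=-\partial_x^2+3\Phi_1^2-1$ (norm at most $2/3$). Then come an $O(\varepsilon^2)$ residual, a contraction in $H^{2s}_{\mathrm{odd}}$, transfer via Lemma~\ref{lem:LJ-polarization}, expansion of $J(s,u_s)$, and integration up to $s=1$ with $240/3=80$.

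\textbf{The gap: $\kappa_2\ge0$.} This claim is part of the statement, you leave it open, and neither route you suggest can close it.
\begin{itemize}
\item Minimality is one-sided. $\Es(s)\le E_s[\Phi_1]=\tfrac{2\sqrt2}{3}+\kappa_1\varepsilon+\mu_{\Phi_1}\varepsilon^2+O(\varepsilon^3)$ yields only $\kappa_2\le\mu_{\Phi_1}$.
\item The reverse comparison $\Es(1)\le E_1[\Phis]$ yields only $\langle\mathcal{J}\psi,\psi\rangle\ge0$.
\item A minimum of functions that are convex in $s$ inherits no convexity.
\item Positivity or self-adjointness of $\mathcal{J}^{-1}$ only shows that the subtracted term in $\kappa_2=\mu_{\Phi_1}-\tfrac12\langle\mathcal{J}\psi,\psi\rangle$ is nonnegative, which is the wrong direction.
\item Since $\psi$ is not explicit, $\langle\mathcal{J}\psi,\psi\rangle$ is not an explicit integral you could simply evaluate.
\end{itemize}

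The missing idea is a quantitative \emph{upper} bound on that term from the spectral gap. With $\widehat g(\xi)=-2|\xi|^2\log|\xi|\,\widehat{\Phi_1}(\xi)$ one has $\tfrac12\langle\mathcal{J}\psi,\psi\rangle=\tfrac12\langle g,\mathcal{J}^{-1}g\rangle\le\tfrac13\|g\|_{L^2}^2$. Splitting at $\xi=1$ gives $\|g\|_{L^2}^2=8\pi\int_0^\infty\xi^4(\log\xi)^2\sinh^{-2}(\pi\xi/\sqrt2)\,d\xi<0.688$, so this term is below $25/108$. On the other side, restricting $\mu_{\Phi_1}$ to $(0,1)$ and using $\sinh(\pi\xi/\sqrt2)\le\xi\sinh(\pi/\sqrt2)$ gives $\mu_{\Phi_1}\ge4\pi/\sinh^2(\pi/\sqrt2)>4\pi/25>1/2$.

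\textbf{The explicit constants.} The values $240$ and $80$ are part of the theorem, but your plan does not produce them. In the paper they come from $140\varepsilon^2$ for $|J(s,u_s)+\kappa_1+2\kappa_2\varepsilon|$ plus $100\varepsilon^2$ for the transfer. The latter uses $\|\Phis-u_s\|_{H^{2s}}\le75\varepsilon^2$, $C_\star(s)\le1/2$ and $\|u_s\|_\star,\|\Phis\|_\star\le13/10$.

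\textbf{Coercivity of $\Lop$.} This is not an $O(\varepsilon)$ form perturbation of $\mathcal{J}$. For $|\xi|>1$ the symbol difference $|\xi|^2-|\xi|^{2s}$ is positive and unbounded even relative to $|\xi|^{2s}$. It is small only relative to $|\xi|^{4s}$: at most $\tfrac1{100}|\xi|^{4s}$ when $\varepsilon\le1/50$. The paper therefore uses Weyl's theorem to reduce to an odd eigenfunction $w$ with eigenvalue in $[-1,1]$. For such $w$ the eigenvalue equation gives $\|\fl w\|_{L^2}\le157/50$, which leads to $\langle\mathcal{J}w,w\rangle\le6/5<3/2$, a contradiction (Lemma~\ref{lem:uniform-invert}).

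\textbf{Identifying the fixed point.} No monotonicity check is needed. Uniqueness in Theorem~\ref{thm:CS} holds among solutions with limits $\pm1$ that vanish at the origin. Both properties follow from oddness of the fixed point $u_s+r$ and from $r\in H^{2s}\hookrightarrow C_0$. Your fallback through Proposition~\ref{prop:equicoerc} would not work: that result concerns $s_*\in(1/2,1)$ and gives pointwise convergence, not membership in the $H^{2s}$ ball.

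\textbf{A sign slip.} $-\partial_s(-\Delta)^s|_{s=1}$ has symbol $-2\xi^2\log|\xi|$. The correct corrector equation is $\mathcal{J}\psi=\partial_s(-\Delta)^s\Phi_1|_{s=1}$, whose symbol $2\xi^2\log|\xi|$ is the one you actually wrote.
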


Part~(i) contains in particular the statement $\Es(s)=\tfrac{2\sqrt2}{3}+\kappa_{1}(1-s)+o(1-s)$ asserted in Theorem~\ref{thm:asymp}(i), with the remainder improved from $o(1-s)$ to $O((1-s)^{2})$. Part~(ii) gives the monotonicity near the endpoint. The following proposition is exactly the statement we need, which will be proved at the end of this section.

\begin{proposition}\label{prop:mono-upper}
	Let $\gamma_{\!*}:=\tfrac{1}{50}=0.020$. Then $\Es'(s)\le-0.03$ for every $s\in[1-\gamma_{\!*},1)$, and $\Es$ is strictly decreasing on $[0.980,1]$.
\end{proposition}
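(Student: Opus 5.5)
The plan is to deduce Proposition~\ref{prop:mono-upper} directly from Theorem~\ref{thm:kappa1}(ii), the explicit values of $\kappa_1$ and $\kappa_2$, and continuity of $\Es$ at $s=1$. For $s\in[49/50,1)$ write $\varepsilon=1-s\in(0,\gamma_{\!*}]$. By \eqref{eq:URem2-paper},
\begin{equation*}
	\Es'(s)\le -\kappa_{1}-2\kappa_{2}\varepsilon+240\,\varepsilon^{2}.
\end{equation*}
Since Theorem~\ref{thm:kappa1} gives $\kappa_{2}\ge0$, the middle term can be dropped, so
\begin{equation*}
	\Es'(s)\le -\kappa_{1}+240\,\gamma_{\!*}^{2}=-\kappa_{1}+0.096.
\end{equation*}
It therefore suffices to show $\kappa_{1}\ge 0.126$. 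The stated value $\kappa_1\approx1.073$ leaves a very comfortable margin.

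The lower bound on $\kappa_1$ comes from the explicit integral in Theorem~\ref{thm:asymp}(i). Substitute $\xi=\sqrt2\,t/\pi$ and split the integral at $\xi=1$. On $(0,1)$ the integrand $-\xi^2\log\xi/\sinh^2(\pi\xi/\sqrt2)$ is nonnegative. On $(1,\infty)$ it is negative but exponentially small, since $\sinh^{2}(\pi\xi/\sqrt2)\ge e^{\sqrt2\pi\xi}/4\cdot(1-e^{-\sqrt2\pi})^2$ there. I would bound the positive part from below by a crude Riemann sum, using $\sinh x\le x\cosh x$ together with monotonicity on a few subintervals. I would bound the negative tail from above by $\int_1^\infty 4\xi^2\log\xi\, e^{-\sqrt2\pi\xi}\,d\xi$ (up to the constant factor), which is elementary. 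A rough estimate already gives $2\pi$ times the positive part well above $0.2$, and a tail of order $10^{-2}$, so $\kappa_1\ge0.126$ follows. Alternatively, the value could be certified in interval arithmetic, consistent with the rest of the paper. This yields $\Es'(s)\le-0.03$ on $[0.98,1)$.

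For strict decrease on the closed interval $[0.980,1]$, I would use $\Es\in C^1([49/50,1))$ from Theorem~\ref{thm:kappa1}(ii). For $0.98\le r<s<1$ the mean value theorem gives $\Es(s)-\Es(r)\le-0.03(s-r)<0$. The endpoint $s=1$ is handled by continuity: by Theorem~\ref{thm:cont} and Remark~\ref{rmk:endpoint-limit}, $\Es(1)=\lim_{t\to1^-}\Es(t)\le \Es(s)-0.03(1-s)<\Es(s)$ for every $s<1$.

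The main obstacle is not this deduction, which is routine. It is Theorem~\ref{thm:kappa1}(ii) itself: the explicit remainder constant $240$ and the sign $\kappa_2\ge0$. Both are taken as given here. The only genuinely new work in the proposal is the rigorous numerical lower bound on $\kappa_1$, and I expect the sign-changing logarithmic weight to be the one point requiring care.
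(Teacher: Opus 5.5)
Your proposal is correct and follows the paper's proof essentially verbatim: you drop the $-2\kappa_{2}\varepsilon$ term using $\kappa_{2}\ge0$, bound $240\varepsilon^{2}\le 12/125$, and handle $s=1$ by passing to the limit with Remark~\ref{rmk:endpoint-limit}. The lower bound on $\kappa_{1}$ you plan to derive is already Lemma~\ref{lem:kappa-constants}, which gives $\kappa_{1}>119/900>0.126$ using the same split at $\xi=1$; there the $(0,1)$ part is bounded below via the convexity estimate $\sinh(\pi\xi/\sqrt2)\le\xi\sinh(\pi/\sqrt2)$ instead of $\sinh x\le x\cosh x$, and the tail is bounded by $0.051$.
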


Throughout this section we work with the parameter $ \varepsilon := 1-s\in[0,1/50], $. We restrict to $\varepsilon\le1/50$ since the conclusions of this section concern the limit $s\to1^{-}$ and the interval $[0.980,1]$, whose left endpoint is the right endpoint of the interior interval $I_{0}$ of~\eqref{eq:three-interval-cover}. All constants below are explicit and are deliberately rounded upward; none is optimized.

We record once, in the form in which they are used repeatedly below, the inequalities and integrals underlying the explicit constants of this section. From the explicit expression of $\Phi_1$ we have
\begin{equation}\label{eq:Phi1-hat}
	\Phi_1'(x)=\frac{1}{\sqrt2}\sech^{2}\Bigl(\frac{x}{\sqrt2}\Bigr),
	\qquad
	\widehat{\Phi_1'}(\xi)=\frac{\sqrt2\,\pi\,\xi}{\sinh(\pi\xi/\sqrt2)},
	\qquad
	\widehat{\Phi_1}(\xi)=\frac{\widehat{\Phi_1'}(\xi)}{i\xi}
	=\frac{-i\sqrt2\,\pi}{\sinh(\pi\xi/\sqrt2)}
\end{equation}
for $\xi\ne0$. The last two identities follow from the standard Fourier transform of $\sech^{2}$. Since $\Phi_1$ is bounded but not integrable, $\widehat{\Phi_1}$ is understood as a tempered distribution. When the case of $\xi=0$ occurs below, it is always through the combination $|\xi|^{2\sigma}|\widehat{\Phi_1}(\xi)|^{2}=|\xi|^{2\sigma-2}|\widehat{\Phi_1'}(\xi)|^{2}$.

\subsection{The first-order corrector}\label{ss:s1-trial}

The expansion is driven by a single linear correction to the classical layer~$\Phi_1$. We recall the convergence of Fourier multipliers
\begin{equation}\label{eq:Lone-def}
	\frac{\fl f - (-\Delta)f}{1-s} \;\longrightarrow\; \Lone f
	\quad\text{in }\mathcal{S}'(\R),
	\qquad
	\widehat{\Lone f}(\xi)\;=\;-2|\xi|^{2}\log|\xi|\;\widehat f(\xi),
\end{equation}
valid for every $f\in\mathcal{S}(\R)$. The proof is the Taylor expansion
\begin{equation}\label{eq:symbol-taylor}
	|\xi|^{2s}=|\xi|^{2}-2\varepsilon|\xi|^{2}\log|\xi|
	+2\varepsilon^{2}|\xi|^{2}(\log|\xi|)^{2}+\cdots,
\end{equation}
combined with dominated convergence against the Fourier transform of a Schwartz function. We write $\Ltwo$ for the Fourier multiplier with symbol $2|\xi|^{2}(\log|\xi|)^{2}$, the next term of~\eqref{eq:symbol-taylor}.

Linearizing the classical Allen--Cahn equation $-\Phi_1'' = \Phi_1-\Phi_1^{3}$ about $\Phi_1$ yields the operator
\begin{equation}\label{eq:Jacobi}
	\Jop \;:=\; -\partial_{x}^{2} + W''(\Phi_1)
	\;=\; -\partial_{x}^{2} + (3\Phi_1^{2}-1),
\end{equation}
self-adjoint on $L^{2}(\R)$ with domain $H^{2}(\R)$, whose kernel is spanned by the even function $\Phi_1'$. From~\cite[Example~2, pp.~73--75]{NikiforovUvarov1988} we know that, restricted to the odd subspace, $\Jop$ has a strictly positive spectral gap:
\begin{equation}\label{eq:J-odd-spectrum}
	\sigma\bigl(\Jop|_{\Lodd(\R)}\bigr)=\{3/2\}\cup[2,+\infty),
\end{equation}
its essential spectrum being $[2,+\infty)$ and its lowest odd eigenvalue $3/2$, with the explicit eigenfunction $\sech(x/\sqrt{2})\tanh(x/\sqrt{2})$. In particular $\Jop\ge3/2$ on $\Lodd(\R)$, so $\Jop$ is boundedly invertible there with
\begin{equation}\label{eq:J-inverse}
	\bigl\|\Jop^{-1}\bigr\|_{\Lodd\to\Lodd}\le\frac23 .
\end{equation}

The corrector equation is dictated by cancellation of the first-order residual. Indeed, formally substituting $\Phis=\Phi_1+\varepsilon\psi+O(\varepsilon^2)$ into the fractional Allen--Cahn equation and using~\eqref{eq:Lone-def} and~\eqref{eq:Jacobi} gives
\begin{equation*}
	\fl\Phis+W'(\Phis)
	=\varepsilon\bigl(\Jop\psi+\Lone\Phi_1\bigr)+O(\varepsilon^2),
\end{equation*}
since $(-\Delta)\Phi_1+W'(\Phi_1)=0$. Thus the term of order $\varepsilon$ vanishes precisely when $\Jop\psi=-\Lone\Phi_1$, which is the equation solved in the next lemma.

\begin{lemma}\label{lem:corrector}
	Let $g := \Lone \Phi_1$, interpreted via~\eqref{eq:Lone-def} as the tempered distribution with Fourier transform
	\begin{equation}\label{eq:ghat}
		\widehat g(\xi)
		\;=\; -2|\xi|^{2}\log|\xi|\,\widehat{\Phi_1}(\xi)
		\;=\; \frac{2i\sqrt{2}\,\pi\,\xi^{2}\log|\xi|}
		{\sinh(\pi\xi/\sqrt2)}
		\qquad(\xi\ne 0).
	\end{equation}
	Then $g$ is a real-valued odd function in $H^{1}(\R)\cap C_{0}(\R)$, and there is a unique odd $\psi\in H^{3}(\R)\cap C_{0}(\R)$ solving
	\begin{equation}\label{eq:corrector-eq}
		\Jop\psi \;=\; -\Lone \Phi_1\;=\;-g
		\qquad\text{on }\R
	\end{equation}
	in the classical sense. Moreover $\Lone\psi$ and $\fl\psi$ (for $s\in(1/2,1)$) define elements of $C_{0,\mathrm{odd}}(\R)$.
\end{lemma}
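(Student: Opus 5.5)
The plan works entirely on the Fourier side for $g$ and then invokes the odd spectral gap~\eqref{eq:J-odd-spectrum} to produce $\psi$.

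\emph{Step 1: properties of $g$.} Formula~\eqref{eq:ghat} is the product of the multiplier $-2|\xi|^2\log|\xi|$ with $\widehat{\Phi_1}$ from~\eqref{eq:Phi1-hat}; away from $\xi=0$ it is a smooth function. Near $\xi=0$ we have $\xi^2\log|\xi|/\sinh(\pi\xi/\sqrt2)\sim(\sqrt2/\pi)\,\xi\log|\xi|\to0$, so there is no singular part at the origin. To justify this I would compute the distributional product through the combination $|\xi|^2\widehat{\Phi_1}=-i\xi\,\widehat{\Phi_1'}$, which is a bounded continuous function. At infinity $\widehat g$ decays like $|\xi|^2\log|\xi|\,e^{-\pi|\xi|/\sqrt2}$. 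Hence $\widehat g$ is continuous, and it lies in $L^1$ and in every weighted space $(1+|\xi|^2)^{k/2}L^2$. It follows that $g\in H^k(\R)$ for all $k$, in particular $g\in H^1(\R)\cap C_0(\R)$ (Riemann--Lebesgue). Since $\widehat g$ is odd and purely imaginary, $g$ is real and odd.

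\emph{Step 2: solving the corrector equation.} Since $g\in\Lodd(\R)$ and $\Jop$ commutes with the reflection $x\mapsto-x$ (because $\Phi_1^2$ is even), we may apply~\eqref{eq:J-inverse} and set $\psi:=-\Jop^{-1}g\in\Lodd\cap H^2$. Uniqueness among odd $L^2$ (hence $H^3$) solutions follows from injectivity of $\Jop$ on $\Lodd$: its kernel is spanned by the even function $\Phi_1'$. For regularity, rewrite the equation as $-\psi''=-g-(3\Phi_1^2-1)\psi$. The right-hand side lies in $H^1$, because $g\in H^1$, $\psi\in H^2$, and $3\Phi_1^2-1$ is smooth with bounded derivatives. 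Elliptic regularity on the Fourier side then gives $\psi\in H^3$. By Lemma~\ref{lem:sobolev-constants}, $\psi\in C^2$, $\psi\in C_0$, and the equation holds classically.

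\emph{Step 3: $\Lone\psi$ and $\fl\psi$.} We have $\widehat{\Lone\psi}=-2|\xi|^2\log|\xi|\,\widehat\psi$. Near the origin, $|\xi|^2\log|\xi|$ is bounded and $\widehat\psi\in L^2$ on a compact set, hence in $L^1$ there. At infinity, $|\xi|^2\log|\xi|\,|\widehat\psi|\le C(1+|\xi|)^{-1+\delta}(1+|\xi|)^3|\widehat\psi|$; by Cauchy--Schwarz with $(1+|\xi|)^{-2+2\delta}\in L^1$ for $\delta<1/2$, this is integrable. So $\widehat{\Lone\psi}\in L^1$, and $\Lone\psi$ is continuous and vanishing at infinity by Riemann--Lebesgue; it is real and odd because the multiplier is real and even. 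The same argument with the symbol $|\xi|^{2s}$, $2s<2$, handles $\fl\psi$.

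I expect the only delicate point to be the distributional interpretation at $\xi=0$ in Step 1: $\widehat{\Phi_1}$ has a $1/\xi$ singularity, namely a principal-value term plus possibly a delta. The delta is killed by the vanishing factor $|\xi|^2\log|\xi|$, and this must be checked via the consistency with~\eqref{eq:Lone-def} established through $\Phi_1'$. Once that is settled, the rest is routine.
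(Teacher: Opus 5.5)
Your proposal is correct and follows essentially the same route as the paper. You control $\widehat g$ near $\xi=0$ through $\widehat{\Phi_1}=\widehat{\Phi_1'}/(i\xi)$ and at infinity through exponential decay. You then invert $\mathcal{J}$ on the odd subspace using the spectral gap, bootstrap $\psi\in H^{2}\to H^{3}$ from $\psi''=(3\Phi_1^{2}-1)\psi+g$, and conclude that $-2|\xi|^{2}\log|\xi|\,\widehat\psi$ and $|\xi|^{2s}\widehat\psi$ are integrable. Your explicit Cauchy--Schwarz check in Step~3 fills in a detail the paper only asserts. Proving $g\in H^{k}$ for every $k$ is more than needed but harmless; there you mean $(1+|\xi|^{2})^{k/2}\widehat g\in L^{2}$.
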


\begin{proof}
	The multiplier $-2|\xi|^{2}\log|\xi|$ is $O(|\xi|^{2}|\log|\xi||)$ at the origin and, weighted against $\widehat{\Phi_1}(\xi)=\widehat{\Phi_1'}(\xi)/(i\xi)$, decays exponentially, because $\widehat{\Phi_1'}$ does by~\eqref{eq:Phi1-hat}. Hence $\widehat g$ and $\xi\widehat g$ belong to $L^{1}\cap L^{2}$, so $g \in H^{1}(\mathbb{R})$ and, by the Sobolev embedding, it also belongs to $C_0(\mathbb{R})$. Oddness of $g$ follows from oddness of $\widehat{\Phi_1}$ and evenness of the symbol.

	Since $g\in\Lodd(\R)$, the bounded inverse~\eqref{eq:J-inverse} produces the unique odd solution $\psi:=-\Jop^{-1}g\in\Lodd(\R)$ of~\eqref{eq:corrector-eq} in $L^2(\mathbb{R})$. For the regularity, equation~\eqref{eq:corrector-eq} reads
	\begin{equation}\label{eq:psi-ode}
		\psi''=(3\Phi_1^{2}-1)\psi+g ,
	\end{equation}
	so $\psi\in L^{2}$ and $g\in H^{1}$ give $\psi\in H^{2}$. Differentiating~\eqref{eq:psi-ode} once and using $g'\in L^{2}$ gives $\psi\in H^{3}$. In particular $\psi\in C_{0}$, and $\psi$ solves \eqref{eq:corrector-eq} classically. Consequently the symbols $-2|\xi|^{2}\log|\xi|\,\widehat\psi(\xi)$ and (for $s\in(1/2,1)$) $|\xi|^{2s}\widehat\psi(\xi)$ both belong to $L^{1}(\R)$, so their inverse Fourier transforms $\Lone\psi$ and $\fl\psi$ lie in $C_{0,\mathrm{odd}}(\R)$.
\end{proof}

The following explicit bounds are used by the fixed-point argument of Section~\ref{ss:s1-fixed-point}. They are kept explicit because they propagate through the residual and fixed-point estimates to the quantitative derivative remainder needed up to $s=0.980$.

\begin{lemma}\label{lem:corrector-bounds}
With $g$ and $\psi$ as in Lemma~\ref{lem:corrector},
\begin{equation}\label{eq:g-bounds}
	\|g\|_{L^{2}}\le\tfrac56,
	\qquad
	\|g'\|_{L^{2}}\le\tfrac{13}{10},
\end{equation}
and
\begin{equation*}
\begin{gathered}
\|\psi\|_{L^{2}}\le\tfrac59,\qquad
\|\psi'\|_{L^{2}}\le\tfrac9{10},\qquad
\|\psi''\|_{L^{2}}\le2,\qquad
\|\psi'''\|_{L^{2}}\le\tfrac{41}{10},\\begin{equation*}2pt]
	\|\psi\|_{H^{1}}\le\tfrac{11}{10},\qquad
	\|\psi\|_{H^{3}}\le6,\qquad
	\|\psi\|_{L^{\infty}}\le\tfrac45 .
	\end{gathered}
\end{equation*}
\end{lemma}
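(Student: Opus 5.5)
The bounds on $g$ come directly from Plancherel and the explicit formula~\eqref{eq:ghat}. The bounds on $\psi$ then follow from the spectral gap~\eqref{eq:J-inverse} and the ODE~\eqref{eq:psi-ode}, each constant rounded upward.

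\emph{Step 1 (bounds on $g$).} By~\eqref{eq:ghat},
\begin{equation*}
	\|g\|_{L^2}^2=\frac{1}{2\pi}\int_\R\frac{8\pi^2\xi^4(\log|\xi|)^2}{\sinh^2(\pi\xi/\sqrt2)}\,d\xi,
	\qquad
	\|g'\|_{L^2}^2=\frac{1}{2\pi}\int_\R\frac{8\pi^2\xi^6(\log|\xi|)^2}{\sinh^2(\pi\xi/\sqrt2)}\,d\xi .
\end{equation*}
These are one-dimensional integrals of explicit, exponentially decaying, integrable functions. I would bound them rigorously by one of two routes. The first is to compute them in closed form: differentiate $\int_0^\infty \xi^{a}\sinh^{-2}(c\xi)\,d\xi=4\Gamma(a+1)\zeta(a)/(2c)^{a+1}$ twice in $a$. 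The second is to split $[0,\infty)$ into $[0,1]$ and $[1,\infty)$, use elementary bounds $\sinh x\ge x$ and $\sinh x\ge e^{x}/2\cdot(1-e^{-2x})$, and estimate the tail by a Gamma integral. Either route gives numbers below $(5/6)^2$ and $(13/10)^2$.

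\emph{Step 2 ($L^2$ and $H^1$ bounds on $\psi$).} From~\eqref{eq:J-inverse}, $\|\psi\|_{L^2}\le\frac23\|g\|_{L^2}\le\frac59$. For $\psi'$, pair~\eqref{eq:corrector-eq} with $\psi$ to get
\begin{equation*}
	\|\psi'\|_{L^2}^2+\int(3\Phi_1^2-1)\psi^2=-\langle g,\psi\rangle .
\end{equation*}
Since $3\Phi_1^2-1\ge-1$, this gives $\|\psi'\|^2\le\|\psi\|^2+\|g\|\|\psi\|$. That bound is slightly too crude, so I would refine it using $\langle \Jop\psi,\psi\rangle\ge\frac32\|\psi\|^2$, which yields $\|\psi\|\le\frac23\|g\|$ sharply. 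Alternatively, write $\Jop=-\partial_x^2+2-3\,\mathrm{sech}^2$ and use
\begin{equation*}
	\|\psi'\|^2\le\|g\|\|\psi\|+\|\psi\|^2 .
\end{equation*}
This gives $\|\psi'\|\le\sqrt{(5/6)(5/9)+(25/81)}\approx0.87<\frac9{10}$. Then $\|\psi\|_{H^1}\le\sqrt{(5/9)^2+(9/10)^2}<\frac{11}{10}$, and $\|\psi\|_{L^\infty}\le\|\psi\|_{L^2}^{1/2}\|\psi'\|_{L^2}^{1/2}\approx0.707<\frac45$ by the one-dimensional Agmon inequality $\|u\|_\infty^2\le\|u\|_2\|u'\|_2$.

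\emph{Step 3 (higher derivatives).} From~\eqref{eq:psi-ode}, using $|3\Phi_1^2-1|\le2$, we get $\|\psi''\|\le2\|\psi\|+\|g\|\le\frac{10}{9}+\frac56$, which is about $1.94<2$. Differentiating~\eqref{eq:psi-ode} gives
\begin{equation*}
	\psi'''=(3\Phi_1^2-1)\psi'+6\Phi_1\Phi_1'\psi+g' .
\end{equation*}
Since $\|\Phi_1\Phi_1'\|_\infty\le\frac{1}{\sqrt2}\max|\tanh\,\mathrm{sech}^2|=\frac{1}{\sqrt2}\cdot\frac{2}{3\sqrt3}$, this yields
\begin{equation*}
	\|\psi'''\|\le2\|\psi'\|+6\cdot0.273\,\|\psi\|+\|g'\|\le1.8+0.91+1.3\approx4.0<\frac{41}{10}.
\end{equation*}
Summing squares gives $\|\psi\|_{H^3}\le(0.31+0.81+4+16.8)^{1/2}<5$ for the seminorm-sum version. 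If the $H^3$ norm is the Fourier one $\|(1+\xi^2)^{3/2}\widehat\psi\|$, expanding $(1+\xi^2)^3$ gives
\begin{equation*}
	\|\psi\|^2+3\|\psi'\|^2+3\|\psi''\|^2+\|\psi'''\|^2\le0.31+2.43+12+16.8\approx31.6<36,
\end{equation*}
hence $\|\psi\|_{H^3}\le6$.

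\emph{Main obstacle.} The main obstacle is Step 1: obtaining certified numerical values of the two logarithmic $\sinh^{-2}$ integrals with enough margin. The margins in Steps 2–3 are thin, so these values must be sharp. I would try the $\zeta$-derivative closed form first. If that proves messy, I would fall back on interval-arithmetic quadrature with an explicit exponential tail bound.
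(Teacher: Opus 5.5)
Your plan is essentially the paper's proof. It uses Plancherel for $g$, the gap $\mathcal{J}\ge\tfrac32$ on odd functions for $\|\psi\|_{L^2}$, and the energy identity $\|\psi'\|_{L^2}^2\le\|g\|_{L^2}\|\psi\|_{L^2}+\|\psi\|_{L^2}^2\le\tfrac{125}{162}<(\tfrac9{10})^2$. It then uses~\eqref{eq:psi-ode} and its derivative for $\psi''$ and $\psi'''$, and the expansion of $(1+\xi^2)^3$ for the $H^3$ norm, with the same constants. Your remark that the energy bound is ``slightly too crude'' is unfounded: the paper uses exactly that bound, and your own computation shows it suffices.

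There are two real differences.

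\textbf{The $L^\infty$ bound.} You use the multiplicative inequality $\|\psi\|_{L^\infty}^2\le\|\psi\|_{L^2}\|\psi'\|_{L^2}\le\tfrac59\cdot\tfrac9{10}=\tfrac12$. The paper instead applies Lemma~\ref{lem:sobolev-constants} with $C^\ast(1)=1/\sqrt2$ to $\|\psi\|_{H^1}\le\tfrac{11}{10}$, which gives about $0.78$. Your version is sharper and does not need the $H^1$ bound; by AM--GM it actually implies the paper's.

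\textbf{Step~1, which you leave unexecuted.} Your $\zeta$-route is valid, since $\sinh^{-2}x=4\sum_{n\ge1}ne^{-2nx}$ gives your formula for $a>1$. It yields $\|g\|_{L^2}^2\approx0.42$ and $\|g'\|_{L^2}^2\approx0.64$, far below the thresholds. However, it requires certified enclosures of $\zeta'$ and $\zeta''$ at $4$ and $6$.

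The paper takes your second, elementary route, and there the margin for $\|g\|_{L^2}$ is thin. It uses $\sinh t\ge t$ on $(0,1)$ and $\log\xi\le\xi-1$ on $(1,\infty)$, and gets $\|g\|_{L^2}^2<0.688$ against $(5/6)^2\approx0.694$. A cruder tail estimate such as $(\log\xi)^2\le\xi^2$ would overshoot by a large factor. So ``estimate the tail by a Gamma integral'' only works with a careful choice of the logarithmic inequality.
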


\begin{proof}
	\emph{Step 1: the bounds on $g$.}  By~\eqref{eq:ghat}, Plancherel's
	identity, and the evenness of the integrands,
	\begin{equation*}
		\|g\|_{L^{2}}^{2}
		=8\pi\int_{0}^{\infty}
		\frac{\xi^{4}(\log\xi)^{2}}{\sinh^{2}(\pi\xi/\sqrt2)}\,d\xi,\qquad
		\|g'\|_{L^{2}}^{2}
		=8\pi\int_{0}^{\infty}
		\frac{\xi^{6}(\log\xi)^{2}}{\sinh^{2}(\pi\xi/\sqrt2)}\,d\xi.
	\end{equation*}
	We estimate both integrals by splitting them at $\xi=1$. On $(0,1)$, the inequality $\sinh(\pi\xi/\sqrt2)\ge\pi\xi/\sqrt2$ and the substitution $\xi=e^{-t}$ give
  \begin{equation*}
	\begin{aligned}
		8\pi\int_{0}^{1}
		\frac{\xi^{4}(\log\xi)^{2}}{\sinh^{2}(\pi\xi/\sqrt2)}\,d\xi
		 & \le\frac{16}{\pi}\int_{0}^{1}\xi^{2}(\log\xi)^{2}\,d\xi
		=\frac{32}{27\pi}<0.378,                                   \\
		8\pi\int_{0}^{1}
		\frac{\xi^{6}(\log\xi)^{2}}{\sinh^{2}(\pi\xi/\sqrt2)}\,d\xi
		 & \le\frac{16}{\pi}\int_{0}^{1}\xi^{4}(\log\xi)^{2}\,d\xi
		=\frac{32}{125\pi}<0.082.
	\end{aligned}
	\end{equation*}
	For $\xi\ge1$, we have
	\begin{equation*}
		\sinh\left(\frac{\pi\xi}{\sqrt2}\right)
		=\frac12e^{\pi\xi/\sqrt2}\left(1-e^{-\sqrt2\pi\xi}\right)
		\ge\frac{49}{100}e^{\pi\xi/\sqrt2},
	\end{equation*}
	because $\sqrt2\pi\xi>4$ and $e^{-4}<1/50$. Together with $\log\xi\le\xi-1$, this yields
	\begin{equation*}
		\begin{aligned}
			8\pi\int_{1}^{\infty}
			\frac{\xi^{4}(\log\xi)^{2}}{\sinh^{2}(\pi\xi/\sqrt2)}\,d\xi
			 & \le8\pi\left(\frac{100}{49}\right)^{2}
			\int_{1}^{\infty}\xi^{4}(\xi-1)^{2}e^{-\sqrt2\pi\xi}\,d\xi<0.310, \\
			8\pi\int_{1}^{\infty}
			\frac{\xi^{6}(\log\xi)^{2}}{\sinh^{2}(\pi\xi/\sqrt2)}\,d\xi
			 & \le8\pi\left(\frac{100}{49}\right)^{2}
			\int_{1}^{\infty}\xi^{6}(\xi-1)^{2}e^{-\sqrt2\pi\xi}\,d\xi<1.491.
		\end{aligned}
	\end{equation*}
	Indeed, after substituting $\xi=1+t$, expanding the powers, and integrating term by term, the two expressions on the right are respectively
	\begin{equation*}
		\begin{aligned}
			 & 8\pi\left(\frac{100}{49}\right)^{2}e^{-\sqrt2\pi}
			\sum_{k=0}^{4}\binom{4}{k}\frac{(k+2)!}{(\sqrt2\pi)^{k+3}}
			=0.3091\ldots<0.310,                                 \\
			 & 8\pi\left(\frac{100}{49}\right)^{2}e^{-\sqrt2\pi}
			\sum_{k=0}^{6}\binom{6}{k}\frac{(k+2)!}{(\sqrt2\pi)^{k+3}}
			=1.4900\ldots<1.491.
		\end{aligned}
	\end{equation*}
	Combining the estimates on the two regions, we obtain
	\begin{equation*}
		\|g\|_{L^{2}}^{2}<0.688<\left(\frac56\right)^{2},
		\qquad
		\|g'\|_{L^{2}}^{2}<1.573<\left(\frac{13}{10}\right)^{2},
	\end{equation*}
	which proves~\eqref{eq:g-bounds}.

	\smallskip
	\noindent\emph{Step 2: the $L^{2}$-bounds for $\psi$ and its derivatives.}
	Since $\psi=-\Jop^{-1}g$, estimates~\eqref{eq:J-inverse}
	and~\eqref{eq:g-bounds} give
	\begin{equation*}
		\|\psi\|_{L^{2}}
		\le\frac23\|g\|_{L^{2}}
		\le\frac59.
	\end{equation*}
	Testing~\eqref{eq:corrector-eq} against $\psi$ and integrating by parts, we obtain
	\begin{equation*}
		\|\psi'\|_{L^{2}}^{2}+\int_{\R}(3\Phi_1^{2}-1)\psi^{2}\,dx
		=\langle\Jop\psi,\psi\rangle_{L^{2}}=-\langle g,\psi\rangle_{L^{2}}
		\le\|g\|_{L^{2}}\|\psi\|_{L^{2}} .
	\end{equation*}
	Because $3\Phi_1^{2}-1\ge-1$, it follows that
	\begin{equation*}
		\|\psi'\|_{L^{2}}^{2}
		\le\|g\|_{L^{2}}\|\psi\|_{L^{2}}+\|\psi\|_{L^{2}}^{2}
		\le\frac56\cdot\frac59+\left(\frac59\right)^{2}
		=\frac{125}{162}<\left(\frac9{10}\right)^{2}.
	\end{equation*}

	Since $\|3\Phi_1^{2}-1\|_{L^{\infty}}\le2$, equation~\eqref{eq:psi-ode} implies
	\begin{equation*}
		\|\psi''\|_{L^{2}}
		\le2\|\psi\|_{L^{2}}+\|g\|_{L^{2}}
		\le2\cdot\frac59+\frac56=\frac{35}{18}<2.
	\end{equation*}
	Moreover, the explicit formula for $\Phi_1$ shows that
	\begin{equation*}
		\|(3\Phi_1^{2}-1)'\|_{L^{\infty}}
		=6\|\Phi_1\Phi_1'\|_{L^{\infty}}
		=2\sqrt{\frac23}<\frac53.
	\end{equation*}
	Here the maximum is attained where $\Phi_1^{2}=1/3$. Differentiating~\eqref{eq:psi-ode} and using the preceding estimates therefore gives
	\begin{equation*}
		\begin{aligned}
			\|\psi'''\|_{L^{2}}
			 & \le\|(3\Phi_1^{2}-1)'\|_{L^{\infty}}\|\psi\|_{L^{2}}
			+\|3\Phi_1^{2}-1\|_{L^{\infty}}\|\psi'\|_{L^{2}}
			+\|g'\|_{L^{2}}                                          \\
			 & \le\frac53\cdot\frac59+2\cdot\frac9{10}+\frac{13}{10}
			=\frac{25}{27}+\frac{31}{10}<\frac{41}{10}.
		\end{aligned}
	\end{equation*}

	\smallskip
	\noindent\emph{Step 3: the Sobolev bounds.}  By Plancherel's identity and the
	estimates just proved,
	\begin{equation*}
		\|\psi\|_{H^{1}}^{2}
		=\|\psi\|_{L^{2}}^{2}+\|\psi'\|_{L^{2}}^{2}
		\le\left(\frac59\right)^{2}+\left(\frac9{10}\right)^{2}
		<\left(\frac{11}{10}\right)^{2}.
	\end{equation*}
	Similarly, using $(1+\xi^{2})^{3}=1+3\xi^{2}+3\xi^{4}+\xi^{6}$, we have
	\begin{equation*}
		\begin{aligned}
			\|\psi\|_{H^{3}}^{2}
			 & =\|\psi\|_{L^{2}}^{2}+3\|\psi'\|_{L^{2}}^{2}
			+3\|\psi''\|_{L^{2}}^{2}+\|\psi'''\|_{L^{2}}^{2} \\
			 & \le\left(\frac59\right)^{2}
			+3\left(\frac9{10}\right)^{2}
			+3\cdot2^{2}
			+\left(\frac{41}{10}\right)^{2}
			<32<6^{2}.
		\end{aligned}
	\end{equation*}
	Finally, Lemma~\ref{lem:sobolev-constants} with $\sigma=1$ gives $C^{\ast}(1)=1/\sqrt2$, and hence
	\begin{equation*}
		\|\psi\|_{L^{\infty}}\le C^{\ast}(1)\|\psi\|_{H^{1}}
		\le\frac{11}{10\sqrt2}<\frac45 . \qedhere
	\end{equation*}
\end{proof}

For each $s\in[49/50,1)$ we now define the approximate profile
\begin{equation*}
	u_{s} \;:=\; \Phi_1 + (1-s)\,\psi \;=\; \Phi_1 + \varepsilon\,\psi,
\end{equation*}
an odd smooth function on $\R$ with $u_{s}\to\pm1$ at $\pm\infty$. By Lemma~\ref{lem:corrector-bounds} and $\varepsilon\le1/50$,
\begin{equation}\label{eq:us-bounds}
	\|u_{s}\|_{L^{\infty}}\le1+\varepsilon\|\psi\|_{L^{\infty}}\le1+\tfrac1{50}\cdot\tfrac45
	\le\tfrac{51}{50},
	\qquad
	\|W''(u_{s})\|_{L^{\infty}}\le3\bigl(\tfrac{51}{50}\bigr)^{2}-1\le\tfrac{107}{50},
\end{equation}
the latter because $|W''(u)|=|3u^{2}-1|\le\max(1,3\|u\|_{L^{\infty}}^{2}-1)$.

We can now define the two constants of Theorem~\ref{thm:kappa1}. Both are integrals of the explicit classical layer against a logarithmic weight:
\begin{equation}\label{eq:kappa-def}
\begin{gathered}
  \kappa_{1}:=-\frac{1}{2\pi}\int_{\R}\log|\xi|\;|\widehat{\Phi_1'}(\xi)|^{2}\,d\xi
  =-2\pi\int_{0}^{\infty}
  \frac{\xi^{2}\log\xi}{\sinh^{2}(\pi\xi/\sqrt2)}\,d\xi,\\[2pt]
	\kappa_{2}:=\mu_{\Phi_1}-\tfrac12\langle\Jop\psi,\psi\rangle_{L^{2}},
	\quad
	\mu_{\Phi_1}:=\frac{1}{2\pi}\int_{\R}(\log|\xi|)^{2}\,|\widehat{\Phi_1'}(\xi)|^{2}\,d\xi
	=2\pi\int_{0}^{\infty}
	\frac{\xi^{2}(\log\xi)^{2}}{\sinh^{2}(\pi\xi/\sqrt2)}\,d\xi,
\end{gathered}
\end{equation}
	the second expression in each line following from $|\widehat{\Phi_1'}(\xi)|^{2} =2\pi^{2}\xi^{2}/\sinh^{2}(\pi|\xi|/\sqrt2)$ and the evenness of the integrands. Both integrals converge absolutely. Their role is suggested by a formal expansion of the derivative functional~\eqref{eq:J-functional}. Substituting $u_s=\Phi_1+\varepsilon\psi$, expanding $|\xi|^{-2\varepsilon}=1-2\varepsilon\log|\xi|+O(\varepsilon^2)$, and using the corrector equation and Plancherel for the cross term gives
\begin{equation*}
	\begin{aligned}
		J(s,u_s)
		 & =\frac{1}{2\pi}\int_{\R}\log|\xi|
		\Bigl[|\widehat{\Phi_1'}|^{2}
		+2\varepsilon\operatorname{Re}\bigl(\widehat{\Phi_1'}\,
		\overline{\widehat{\psi'}}\bigr)
		-2\varepsilon\log|\xi|\,|\widehat{\Phi_1'}|^{2}\Bigr]\,d\xi
		+O(\varepsilon^{2})                                    \\
		 & =-\kappa_1-\varepsilon\langle g,\psi\rangle_{L^{2}}
		-2\varepsilon\mu_{\Phi_1}+O(\varepsilon^{2}).
	\end{aligned}
\end{equation*}

\begin{lemma}\label{lem:kappa-constants}
	The constants in~\eqref{eq:kappa-def} satisfy $\kappa_{1}>119/900$ and $\kappa_{2}>0$.
\end{lemma}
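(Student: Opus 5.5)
For $\kappa_1$ we work with the explicit integral in~\eqref{eq:kappa-def}. After the substitution $t=\pi\xi/\sqrt2$ it becomes
\begin{equation*}
\kappa_1=-\frac{2\sqrt2}{\pi^{2}}\int_0^\infty\frac{t^{2}\bigl(\log t-\log(\pi/\sqrt2)\bigr)}{\sinh^{2}t}\,dt .
\end{equation*}
Both pieces have closed forms: $\int_0^\infty t^{2}/\sinh^{2}t\,dt=\pi^{2}/6$, and $\int_0^\infty t^{2}\log t/\sinh^{2}t\,dt$ follows by differentiating $\int_0^\infty t^{a}/\sinh^{2}t\,dt=4\Gamma(a+1)\zeta(a)/2^{a+1}$ at $a=2$. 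This yields an explicit expression in terms of $\log\pi$, $\log 2$, $\gamma$ and $\zeta'(2)/\zeta(2)$, which evaluates to about $1.073$. That value is far above $119/900\approx0.132$, so crude rigorous bounds are enough.

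If we want to avoid special-function identities, we can instead split at $\xi=1$, as in the proof of Lemma~\ref{lem:corrector-bounds}. On $(0,1)$ we have $\log\xi<0$, so the integrand contributes positively to $\kappa_1$. We bound it from below using $\sinh x\le x\cosh x$, or simply use $\sinh(\pi\xi/\sqrt2)\le\sinh(\pi/\sqrt2)\,\xi\cdot(\sqrt2/\pi)\cdot(\pi/\sqrt2)$-type convexity bounds, which give a lower bound of the form $c\int_0^1|\log\xi|\,d\xi$. On $(1,\infty)$ the negative contribution is bounded above using $\log\xi\le\xi-1$ and $\sinh\ge\frac{49}{100}e^{\pi\xi/\sqrt2}$, with the same term-by-term Gamma integrals as before. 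This gives explicit rational margins, and the difference exceeds $119/900$.

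For $\kappa_2>0$ we have $\kappa_2=\mu_{\Phi_1}-\frac12\langle\Jop\psi,\psi\rangle=\mu_{\Phi_1}+\frac12\langle g,\psi\rangle$, since $\Jop\psi=-g$. Two facts control the sign:
\begin{itemize}
\item $\langle\Jop\psi,\psi\rangle=\langle\Jop^{-1}g,g\rangle\le\frac23\|g\|_{L^2}^{2}$ by~\eqref{eq:J-inverse};
\item $\|g\|_{L^2}^{2}<0.688$ by Step~1 of Lemma~\ref{lem:corrector-bounds}.
\end{itemize}
Hence $\frac12\langle\Jop\psi,\psi\rangle<0.23$, and it suffices to show $\mu_{\Phi_1}>0.23$.

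Since $\mu_{\Phi_1}$ has a positive integrand, a lower bound comes from restricting to a subinterval. On $(0,1/2)$ we use $\sinh(\pi\xi/\sqrt2)\le\sinh(\pi/(2\sqrt2))\,(2\xi)$, which holds by convexity of $\sinh$ on $[0,\infty)$. This gives
\begin{equation*}
\mu_{\Phi_1}\ge\frac{2\pi}{4\sinh^{2}(\pi/(2\sqrt2))}\int_0^{1/2}(\log\xi)^{2}\,d\xi ,
\end{equation*}
where the integral is elementary and equals $\frac12\bigl((\log 2)^{2}+2\log2+2\bigr)\approx1.93$. Since $\sinh^{2}(1.11)\approx1.87$, the bound is roughly $2\pi\cdot1.93/7.5\approx1.6$, well above $0.23$.

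Alternatively, we can use the exact representation: $\kappa_2$ is a Rayleigh-type combination, and a sharper bound for $\langle g,\psi\rangle$ is obtained from $\Jop\ge3/2$.

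The main obstacle is only bookkeeping: making every numerical inequality rigorous with rational margins, in the same style as Lemma~\ref{lem:corrector-bounds}. The margins are large, so no delicate estimate is needed.
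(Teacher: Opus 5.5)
Your argument for $\kappa_{2}$ and your second, split-at-$\xi=1$ route for $\kappa_{1}$ are essentially the paper's proof. The paper obtains $A_{0}>1/25$ from the convexity bound $\sinh(\pi\xi/\sqrt2)\le\xi\sinh(\pi/\sqrt2)$ on $[0,1]$. It bounds $2\pi A_{\infty}<0.051$ using $\log\xi\le\xi-1$ and $\sinh\ge\frac{49}{100}e^{\pi\xi/\sqrt2}$. For $\kappa_{2}$ it combines $\frac12\langle\mathcal{J}\psi,\psi\rangle\le\frac13\|g\|_{L^{2}}^{2}\le\frac{25}{108}$ with $\mu_{\Phi_1}\ge4\pi/\sinh^{2}(\pi/\sqrt2)>\frac12$. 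Your restriction to $(0,1/2)$ is a harmless variant that gives a better constant.

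Two points need fixing.

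\emph{The prefactor in the closed-form route.} The substitution $t=\pi\xi/\sqrt2$ produces the prefactor $4\sqrt2/\pi^{2}$, not $2\sqrt2/\pi^{2}$. Your displayed formula would therefore evaluate to about $0.537$. The corrected one gives
\[
\kappa_{1}=\frac{2\sqrt2}{3}\Bigl(\log(\sqrt2\,\pi)+\gamma-\frac32-\frac{\zeta'(2)}{\zeta(2)}\Bigr)\approx1.073 .
\]
This is an exact evaluation the paper does not state. Turning it into a proof, however, requires certified enclosures of $\gamma$ and $\zeta'(2)$, whereas the paper's estimates are entirely elementary.

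\emph{The $\sinh x\le x\cosh x$ option is too weak.} If you replace $\cosh(\pi\xi/\sqrt2)$ by $\cosh(\pi/\sqrt2)$, it yields only $2\pi A_{0}\ge4/(\pi\cosh^{2}(\pi/\sqrt2))\approx0.059$. You need more than $0.051+119/900\approx0.183$. The convexity bound is the one to use.
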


\begin{proof}
	By~\eqref{eq:kappa-def},
	\begin{equation*}
		\kappa_{1}=2\pi(A_{0}-A_{\infty}),
	\end{equation*}
	where
	\begin{equation*}
		\begin{aligned}
			A_{0}      & :=\int_{0}^{1}
			\frac{\xi^{2}|\log\xi|}{\sinh^{2}(\pi\xi/\sqrt2)}\,d\xi, \\
			A_{\infty} & :=\int_{1}^{\infty}
			\frac{\xi^{2}\log\xi}{\sinh^{2}(\pi\xi/\sqrt2)}\,d\xi.
		\end{aligned}
	\end{equation*}
	Convexity of $\sinh$, together with $\sinh 0=0$, gives $\sinh(\pi\xi/\sqrt2)\le\xi\sinh(\pi/\sqrt2)$ for $0\le\xi\le1$. Moreover, $\pi/\sqrt2<9/4$ and $e^{9/4}<10$ imply $\sinh(\pi/\sqrt2)<5$. Therefore
	\begin{equation*}
		\begin{aligned}
			A_{0}
			 & \ge\frac{1}{\sinh^{2}(\pi/\sqrt2)}
			\int_{0}^{1}|\log\xi|\,d\xi
			>\frac1{25}.
		\end{aligned}
	\end{equation*}

	For $\xi\ge1$, we use
	\begin{equation*}
		\sinh\left(\frac{\pi\xi}{\sqrt2}\right)
		=\frac12e^{\pi\xi/\sqrt2}\bigl(1-e^{-\sqrt2\pi\xi}\bigr)
		\ge\frac{49}{100}e^{\pi\xi/\sqrt2},
	\end{equation*}
	because $\sqrt2\pi\xi>4$ and $e^{-4}<1/50$. Since $\sqrt2\pi>21/5$ (because $\sqrt2>7/5$ and $\pi>3$) and $\log\xi\le\xi-1$, the substitution $t=\xi-1$ yields
	\begin{equation*}
		\begin{aligned}
			2\pi A_{\infty}
			 & \le2\pi\left(\frac{100}{49}\right)^{2}
			\int_{1}^{\infty}\xi^{2}(\xi-1)e^{-21\xi/5}\,d\xi \\
			 & =2\pi\left(\frac{100}{49}\right)^{2}e^{-21/5}
			\left[
				\left(\frac5{21}\right)^{2}
				+4\left(\frac5{21}\right)^{3}
				+6\left(\frac5{21}\right)^{4}
			\right]                                           \\
			 & =0.0509\ldots<0.051.
		\end{aligned}
	\end{equation*}
	It follows, using $\pi>3$, that
	\begin{equation*}
		\kappa_{1}>\frac{2\pi}{25}-\frac{51}{1000}
		>\frac{189}{1000}>\frac{119}{900}.
	\end{equation*}

	For the second term of~\eqref{eq:kappa-def}, $\Jop\psi=-g$ gives $\psi=-\Jop^{-1}g$ and hence $\langle\Jop\psi,\psi\rangle=\langle g,\Jop^{-1}g\rangle$; since $\Jop\ge3/2$ on $\Lodd(\R)$, the spectral theorem and~\eqref{eq:g-bounds} give
	\begin{equation*}
		0\le\frac12\langle\Jop\psi,\psi\rangle
		\le\frac13\|g\|_{L^{2}}^{2}
		\le \frac13\left(\frac56\right)^{2}
		=\frac{25}{108}<\frac12 .
	\end{equation*}
	For $\mu_{\Phi_1}$, convexity of $\sinh$ together with $\sinh0=0$ gives $\sinh(\pi\xi/\sqrt2)\le\xi\sinh(\pi/\sqrt2)$ for $0\le\xi\le1$, so that,
	\begin{equation*}
		\mu_{\Phi_1}\ge2\pi\int_{0}^{1}
		\frac{\xi^{2}(\log\xi)^{2}}{\xi^{2}\sinh^{2}(\pi/\sqrt2)}\,d\xi
		= \frac{2\pi}{\sinh^{2}(\pi/\sqrt2)} \int_{0}^{1}(\log\xi)^{2}\,d\xi
		=\frac{4\pi}{\sinh^{2}(\pi/\sqrt2)} .
	\end{equation*}
	Now $\pi/\sqrt2<9/4$ and $e^{9/4}<10$ give $\sinh(\pi/\sqrt2)<\tfrac12 e^{\pi/\sqrt2}<5$, hence $\sinh^{2}(\pi/\sqrt2)<25$ and $\mu_{\Phi_1}>4\pi/25>1/2$, the last step because $\pi>25/8$. Combining these estimates, we conclude $\kappa_{2}=\mu_{\Phi_1}-\tfrac12\langle\Jop\psi,\psi\rangle>0$.
\end{proof}

\subsection{The layer near the approximate profile}\label{ss:s1-fixed-point}

Let $\F_{s}(u):=\fl u+W'(u)$ be the Allen--Cahn residual, so that $\Phis$ is characterized by $\F_{s}(\Phis)=0$. We show in this subsection that $\Phis$ lies within $O(\varepsilon^{2})$ of $u_{s}$ in $H^{2s}$. We will do this by a fixed-point argument. Throughout we use the comparison
\begin{equation}\label{eq:mult-comp}
	\|w\|_{H^{2s}}^{2}\;\le\;2\bigl(\|w\|_{L^{2}}^{2}+\|\fl w\|_{L^{2}}^{2}\bigr)
	\qquad(s\in[49/50,1]),
\end{equation}
which follows from Plancherel and the bound $(1+|\xi|^{2})^{2s}\le2^{2s-1}(1+|\xi|^{4s})\le2(1+|\xi|^{4s})$.

The fixed point argument is organized in the next three lemmas as follows: first, we bound the error of $u_s$ as an approximate solution to the Allen--Cahn equation, second, we invert the corresponding linearized operator and finally we go through the nonlinear estimates needed to apply Banach's fixed-point theorem.

We use the expansion of the fractional Laplacian
\begin{equation}\label{eq:frac-laplacian-exp}
	\fl \Phi_1=(-\Delta)\Phi_1+\varepsilon\Lone \Phi_1+\rho_{s}^{(\Phi_1)}
	\quad\text{and}\quad
	\fl\psi=(-\Delta)\psi+\varepsilon\Lone\psi+\rho_{s}^{(\psi)},
\end{equation}
where $\rho_{s}^{(\Phi_1)}$ and $\rho_{s}^{(\psi)}$ are the second-order Taylor remainders of the symbol $|\xi|^{2s}$ acting on $\Phi_1$ and $\psi$, respectively.

\begin{lemma}\label{lem:residual}
	For $s\in[49/50,1)$ set
	\begin{equation}\label{eq:Rs-def}
		R_{s} \;:=\; -\bigl(\rho_{s}^{(\Phi_1)} + \varepsilon\,\rho_{s}^{(\psi)}
		+ \varepsilon^{2}\Lone\psi + 3\Phi_1\,\varepsilon^{2}\psi^{2}
		+ \varepsilon^{3}\psi^{3}\bigr).
	\end{equation}
	Then $\F_{s}(u_{s})=-R_{s}$, and $R_{s}\in C_{0,\mathrm{odd}}(\R)\cap L^{2}(\R)$ with
	\begin{equation}\label{eq:Rs-L2}
		\|R_{s}\|_{L^{2}}\;\le\;\tfrac{15}{2}\,\varepsilon^{2}
		\qquad(0\le\varepsilon\le\tfrac1{50}).
	\end{equation}
\end{lemma}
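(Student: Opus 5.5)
The plan has two parts: first verify the identity $\F_s(u_s)=-R_s$ algebraically, then bound each of the five pieces of $R_s$ in $L^2$ separately.

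\emph{The identity.} Expand $\fl u_s=\fl\Phi_1+\varepsilon\fl\psi$ using \eqref{eq:frac-laplacian-exp}. Then expand $W'(u_s)=u_s^3-u_s$ as
\begin{equation*}
W'(\Phi_1)+\varepsilon W''(\Phi_1)\psi+3\Phi_1\varepsilon^2\psi^2+\varepsilon^3\psi^3 .
\end{equation*}
The order-zero terms cancel because $-\Phi_1''+W'(\Phi_1)=0$. The order-$\varepsilon$ terms are $\varepsilon(\Lone\Phi_1+\Jop\psi)$, which vanish by \eqref{eq:corrector-eq}. What remains is exactly $-R_s$. Regularity is inherited from the pieces. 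Lemma~\ref{lem:corrector} puts $\Lone\psi$, $\fl\psi$ and the polynomial terms in $C_{0,\mathrm{odd}}\cap L^2$. For $\rho_s^{(\Phi_1)}$ the Fourier symbol decays exponentially, and $\rho_s^{(\psi)}$ is a difference of $C_0$ functions.

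\emph{The estimates.} I would bound each term separately. For the nonlinear terms, $\|3\Phi_1\psi^2\|_{L^2}\le3\|\psi\|_{L^\infty}\|\psi\|_{L^2}\le 3\cdot\frac45\cdot\frac59$, which gives a contribution of about $\frac43\varepsilon^2$. Also $\varepsilon^3\|\psi\|_\infty^2\|\psi\|_{L^2}$ is negligible when $\varepsilon\le1/50$.

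For $\varepsilon^2\Lone\psi$, I would use $2|\xi|^2|\log|\xi||\le C(1+\xi^2)^{3/2}$ with an explicit $C$. On $|\xi|\ge1$ one has $|\log|\xi||\le|\xi|-1$, and on $|\xi|<1$ one has $|\xi|^2|\log|\xi||\le 1/(2e)$. Together with $\|\psi\|_{H^3}\le6$ this gives $\|\Lone\psi\|_{L^2}\le c\cdot6$ for some $c\approx1$.

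The Taylor remainders come from the exact identity
\begin{equation*}
|\xi|^{2s}-|\xi|^2+2\varepsilon|\xi|^2\log|\xi|=|\xi|^2\bigl(e^{-2\varepsilon\ell}-1+2\varepsilon\ell\bigr),\qquad \ell=\log|\xi|.
\end{equation*}
This is bounded by $2\varepsilon^2\ell^2|\xi|^2\max(1,|\xi|^{-2\varepsilon})$, where the factor $|\xi|^{-2\varepsilon}\le|\xi|^{-1/25}$ matters only for $|\xi|<1$, where it is harmless. For $\rho_s^{(\Phi_1)}$, Plancherel and \eqref{eq:Phi1-hat} reduce the bound to the explicit integral
\begin{equation*}
\varepsilon^4\cdot 4\cdot 2\pi\int_0^\infty \frac{\xi^2\ell^4|\xi|^{-4\varepsilon}}{\sinh^2(\pi\xi/\sqrt2)}\,\frac{d\xi}{\ldots}
\end{equation*}
(precisely, $\frac{1}{2\pi}\int 4\varepsilon^4\ell^4\xi^4|\widehat{\Phi_1}|^2$). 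I would estimate it exactly as in Lemma~\ref{lem:corrector-bounds}: split at $\xi=1$, use $\sinh x\ge x$ near $0$ and $\sinh\ge\frac{49}{100}e^{x}$ together with $\log\xi\le\xi-1$ at infinity. For $\varepsilon\rho_s^{(\psi)}$ the same symbol bound, now weighted by $\ell^2|\xi|^2\lesssim(1+\xi^2)^{3/2}\cdot(\ell^2/(1+|\xi|))$, gives $\varepsilon\cdot O(\varepsilon^2)\|\psi\|_{H^3}$, which is an $O(\varepsilon^3)$ contribution and is absorbed using $\varepsilon\le1/50$.

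Summing the contributions gives a constant that must come out below $15/2$. I expect the main obstacle to be bookkeeping rather than ideas: the explicit constant for $\Lone\psi$, since $\|\psi\|_{H^3}\le6$ is crude, and for the $\Phi_1$ remainder integral. If the crude sum exceeds $15/2$, I would instead bound $\|\Lone\psi\|_{L^2}$ through the actual $L^2$ norms of $\psi,\psi',\psi''$ and $\psi'''$ on the split $|\xi|\lessgtr1$, rather than through the $H^3$ norm.
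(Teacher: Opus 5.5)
Your decomposition, the algebraic identity, and your tools (Plancherel, the Lagrange remainder for $e^{z}$ with $z=-2\varepsilon\ell$, the split at $|\xi|=1$, and $\|\psi\|_{H^{3}}\le 6$) are the same as the paper's. However, the argument does not close at the step you flagged yourself: the constant for $\varepsilon^{2}\mathcal{L}_{1}\psi$. Using $\log|\xi|\le|\xi|-1$ on $|\xi|\ge1$ only gives
\[
\frac{2\xi^{2}\log|\xi|}{(1+\xi^{2})^{3/2}}\le\frac{2(|\xi|-1)}{|\xi|}\longrightarrow 2\qquad(|\xi|\to\infty).
\]
So $c=2$ rather than $c\approx1$, and $\|\mathcal{L}_{1}\psi\|_{L^{2}}\le 12$. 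Even $c=1$ would not suffice. The other contributions are about $1$ from $\rho_{s}^{(\Phi_1)}$, $\tfrac43$ from $3\Phi_1\psi^{2}$, and less than $\tfrac14$ from the two terms carrying a spare factor of $\varepsilon$. The budget $\tfrac{15}{2}$ therefore forces $c\lesssim 0.82$ against $\|\psi\|_{H^{3}}\le 6$.

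The missing ingredient is the sharp elementary bound $\sup_{t\ge1}(\log t)/t=1/e$, which is what the paper uses. For $|\xi|\ge1$ it gives $2\xi^{2}\log|\xi|/(1+\xi^{2})^{3/2}\le 2\log|\xi|/|\xi|\le 2/e$. Combined with your bound $1/e$ for $|\xi|<1$, this yields $\|\mathcal{L}_{1}\psi\|_{L^{2}}\le 12/e\le\tfrac92$. The total is then $1+\tfrac{12}{50}+\tfrac92+\tfrac43+\tfrac{2}{250}=\tfrac{5311}{750}<\tfrac{15}{2}$.

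Your fallback through the individual $L^{2}$ norms also works, but only with this same inequality. It gives $\|\mathcal{L}_{1}\psi\|_{L^{2}}^{2}\le e^{-2}\|\psi\|_{L^{2}}^{2}+4e^{-2}\|\psi'''\|_{L^{2}}^{2}$, hence $\|\mathcal{L}_{1}\psi\|_{L^{2}}\le 3.03$. With $\log t\le t-1$ it yields only $2\|\psi'''\|_{L^{2}}\le 8.2$, which is again too large.

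Two smaller points, writing $\ell=\log|\xi|$ as you do:
\begin{itemize}
\item In the $\rho_{s}^{(\Phi_1)}$ integral, using $\log\xi\le\xi-1$ at infinity gives about $1.06$ instead of the paper's value below $1$ (obtained via $\log\xi\le\xi/e$). That means $\|\rho_{s}^{(\Phi_1)}\|_{L^{2}}\le 1.04\,\varepsilon^{2}$, which the margin absorbs.
\item For $\rho_{s}^{(\psi)}$, your factor $\ell^{2}/(1+|\xi|)$ is unbounded at $\xi=0$. Keep $\xi^{2}\ell^{2}$ together near the origin, as in the paper's bound $\sup_{\xi\neq0}\xi^{4}\ell^{4}e^{4\varepsilon|\ell|}/(1+\xi^{2})^{3}\le 1$.
\end{itemize}
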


\begin{proof}
	\emph{Step 1: the residual identity.}  Insert
	$u_{s}=\Phi_1+\varepsilon\psi$ into $\F_{s}$, using the symbol expansions in
	\eqref{eq:frac-laplacian-exp}
	together with the exact Taylor expansion of the cubic $W'$,
	\begin{equation*}
		W'(\Phi_1+\varepsilon\psi)=W'(\Phi_1)+\varepsilon W''(\Phi_1)\psi
		+3\Phi_1\,\varepsilon^{2}\psi^{2}+\varepsilon^{3}\psi^{3}.
	\end{equation*}
	Grouping by powers of $\varepsilon$,
	\begin{equation*}
		\F_{s}(u_{s})
		=\underbrace{\bigl[(-\Delta)\Phi_1+W'(\Phi_1)\bigr]}_{=\,0}
		+\varepsilon\,\underbrace{\bigl[\Lone \Phi_1+\Jop\psi\bigr]}_{=\,0}
		+\rho_{s}^{(\Phi_1)}+\varepsilon\rho_{s}^{(\psi)}+\varepsilon^{2}\Lone\psi
		+3\Phi_1\,\varepsilon^{2}\psi^{2}+\varepsilon^{3}\psi^{3},
	\end{equation*}
	the first bracket vanishing because $\Phi_1$ solves the classical equation and the second by the corrector equation~\eqref{eq:corrector-eq}.

	\smallskip
	\noindent\emph{Step 2: the multiplier remainder.} First we control the error in the Taylor
	expansion of $(-\Delta)^{s}$. Since
	$|\xi|^{2s}=|\xi|^{2}e^{-2\varepsilon\log|\xi|}$, the Taylor bound
	$|e^{z}-1-z|\le\tfrac12 z^{2}e^{|z|}$ applied with
	$z=-2\varepsilon\log|\xi|$, gives the estimate
	\begin{equation}\label{eq:symbol-rem2}
		\bigl|\,|\xi|^{2s}-|\xi|^{2}+2\varepsilon|\xi|^{2}\log|\xi|\,\bigr|
		\;\le\; 2\varepsilon^{2}|\xi|^{2}\bigl|\log|\xi|\bigr|^{2}
		e^{2\varepsilon|\log|\xi||}.
	\end{equation}
	By Plancherel and \eqref{eq:symbol-rem2},
	\begin{equation*}
		\|\rho_{s}^{(\Phi_1)}\|_{L^{2}}^{2}
		\le\frac{4\varepsilon^{4}}{2\pi}\int_{\R}
		\xi^{4}\bigl|\log|\xi|\bigr|^{4}e^{4\varepsilon|\log|\xi||}
		|\widehat{\Phi_1}(\xi)|^{2}\,d\xi
		= 8\pi \varepsilon^{4}\int_{0}^{\infty}
		\frac{\xi^{4}(\log\xi)^{4}e^{4\varepsilon|\log\xi|}}
		{\sinh^{2}(\pi\xi/\sqrt2)}\,d\xi,
	\end{equation*}
	using $|\widehat{\Phi_1}(\xi)|^{2} =2\pi^{2}/\sinh^{2}(\pi|\xi|/\sqrt2)$ and evenness. To estimate the last integral, we split it at $\xi=1$. On $(0,1)$, the Taylor series of $\sinh$ gives $\sinh(\pi\xi/\sqrt2)\ge\pi\xi/\sqrt2$, and $e^{4\varepsilon|\log\xi|}=\xi^{-4\varepsilon}$. On $(1,\infty)$, we use
	\begin{equation*}
		\sinh\left(\frac{\pi\xi}{\sqrt{2}}\right)
		= \frac12 e^{\pi\xi/\sqrt{2}} \left(1-e^{-\sqrt{2}\pi\xi}\right)
		\ge\tfrac{49}{100}e^{\pi\xi/\sqrt2}
	\end{equation*}
	which holds since $\sqrt{2}\pi\xi>4$ and $e^{-4}<1/50$. Moreover, $e^{4\varepsilon|\log\xi|}=\xi^{4\varepsilon}\le\xi$, and $\log\xi\le\xi/e$ for $\xi\ge1$. Thus,
	\begin{equation*}
		\begin{aligned}
			 & 8\pi\int_{0}^{\infty}
			\frac{\xi^{4}(\log\xi)^{4}e^{4\varepsilon|\log\xi|}}
			{\sinh^{2}(\pi\xi/\sqrt2)}\,d\xi                         \\
			 & \quad\le \frac{16}{\pi}\int_{0}^{1}
			\xi^{2-4\varepsilon}|\log\xi|^{4}\,d\xi
			+\frac{8\pi}{e^{4}}\Bigl(\frac{100}{49}\Bigr)^{2}
			\int_{1}^{\infty}\xi^{9}e^{-\sqrt2\pi\xi}\,d\xi          \\
			 & \quad\le\frac{16}{\pi}\frac{4!}{(3-4\varepsilon)^{5}}
			+\frac{8\pi}{e^{4}}\Bigl(\frac{100}{49}\Bigr)^{2}
			\frac{9!}{(\sqrt2\pi)^{10}}                              \\
			 & \quad<0.576+0.233<1.
		\end{aligned}
	\end{equation*}
	Here we used the substitution $\xi=e^{-t}$ in the first integral. In the second one, we enlarged the domain of integration to $(0,\infty)$ and used $\int_{0}^{\infty}t^{n}e^{-ct}\,dt=n!/c^{n+1}$. The bounds in the last line also use $\varepsilon\le1/50$. It follows that
	\begin{equation*}
		\|\rho_{s}^{(\Phi_1)}\|_{L^{2}}
		\le\varepsilon^{2}.
	\end{equation*}

	Again, by~\eqref{eq:symbol-rem2},
	\begin{equation*}
		\begin{aligned}
			\|\rho_{s}^{(\psi)}\|_{L^{2}}
			 & \le2\varepsilon^{2}\Biggl(\frac{1}{2\pi}\int_{\R}
			\xi^{4}\bigl|\log|\xi|\bigr|^{4}e^{4\varepsilon|\log|\xi||}
			|\widehat\psi(\xi)|^{2}\,d\xi\Biggr)^{1/2}           \\
			 & \le2\varepsilon^{2}\,\Biggl(\sup_{\xi\ne0}
			\frac{\xi^{4}\bigl|\log|\xi|\bigr|^{4}e^{4\varepsilon|\log|\xi||}}
			{(1+\xi^{2})^{3}}\Biggr)^{1/2}\|\psi\|_{H^{3}} .
		\end{aligned}
	\end{equation*}
	We now use the elementary identities
	\begin{equation}\label{eq:sup-texp}
		\sup_{t\ge0}t^{4}e^{-at}=\left(\frac{4}{ae}\right)^{4}
		\quad\text{and}\quad
		\sup_{t\ge0}te^{-at}=\frac{1}{ae}
		\qquad(a>0).
	\end{equation}
	If $|\xi|\ge1$, then $(1+\xi^{2})^{3}\ge\xi^{6}$ hence,
	\begin{equation*}
		\frac{\xi^{4}\bigl|\log|\xi|\bigr|^{4}e^{4\varepsilon|\log|\xi||}}{(1+\xi^{2})^{3}}
		\le (\log|\xi|)^{4}e^{-2(1-2\varepsilon)\log|\xi|}
		\le\left(\frac{2}{e(1-2\varepsilon)}\right)^{4}<1.
	\end{equation*}
	If $0<|\xi|\le1$, then $(1+\xi^{2})^{3}\ge1$, so, using the first identity with $t=-\log|\xi|$,
	\begin{equation*}
		\frac{\xi^{4}\bigl|\log|\xi|\bigr|^{4}e^{4\varepsilon|\log|\xi||}}{(1+\xi^{2})^{3}}
		\le (\log|\xi|)^{4}e^{4(1-\varepsilon)\log|\xi|}
		\le\left(\frac{1}{e(1-\varepsilon)}\right)^{4}<1.
	\end{equation*}
	Thus the supremum is at most $1$, and $\|\psi\|_{H^{3}}\le6$ yields
	\begin{equation*}
		\|\rho_{s}^{(\psi)}\|_{L^{2}}
		\le2\varepsilon^{2}\|\psi\|_{H^{3}}\le12\varepsilon^{2}.
	\end{equation*}

	\smallskip
	\noindent\emph{Step 3: the remaining three terms.}  For $\Lone\psi$, whose
	symbol is $-2|\xi|^{2}\log|\xi|$, we bound that symbol against
	$(1+\xi^{2})^{3/2}$.  For $|\xi|\ge1$, using
	$(1+\xi^{2})^{3/2}\ge|\xi|^{3}$ and the second identity in \eqref{eq:sup-texp} with
	$t=\log|\xi|$ and $a=1$,
	\begin{equation*}
		\frac{2\xi^{2}\bigl|\log|\xi|\bigr|}{(1+\xi^{2})^{3/2}}
		\le\frac{2\log|\xi|}{|\xi|}\le\frac2e ;
	\end{equation*}
	for $0<|\xi|\le1$, using $(1+\xi^{2})^{3/2}\ge1$ and the same identity with $t=-\log|\xi|$ and $a=2$,
	\begin{equation*}
		\frac{2\xi^{2}\bigl|\log|\xi|\bigr|}{(1+\xi^{2})^{3/2}}
		\le2\xi^{2}\bigl|\log|\xi|\bigr|\le\frac{2}{2e}=\frac1e .
	\end{equation*}
	Hence
	\begin{equation*}
		\|\Lone\psi\|_{L^{2}}\le\frac{2}{e}\,\|\psi\|_{H^{3}}\le\frac{12}{e}\le\frac92 .
	\end{equation*}
	Next, $\|\Phi_1\|_{L^{\infty}}\le1$ and Lemma~\ref{lem:corrector-bounds} give
	\begin{equation*}
		\|3\Phi_1\psi^{2}\|_{L^{2}}\le3\|\psi\|_{L^{\infty}}\|\psi\|_{L^{2}}
		\le3\cdot\frac45\cdot\frac59=\frac43,
		\qquad
		\|\psi^{3}\|_{L^{2}}\le\|\psi\|_{L^{\infty}}^{2}\|\psi\|_{L^{2}}
		\le\left(\frac45\right)^{2}\frac59=\frac{16}{45}\le\frac25.
	\end{equation*}

	\smallskip
	\noindent\emph{Step 4: conclusion.}  By the triangle inequality
	in~\eqref{eq:Rs-def}, and using $\varepsilon\le1/50$ on the two terms
	carrying a spare power of $\varepsilon$,
	\begin{equation*}
		\|R_{s}\|_{L^{2}}
		\le\Bigl(1+\frac1{50}\cdot12+\frac92+\frac43
		+\frac1{50}\cdot\frac25\Bigr)\varepsilon^{2}
		=\frac{5311}{750}\,\varepsilon^{2}<\frac{15}{2}\varepsilon^{2},
	\end{equation*}
	which is~\eqref{eq:Rs-L2}. Finally, each of the five terms in~\eqref{eq:Rs-def} lies in $C_{0,\mathrm{odd}}(\R)$. Indeed, the two Taylor remainders lie in this space because their symbols are $L^{1}$ against $\widehat{\Phi_1}$ and $\widehat\psi$ by the bounds just proved; $\Lone\psi$ does so by Lemma~\ref{lem:corrector}; and the last two do so because $\Phi_1$ is bounded and $\psi\in C_{0}$.
\end{proof}

The next step in the fixed point argument is to invert the operator linearized around the approximate profile $u_s$. This is done in the following lemma.

\begin{lemma}\label{lem:uniform-invert}
	For $s\in[49/50,1)$, the operator $\Lop:=\fl+W''(u_{s})\colon H^{2s}_{\mathrm{odd}}(\R)\to\Lodd(\R)$ is a bounded isomorphism. Its inverse $\Gs:=\Lop^{-1}$ satisfies
	\begin{equation}\label{eq:Gs-bound}
		\|\Gs\|_{L^{2}\to L^{2}}\le1,
		\qquad
		\|\Gs\|_{L^{2}\to H^{2s}}\le M:=5 .
	\end{equation}
\end{lemma}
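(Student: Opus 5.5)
The plan is to prove coercivity of $\Lop$ on odd functions in the $L^{2}$ quadratic-form sense, $\langle\Lop w,w\rangle\ge\|w\|_{L^{2}}^{2}$ for $w\in H^{2s}_{\mathrm{odd}}$, and then upgrade to $H^{2s}$ using the equation. First, $\Lop$ is self-adjoint on $L^{2}(\R)$ with domain $H^{2s}$: it is a bounded real even potential added to $\fl$. Since $u_{s}$ is odd, $W''(u_{s})$ is even, so $\Lop$ preserves the odd subspace. Once the lower bound $\Lop\ge1$ on $\Lodd$ is established, the spectral theorem gives invertibility and $\|\Gs\|_{L^{2}\to L^{2}}\le1$, and hence surjectivity onto $\Lodd$ with range in $H^{2s}_{\mathrm{odd}}$.

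For the $H^{2s}$ bound, let $f\in\Lodd$ and $w=\Gs f$. Then $\fl w=f-W''(u_{s})w$, so by \eqref{eq:us-bounds}
\[
\|\fl w\|_{L^{2}}\le\bigl(1+\tfrac{107}{50}\bigr)\|f\|_{L^{2}}.
\]
Inserting this into \eqref{eq:mult-comp} gives
\[
\|w\|_{H^{2s}}^{2}\le2\bigl(1+(157/50)^{2}\bigr)\|f\|_{L^{2}}^{2}<25\|f\|_{L^{2}}^{2},
\]
which is the claim with $M=5$.

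The coercivity is the main obstacle, because $\fl$ is not a small perturbation of $-\partial_{x}^{2}$ at high frequencies: the symbol difference $|\xi|^{2}-|\xi|^{2s}$ is unbounded. I would split the potential as $W''(u_{s})=W''(\Phi_1)+q_{s}$. Here
\[
q_{s}=6\varepsilon\Phi_1\psi+3\varepsilon^{2}\psi^{2},
\qquad \|q_{s}\|_{L^{\infty}}\le\tfrac{6}{50}\cdot\tfrac45+\tfrac{3}{2500}\cdot\tfrac{16}{25}<0.1
\]
by Lemma~\ref{lem:corrector-bounds}. It therefore suffices to show $\langle(\fl+W''(\Phi_1))w,w\rangle\ge1.1\|w\|^{2}$ on odd $w$. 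Write
\[
W''(\Phi_1)=2+V,\qquad V=-3\,\mathrm{sech}^{2}(x/\sqrt2)\in[-3,0],
\]
and use \eqref{eq:J-odd-spectrum} in the form: for odd $w$,
\[
\int|\xi|^{2}|\widehat w|^{2}\tfrac{d\xi}{2\pi}+\int(2+V)w^{2}\ge\tfrac32\|w\|^{2}.
\]
Fix a frequency cutoff $K$ (say $K$ around $10$; I would tune it) and split $w=w_{\mathrm{lo}}+w_{\mathrm{hi}}$ by the Fourier projector onto $\{|\xi|\le K\}$. Both pieces remain odd and are $L^{2}$-orthogonal.

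On the low piece, $|\xi|^{2s}\ge K^{-2\varepsilon}|\xi|^{2}$ for $1\le|\xi|\le K$, and $|\xi|^{2s}\ge|\xi|^{2}$ for $|\xi|\le1$. So with $\theta:=K^{-2\varepsilon}\ge K^{-1/25}$, the kinetic term dominates $\theta$ times the classical one. Writing the form as a convex combination, $\theta\,(\text{classical form})+(1-\theta)\int(2+V)w_{\mathrm{lo}}^{2}$, and using $2+V\ge-1$, the low part gives at least $\bigl(\tfrac32\theta-(1-\theta)\bigr)\|w_{\mathrm{lo}}\|^{2}$. This is close to $3/2$.

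On the high piece, the kinetic term is at least $K^{2s}\|w_{\mathrm{hi}}\|^{2}\ge K^{1.96}\|w_{\mathrm{hi}}\|^{2}$, which swamps $\|V\|_{\infty}=3$. The cross terms $2\int V w_{\mathrm{lo}}w_{\mathrm{hi}}$ are bounded by $6\|w_{\mathrm{lo}}\|\|w_{\mathrm{hi}}\|$. They are absorbed by Young's inequality, spending a small multiple of $\|w_{\mathrm{lo}}\|^{2}$ against a large multiple of $\|w_{\mathrm{hi}}\|^{2}$. To make this step less lossy, I would first try to use the decay of $\widehat V$ (an explicit $\xi/\sinh$-type function) to show that $V$ maps low frequencies into high frequencies with a small norm. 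This would shrink the cross term further.

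The resulting $2\times2$ quadratic inequality in $(\|w_{\mathrm{lo}}\|,\|w_{\mathrm{hi}}\|)$ must be checked to have smallest eigenvalue at least $1.1$ for $\varepsilon\le1/50$. This is an explicit, if tedious, check, and it is where I expect the constants to need tuning.
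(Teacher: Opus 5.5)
Your argument is correct, but it reaches the spectral gap by a genuinely different route from the paper.

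\textbf{What the paper does.} It never proves a quadratic-form bound for arbitrary odd $w$. It first applies Weyl's theorem: the multiplier $W''(u_s)-2$ lies in $C_0$ and is relatively compact. Hence the odd restriction of $\Lop$ has essential spectrum $[2,\infty)$, so any spectral point in $[-1,1]$ would have to be an eigenvalue. For such an eigenfunction, the eigenvalue equation gives $\|\fl w\|_{L^2}\le 157/50$. This is combined with the pointwise symbol inequality $|\xi|^2-|\xi|^{2s}\le\frac1{100}|\xi|^{4s}$, whose left side is nonpositive for $|\xi|<1$. The kinetic defect relative to $\mathcal{J}$ is then at most $1/10$, and the paper reaches $\langle\mathcal{J}w,w\rangle\le 6/5<3/2$, a contradiction.

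\textbf{How the two routes differ.} The paper tames the unbounded symbol difference through the $H^{2s}$ regularity of eigenfunctions. You tame it by frequency localization instead. Below the cutoff $K$ you compare with $\mathcal{J}$ at the multiplicative cost $K^{-2\varepsilon}$. Above it, you let $|\xi|^{2s}\ge K^{2s}$ dominate the potential outright.

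\textbf{What each buys.} Your route avoids Weyl's theorem and compactness entirely. It also gives the form bound $\langle\Lop w,w\rangle\ge\|w\|_{L^2}^2$ directly on all of $H^{2s}_{\mathrm{odd}}$. The price is a tunable parameter and a $2\times2$ check. The paper's argument is shorter once the essential spectrum is known.

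\textbf{The check closes.} It works at your suggested $K=10$, without the refined $\widehat V$ estimate. Using $\varepsilon\le 1/50$ and $s\ge 49/50$, you get $\frac52\cdot 10^{-1/25}-1\ge 1.28$ and $10^{49/25}-1\ge 90$. After subtracting $1.1$, the diagonal entries are at least $0.18$ and $88.9$, and $0.18\cdot 88.9>9=(6/2)^2$. Together with $\|q_s\|_{L^\infty}<0.1$, this gives $\Lop\ge 1$ on $\Lodd$. Your $H^{2s}$ upgrade is then identical to the paper's.
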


\begin{proof}
	\emph{Step 1: self-adjointness and the essential spectrum.}  Since
	$W''(u_{s})$ is real-valued and bounded by~\eqref{eq:us-bounds},
	$\Lop$ is self-adjoint on $L^{2}(\R)$
	with domain $H^{2s}(\R)$.  Moreover, $u_{s}$ is odd and
	$W''(u_{s})=3u_{s}^{2}-1$ is even.  Thus $\Lop$ preserves the odd
	subspace, and its restriction to $\Lodd(\R)$ is self-adjoint with domain
	$H^{2s}_{\mathrm{odd}}(\R)$.  The estimate~\eqref{eq:us-bounds} also gives,
	for every $v\in H^{2s}_{\mathrm{odd}}(\R)$,
	\begin{equation*}
		\|\Lop v\|_{L^{2}}
		\le\|\fl v\|_{L^{2}}+\frac{107}{50}\|v\|_{L^{2}}
		\le\frac{157}{50}\|v\|_{H^{2s}},
	\end{equation*}
	so $\Lop\colon H^{2s}_{\mathrm{odd}}(\R)\to\Lodd(\R)$ is bounded.

	Since $u_{s}(x)\to\pm1$ as $x\to\pm\infty$,
	\begin{equation*}
		W''(u_{s})-2=3(u_{s}^{2}-1)\in C_{0}(\R).
	\end{equation*}
	Multiplication by $W''(u_{s})-2$, composed with $(\fl+1)^{-1}\colon L^{2}(\R)\to H^{2s}(\R)$, is compact. Indeed, after restricting the multiplier to a bounded interval, this follows from the compact embedding of $H^{2s}$ into $L^{2}$ on bounded intervals, while the operator norm of the remaining tail tends to zero because
	\begin{equation*}
		\sup_{|x|\ge R}|W''(u_{s}(x))-2|\longrightarrow0
		\qquad\text{as }R\to\infty.
	\end{equation*}
	The same argument applies on the odd subspace. The odd restriction of $\fl+2$ has spectrum $[2,+\infty)$, so Weyl's theorem on the stability of the essential spectrum under relatively compact perturbations~\cite[Ch.~IV, Thm.~5.35]{Kato1976} implies that the essential spectrum of the odd restriction of $\Lop$ is also $[2,+\infty)$. Consequently, any spectral point of this restriction in $[-1,1]$ must be an eigenvalue with an eigenfunction in $H^{2s}_{\mathrm{odd}}(\R)$.

	\smallskip
	\noindent\emph{Step 2: exclusion of $[-1,1]$.}  Suppose
	$\Lop w=\lambda w$ for some $w\in H^{2s}_{\mathrm{odd}}(\R)$ satisfying
	$\|w\|_{L^{2}}=1$ and some $\lambda\in[-1,1]$.  Since $2s>1$, the
	function $w$ belongs to $H^{1}(\R)$, so the quadratic form
	$\langle\Jop w,w\rangle$ is well defined, and the spectral
	gap~\eqref{eq:J-odd-spectrum} gives
	$\langle\Jop w,w\rangle\ge\tfrac32$.  On the other hand, adding and
	subtracting the quadratic forms of $\Lop$ and $\Jop$, we obtain
	\begin{equation}\label{eq:J-vs-L}
		\langle\Jop w,w\rangle
		=\lambda
		+\frac{1}{2\pi}\int_{\R}\bigl(|\xi|^{2}-|\xi|^{2s}\bigr)|\widehat w(\xi)|^{2}\,d\xi
		+\int_{\R}\bigl(W''(\Phi_1)-W''(u_{s})\bigr)|w|^{2}\,dx .
	\end{equation}
	We estimate the two error terms in~\eqref{eq:J-vs-L}. The multiplier $|\xi|^{2}-|\xi|^{2s}$ is non-positive when $|\xi|<1$. When $|\xi|\ge1$, we have
	\begin{equation*}
		\frac{|\xi|^{2}-|\xi|^{2s}}{|\xi|^{4s}}
		=|\xi|^{-2+4\varepsilon}-|\xi|^{-2+2\varepsilon}.
	\end{equation*}
	The expression on the right attains its maximum when $ |\xi|^{2\varepsilon}=(1-\varepsilon)/(1-2\varepsilon) $, and hence
	\begin{equation*}
		|\xi|^{2}-|\xi|^{2s}
		\le
		\frac{\varepsilon}{1-\varepsilon}
		\Bigl(\frac{1-2\varepsilon}{1-\varepsilon}\Bigr)^{(1-2\varepsilon)/\varepsilon}
		|\xi|^{4s}.
	\end{equation*}
	Moreover,
	\begin{equation*}
		\log\Bigl(\frac{1-2\varepsilon}{1-\varepsilon}\Bigr)
		=\log\Bigl(1-\frac{\varepsilon}{1-\varepsilon}\Bigr)
		\le-\frac{\varepsilon}{1-\varepsilon}.
	\end{equation*}
	Since $\varepsilon\le1/50$, this implies
	\begin{equation*}
		\frac{\varepsilon}{1-\varepsilon}
		\Bigl(\frac{1-2\varepsilon}{1-\varepsilon}\Bigr)^{(1-2\varepsilon)/\varepsilon}
		\le\frac1{49}e^{-48/49}<\frac1{100}.
	\end{equation*}
	Since $\fl w=\lambda w-W''(u_{s})w$, it follows from~\eqref{eq:us-bounds} that
	\begin{equation*}
		\|\fl w\|_{L^{2}}
		\le|\lambda|\|w\|_{L^{2}}
		+\|W''(u_{s})\|_{L^{\infty}}\|w\|_{L^{2}}
		\le1+\frac{107}{50}=\frac{157}{50}.
	\end{equation*}
	Plancherel's identity now yields
	\begin{equation*}
		\frac{1}{2\pi}\int_{\R}\bigl(|\xi|^{2}-|\xi|^{2s}\bigr)|\widehat w|^{2}\,d\xi
		\le\frac1{100}\,\|\fl w\|_{L^{2}}^{2}
		\le\frac1{100}\Bigl(\frac{157}{50}\Bigr)^{2}\le\frac{1}{10}.
	\end{equation*}

	For the potential term, the identity
	\begin{equation*}
		W''(u_{s})-W''(\Phi_1)
		=3(u_{s}^{2}-\Phi_1^{2})
		=3\varepsilon(u_{s}+\Phi_1)\psi
	\end{equation*}
	and the bounds~\eqref{eq:us-bounds} and Lemma~\ref{lem:corrector-bounds} give
	\begin{equation*}
		\|W''(u_{s})-W''(\Phi_1)\|_{L^{\infty}}
		\le3\varepsilon\bigl(\|u_{s}\|_{L^{\infty}}+1\bigr)\|\psi\|_{L^{\infty}}
		\le3\cdot\frac1{50}\cdot\frac{101}{50}\cdot\frac45\le\frac1{10}.
	\end{equation*}
	Since $\|w\|_{L^{2}}=1$, it follows that
	\begin{equation*}
		\int_{\R}\bigl(W''(\Phi_1)-W''(u_{s})\bigr)|w|^{2}\,dx
		\le\frac1{10}.
	\end{equation*}
	Substituting these estimates into~\eqref{eq:J-vs-L}, and using $\lambda\le1$, we find
	\begin{equation*}
		\langle\Jop w,w\rangle\le 1+\frac1{10}+\frac1{10}=\frac65<\frac32,
	\end{equation*}
	which contradicts the spectral gap of $\Jop$. Hence the spectrum of the odd restriction of $\Lop$ does not intersect $[-1,1]$.

	\smallskip
	\noindent\emph{Step 3: the inverse bounds.}  The spectral exclusion proved
	above gives
	\begin{equation*}
		\operatorname{dist}\bigl(0,\sigma(\Lop|_{\Lodd(\R)})\bigr)\ge1.
	\end{equation*}
	The spectral theorem therefore shows that $0$ belongs to the resolvent set of the odd restriction of $\Lop$ and that
	\begin{equation*}
		\|\Gs\|_{L^{2}\to L^{2}}\le1.
	\end{equation*}
	In particular, $\Lop$ maps $H^{2s}_{\mathrm{odd}}(\R)$ bijectively onto $\Lodd(\R)$. For $f\in\Lodd(\R)$, the identity $\Lop\Gs f=f$ gives
	\begin{equation*}
		\|\fl\Gs f\|_{L^{2}}
		\le\|f\|_{L^{2}}+\frac{107}{50}\|\Gs f\|_{L^{2}}
		\le\frac{157}{50}\|f\|_{L^{2}}.
	\end{equation*}
	Using~\eqref{eq:mult-comp}, we conclude that
	\begin{equation*}
		\|\Gs f\|_{H^{2s}}
		\le\sqrt2\,\Bigl(\|\Gs f\|_{L^{2}}^{2}
		+\|\fl\Gs f\|_{L^{2}}^{2}\Bigr)^{1/2}
		\le\sqrt2\,\Bigl(1+\bigl(\tfrac{157}{50}\bigr)^{2}\Bigr)^{1/2}
		\|f\|_{L^{2}}
		<5\|f\|_{L^{2}},
	\end{equation*}
	which proves~\eqref{eq:Gs-bound} and completes the proof.
\end{proof}

The next lemma shows that $\Phi_s$ is a perturbation of $u_s$ and gives a quantitative bound for the error using a fixed point argument.

\begin{lemma}\label{lem:fixed-point}
	For every $s\in[49/50,1)$,
	\begin{equation*}
		\Phis \;=\; u_{s} + r_{s},\qquad r_{s}\in H^{2s}_{\mathrm{odd}}(\R),
		\qquad
		\|r_{s}\|_{H^{2s}(\R)}\;\le\; 75\,\varepsilon^{2}.
	\end{equation*}
\end{lemma}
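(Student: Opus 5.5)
The plan is a Newton--Kantorovich fixed-point argument around $u_{s}$, built on Lemma~\ref{lem:residual} and Lemma~\ref{lem:uniform-invert}, followed by a separate argument identifying the fixed point with $\Phis$. Writing a candidate solution as $u_{s}+r$ with $r$ odd, the equation $\F_{s}(u_{s}+r)=0$ is, by $\F_{s}(u_{s})=-R_{s}$ and the exact cubic Taylor expansion of $W'$, equivalent to
\begin{equation*}
	\Lop r \;=\; R_{s}-N_{s}(r),\qquad N_{s}(r):=3u_{s}r^{2}+r^{3}.
\end{equation*}
Accordingly, I would set $T(r):=\Gs\bigl(R_{s}-N_{s}(r)\bigr)$ on the closed ball $B_{\rho}:=\{r\in H^{2s}_{\mathrm{odd}}(\R):\|r\|_{H^{2s}}\le\rho\}$ with $\rho:=75\varepsilon^{2}$. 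By~\eqref{eq:Gs-bound} and~\eqref{eq:Rs-L2},
\begin{equation*}
	\|\Gs R_{s}\|_{H^{2s}}\le 5\cdot\tfrac{15}{2}\varepsilon^{2}=\tfrac{75}{2}\varepsilon^{2}=\tfrac{\rho}{2}.
\end{equation*}
It therefore suffices to show that $5\|N_{s}(r)\|_{L^{2}}\le\rho/2$ and that $T$ is a contraction on $B_{\rho}$.

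For the nonlinear estimates I would use Lemma~\ref{lem:sobolev-constants} with $\sigma=2s\ge 49/25$. Since $C^{\ast}(2)^{2}=1/4$ and $C^{\ast}$ varies continuously in $\sigma$, one gets $C^{\ast}(2s)\le 0.55$ on this range, so $\|r\|_{L^{\infty}}\le 0.55\,\|r\|_{H^{2s}}$. Combined with $\|u_{s}\|_{L^{\infty}}\le 51/50$ from~\eqref{eq:us-bounds}, this gives
\begin{equation*}
	\|N_{s}(r)\|_{L^{2}}\le\bigl(3\cdot\tfrac{51}{50}\cdot0.55\,\rho+0.55^{2}\rho^{2}\bigr)\rho .
\end{equation*}
Since $\rho\le 75/2500=0.03$, the bracket is below $0.06$, so $5\|N_{s}(r)\|_{L^{2}}\le 0.3\rho<\rho/2$. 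The difference identity
\begin{equation*}
	N_{s}(r_{1})-N_{s}(r_{2})=\bigl(3u_{s}(r_{1}+r_{2})+r_{1}^{2}+r_{1}r_{2}+r_{2}^{2}\bigr)(r_{1}-r_{2})
\end{equation*}
yields, in the same way, a Lipschitz constant for $T$ of roughly $0.6$ on $B_{\rho}$. Banach's theorem then produces a unique fixed point $r_{s}\in B_{\rho}$. Moreover, the same bounds keep $T$ a contraction on a fixed larger ball $B_{\delta}$ (for instance $\delta=1/10$) that does not depend on $\varepsilon$, so $r_{s}$ is the unique fixed point in $B_{\delta}$ as well. Because $\fl(u_{s}+r_{s})\in L^{2}$ and $2s>1$, the function $v_{s}:=u_{s}+r_{s}$ is a continuous, bounded, odd solution of the layer equation with $v_{s}(\pm\infty)=\pm1$.

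The main obstacle is proving $v_{s}=\Phis$. The uniqueness in Theorem~\ref{thm:CS} applies to layers, and monotonicity of $v_{s}$ is not evident, because $\Phi_{1}'$ decays exponentially while $r_{s}'$ is only controlled in $H^{2s-1}$. I would therefore argue by continuation in $s$. Let $S$ be the set of $s\in[49/50,1]$ for which $\|\Phis-u_{s}\|_{H^{2s}}<\delta$; at $s=1$ we have $\Phi_{1}=u_{1}$, so $1\in S$. On $S$, $\Phis-u_{s}$ is a fixed point of $T$ in $B_{\delta}$, hence equals $r_{s}$ and satisfies the sharper bound $\|\Phis-u_{s}\|_{H^{2s}}\le75\varepsilon^{2}\le\delta/2$. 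This gap between $75\varepsilon^{2}$ and $\delta$, combined with continuity of $s\mapsto\|\Phis-u_{s}\|_{H^{2s}}$ on $[49/50,1]$, shows that $S$ is both open and closed, so $S=[49/50,1]$.

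The continuity input is where genuine work remains. Proposition~\ref{prop:equicoerc} provides only pointwise convergence for $s_{*}<1$, so it has to be upgraded to $H^{2s}$-convergence of $\Phis-u_{s}$, including the endpoint $s_{*}=1$. I would first try to prove $L^{2}$-convergence of $\Phis-\Phi_{\sigma}$ using the uniform tail bound~\eqref{eq:uniform-layer-tail} together with the Fourier bound~\eqref{eq:hf-fourier-bound-lem}. I would then bootstrap to $H^{2s}$ through the equation $\fl\Phis=\Phis-\Phis^{3}$, running the same dominated-convergence argument on the Fourier side at $s_{*}=1$ to cover the endpoint. If this continuation route proves too delicate, the fallback is to show directly that $v_{s}$ is monotone, by establishing a quantitative positive lower bound on $v_{s}'$ in a weighted norm; this would let the uniqueness statement of Theorem~\ref{thm:CS} identify $v_{s}$ with $\Phis$.
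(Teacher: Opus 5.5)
Your contraction step is the paper's argument in substance. It uses the same map $T=\mathcal{G}_{s}\bigl(R_{s}-N_{s}(\cdot)\bigr)$ (your $N_{s}$ is the negative of the paper's), the same radius $75\varepsilon^{2}$, the same bound $\|\mathcal{G}_{s}R_{s}\|_{H^{2s}}\le\rho/2$, and a Lipschitz constant safely below one. One small repair: continuity of $C^{\ast}$ alone does not give $C^{\ast}(2s)\le0.55$ on $[49/25,2]$. The paper instead uses that $\sigma\mapsto C^{\ast}(\sigma)$ is decreasing and evaluates $C^{\ast}(49/25)=0.5039\ldots$.

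The gap is the identification $u_{s}+r_{s}=\Phis$, which your proposal does not complete.

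\emph{The continuation argument is unfinished.} It rests on continuity of $s\mapsto\|\Phis-u_{s}\|_{H^{2s}}$ on $[49/50,1]$. That includes the a priori fact that $\Phis-u_{s}\in H^{2s}$ at all, and the endpoint $s=1$. You only outline this. The auxiliary radius $\delta=1/10$ also fails: with your own constants the Lipschitz bound of $T$ on $B_{1/10}$ is about $5\cdot3\cdot\tfrac{51}{50}\cdot2\cdot0.55\cdot\tfrac1{10}\approx1.7$. A radius such as $1/20$ would work, since you only need $75\varepsilon^{2}<\delta$.

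\emph{The detour is unnecessary.} Theorem~\ref{thm:CS}, as stated in the paper, asserts uniqueness among all $C^{\infty}$ solutions with $\Phi(\pm\infty)=\pm1$ and $\Phi(0)=0$. Monotonicity is part of its conclusion, not a hypothesis. The paper therefore closes the identification in a few lines:
\begin{itemize}
\item $v_{s}=u_{s}+r_{s}$ is bounded and solves $\fl v_{s}=v_{s}-v_{s}^{3}$ with bounded right-hand side, so interior regularity for the fractional Laplacian makes it $C^{\infty}$.
\item $v_{s}$ is odd, hence $v_{s}(0)=0$.
\item $r_{s}\in H^{2s}\hookrightarrow C_{0}$ and $\psi\in C_{0}$ give $v_{s}(\pm\infty)=\pm1$.
\item Uniqueness in Theorem~\ref{thm:CS} then yields $v_{s}=\Phis$.
\end{itemize}
The ingredient missing from your write-up is the regularity upgrade from a continuous, bounded $L^{2}$-solution to a $C^{\infty}$ one, which is what lets Theorem~\ref{thm:CS} apply. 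If you prefer not to rely on that form of uniqueness, you would have to carry out the continuation argument in full.
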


\begin{proof}
	We first reformulate the equation for $r_s$ as a fixed point problem. Since $W'$ is a cubic, the exact Taylor expansion $W'(u_{s}+r)=W'(u_{s})+W''(u_{s})r+3u_{s}r^{2}+r^{3}$ yields, for $r\in H^{2s}_{\mathrm{odd}}(\R)$,
	\begin{equation*}
		\F_{s}(u_{s}+r)=\F_{s}(u_{s})+\Lop r-N_{s}(r),
		\qquad
		N_{s}(r):=-3u_{s}r^{2}-r^{3}.
	\end{equation*}
	Since $\F_{s}(u_{s})=-R_{s}$, solving $\F_{s}(u_{s}+r)=0$ is equivalent to $\Lop r=R_{s}+N_{s}(r)$. Applying $\Gs$ leads to the fixed-point equation
	\begin{equation*}
		r=T_{s}(r),
		\qquad
		T_{s}(r):=\Gs\bigl(R_{s}+N_{s}(r)\bigr).
	\end{equation*}
	These two formulations are equivalent because $\Lop$ is an isomorphism $H^{2s}_{\mathrm{odd}}\to\Lodd$ and its inverse is $\Gs$.

	We now prove that the operator $T_s$ is a contraction on a suitable ball. By Lemma~\ref{lem:sobolev-constants}, $\|w\|_{L^{\infty}}\le C^{\ast}(2s)\|w\|_{H^{2s}}$. Since $\sigma\mapsto C^{\ast}(\sigma)$ is decreasing and $2s\ge49/25$,
	\begin{equation}\label{eq:Cstar-2s-value}
		C^{\ast}(2s)\le C^{\ast}\left(\frac{49}{25}\right)
		=\Bigl(\frac{\Gamma(\tfrac{73}{50})}{2\sqrt\pi\,\Gamma(\tfrac{49}{25})}\Bigr)^{1/2}
		=0.5039\ldots\le\frac{51}{100}
		\qquad(s\in\left[\frac{49}{50},1\right]).
	\end{equation}
	Set $\delta:=75\varepsilon^{2}$. On the closed ball $B_{\delta}:=\{r\in H^{2s}_{\mathrm{odd}}:\|r\|_{H^{2s}}\le\delta\}$, the algebraic identity
	\begin{equation*}
		N_{s}(r_{1})-N_{s}(r_{2})
		=-3u_{s}(r_{1}+r_{2})(r_{1}-r_{2})-(r_{1}^{2}+r_{1}r_{2}+r_{2}^{2})(r_{1}-r_{2}),
	\end{equation*}
	together with $\|r_{1}^{2}+r_{1}r_{2}+r_{2}^{2}\|_{L^{\infty}} \le(\|r_{1}\|_{L^{\infty}}+\|r_{2}\|_{L^{\infty}})^{2}$ and $\|r_{1}\|_{H^{2s}}+\|r_{2}\|_{H^{2s}}\le2\delta$, gives
	\begin{equation*}
		\|N_{s}(r_{1})-N_{s}(r_{2})\|_{L^{2}}
		\le C_{N}\bigl(\|r_{1}\|_{H^{2s}}+\|r_{2}\|_{H^{2s}}\bigr)\|r_{1}-r_{2}\|_{H^{2s}},
	\end{equation*}
	with, by~\eqref{eq:us-bounds} and~\eqref{eq:Cstar-2s-value},
	\begin{equation*}
		C_{N}:=3\|u_{s}\|_{L^{\infty}}C^{\ast}(2s)+2\bigl(C^{\ast}(2s)\bigr)^{2}\delta
		\le3\cdot\frac{51}{50}\cdot\frac{51}{100}+2\left(\frac{51}{100}\right)^{2}\cdot\frac{3}{100}
		\le\frac85 .
	\end{equation*}
	Composing with $\Gs$ and using~\eqref{eq:Gs-bound} and~\eqref{eq:Rs-L2},
	\begin{equation*}
		\|\Gs R_{s}\|_{H^{2s}}\le M\|R_{s}\|_{L^{2}}\le5\cdot\frac{15}{2}\varepsilon^{2}
		=\frac{\delta}{2},
		\qquad
		\|T_{s}(r_{1})-T_{s}(r_{2})\|_{H^{2s}}\le 2MC_{N}\delta\,\|r_{1}-r_{2}\|_{H^{2s}} .
	\end{equation*}
	Since $\varepsilon\le1/50$,
	\begin{equation*}
		2MC_{N}\delta\le2\cdot5\cdot\frac85\cdot75\,\varepsilon^{2}
		=1200\,\varepsilon^{2}\le\frac{1200}{2500}<\frac12,
	\end{equation*}
	the map $T_{s}$ is a contraction of $B_{\delta}$. By the Banach fixed-point theorem there is a unique $r_{s}\in B_{\delta}$ with $r_{s}=T_{s}(r_{s})$, so $\Psi_{s}:=u_{s}+r_{s}$ solves $\fl\Psi_{s}+W'(\Psi_{s})=0$ in $L^{2}(\R)$ and $\|r_{s}\|_{H^{2s}}\le\delta=75\varepsilon^{2}$.

	Finally we identify $\Psi_s$ with $\Phi_s$. Since $2s>1$, the embedding $H^{2s}(\R)\hookrightarrow C_{0}(\R)$ gives $r_{s}\in C_{0}$; as $u_{s}=\Phi_1+\varepsilon\psi$ with $\Phi_1(\pm\infty)=\pm1$ and $\psi\in C_{0}$, we get $\Psi_{s}(\pm\infty)=\pm1$, while oddness forces $\Psi_{s}(0)=0$. The bounded function $\Psi_{s}$ solves $\fl\Psi_{s}=\Psi_{s}-\Psi_{s}^{3}$ distributionally with right-hand side in $L^{\infty}$, so the interior regularity theory for the fractional Laplacian~\cite{CabreSire2014I,CabreSire2015II} upgrades it to a classical solution in $C^{\infty}(\R)$. Thus $\Psi_{s}$ is an odd, smooth solution of the layer equation with the correct limits and normalization, and by uniqueness in Theorem~\ref{thm:CS} we have $\Psi_{s}=\Phis$.
\end{proof}

\subsection{The derivative expansion}\label{ss:s1-expansion}

We now expand the derivative functional $J$ of~\eqref{eq:J-functional} at the approximate profile, transfer the expansion to the layer, and record the endpoint limit.
\begin{lemma}\label{lem:J-trial}
	For $0\le\varepsilon\le1/50$,
	\begin{equation*}
		\bigl|J(s,u_{s})+\kappa_{1}+2\kappa_{2}\varepsilon\bigr|
		\;\le\; 140\,\varepsilon^{2}.
	\end{equation*}
\end{lemma}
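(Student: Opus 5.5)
We expand $J(s,u_s)$ exactly. Since $u_s'=\Phi_1'+\varepsilon\psi'$ and $|\xi|^{2s-2}=|\xi|^{-2\varepsilon}$, we have
\begin{equation*}
	J(s,u_s)=\frac{1}{2\pi}\int_{\R}\log|\xi|\,|\xi|^{-2\varepsilon}\Bigl[|\widehat{\Phi_1'}|^{2}+2\varepsilon\operatorname{Re}\bigl(\widehat{\Phi_1'}\,\overline{\widehat{\psi'}}\bigr)+\varepsilon^{2}|\widehat{\psi'}|^{2}\Bigr]\,d\xi=:T_{0}+T_{1}+T_{2}.
\end{equation*}
Each term is then expanded in $\varepsilon$ with an explicit remainder.

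For $T_{0}$ we write $|\xi|^{-2\varepsilon}=1-2\varepsilon\log|\xi|+\rho(\xi)$, where $|\rho(\xi)|\le2\varepsilon^{2}(\log|\xi|)^{2}e^{2\varepsilon|\log|\xi||}$; this is the same Taylor bound used in \eqref{eq:symbol-rem2}. By \eqref{eq:kappa-def} this gives $T_{0}=-\kappa_{1}-2\varepsilon\mu_{\Phi_1}+E_{0}$, with
\begin{equation*}
	|E_{0}|\le2\varepsilon^{2}\cdot2\pi\int_{0}^{\infty}\frac{\xi^{2}|\log\xi|^{3}e^{2\varepsilon|\log\xi|}}{\sinh^{2}(\pi\xi/\sqrt2)}\,d\xi .
\end{equation*}
We split this integral at $\xi=1$, as in Lemma~\ref{lem:residual}. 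On $(0,1)$ we use $\sinh(\pi\xi/\sqrt2)\ge\pi\xi/\sqrt2$ and $\int_0^1\xi^{-2\varepsilon}|\log\xi|^{3}\,d\xi=6/(1-2\varepsilon)^{4}$. On $(1,\infty)$ we use the bound $\sinh\ge\frac{49}{100}e^{\pi\xi/\sqrt2}$ and a Gamma integral. This should give $|E_{0}|\le C_{0}\varepsilon^{2}$ with $C_{0}$ of order $10$.

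For $T_{1}$ the leading part is $\frac{\varepsilon}{\pi}\int\log|\xi|\operatorname{Re}(\widehat{\Phi_1'}\overline{\widehat{\psi'}})\,d\xi$. Writing $\widehat{\Phi_1'}=i\xi\widehat{\Phi_1}$ and $\widehat{\psi'}=i\xi\widehat\psi$, and using $\widehat g=-2|\xi|^{2}\log|\xi|\,\widehat{\Phi_1}$, the integrand becomes $-\tfrac12\operatorname{Re}(\widehat g\,\overline{\widehat\psi})$. Plancherel then gives $-\varepsilon\langle g,\psi\rangle_{L^{2}}$, and the corrector equation \eqref{eq:corrector-eq} turns this into $\varepsilon\langle\Jop\psi,\psi\rangle$. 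Combined with $T_{0}$, this reproduces exactly $-\kappa_{1}-2\varepsilon(\mu_{\Phi_1}-\tfrac12\langle\Jop\psi,\psi\rangle)=-\kappa_{1}-2\kappa_{2}\varepsilon$. The remainder from $|\xi|^{-2\varepsilon}-1$ in $T_{1}$ is bounded by
\begin{equation*}
	2\varepsilon\cdot\frac{1}{2\pi}\int|\log|\xi||\,\bigl||\xi|^{-2\varepsilon}-1\bigr|\,|\widehat{\Phi_1'}|\,|\widehat{\psi'}|\,d\xi,
\end{equation*}
using $\bigl||\xi|^{-2\varepsilon}-1\bigr|\le2\varepsilon|\log|\xi||e^{2\varepsilon|\log|\xi||}$. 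Cauchy--Schwarz then gives $\|\psi'\|_{L^{2}}\le\frac9{10}$ times an explicit $\Phi_1$-integral, namely $\bigl(\frac1{2\pi}\int(\log|\xi|)^{4}|\xi|^{-4\varepsilon}|\widehat{\Phi_1'}|^{2}\bigr)^{1/2}$, which is exponentially decaying at infinity and integrable at $0$ since $|\widehat{\Phi_1'}|^{2}\sim2\xi^{2}$. For $T_{2}$ we bound $|\log|\xi||\,|\xi|^{-2\varepsilon}\,\xi^{2}\le C(1+\xi^{2})^{3}$ pointwise, with a small explicit $C$ obtained as in the supremum estimates \eqref{eq:sup-texp}. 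This gives $|T_{2}|\le C\varepsilon^{2}\|\psi\|_{H^{3}}^{2}\le36C\varepsilon^{2}$.

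The main obstacle is bookkeeping so that the sum of the three remainder constants stays below $140$. The $T_{2}$ term with $\|\psi\|_{H^{3}}^{2}\le36$ is the largest contributor, so we will use a sharper multiplier bound there. Since only $\widehat{\psi'}$ appears in $T_{2}$, we can bound $|\log|\xi||\,|\xi|^{-2\varepsilon}\le C'(1+\xi^{2})$ and use $\|\psi'\|_{L^{2}}^{2}+\|\psi''\|_{L^{2}}^{2}\le 0.81+4$, which keeps this term well under $140\varepsilon^{2}$. The remaining $\Phi_1$-integrals are handled by the same explicit splits at $\xi=1$ already used in Lemmas~\ref{lem:corrector-bounds} and~\ref{lem:kappa-constants}.
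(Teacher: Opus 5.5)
Your decomposition $J(s,u_s)=T_0+T_1+T_2$ follows the paper's proof, as do the identification of the linear coefficient (via $\widehat g=-2|\xi|^{2}\log|\xi|\,\widehat{\Phi_1}$, Plancherel and $\mathcal{J}\psi=-g$) and the Taylor bounds with the factor $e^{2\varepsilon|\log|\xi||}$. Your $E_0$ is the paper's $A_1$ term, and your first estimate of $T_2$ is its $A_3$ term.

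The step that fails is the ``sharper multiplier bound'' you announce for $T_2$ at the end. The weight $|\log|\xi||\,|\xi|^{-2\varepsilon}$ is unbounded as $\xi\to0$, so no inequality $|\log|\xi||\,|\xi|^{-2\varepsilon}\le C'(1+\xi^{2})$ can hold. Hence $T_2$ cannot be controlled by $\|\psi'\|_{L^2}^{2}+\|\psi''\|_{L^2}^{2}$ alone. Low frequencies must be paid for with the factor $\xi^{2}$ in $|\widehat{\psi'}|^{2}=\xi^{2}|\widehat\psi|^{2}$, i.e.\ with $\|\psi\|_{L^2}$, using $|\log|\xi||\,|\xi|^{2-2\varepsilon}\le 1/(e(2-2\varepsilon))$ on $|\xi|\le1$. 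That is what the paper does, and what your own first estimate does.

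The refinement is also unnecessary, because the premise that $T_2$ dominates is wrong. In $|\log|\xi||\,|\xi|^{2-2\varepsilon}\le C(1+\xi^{2})^{3}$ you can take $C=0.19$: the quotient is at most $1/(e(2-2\varepsilon))$ for $|\xi|\le1$ and at most $1/(4e)$ for $|\xi|\ge1$. This gives $|T_2|\le 36C\varepsilon^{2}<7\varepsilon^{2}$.

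You genuinely depart from the paper in the cross remainder of $T_1$. The paper bounds it by $4\varepsilon^{2}A_2/(2\pi)$ with $A_2\le A_1^{1/2}A_3^{1/2}$. That brings in $\|\psi\|_{H^3}$ and costs about $80\varepsilon^{2}$. You instead put the whole weight on $\widehat{\Phi_1'}$ and pay only $\|\psi'\|_{L^2}\le 9/10$. Using $\int_0^1|\log\xi|^{4}\xi^{-4\varepsilon}\,d\xi=24/(1-4\varepsilon)^{5}$, this gives about $4\cdot\frac{9}{10}\cdot\sqrt{46.4}\,\varepsilon^{2}\approx 25\varepsilon^{2}$.

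One minor slip: the weight in that Cauchy--Schwarz step should be $e^{4\varepsilon|\log|\xi||}$, not $|\xi|^{-4\varepsilon}$, on $|\xi|\ge1$. This is harmless because $\widehat{\Phi_1'}$ decays exponentially.

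Your $E_0$ bound evaluates to about $18\varepsilon^{2}$, so your route totals roughly $50\varepsilon^{2}$, well under $140\varepsilon^{2}$. Once the faulty refinement is dropped, the proof is complete and gives a better constant than the paper's.
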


\begin{proof}
	\emph{Step 1: identification of the constant and linear terms.}
	Since $s=1-\varepsilon$, the definition of $J$ gives
	\begin{equation*}
		J(s,u_s)=\frac{1}{2\pi}\int_{\R}\log|\xi|\,|\xi|^{-2\varepsilon}
		\bigl|\widehat{\Phi_1'}+\varepsilon\widehat{\psi'}\bigr|^2\,d\xi .
	\end{equation*}
	The square in the integrand satisfies
	\begin{equation*}
		\bigl|\widehat{\Phi_1'}(\xi)+\varepsilon\widehat{\psi'}(\xi)\bigr|^{2}
		=|\widehat{\Phi_1'}(\xi)|^{2}
		+2\varepsilon\operatorname{Re}\bigl(\widehat{\Phi_1'}(\xi)\overline{\widehat{\psi'}(\xi)}\bigr)
		+\varepsilon^{2}|\widehat{\psi'}(\xi)|^{2},
	\end{equation*}
	while the multiplier has the pointwise expansion $|\xi|^{-2\varepsilon}=1-2\varepsilon\log|\xi|+O(\varepsilon^2)$. Thus the constant term is $-\kappa_1$, and the linear contribution is
	\begin{equation}\label{eq:order-eps-term}
		\frac{\varepsilon}{2\pi}\int_{\R}\log|\xi|
		\Bigl[2\operatorname{Re}\bigl(\widehat{\Phi_1'}\overline{\widehat{\psi'}}\bigr)
		-2\log|\xi|\,|\widehat{\Phi_1'}|^{2}\Bigr]\,d\xi .
	\end{equation}
	To identify the cross term, we use $\operatorname{Re}(\widehat{\Phi_1'}\overline{\widehat{\psi'}}) =\xi^{2}\operatorname{Re}(\widehat{\Phi_1}\overline{\widehat\psi})$. Hence~\eqref{eq:ghat}, Plancherel, and the corrector equation~\eqref{eq:corrector-eq} yield
	\begin{equation*}
		\frac{1}{\pi}\int_{\R}\log|\xi|\;\xi^{2}\operatorname{Re}\bigl(\widehat{\Phi_1}\overline{\widehat\psi}\bigr)\,d\xi
		=-\langle g,\psi\rangle_{L^{2}}
		=\langle\Jop\psi,\psi\rangle_{L^{2}},
	\end{equation*}
	whereas the second term in~\eqref{eq:order-eps-term} is $-2\mu_{\Phi_1}\varepsilon$. By~\eqref{eq:kappa-def}, the expression in~\eqref{eq:order-eps-term} therefore equals
	\begin{equation*}
		\varepsilon\bigl(\langle\Jop\psi,\psi\rangle_{L^{2}}
		-2\mu_{\Phi_1}\bigr)=-2\kappa_2\varepsilon .
	\end{equation*}

	\smallskip
	\noindent\emph{Step 2: the quadratic remainder.}
	Subtracting the constant and linear terms gives the exact identity
	\begin{equation*}
		\begin{aligned}
			J(s,u_{s})+\kappa_{1}+2\kappa_{2}\varepsilon
			 & =\frac1{2\pi}\int_{\R}\log|\xi|\,\Bigl[
			\bigl(|\xi|^{-2\varepsilon}-1+2\varepsilon\log|\xi|\bigr)
			|\widehat{\Phi_1'}|^{2}                                                  \\
			 & \qquad+2\varepsilon\bigl(|\xi|^{-2\varepsilon}-1\bigr)
			\operatorname{Re}\bigl(\widehat{\Phi_1'}\overline{\widehat{\psi'}}\bigr) \\
			 & \qquad+\varepsilon^{2}|\xi|^{-2\varepsilon}|\widehat{\psi'}|^{2}
			\Bigr]\,d\xi .
		\end{aligned}
	\end{equation*}
	The Taylor bounds $|e^{z}-1-z|\le\tfrac12 z^{2}e^{|z|}$ and $|e^{z}-1|\le|z|e^{|z|}$ with $z=-2\varepsilon\log|\xi|$ give
	\begin{equation*}
		\begin{aligned}
			\bigl||\xi|^{-2\varepsilon}-1+2\varepsilon\log|\xi|\bigr|
			 & \le2\varepsilon^{2}\bigl|\log|\xi|\bigr|^{2}
			e^{2\varepsilon|\log|\xi||},                    \\
			\bigl||\xi|^{-2\varepsilon}-1\bigr|
			 & \le2\varepsilon\bigl|\log|\xi|\bigr|
			e^{2\varepsilon|\log|\xi||},                    \\
			|\xi|^{-2\varepsilon}
			 & \le e^{2\varepsilon|\log|\xi||}.
		\end{aligned}
	\end{equation*}
	Moreover, since $\varepsilon\le1/50$,
	\begin{equation}\label{eq:exp-bounds}
		e^{2\varepsilon|\log|\xi||}=|\xi|^{-2\varepsilon}
		\quad (0<|\xi|\le1),
		\qquad
		e^{2\varepsilon|\log|\xi||}=|\xi|^{2\varepsilon}\le|\xi|
		\quad (|\xi|\ge1).
	\end{equation}
	Consequently,
	\begin{equation}\label{eq:J-rem-majorant}
		\bigl|J(s,u_{s})+\kappa_{1}+2\kappa_{2}\varepsilon\bigr|
		\le\frac{\varepsilon^{2}}{2\pi}\bigl[2A_{1}+4A_{2}+A_{3}\bigr],
	\end{equation}
	where
	\begin{equation*}
		\begin{aligned}
			A_{1} & :=\int_{\R}\bigl|\log|\xi|\bigr|^{3}
			e^{2\varepsilon|\log|\xi||}\,|\widehat{\Phi_1'}|^{2}\,d\xi, \\
			A_{2} & :=\int_{\R}\bigl|\log|\xi|\bigr|^{2}
			e^{2\varepsilon|\log|\xi||}\,
			|\widehat{\Phi_1'}|\,|\widehat{\psi'}|\,d\xi,               \\
			A_{3} & :=\int_{\R}\bigl|\log|\xi|\bigr|
			e^{2\varepsilon|\log|\xi||}\,|\widehat{\psi'}|^{2}\,d\xi .
		\end{aligned}
	\end{equation*}

	\smallskip
	\noindent\emph{Step 3: estimates of the three integrals.}
	For $A_1$, split the integral at $|\xi|=1$.  The explicit formula
	in~\eqref{eq:Phi1-hat} and the inequality $\sinh t\ge t$ give
	$|\widehat{\Phi_1'}(\xi)|\le2$.  Thus, on $|\xi|\le1$,
	\eqref{eq:exp-bounds} and the substitution $t=-\log\xi$ yield
	\begin{equation*}
		\begin{aligned}
			\int_{|\xi|\le1}\bigl|\log|\xi|\bigr|^{3}
			e^{2\varepsilon|\log|\xi||}|\widehat{\Phi_1'}|^{2}\,d\xi
			 & \le 8\int_{0}^{1}|\log\xi|^{3}\xi^{-2\varepsilon}\,d\xi \\
			 & =8\int_{0}^{\infty}t^{3}e^{-(1-2\varepsilon)t}\,dt
			=\frac{48}{(1-2\varepsilon)^{4}}
			\le\frac{48}{(24/25)^{4}}<57,
		\end{aligned}
	\end{equation*}
	where the second equality follows by repeated integration by parts.

	For $\xi\ge1$, we have
	\begin{equation*}
		\sinh\left(\frac{\pi\xi}{\sqrt2}\right)
		=\frac12e^{\pi\xi/\sqrt2}\bigl(1-e^{-\sqrt2\pi\xi}\bigr)
		\ge\frac{49}{100}e^{\pi\xi/\sqrt2}.
	\end{equation*}
	Indeed, $\sqrt2\pi\xi>4$ and $e^{-4}<1/50$. Hence the same explicit formula for $\widehat{\Phi_1'}$, together with $\pi^2<10$ and $\sqrt2\pi>4$, gives
	\begin{equation*}
		|\widehat{\Phi_1'}(\xi)|^2
		\le 2\pi^2\left(\frac{100}{49}\right)^2
		\xi^2e^{-\sqrt2\pi\xi}
		\le84\xi^2e^{-4\xi}.
	\end{equation*}
	Using also $\log\xi\le\xi$ and $e^{2\varepsilon\log\xi}\le\xi$, we obtain
	\begin{equation*}
		\int_{|\xi|\ge1}\bigl|\log|\xi|\bigr|^{3}
		e^{2\varepsilon|\log|\xi||}|\widehat{\Phi_1'}|^{2}\,d\xi
		\le168\int_{1}^{\infty}\xi^6e^{-4\xi}\,d\xi
		\le168\left(\frac{2}{e}\right)^6\int_{1}^{\infty}e^{-\xi}\,d\xi
		=10752e^{-7}<10.
	\end{equation*}
	Here we used $\sup_{\xi\ge1}\xi^6e^{-3\xi}=(2/e)^6$, which follows by differentiation. Thus $A_{1}<67$.

	Next, on $|\xi|\le1$, $\bigl|\log|\xi|\bigr|e^{2\varepsilon|\log|\xi||}\xi^{2} =\bigl|\log|\xi|\bigr|\,|\xi|^{2-2\varepsilon} \le1/(e(2-2\varepsilon))\le1/e$. Indeed, differentiation shows that $t\mapsto-t^{2-2\varepsilon}\log t$ attains its maximum on $(0,1]$ at $t=e^{-1/(2-2\varepsilon)}$. On the other hand, for $|\xi|\ge1$, $\log|\xi|\le|\xi|$ and $e^{2\varepsilon|\log|\xi||}\le|\xi|$ give $\bigl|\log|\xi|\bigr|e^{2\varepsilon|\log|\xi||}\xi^{2} \le|\xi|\cdot|\xi|\cdot\xi^{2}=\xi^{4}\le(1+\xi^{2})^{3}$. Therefore
	\begin{equation*}
		\begin{aligned}
			A_{3} & \le\frac1e\int_{|\xi|\le1}|\widehat\psi|^{2}\,d\xi
			+\int_{|\xi|\ge1}(1+\xi^{2})^{3}|\widehat\psi|^{2}\,d\xi                    \\
			      & \le2\pi\Bigl[\frac1e\|\psi\|_{L^{2}}^{2}+\|\psi\|_{H^{3}}^{2}\Bigr] \\
			      & \le2\pi\Bigl[\frac1e\Bigl(\frac59\Bigr)^{2}+36\Bigr]
			\le230 .
		\end{aligned}
	\end{equation*}

	Finally, the factorization
	\begin{equation*}
		\bigl|\log|\xi|\bigr|^{2}e^{2\varepsilon|\log|\xi||}
		=\Bigl(\bigl|\log|\xi|\bigr|^{3}e^{2\varepsilon|\log|\xi||}\Bigr)^{1/2}
		\Bigl(\bigl|\log|\xi|\bigr|e^{2\varepsilon|\log|\xi||}\Bigr)^{1/2}
	\end{equation*}
	and Cauchy--Schwarz show that
	\begin{equation*}
		A_{2}\le A_{1}^{1/2}A_{3}^{1/2}
		\le\sqrt{67\cdot230}\le125.
	\end{equation*}
	Inserting the three bounds into~\eqref{eq:J-rem-majorant} yields
	\begin{equation*}
		\bigl|J(s,u_{s})+\kappa_{1}+2\kappa_{2}\varepsilon\bigr|
		\le\frac{\varepsilon^{2}}{2\pi}\bigl[2\cdot67+4\cdot125+230\bigr]
		=\frac{864}{2\pi}\varepsilon^{2}
		\le140\,\varepsilon^{2}.
	\end{equation*}
\end{proof}

The next lemma will allow us to transfer the expansion we have for $J(s,u_{s})$ to an expansion at the true layer.

\begin{lemma}\label{lem:J-transfer}
	For $0\le\varepsilon\le1/50$,
	\begin{equation*}
		|J(s,\Phis)-J(s,u_{s})|
		\;\le\; 100\,\varepsilon^{2}.
	\end{equation*}
\end{lemma}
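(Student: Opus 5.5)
The plan is to apply Lemma~\ref{lem:LJ-polarization}(ii) with $u_{1}=\Phis$ and $u_{2}=u_{s}$. By Lemma~\ref{lem:fixed-point}, $\Phis-u_{s}=r_{s}\in H^{2s}_{\mathrm{odd}}(\R)$ with $\|r_{s}\|_{H^{2s}}\le75\varepsilon^{2}$. Part~(i) of the same lemma then gives
\begin{equation*}
	\|\Phis-u_{s}\|_{\star}(s)\le C_{\star}(s)\,\|r_{s}\|_{H^{2s}}\le75\,C_{\star}(s)\,\varepsilon^{2}.
\end{equation*}
The bound \eqref{eq:LJ-bound} yields
\begin{equation*}
	|J(s,\Phis)-J(s,u_{s})|\le\bigl(2\|u_{s}\|_{\star}+\|r_{s}\|_{\star}\bigr)\|r_{s}\|_{\star},
\end{equation*}
where we write $\|\Phis\|_{\star}\le\|u_{s}\|_{\star}+\|r_{s}\|_{\star}$. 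So the task reduces to explicit numerical bounds for two quantities: $C_{\star}(s)$ uniformly on $s\in[49/50,1]$, and $\|u_{s}\|_{\star}(s)$.

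For $C_{\star}(s)$, I will bound the supremum in \eqref{eq:Cstar-LJ-def} directly. Using $(1+t)^{2s}\ge t^{s}\max(1,t)^{s}$, the quotient is at most $|\log t|/2$ times $t^{s}$ for $t\le1$, and $t^{-s}$ for $t\ge1$. By \eqref{eq:sup-texp} this gives at most $1/(2es)\le 1/(2e\cdot 49/50)$, so $C_{\star}(s)^{2}\le 0.19$ and $C_{\star}(s)\le 0.44$. Hence $\|r_{s}\|_{\star}\le 33\varepsilon^{2}\le 33/2500$, which is negligible.

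For $\|u_{s}\|_{\star}$, I will use the triangle inequality $\|u_{s}\|_{\star}\le\|\Phi_1\|_{\star}(s)+\varepsilon\|\psi\|_{\star}(s)$. The second term is bounded by $\varepsilon C_{\star}(s)\|\psi\|_{H^{2s}}\le\varepsilon\, C_{\star}\|\psi\|_{H^{3}}\le 0.44\cdot6/50$, using Lemma~\ref{lem:corrector-bounds} and $H^{3}\subset H^{2s}$ with norm at most one. The first term is explicit:
\begin{equation*}
	\|\Phi_1\|_{\star}(s)^{2}=2\pi\int_{0}^{\infty}\frac{|\log\xi|\,\xi^{2-2\varepsilon}}{\sinh^{2}(\pi\xi/\sqrt2)}\,d\xi.
\end{equation*}
I will split this at $\xi=1$, as in Lemma~\ref{lem:kappa-constants}. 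On $(0,1)$, using $\sinh t\ge t$, the piece is at most $4\int_{0}^{1}|\log\xi|\xi^{-2\varepsilon}\,d\xi=4/(1-2\varepsilon)^{2}<4.4$. On $(1,\infty)$, using the $49/100$ lower bound for $\sinh$ and $\xi^{-2\varepsilon}\le1$, the piece is $2\pi A_{\infty}<0.051$. So $\|\Phi_1\|_{\star}\le2.12$ and $\|u_{s}\|_{\star}\le2.2$.

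Combining, $|J(s,\Phis)-J(s,u_{s})|\le(4.4+0.02)\cdot33\,\varepsilon^{2}<150\varepsilon^{2}$. This exceeds the target $100\varepsilon^{2}$, so the crude bound $(1+t)^{2s}\ge t^{s}\max(1,t)^{s}$ loses too much, and this constant-chasing step is the main obstacle. To close the gap I will compute $C_{\star}$ more sharply. The quotient $|\log t|\,t^{s}/(2(1+t)^{2s})$ is symmetric under $t\mapsto1/t$, and its maximum is near $t\approx 0.08$ or $t\approx 12$, where the value is about $0.11$. That gives $C_{\star}\approx0.33$, hence $\|r_{s}\|_{\star}\le 25\varepsilon^{2}$ and a total of about $4.4\cdot25\approx110\varepsilon^{2}$, still slightly over.

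I then expect to also sharpen $\|\Phi_1\|_{\star}$ using $\sinh t\ge t+t^{3}/6$ on $(0,1)$, which brings the product below $100\varepsilon^{2}$. If the margin is still insufficient, I will instead estimate $\|r_{s}\|_{\star}$ via $\|\fl r_{s}\|_{L^{2}}$ and $\|r_{s}\|_{L^{2}}$ separately, with a sharper multiplier bound, since $\Gs$ has $L^{2}$-norm at most $1$.
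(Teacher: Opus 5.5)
Your route is the paper's: the polarization bound of Lemma~\ref{lem:LJ-polarization}(ii), $C_\star(s)\le(2es)^{-1/2}$, $\|u_s\|_\star\le\|\Phi_1\|_\star+\varepsilon\|\psi\|_\star$ with $\|\psi\|_\star\le C_\star\|\psi\|_{H^3}$, and a split of $\|\Phi_1\|_\star^2$ at $\xi=1$. As written, however, the proposal does not reach the constant $100$. The cause is an arithmetic slip, not a weakness of the method. On $(0,1)$ you have $\sinh^{2}(\pi\xi/\sqrt2)\ge\pi^{2}\xi^{2}/2$, so $2\pi\,\xi^{2-2\varepsilon}|\log\xi|/\sinh^{2}(\pi\xi/\sqrt2)\le\frac{4}{\pi}\,\xi^{-2\varepsilon}|\log\xi|$. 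You dropped the factor $1/\pi$. The low-frequency piece is therefore at most $4/(\pi(1-2\varepsilon)^{2})<1.382$, not $4.4$. Adding the high-frequency piece ($<0.051$) gives $\|\Phi_1\|_\star(s)^{2}<1.433$, i.e.\ $\|\Phi_1\|_\star(s)\le 6/5$.

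With this correction your own first-pass estimates already close the proof. You get $\|u_s\|_\star\le 1.2+0.44\cdot 6/50<1.26$ and $\|r_s\|_\star\le 33\varepsilon^{2}\le 0.0132$. Hence $|J(s,\Phis)-J(s,u_s)|\le(2\cdot1.26+0.0132)\cdot 33\,\varepsilon^{2}<84\,\varepsilon^{2}$. The paper uses the cruder bounds $C_\star\le 1/2$, $\|\psi\|_\star\le 3$ and $\|u_s\|_\star,\|\Phis\|_\star\le 13/10$, and gets $97.5\,\varepsilon^{2}$.

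The sharpenings in your last paragraph are therefore unnecessary. They are also only sketched, so they could not serve as the proof; for instance, the maximizer of $|\log t|\,t^{s}/(1+t)^{2s}$ lies near $t\approx 4.7$ (equivalently $t\approx0.21$), not near $12$ or $0.08$.

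Finally, Lemma~\ref{lem:fixed-point} covers only $s<1$. The endpoint $\varepsilon=0$ must be handled separately, which is trivial since $u_1=\Phi_1$.
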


\begin{proof}
	The assertion is immediate when $\varepsilon=0$, because then $u_1=\Phi_1$. We may therefore assume that $0<\varepsilon\le1/50$. By Lemma~\ref{lem:fixed-point}, $\Phis-u_s=r_s\in H^{2s}_{\mathrm{odd}}(\R)$, so it remains to estimate the seminorms in the polarization bound of Lemma~\ref{lem:LJ-polarization}.

	\smallskip
	\noindent\emph{Step 1: the Sobolev constant.}
	The function $t\mapsto|\log t|\,t^{s}/(1+t)^{2s}$ is invariant under
	$t\mapsto1/t$.  Hence the supremum in~\eqref{eq:Cstar-LJ-def} may be
	restricted to $t\ge1$.  On this interval,
	$(1+t)^{2s}\ge t^{2s}$, and differentiation shows that
	$(\log t)t^{-s}$ attains its maximum $1/(es)$ at $t=e^{1/s}$.
	Consequently,
	\begin{equation*}
		C_{\star}(s)
		\le\left(\frac{1}{2es}\right)^{1/2}
		\le\left(\frac{25}{49e}\right)^{1/2}
		=0.4332\ldots\le\frac12 ,
	\end{equation*}
	where we used $s\ge49/50$.

	\smallskip
	\noindent\emph{Step 2: the seminorms of $\Phi_1$ and $\psi$.}
	The explicit formula~\eqref{eq:Phi1-hat} gives
	\begin{equation*}
		\begin{aligned}
			\|\Phi_1\|_{\star}^{2}(s)
			=\frac1{2\pi}\int_{\R}\bigl|\log|\xi|\bigr|\,|\xi|^{-2\varepsilon}|\widehat{\Phi_1'}(\xi)|^{2}\,d\xi
			=2\pi\int_0^\infty
			\frac{\xi^{2-2\varepsilon}|\log\xi|}
			{\sinh^{2}(\pi\xi/\sqrt2)}\,d\xi .
		\end{aligned}
	\end{equation*}
	For $0<\xi\le1$, the inequality $\sinh t\ge t$ and the substitution $t=-\log\xi$ yield
	\begin{equation*}
		2\pi\int_0^1
		\frac{\xi^{2-2\varepsilon}|\log\xi|}
		{\sinh^{2}(\pi\xi/\sqrt2)}\,d\xi
		\le\frac4\pi\int_0^1\xi^{-2\varepsilon}|\log\xi|\,d\xi
		=\frac{4}{\pi(1-2\varepsilon)^2}
		\le\frac4\pi\left(\frac{25}{24}\right)^2
		<1.382.
	\end{equation*}
	For $\xi\ge1$, the identity
	\begin{equation*}
		\sinh\left(\frac{\pi\xi}{\sqrt2}\right)
		=\frac12e^{\pi\xi/\sqrt2}\bigl(1-e^{-\sqrt2\pi\xi}\bigr)
		\ge\frac{49}{100}e^{\pi\xi/\sqrt2}
	\end{equation*}
	follows from $\sqrt2\pi\xi>4$ and $e^{-4}<1/50$. Since $\sqrt2\pi>21/5$ (because $\sqrt2>7/5$ and $\pi>3$), $\xi^{-2\varepsilon}\le1$, and $\log\xi\le\xi-1$, the change of variables $t=\xi-1$ gives
	\begin{equation*}
		\begin{aligned}
			2\pi\int_1^\infty
			\frac{\xi^{2-2\varepsilon}\log\xi}
			{\sinh^{2}(\pi\xi/\sqrt2)}\,d\xi
			 & \le2\pi\left(\frac{100}{49}\right)^2
			\int_1^\infty\xi^2(\xi-1)e^{-21\xi/5}\,d\xi    \\
			 & =2\pi\left(\frac{100}{49}\right)^2e^{-21/5}
			\left[
				\left(\frac5{21}\right)^2
				+4\left(\frac5{21}\right)^3
				+6\left(\frac5{21}\right)^4
			\right]                                        \\
			 & =0.0509\ldots<0.051.
		\end{aligned}
	\end{equation*}
	Combining the two regions,
	\begin{equation*}
		\|\Phi_1\|_{\star}^{2}(s)<1.382+0.051
		=1.433<\left(\frac65\right)^2,
		\qquad\text{hence}\qquad
		\|\Phi_1\|_{\star}(s)\le\frac65.
	\end{equation*}
	Moreover, since $2s\le2<3$, \eqref{eq:star-sobolev}, Step~1, and Lemma~\ref{lem:corrector-bounds} imply
	\begin{equation*}
		\|\psi\|_{\star}(s)
		\le C_{\star}(s)\|\psi\|_{H^{2s}}
		\le\frac12\|\psi\|_{H^3}\le3.
	\end{equation*}

	\smallskip
	\noindent\emph{Step 3: conclusion.}
	By~\eqref{eq:star-sobolev}, Step~1, and Lemma~\ref{lem:fixed-point},
	\begin{equation*}
		\|r_s\|_{\star}(s)
		\le C_{\star}(s)\|r_s\|_{H^{2s}}
		\le\frac{75}{2}\varepsilon^2.
	\end{equation*}
	The triangle inequality and Step~2 now give
	\begin{equation*}
		\|u_s\|_{\star}
		\le\|\Phi_1\|_{\star}+\varepsilon\|\psi\|_{\star}
		\le\frac65+\frac3{50}=\frac{63}{50}\le\frac{13}{10},
		\qquad
		\|\Phis\|_{\star}
		\le\|u_s\|_{\star}+\|r_s\|_{\star}
		\le\frac{63}{50}+\frac{75}{2}\cdot\frac1{2500}
		\le\frac{13}{10}.
	\end{equation*}
	Applying Lemma~\ref{lem:LJ-polarization}(ii) to $\Phis$ and $u_s$, we conclude that
	\begin{equation*}
		|J(s,\Phis)-J(s,u_{s})|
		\le\bigl(\|\Phis\|_{\star}+\|u_{s}\|_{\star}\bigr)\|r_{s}\|_{\star}
		\le\left(\frac{13}{10}+\frac{13}{10}\right)
		\frac{75}{2}\varepsilon^{2}
		=97.5\,\varepsilon^{2}\le100\,\varepsilon^{2}. \qedhere
	\end{equation*}
\end{proof}

At this point, we are ready to integrate $J(s, \Phi_s)$ to prove the expansions and the monotonicity of the energy. The endpoint value needed in the integration is provided by Remark~\ref{rmk:endpoint-limit} in Section~\ref{sec:cont}.

\begin{proof}[Proof of Theorem~\ref{thm:kappa1}]
	The inequality $\kappa_{2}\ge0$ is Lemma~\ref{lem:kappa-constants}.

	By Lemma~\ref{lem:envelope-paper}, $\Es$ is $C^{1}$ on $(1/2,1)$ with $\Es'(s)=J(s,\Phis)$; in particular $\Es\in C^{1}([49/50,1))$. For $s\in[49/50,1)$ the triangle inequality, Lemma~\ref{lem:J-trial} and Lemma~\ref{lem:J-transfer} give
	\begin{equation*}
		\bigl|\Es'(s)+\kappa_{1}+2\kappa_{2}\varepsilon\bigr|
		\le\bigl|J(s,\Phis)-J(s,u_{s})\bigr|
		+\bigl|J(s,u_{s})+\kappa_{1}+2\kappa_{2}\varepsilon\bigr|
		\le(100+140)\,\varepsilon^{2},
	\end{equation*}
	which is~\eqref{eq:URem2-paper} with $240=140+100$.

	Now fix $s\in[49/50,1)$ and let $\sigma\in(s,1)$. Integrating~\eqref{eq:URem2-paper} over $[s,\sigma]$,
	\begin{equation*}
		\Es(\sigma)-\Es(s)
		=\int_{s}^{\sigma}\bigl(-\kappa_{1}-2\kappa_{2}(1-\tau)+R_{2}(\tau)\bigr)\,d\tau
		=-\kappa_{1}(\sigma-s)-\kappa_{2}\bigl[(1-s)^{2}-(1-\sigma)^{2}\bigr]
		+\int_{s}^{\sigma}R_{2}(\tau)\,d\tau .
	\end{equation*}
	Letting $\sigma\to1^{-}$ and using Remark~\ref{rmk:endpoint-limit} on the left gives
	\begin{equation*}
		\frac{2\sqrt2}{3}-\Es(s)
		=-\kappa_{1}\varepsilon-\kappa_{2}\varepsilon^{2}+\int_{s}^{1}R_{2}(\tau)\,d\tau ,
	\end{equation*}
	the integral converging absolutely because $|R_{2}(\tau)|\le240(1-\tau)^{2}$. Hence $\Es(s)=\tfrac{2\sqrt2}{3}+\kappa_{1}\varepsilon+\kappa_{2}\varepsilon^{2}+R_{3}(s)$ with $R_{3}(s):=-\int_{s}^{1}R_{2}(\tau)\,d\tau$, and
	\begin{equation*}
		|R_{3}(s)|\le\int_{s}^{1}240(1-\tau)^{2}\,d\tau
		=\frac{240}{3}\varepsilon^{3}=80\,\varepsilon^{3},
	\end{equation*}
	which is~\eqref{eq:URem2-integrated}. At $s=1$ both sides of~\eqref{eq:URem2-integrated} equal $2\sqrt2/3$, so the statement holds on the closed interval $[49/50,1]$.
\end{proof}

\begin{proof}[Proof of Proposition~\ref{prop:mono-upper}]
	Let $s\in[49/50,1)$ and $\varepsilon=1-s\in(0,1/50]$. Since $\kappa_{2}\ge0$ and $\varepsilon>0$, Theorem~\ref{thm:kappa1}(ii) gives
	\begin{equation*}
		\Es'(s)\le-\kappa_{1}+240\,\varepsilon^{2}
		<-\frac{119}{900}+\frac{12}{125}
		=-\frac{163}{4500}<-\frac{3}{100},
	\end{equation*}
	using $\kappa_{1}>119/900$ from Lemma~\ref{lem:kappa-constants} and $240\,\varepsilon^{2}\le240/2500=12/125$. For $49/50\le s_{1}<s_{2}<1$, integrating this bound over $[s_{1},s_{2}]$ gives $\Es(s_{2})-\Es(s_{1})\le-0.03\,(s_{2}-s_{1})<0$. Letting $s_{2}\to1^{-}$ and applying Remark~\ref{rmk:endpoint-limit},
	\begin{equation*}
		\Es(1)-\Es(s_{1})\le-0.03\,(1-s_{1})<0
		\qquad(49/50\le s_{1}<1),
	\end{equation*}
	so the strict inequality also holds when $s_{2}=1$. Hence $\Es$ is strictly decreasing on $[49/50,1]=[0.980,1]$.
\end{proof}

\begin{remark}\label{rem:slope-margin-upper}
	The purpose of the explicit bounds above is to retain a strictly negative margin all the way down to $s=0.980$, where this analytic endpoint argument joins the computer-assisted interior argument. The endpoint calculation uses only the sign $\kappa_{2}\ge0$ and the lower bound $\kappa_{1}>119/900$. The true coefficient is far larger, $\kappa_{1}\approx1.073$, so the genuine slope near $s=1$ is likely close to $-1.07$. We chose not to prove such a sharp bound to keep the computations in the argument as simple as possible.
\end{remark}

\section{The endpoint \texorpdfstring{$s=1/2$}{s=1/2}: pole and strict decrease}\label{sec:s12}
\providecommand{\eps}{\varepsilon}
\providecommand{\Kin}{\mathcal{K}}
\providecommand{\Pot}{\mathcal{P}}

In this section we prove part~(ii) of Theorem~\ref{thm:asymp}; that is, the function $\Es$ has a pole at $s=1/2$ with explicit residue $1/\pi$ and a bounded remainder. The same analysis controls the derivative $\Es'$ and yields strict decrease on $(1/2,0.530]$.

\begin{proposition}\label{prop:mono-lower}
	Let $\delta:=3/100$. Then $\Es\in C^{1}((1/2,1/2+\delta])$ and $\Es'(s)<0$ for every $s\in(1/2,1/2+\delta]$. Equivalently, $\Es$ is strictly decreasing on $(1/2,0.530]$.
\end{proposition}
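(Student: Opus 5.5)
The plan is to use the identity $\Es'(s)=J(s,\Phis)$ of Lemma~\ref{lem:envelope-paper} and to show directly that $J(s,\Phis)<0$ for $s\in(1/2,1/2+\delta]$. The mechanism is the low-frequency singularity of the weight $\log|\xi|\,|\xi|^{2s-2}$ as $s\to1/2^{+}$. Since $\Phis'\ge0$ has total mass $2$, we have $\widehat{\Phis'}(0)=2$, and formally
\begin{equation*}
\frac{1}{2\pi}\int_{|\xi|<1}\log|\xi|\,|\xi|^{2s-2}\cdot 4\,d\xi=-\frac{4}{\pi(2s-1)^{2}}=-\frac{1}{\pi(s-1/2)^{2}}.
\end{equation*}
This matches the derivative of the pole $\frac{1/\pi}{s-1/2}$ in Theorem~\ref{thm:asymp}(ii). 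The proof makes this quantitative with explicit constants, so that the negative low-frequency term beats a bounded positive high-frequency term on the whole interval.

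\emph{Step 1: high frequencies.} On $\{|\xi|\ge1\}$ the integrand of $J$ is nonnegative. Using~\eqref{eq:hf-fourier-bound-lem}, $|\widehat{\Phis'}(\xi)|\le4|\xi|^{-2s}$, I bound it by
\begin{equation*}
\frac{1}{\pi}\int_{1}^{\infty}16\log\xi\,\xi^{-2s-2}\,d\xi=\frac{16}{\pi(2s+1)^{2}}\le\frac{8}{\pi}.
\end{equation*}

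\emph{Step 2: low frequencies.} On $\{|\xi|<1\}$ the weight is negative, so I may discard $\eta\le|\xi|<1$ for a fixed explicit $\eta\in(0,1)$. On $|\xi|\le\eta$ I need a lower bound $|\widehat{\Phis'}(\xi)|\ge c_{0}>0$ that is uniform in $s\in(1/2,0.53]$. Since $\Phis'$ is even and nonnegative, for any $R$ with $\eta R\le\pi/2$,
\begin{equation*}
\widehat{\Phis'}(\xi)=\int\cos(\xi x)\Phis'(x)\,dx\ge 2\cos(\eta R)\,\Phis(R)-2\bigl(1-\Phis(R)\bigr).
\end{equation*}
Thus it suffices to have $\Phis(R)\ge 3/4$ for an explicit $R$ independent of $s$, together with $\eta R$ small. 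Then
\begin{equation*}
J(s,\Phis)\le\frac{8}{\pi}+\frac{c_{0}^{2}}{\pi}\int_{0}^{\eta}\log\xi\,\xi^{2s-2}\,d\xi=\frac{8}{\pi}-\frac{c_{0}^{2}\eta^{2s-1}}{\pi}\Bigl(\frac{1}{(2s-1)^{2}}-\frac{\log\eta}{2s-1}\Bigr).
\end{equation*}
Since $2s-1\le 0.06$, the factor $(2s-1)^{-2}$ is at least about $278$. A moderate $c_{0}$ and $\eta$ (with $\eta^{2s-1}$ close to $1$) then make the right-hand side negative, which is the whole inequality check.

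\emph{Step 3: uniform localization (main obstacle).} The difficulty is the uniform bound $\Phis(R)\ge3/4$. The energy-based tail estimate~\eqref{eq:uniform-layer-tail} degenerates because $\Es(s)\to\infty$. I would instead bound only the \emph{potential} part. The pole analysis of this section gives the upper bound $\Es(s)\le\frac{1}{\pi(s-1/2)}+C_{1}$ through an explicit competitor and Theorem~\ref{thm:PSV}. I would complement it with an explicit lower bound on the kinetic term of any odd increasing admissible profile, of the form $\frac{1}{\pi(s-1/2)}-C_{2}$, obtained from the same low-frequency computation with the actual $\widehat{u'}$ near $0$. Subtracting gives $\int W(\Phis)\le C_{1}+C_{2}$ uniformly. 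The argument preceding~\eqref{eq:uniform-layer-tail} then yields $2(1-\Phis(R))\le4\sqrt{C_{1}+C_{2}}\,R^{-1/2}$, which fixes $R$ and hence $\eta$.

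If this kinetic lower bound proves circular, my fallback is a bootstrap. First take $\eta$ tiny and use only the crude energy tail bound; this still controls $\widehat{\Phis'}$ at frequencies $\eta\sim(s-1/2)$. Because $\eta^{2s-1}\to1$ even for such $\eta$, that already produces the dominant negative term. Keeping all constants explicit up to $s=0.53$ is where I expect most of the work.
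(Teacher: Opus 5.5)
Your Steps 1 and 2 are correct. In fact the high-frequency bound is at most $4/\pi$, since $(2s+1)^2>4$. However, the primary route in Step 3 rests on a false claim. There is no lower bound $\mathcal{K}_s[u]\ge\frac{1}{\pi(s-1/2)}-C_2$, with $\mathcal{K}_s$ the kinetic part of $E_s$, valid for all odd increasing admissible $u$. Write $\varepsilon=s-1/2$. Dilations satisfy $\mathcal{K}_s[u(\cdot/\lambda)]=\lambda^{-2\varepsilon}\mathcal{K}_s[u]$. For the arctan profile with $\lambda=e^{1/\varepsilon}$, the kinetic energy is therefore about $e^{-2}/(\pi\varepsilon)$, so the deficit is unbounded.

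Any such lower bound has to depend on how concentrated $u'$ is. Applied to $\Phis$ it is circular, as you suspected. The tool the paper uses to obtain the uniform potential bound you were after is the Pohozaev identity of Lemma~\ref{lem:pohozaev}. Since $\lambda\mapsto E_s[\Phis(\cdot/\lambda)]=\lambda^{-2\varepsilon}\mathcal{K}_s+\lambda\mathcal{P}_s$ is minimized at $\lambda=1$, one gets $\mathcal{P}_s=\frac{2\varepsilon}{1+2\varepsilon}\Es(s)\le 2\varepsilon E_s[g]<\frac34$. This yields the $s$-independent tail of Lemma~\ref{lem:tail}, with which your Step 3 would work for fixed $R$ and $\eta$.

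That said, your fallback is already a complete proof, and it needs one pass, not a bootstrap. Set $M:=\frac{1}{\pi\varepsilon}+\frac43\ge E_s[g]\ge\Es(s)$. The argument behind \eqref{eq:uniform-layer-tail}, with $M$ in place of $M_{a,b}$, gives $\Phis(R)\ge\frac34$ for $R=64M$. Take $\eta=1/(2R)$; then $c_0=\frac32\cos\frac12-\frac12>0.8$. At the worst point $\varepsilon=3/100$ one has $M<12$ and $\eta^{0.06}>0.64$. The negative term then exceeds $0.64\cdot0.64\cdot(0.06)^{-2}/\pi>100/\pi$, against at most $8/\pi$ from high frequencies. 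As $\varepsilon$ decreases, both $\eta^{2\varepsilon}$ and $(2\varepsilon)^{-2}$ increase, so the sign holds on all of $(1/2,0.53]$.

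This is a genuinely different route from the paper's. The paper splits $\Es'(s)=-\frac{1}{\pi\varepsilon^2}+K_<(s)+K_>(s)$. It controls $K_<$ through $4-|\widehat{\Phis'}|^2=\iint(1-\cos(\xi(x-y)))\Phis'(x)\Phis'(y)\,dx\,dy$, the kernel bound $J^{\log}_\varepsilon(z)\le3\log^2(e+|z|)$, and a logarithmic moment of $\Phis'$; that moment is where the Pohozaev-based uniform tail enters. The outcome is the two-sided estimate $|\Es'(s)+\frac{1}{\pi\varepsilon^2}|\le\frac{532}{\pi}$ of Lemma~\ref{lem:deriv-remainder}. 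It gives the sign and, at the same time, the bounded remainder in Theorem~\ref{thm:asymp}(ii).

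Your argument localizes $\Phis$ only at scale $R\sim1/\varepsilon$. It yields a one-sided bound $\Es'(s)\le-\frac{c}{\varepsilon^2}+C$, with $c$ well below the sharp value $1/\pi$. That suffices for Proposition~\ref{prop:mono-lower} but not for the $O(1)$ remainder. In exchange, it avoids both the Pohozaev identity and the logarithmic-moment computation.
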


The pole expansion has three ingredients. The first is a Pohozaev identity which decouples the kinetic and potential parts of the energy and shows that the pole comes entirely from the kinetic part. The second is a Fourier-side computation of the residue, using only the identity $\int_{\R}\Phis'\,dx=2$ and the behavior of the spectral weight $|\xi|^{2s}$ near $\xi=0$. The third is a control, uniform in $s$, of the remainders in Fourier variables, which excludes a logarithmic intermediate term.

For brevity we adopt the abbreviations
\begin{equation}\label{eq:s6-shorthand}
	\eps := s-\tfrac12,
	\qquad
	\Kin_{s}[u]
	:= \frac{\cs}{4}\iint_{\R\times\R}\frac{(u(x)-u(y))^{2}}{|x-y|^{1+2s}}\,dx\,dy,
	\qquad
	\Pot[u] := \int_{\R}W(u)\,dx,
\end{equation}
so that $E_{s}[u]=\Kin_{s}[u]+\Pot[u]$ for any $u\in\mathcal A$, and we write $\Kin_{s}:=\Kin_{s}[\Phis]$, $\Pot_{s}:=\Pot[\Phis]$ and $\Es(s)=\Kin_{s}+\Pot_{s}$. The scaling identity and the Fourier representation below hold for all $s\in(1/2,1)$. The quantitative estimates are stated on the subinterval $(1/2,53/100]$ (that is, $0<\eps\le3/100$), which suffices for the limit $s\to(1/2)^{+}$ and matches the endpoint needed for the computer assisted part of the proof. Throughout this section we use that $\Phis'$ is positive and integrable with total mass
\begin{equation}\label{eq:total-mass}
	\int_{\R}\Phis'(x)\,dx = \Phis(+\infty)-\Phis(-\infty) = 2,
\end{equation}
by Theorem~\ref{thm:CS}; in particular $\widehat{\Phis'}(0)=2$.

The first observation reduces the analysis of $\Es$ near $s=1/2$ to the analysis of the kinetic part $\Kin_{s}$.

\begin{lemma}\label{lem:pohozaev}
	For every $s\in(1/2,1)$,
	\begin{equation}\label{eq:pohozaev}
		\Pot_{s} \;=\; (2s-1)\,\Kin_{s},
	\end{equation}
	and consequently
	\begin{equation}\label{eq:E-K-V}
		\Es(s) \;=\; 2s\,\Kin_{s},
		\qquad
		\Pot_{s} \;=\; \frac{2s-1}{2s}\,\Es(s)
		\;=\; \frac{2\eps}{1+2\eps}\,\Es(s).
	\end{equation}
\end{lemma}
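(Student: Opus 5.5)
The plan is the standard dilation (Pohozaev/Derrick) argument, using the minimality of $\Phis$ from Theorem~\ref{thm:PSV}. For $\lambda>0$ set $u_\lambda(x):=\Phis(x/\lambda)$. Each $u_\lambda$ is odd, takes values in $(-1,1)$ and has limits $\pm1$ at $\pm\infty$. Its energy is finite because the kinetic and potential parts scale explicitly. Changing variables $x=\lambda x'$, $y=\lambda y'$ in the Gagliardo double integral gives
\begin{equation*}
	\Kin_{s}[u_\lambda]=\lambda^{2}\lambda^{-(1+2s)}\Kin_{s}=\lambda^{1-2s}\Kin_{s},
	\qquad
	\Pot[u_\lambda]=\lambda\,\Pot_{s}.
\end{equation*}
Thus $u_\lambda\in\mathcal A$ for every $\lambda>0$, and
\begin{equation*}
	f(\lambda):=E_{s}[u_\lambda]=\lambda^{1-2s}\Kin_{s}+\lambda\Pot_{s}.
\end{equation*}
Equivalently, the kinetic scaling can be read off the Fourier form of Lemma~\ref{lem:fourier-energy-nondecaying}, since $\widehat{u_\lambda'}(\xi)=\widehat{\Phis'}(\lambda\xi)$.

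Next, Theorem~\ref{thm:PSV} gives $f(\lambda)\ge f(1)$ for all $\lambda>0$. The function $f$ is smooth on $(0,\infty)$, so $\lambda=1$ is an interior minimum and $f'(1)=0$. Explicitly, $f'(1)=(1-2s)\Kin_{s}+\Pot_{s}=0$, which is \eqref{eq:pohozaev}. Finiteness of $\Kin_s$ and $\Pot_s$ comes from Lemma~\ref{lem:layer-energy-variation} and the tail bound \eqref{eq:CS-tail}. Then $\Es(s)=\Kin_s+\Pot_s=2s\Kin_s$. Substituting $\Kin_s=\Es(s)/(2s)$ into \eqref{eq:pohozaev} gives $\Pot_s=\frac{2s-1}{2s}\Es(s)$. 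Writing $2s=1+2\eps$ yields the last form in \eqref{eq:E-K-V}.

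The only point needing care is admissibility of the dilates. Here the issue is the finiteness of the two energies, which the exact scaling formulas settle once $\Kin_s,\Pot_s<\infty$ is known. Beyond that there is no real obstacle; the argument uses only the minimality of $\Phis$ and does not require the Euler--Lagrange equation or its regularity.

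An alternative route would multiply the equation by $x\Phis'$ and integrate, using the fractional Pohozaev identity. However, that route requires justifying boundary terms at infinity given the slow algebraic decay $|x|^{-2s}$. The variational scaling argument avoids these terms entirely, so I would use it.
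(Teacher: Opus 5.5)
Your proposal is correct and follows the paper's own proof: dilate $\Phis(x/\lambda)$, use the exact scalings $\lambda^{1-2s}$ and $\lambda$ of the kinetic and potential parts, and set the derivative at the interior minimum $\lambda=1$ equal to zero, as minimality (Theorem~\ref{thm:PSV}) requires. The only difference is how you show $\Kin_s,\Pot_s<\infty$: you cite Lemma~\ref{lem:layer-energy-variation} and the tail bound~\eqref{eq:CS-tail}, whereas the paper bounds $E_s[\Phis]$ by the energy of the arctan competitor, and either justification works.
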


\begin{proof}
	For $\lambda>0$ set $\Phi_{s,\lambda}(x):=\Phis(x/\lambda)$. A change of variables in \eqref{eq:s6-shorthand} gives $ \Kin_{s}[\Phi_{s,\lambda}]=\lambda^{1-2s}\Kin_{s} $ and $ \Pot[\Phi_{s,\lambda}]=\lambda\,\Pot_{s}, $ both finite for $s\in(1/2,1)$. To see that $\Kin_{s}$ and $\Pot_{s}$ are finite, note that the arctan profile $g(x)=(2/\pi)\arctan x$ belongs to the class $\mathcal{A}$ and has finite energy $E_{s}[g]<\infty$. The explicit value is computed in Remark~\ref{rem:arctan}. Hence, by the minimality of Theorem~\ref{thm:PSV},
	\begin{equation*}
		\Kin_{s}+\Pot_{s}=E_{s}[\Phis]\;\leq\;E_{s}[g]\;<\;\infty,
	\end{equation*}
	and since $\Kin_{s}\geq 0$ and $\Pot_{s}\geq 0$, both terms are finite. Consequently each $\Phi_{s,\lambda}$ has finite energy $\lambda^{1-2s}\Kin_{s}+\lambda\,\Pot_{s}<\infty$ and hence is in $\mathcal{A}$, so by Theorem~\ref{thm:PSV} the function
	\begin{equation*}
		g(\lambda) := E_{s}[\Phi_{s,\lambda}]
		= \lambda^{1-2s}\Kin_{s} + \lambda\,\Pot_{s},
		\qquad \lambda\in(0,\infty),
	\end{equation*}
	attains its infimum at $\lambda=1$. Since $g$ is smooth on $(0,\infty)$, $g'(1)=0$, i.e.\ $(1-2s)\Kin_{s}+\Pot_{s}=0$, which is \eqref{eq:pohozaev}. The identities \eqref{eq:E-K-V} follow by adding $\Kin_{s}$ to both sides of \eqref{eq:pohozaev} and rearranging.
\end{proof}

Identity~\eqref{eq:pohozaev} is the fractional analog of the classical Pohozaev identity for $-\Delta u=f(u)$. The derivation above is a purely variational consequence of the minimality. The right-hand identity of \eqref{eq:E-K-V} immediately exhibits two consequences for $s\to(1/2)^{+}$:
\begin{itemize}
	\item Any upper bound $\Es(s)\leq M(s-1/2)^{-1}$ implies $\Pot_{s}\leq 2M/(1+2\eps)$, so the potential part of the energy stays bounded as $s\to(1/2)^{+}$. Consequently any singularity of $\Es$ is carried entirely by~$\Kin_{s}$.
	\item The limit $ \lim_{s\to(1/2)^{+}}(s-\tfrac{1}{2})\,\Es(s) $ coincides, modulo a factor $2s\to 1$, with the limit $ \lim_{s\to(1/2)^{+}}\eps\,\Kin_{s}, $ so either residue may be recovered from the other.
\end{itemize}

\subsection{The differentiated pole and its remainder}
\label{ss:s12-remainder}

In this section we extract the pole from the kinetic energy term $\mathcal{K}_s$. This analysis will need the Fourier representation of this term, which was computed in Lemma~\ref{lem:fourier-energy-nondecaying}, and it is given by
\begin{equation*}
	\Kin_{s}
	\;=\;
	\frac{1}{4\pi}\int_{\R}|\xi|^{2s-2}\,|\widehat{\Phis'}(\xi)|^{2}\,d\xi,
	\qquad s\in(1/2,1).
\end{equation*}

Before computing the residue we state two facts about the layer. First we use the arctan as a competitor to obtain both the matching upper bound for the pole in the kinetic energy and a uniform bound on the potential. Then we estimate the polynomial tail of the layer using this boundedness.

\begin{remark}\label{rem:arctan}
	The function $g(x)=(2/\pi)\arctan x$ is admissible in Definition~\ref{def:admissible} for every $s\in(1/2,1)$. It also satisfies $1-|g(x)|\sim 2/(\pi|x|)$, which matches the $x^{-2s}$ tail of fractional layers at $s=1/2$. Since $\widehat{g'}(\xi)=2e^{-|\xi|}$, Lemma~\ref{lem:fourier-energy-nondecaying} gives
	\begin{equation*}
		\Kin_{s}[g]
		= \frac{1}{4\pi}\int_{\R}|\xi|^{-1+2\eps}\cdot 4e^{-2|\xi|}\,d\xi
		= \frac{2}{\pi}\,2^{1-2s}\,\Gamma(2s-1)
		= \frac{2^{-2\eps}\,\Gamma(1+2\eps)}{\pi\,\eps},
	\end{equation*}
	the last equality by $\Gamma(2s-1)=\Gamma(2s)/(2s-1)$ and $\Gamma(2s)=\Gamma(1+2\eps)$. This equation reads $\Kin_{s}[g]=(1/\pi)/\eps+O(1)$ with a simple pole of residue $1/\pi$ and no logarithmic term. Two quantitative consequences of this are used below. First, on $0<\eps\le3/100$, since $0\le2\eps\le0.06$, $2^{-2\eps}\le1$ and $\Gamma(1+2\eps)\le1$ (log-convexity of $\Gamma$ with $\Gamma(1)=\Gamma(2)=1$), whence
	\begin{equation*}
		\Kin_{s}[g]\;\le\;\frac{1}{\pi\eps}.
	\end{equation*}
	Second, the potential of the competitor is bounded,
	\begin{equation*}
		\Pot[g]\;\le\;\frac43.
	\end{equation*}
	To see this, on $|x|\le1$ one uses $0\le W\le1/4$, a contribution at most $1/2$; on $|x|\ge1$, $\arctan x\ge\pi/2-1/x$ gives $1-g(x)^{2}\le4/(\pi|x|)$, hence $W(g(x))\le4/(\pi^{2}x^{2})$. In total, using minimality we obtain
	\begin{equation}\label{eq:arctan-upper}
		\Es(s)\;\le\;E_{s}[g]\;=\;\frac{1/\pi}{\eps}+O(1).
	\end{equation}
\end{remark}

Now we need an explicit estimate for the tail of the layer. The bound in the next lemma is uniform in $s$ and not sharp. The true decay is algebraic of order $|x|^{-2s}$ by~\eqref{eq:CS-tail}, as illustrated in Figure~\ref{fig:layer-tails}.

\begin{lemma}\label{lem:tail}
	For every $s\in(1/2,53/100]$,
	\begin{equation}\label{eq:tail-poly}
		1-\Phis(R)
		\;\leq\; \tfrac32\,R^{-1/2},
		\qquad R\geq 1.
	\end{equation}
\end{lemma}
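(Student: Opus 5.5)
The plan is to repeat the tail argument from the proof of Proposition~\ref{prop:equicoerc}, now with explicit constants: the potential energy controls how far out the transition can spread, and by the Pohozaev identity the potential energy stays bounded uniformly as $s\to(1/2)^{+}$. No information on the decay rate~\eqref{eq:CS-tail} is needed.

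\emph{Step 1: a uniform bound on $\Pot_s$.} By Lemma~\ref{lem:pohozaev}, $\Pot_s=\frac{2\eps}{1+2\eps}\Es(s)$. Minimality (Theorem~\ref{thm:PSV}) with the arctan competitor of Remark~\ref{rem:arctan} gives
\begin{equation*}
	\Es(s)\le \Kin_s[g]+\Pot[g]\le \frac{1}{\pi\eps}+\frac43
\end{equation*}
for $0<\eps\le 3/100$. Hence
\begin{equation*}
	\Pot_s\le \frac{2\eps}{1+2\eps}\Bigl(\frac{1}{\pi\eps}+\frac43\Bigr)\le \frac{2}{\pi}+\frac{8}{3}\cdot\frac{3}{100}<0.72 .
\end{equation*}

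\emph{Step 2: the tail bound.} Fix $R\ge1$ and write $t:=\Phis(R)\in[0,1)$. By oddness and monotonicity, $|\Phis|\le t$ on $[-R,R]$. Since $W$ is even and decreasing on $[0,1]$, this gives $W(\Phis)\ge W(t)$ on $[-R,R]$. Using $1+t\ge1$ we have $W(t)=\frac14(1-t)^2(1+t)^2\ge\frac14(1-t)^2$, so
\begin{equation*}
	\Pot_s\ge 2R\cdot\tfrac14(1-t)^2=\tfrac{R}{2}(1-\Phis(R))^2 .
\end{equation*}
Therefore
\begin{equation*}
	1-\Phis(R)\le\sqrt{2\Pot_s}\,R^{-1/2}\le\sqrt{1.44}\,R^{-1/2}=1.2\,R^{-1/2}\le\tfrac32 R^{-1/2}.
\end{equation*}

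\emph{Main obstacle.} The only delicate point is Step~1, namely checking that the numerical constants close with margin. The sharper inequality $(1+t)^2\ge1$ used in Step~2, rather than the $1/16$ factor of Proposition~\ref{prop:equicoerc}, is what makes this work. If the arctan bound $\Pot[g]\le 4/3$ were too crude, I would instead use $\Pot_s\le 2\eps\,\Es(s)$ together with $\eps\Kin_s[g]\le 1/\pi$ directly. There is ample room, since $\sqrt{2\cdot 0.72}=1.2<3/2$.
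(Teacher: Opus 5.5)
Your proof is correct and is essentially the paper's own argument. Both you and the paper bound the potential energy $\int_{\R}W(\Phis)\,dx$ uniformly for $0<s-\tfrac12\le\tfrac{3}{100}$ via the Pohozaev identity and the arctan competitor; the paper gets $<3/4$, you get $<0.72$. Both then use $W(\Phis)\ge\tfrac14\bigl(1-\Phis(R)\bigr)^{2}$ on $[-R,R]$ to conclude $1-\Phis(R)\le\sqrt{2\int_{\R}W(\Phis)\,dx}\;R^{-1/2}\le\tfrac32R^{-1/2}$.
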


\begin{proof}
	Fix $R\geq 1$ and put $a_{R}:=1-\Phis(R)$. Since $\Phis$ is odd and increasing, $|\Phis(x)|\leq 1-a_{R}$ on $[-R,R]$, so
	\begin{equation*}
		W(\Phis(x))=\frac{(1-\Phis(x)^{2})^{2}}{4}\ge\frac{a_{R}^{2}}{4},
		\qquad |x|\le R,
	\end{equation*}
	whence $\Pot_{s}\ge\int_{-R}^{R}W(\Phis)\ge R\,a_{R}^{2}/2$. The potential is bounded uniformly on the present interval by the scaling identity~\eqref{eq:E-K-V} and the arctan upper bound~\eqref{eq:arctan-upper},
	\begin{equation*}
		\Pot_{s}=\frac{2\eps}{1+2\eps}\,\Es(s)\le 2\eps\,E_{s}[g]
		=2\eps\bigl(\Kin_{s}[g]+\Pot[g]\bigr)
		\le \frac{2}{\pi}+\frac{8}{3}\eps
		\le \frac{2}{\pi}+\frac{2}{25}<\frac34.
	\end{equation*}
	Therefore
	\begin{equation*}
		a_{R}\le \sqrt{\frac{2\Pot_{s}}{R}}
		\le\sqrt{\tfrac32}\,R^{-1/2}\le \tfrac32\,R^{-1/2}. \qedhere
	\end{equation*}
\end{proof}

\begin{figure}[t]
	\centering
	\includegraphics[width=0.78\textwidth]{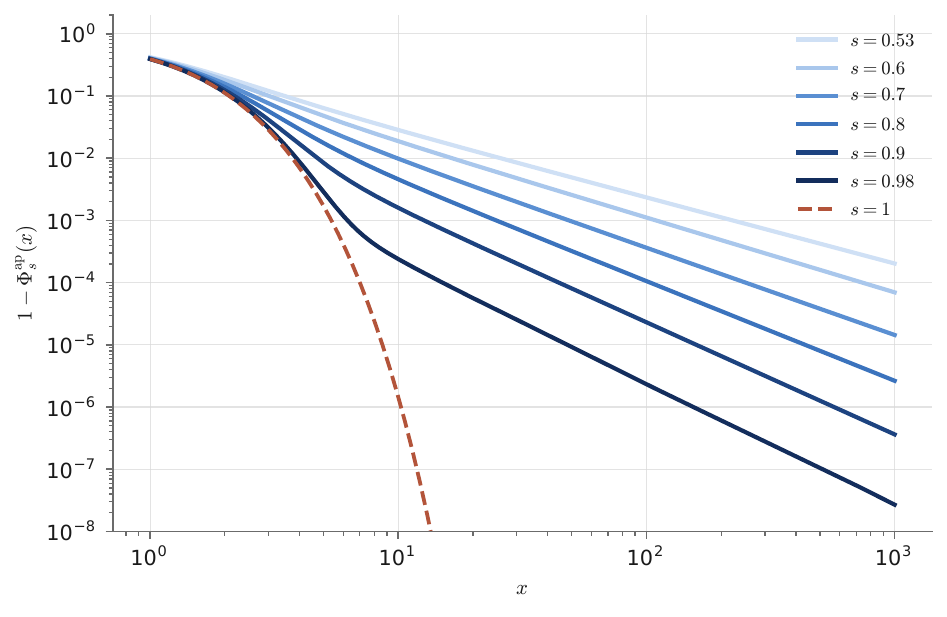}
	\caption{The layer tail $1-\Phis(x)$ on a log--log scale. For $s<1$ the
		decay is algebraic with slope $-2s$, in agreement with the Cabr\'e--Sire
		bounds~\eqref{eq:CS-tail}, and it degenerates to the exponential decay of
		$\tanh(x/\sqrt2)$ only at $s=1$. The curves are computed from the approximate layer
		$\Phi^{\mathrm{ap}}_{s}$ of Appendix~\ref{app:layer}.}
	\label{fig:layer-tails}
\end{figure}

We now carry out the analysis for $\Es'$ and show that the pole may be subtracted. Writing $C(s):=\Es(s)-(1/\pi)/\eps$ for the remainder in
\begin{equation*}
	\Es(s) \;=\; \frac{1/\pi}{s-1/2}+C(s),
	\qquad s\in(1/2,1],
\end{equation*}
the task is to bound $C'(s)=\Es'(s)+1/(\pi\eps^{2})$.

Recall the identity of Lemma~\ref{lem:envelope-paper} $\Es\in C^{1}((1/2,1))$ with
\begin{equation*}
	\Es'(s)\;=\;J(s,\Phis)\;=\;\frac{1}{2\pi}\int_{\R}\log|\xi|\,|\xi|^{2s-2}\,|\widehat{\Phis'}(\xi)|^{2}\,d\xi
	\qquad(s\in(1/2,1)).
\end{equation*}
The identity $\widehat{\Phis'}(0)=2$ isolates the low-frequency singularity,
\begin{equation}\label{eq:Es-prime-split}
	\Es'(s)\;=\;-\frac{1}{\pi\varepsilon^{2}}+K(s),
	\qquad K(s):=K_{<}(s)+K_{>}(s),
\end{equation}
since
\begin{equation*}
	-\frac{1}{\pi\varepsilon^{2}}
	\;=\;\frac{1}{2\pi}\int_{|\xi|<1}\log|\xi|\,|\xi|^{2s-2}\cdot 4\,d\xi.
\end{equation*}
Now the low- and high-frequency remainders are
\begin{equation}\label{eq:K-low}
	K_{<}(s):=\frac{1}{2\pi}\int_{|\xi|<1}\log|\xi|\,|\xi|^{2s-2}
	\bigl(|\widehat{\Phis'}(\xi)|^{2}-4\bigr)\,d\xi,
\end{equation}
\begin{equation*}
	K_{>}(s):=\frac{1}{2\pi}\int_{|\xi|\ge1}\log|\xi|\,|\xi|^{2s-2}
	|\widehat{\Phis'}(\xi)|^{2}\,d\xi .
\end{equation*}
In the next lemma we control these remainders. We need explicit constants in the argument because we need to show that our estimates hold through the interval $(1/2, 53/100]$.

\begin{lemma}\label{lem:deriv-remainder}
	For every $s\in(1/2,53/100]$,
	\begin{equation}\label{eq:L-Lip-paper}
		\left|\Es'(s)+\frac{1}{\pi(s-1/2)^{2}}\right|
		\;\le\;\frac{532}{\pi}.
	\end{equation}
	Equivalently, the remainder $C(s)=\Es(s)-(1/\pi)(s-1/2)^{-1}$ is Lipschitz on $(1/2,53/100]$ with constant $532/\pi$. In particular, $C$ is bounded on this interval. Consequently, there is a constant $C_{0}$ such that
	\begin{equation}\label{eq:K-residue}
		\Bigl|\,\Kin_{s} - \frac{1}{\pi\,(s-1/2)}\,\Bigr|\;\leq\; C_{0}.
	\end{equation}
\end{lemma}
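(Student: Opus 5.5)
By the split \eqref{eq:Es-prime-split}, the claim \eqref{eq:L-Lip-paper} is exactly $|K_{<}(s)|+|K_{>}(s)|\le 532/\pi$. I would bound the two pieces separately, using only the total mass \eqref{eq:total-mass}, positivity of $\Phis'$, the uniform tail bound of Lemma~\ref{lem:tail}, and the Fourier decay estimate \eqref{eq:hf-fourier-bound-lem}.

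\emph{High frequencies.} For $|\xi|\ge1$ I would use $|\widehat{\Phis'}(\xi)|\le\min\{2,4|\xi|^{-2s}\}\le 4|\xi|^{-1}$, which holds uniformly since $2s>1$. This gives
\begin{equation*}
|K_{>}(s)|\le\frac{1}{2\pi}\int_{|\xi|\ge1}\log|\xi|\,|\xi|^{2s-2}\cdot16|\xi|^{-4s}\,d\xi
\le\frac{16}{\pi}\int_1^\infty\log\xi\,\xi^{-3}\,d\xi=\frac{4}{\pi}.
\end{equation*}
Here I used $2s-2-4s=-2-2s\le-3$.

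\emph{Low frequencies.} This is the main obstacle. The weight $|\log|\xi||\,|\xi|^{2s-2}$ integrates near $0$ to about $\varepsilon^{-2}$, so I need a bound $\bigl|4-|\widehat{\Phis'}(\xi)|^2\bigr|\lesssim|\xi|^{\alpha}$ with $\alpha>0$ that is uniform in $s$. Since $\Phis'$ is even and nonnegative, $\widehat{\Phis'}(\xi)=\int\cos(\xi x)\Phis'(x)\,dx$ is real with $|\widehat{\Phis'}|\le2$. Hence
\begin{equation*}
0\le 4-|\widehat{\Phis'}|^2\le 4\bigl(2-\widehat{\Phis'}(\xi)\bigr)=4\int(1-\cos\xi x)\Phis'(x)\,dx.
\end{equation*}
I would split the last integral at $|x|=R:=1/|\xi|$. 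On $|x|\le R$, use $1-\cos\xi x\le \xi^2x^2/2$ and integrate by parts against the tail $1-\Phis$. On $|x|>R$, use $1-\cos\le2$ and the total tail mass $2(1-\Phis(R))\le 3R^{-1/2}$ from Lemma~\ref{lem:tail}. For the inner part, integration by parts gives
\begin{equation*}
\int_0^R x^2\Phis'\,dx\le 2\int_0^R x(1-\Phis(x))\,dx\le 2\Bigl(\tfrac12+\tfrac32\cdot\tfrac23R^{3/2}\Bigr).
\end{equation*}
Altogether this yields $2-\widehat{\Phis'}(\xi)\le C|\xi|^{1/2}$ for $|\xi|<1$ with an explicit $C$ (roughly $2+3+3$). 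Then
\begin{equation*}
|K_{<}(s)|\le \frac{4C}{2\pi}\int_{|\xi|<1}|\log|\xi||\,|\xi|^{2s-2+1/2}\,d\xi=\frac{4C}{\pi}\cdot\frac{1}{(2s-1/2)^2}\le\frac{4C}{\pi}\cdot4.
\end{equation*}
This bound is uniform in $\varepsilon$, and the $\tfrac12$ power from Lemma~\ref{lem:tail} is exactly what makes the weight integrable independently of $\varepsilon$. Tracking the constants, possibly refining the split point, I expect a total well under $532/\pi$, which the generous constant in the statement suggests.

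\emph{Consequences.} $C'(s)=\Es'(s)+1/(\pi\varepsilon^2)$ is bounded by $532/\pi$, so $C$ is Lipschitz on $(1/2,53/100]$ and hence bounded: $|C(s)|\le|C(53/100)|+(532/\pi)(3/100)$, with $C(53/100)$ finite by Theorem~\ref{thm:cont}. Finally, Lemma~\ref{lem:pohozaev} gives $\Kin_s=\Es(s)/(2s)$, so
\begin{equation*}
\Kin_s-\frac{1}{\pi\varepsilon}=\frac{C(s)}{1+2\varepsilon}-\frac{2}{\pi(1+2\varepsilon)},
\end{equation*}
using $\frac{1}{\pi\varepsilon(1+2\varepsilon)}-\frac{1}{\pi\varepsilon}=-\frac{2}{\pi(1+2\varepsilon)}$. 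This is bounded, which proves \eqref{eq:K-residue}.
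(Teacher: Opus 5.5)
Your proof is correct. It differs from the paper's only in the low-frequency estimate. The high-frequency bound $|K_{>}(s)|\le 4/\pi$, the Lipschitz and boundedness argument for $C$, and the deduction of \eqref{eq:K-residue} via Lemma~\ref{lem:pohozaev} match the paper.

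\textbf{The paper's route.} It does not use that $\widehat{\Phis'}$ is real. It writes $4-|\widehat{\Phis'}(\xi)|^{2}=\iint(1-\cos(\xi(x-y)))\,\Phis'(x)\Phis'(y)\,dx\,dy$. It then integrates the weight in $\xi$ first, giving a kernel $J^{\log}_{\varepsilon}(z)\le 3\log^{2}(e+|z|)$. After that it only needs the uniform logarithmic moment $\int\log^{2}(e+|x|)\,\Phis'\,dx<44$, which it extracts from Lemma~\ref{lem:tail}. This gives $|K_{<}(s)|\le 528/\pi$.

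\textbf{Your route.} You use evenness to get $0\le 4-\widehat{\Phis'}^{2}\le 4(2-\widehat{\Phis'})$. You then turn the $R^{-1/2}$ tail into a H\"older-$\tfrac12$ bound for $2-\widehat{\Phis'}$ at $\xi=0$, which makes the weight integrable uniformly in $\varepsilon$.

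\textbf{A bookkeeping slip.} The inner part is $\xi^{2}(1+2R^{3/2})=\xi^{2}+2|\xi|^{1/2}$ and the outer part is $2\cdot 3R^{-1/2}=6|\xi|^{1/2}$. So your constant is $C=9$, not roughly $8$. This gives $|K_{<}(s)|\le 144/\pi$ and a total of $148/\pi$, still well below $532/\pi$.

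\textbf{Trade-off.} Your argument is shorter and gives a much better constant. Any uniform power tail $R^{-\beta}$ with $\beta>0$ would serve equally well in it. The paper's argument needs strictly less decay, namely only a uniform $\log^{2}$ moment of $\Phis'$. It also does not rely on $\widehat{\Phis'}$ being real.
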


\begin{proof}
	We bound $K(s)=K_{<}(s)+K_{>}(s)$ in~\eqref{eq:Es-prime-split}.

	\smallskip
	\noindent\emph{High frequencies.}  By the pointwise
	bound~\eqref{eq:hf-fourier-bound-lem}, namely
	$|\widehat{\Phis'}(\xi)|\le4|\xi|^{-2s}$ for
	$|\xi|\ge1$, we have
	$|\widehat{\Phis'}(\xi)|^{2}\le16|\xi|^{-4s}$. Since
	$|\xi|^{2s-2}=|\xi|^{-1+2\eps}$ and $-1+2\eps-4s=-2-2s$,
	\begin{equation}\label{eq:Khigh-explicit}
		|K_{>}(s)|
		\le \frac{16}{\pi}\int_{1}^{\infty}\log\xi\,\xi^{-2-2s}\,d\xi
		=\frac{16}{\pi(1+2s)^{2}}
		\le \frac{4}{\pi}.
	\end{equation}

	\smallskip
	\noindent\emph{Low frequencies.}
	By~\eqref{eq:total-mass}, the Fourier-transform representation gives
	\begin{equation*}
		4-|\widehat{\Phis'}(\xi)|^2
		=4 - \iint_{\mathbb R^2}\cos(\xi(x-y))\,\Phis'(x)\Phis'(y)\,dx\,dy
		=\iint_{\R^2}
		\bigl(1-\cos(\xi(x-y))\bigr)\,\Phis'(x)\Phis'(y)\,dx\,dy.
	\end{equation*}
	Define
	\begin{equation*}
		J^{\log}_{\eps}(z)
		:=\int_{-1}^{1}(-\log|\xi|)|\xi|^{-1+2\eps}
		\bigl(1-\cos(\xi z)\bigr)\,d\xi.
	\end{equation*}
	Since $|\widehat{\Phis'}(\xi)|\leq2$ by~\eqref{eq:total-mass}, \eqref{eq:K-low} shows that $K_{<}(s)\geq0$. All the integrands below are nonnegative, so Tonelli's theorem yields
	\begin{equation*}
		|K_{<}(s)|=K_{<}(s)
		=\frac{1}{2\pi}\iint_{\R\times\R}
		J^{\log}_{\eps}(x-y)\,\Phis'(x)\Phis'(y)\,dx\,dy.
	\end{equation*}
	We first show that
	\begin{equation}\label{eq:Jlog-bound}
		J^{\log}_{\eps}(z)\leq 3\log^{2}(e+|z|)
		\qquad (z\in\R).
	\end{equation}
	By evenness, it suffices to consider $z\geq0$. If $0\leq z\leq1$, the inequality $1-\cos t\leq t^2/2$ yields
	\begin{equation*}
		J^{\log}_{\eps}(z)
		\leq z^2\int_0^1(-\log\xi)\xi^{1+2\eps}\,d\xi
		=\frac{z^2}{(2+2\eps)^2}
		\leq\frac14.
	\end{equation*}
	Suppose now that $z>1$, and split the integral at $\xi=1/z$. On $(0,1/z)$, the same inequality and $\eps>0$ give
	\begin{equation*}
		2\int_0^{1/z}(-\log\xi)\xi^{-1+2\eps}
		\bigl(1-\cos(\xi z)\bigr)\,d\xi
		\leq z^2\int_0^{1/z}(-\log\xi)\xi\,d\xi
		=\frac12\log z+\frac14.
	\end{equation*}
	On $(1/z,1)$, using $1-\cos t\leq2$ and then setting $t=-\log\xi$, we obtain
	\begin{equation*}
		2\int_{1/z}^1(-\log\xi)\xi^{-1+2\eps}
		\bigl(1-\cos(\xi z)\bigr)\,d\xi
		\leq4\int_{1/z}^1(-\log\xi)\xi^{-1}\,d\xi
		=4\int_0^{\log z}t\,dt
		=2\log^{2}z.
	\end{equation*}
	Since $\log(e+z)\geq1$ and $\log z\leq\log(e+z)$, these estimates prove~\eqref{eq:Jlog-bound}.

	It remains to control the corresponding logarithmic moment of $\Phis'$. Set $t_0:=\log(e+1)<1.314$ and $m(t):=\int_{\{\log(e+|x|)>t\}}\Phis'(x)\,dx$, so that $m(t)\le2$ by~\eqref{eq:total-mass}. For $t\geq t_0$, the inequality $\log(e+|x|)>t$ is equivalent to $|x|>e^t-e$, where $e^t-e\geq1$; hence, by oddness of $\Phis$ and~\eqref{eq:tail-poly},
	\begin{equation*}
		m(t)=2\bigl(1-\Phis(e^t-e)\bigr)\leq 3\,(e^t-e)^{-1/2},
		\qquad t\geq t_0.
	\end{equation*}
	The layer-cake formula therefore implies
	\begin{equation*}
		\int_{\R}\log^{2}(e+|x|)\,\Phis'(x)\,dx
		=\int_0^\infty
		2t\,m(t)\,dt
		\leq 2t_0^2
		+6\int_{t_0}^\infty t\,(e^t-e)^{-1/2}\,dt.
	\end{equation*}
	For $t\geq t_0$, we have $e^t-e\geq e^t/(e+1)$, and therefore
	\begin{equation*}
		\int_{t_0}^\infty t\,(e^t-e)^{-1/2}\,dt
		\leq\sqrt{e+1}\int_{t_0}^\infty t e^{-t/2}\,dt
		=2(t_0+2)<6.628.
	\end{equation*}
	Consequently,
	\begin{equation}\label{eq:log-moment}
		\int_{\R}\log^{2}(e+|x|)\,\Phis'(x)\,dx<44.
	\end{equation}
	Moreover, the triangle inequality gives $e+|x-y|\leq(e+|x|)(e+|y|)$, and hence
	\begin{equation*}
		\log^{2}(e+|x-y|)
		\leq2\log^{2}(e+|x|)+2\log^{2}(e+|y|).
	\end{equation*}
	Combining this inequality with~\eqref{eq:total-mass} and~\eqref{eq:log-moment}, we find
	\begin{equation*}
		\iint_{\R\times\R}\log^{2}(e+|x-y|)
		\,\Phis'(x)\Phis'(y)\,dx\,dy<352.
	\end{equation*}
	Thus~\eqref{eq:Jlog-bound} gives
	\begin{equation}\label{eq:Klow-explicit}
		|K_{<}(s)|
		\leq \frac{3}{2\pi}\iint_{\R\times\R}\log^{2}(e+|x-y|)
		\,\Phis'(x)\Phis'(y)\,dx\,dy
		\leq \frac{3\cdot352}{2\pi}
		=\frac{528}{\pi}.
	\end{equation}

	Combining~\eqref{eq:Khigh-explicit} and~\eqref{eq:Klow-explicit}, we conclude that
	\begin{equation*}
		|K(s)|\leq\frac{528}{\pi}+\frac4\pi=\frac{532}{\pi},
	\end{equation*}
	which proves~\eqref{eq:L-Lip-paper}. By Lemma~\ref{lem:envelope-paper}, the remainder $C$ belongs to $C^1((1/2,53/100])$, and~\eqref{eq:Es-prime-split} gives $C'(s)=K(s)$. The bound above and the fundamental theorem of calculus therefore show that $C$ is Lipschitz with the stated constant.

	To prove boundedness up to the left endpoint, fix $s_0=53/100$. For every $s\in(1/2,s_0]$,
	\begin{equation*}
		|C(s)|
		\leq |C(s_0)|+\int_s^{s_0}|C'(r)|\,dr
		\leq |C(s_0)|+\frac{532}{\pi}(s_0-s),
	\end{equation*}
	so $C$ is bounded on $(1/2,s_0]$. Finally, \eqref{eq:E-K-V} yields
	\begin{equation*}
		\Kin_{s}-\frac{1}{\pi\eps}
		=\frac{C(s)}{1+2\eps}-\frac{2}{\pi(1+2\eps)}.
	\end{equation*}
	The right-hand side is uniformly bounded, which proves~\eqref{eq:K-residue} and completes the proof.
\end{proof}

\begin{proof}[Proof of Theorem~\ref{thm:asymp}(ii)]
	The boundedness of $C$ in Lemma~\ref{lem:deriv-remainder} yields the sharp statement
	\begin{equation}\label{eq:final-pole}
		\Es(s) \;=\; \frac{1/\pi}{s-1/2} \;+\; O(1)
		\qquad\text{as }s\to (1/2)^{+},
	\end{equation}
	which is \eqref{eq:asymp-s12-sharp}.
\end{proof}

\begin{remark}
	Identity~\eqref{eq:E-K-V} together with \eqref{eq:final-pole} gives $ \Pot_{s} = (2\eps/(1+2\eps))\,\Es(s) = 2/\pi + O(\eps), $ so the potential part of the layer's energy converges to $2/\pi$ as $s\to(1/2)^{+}$. The Fourier-side identification of the residue can thus be read as the statement that the kinetic part of the energy concentrates at zero frequency while the potential part contributes only to the $O(1)$ remainder.
\end{remark}

\subsection{Strict decrease}
\label{ss:s12-conclusion}

Now that we have all the estimates with explicit constants, we are able to show strict decrease of the energy near the endpoint $s=1/2$.

\begin{proof}[Proof of Proposition~\ref{prop:mono-lower}]
	Let $0<\varepsilon=s-1/2\le\delta=3/100$. Lemma~\ref{lem:deriv-remainder} and~\eqref{eq:Es-prime-split} give
	\begin{equation*}
		\Es'(s)\le -\frac{1}{\pi\varepsilon^{2}}+\frac{532}{\pi}.
	\end{equation*}
	Because $\varepsilon\le3/100$,
	\begin{equation*}
		\frac{532}{\pi}<\frac{5000}{9\pi}=\frac{1}{2\pi(3/100)^{2}}
		\le \frac{1}{2\pi\varepsilon^{2}}.
	\end{equation*}
	Therefore
	\begin{equation}\label{eq:Es-prime-neg-lower}
		\Es'(s)
		\le -\frac{1}{\pi\varepsilon^{2}}+\frac{1}{2\pi\varepsilon^{2}}
		=-\frac{1}{2\pi\varepsilon^{2}}<0,
		\qquad 0<\varepsilon\le\frac{3}{100}.
	\end{equation}
	Thus $\Es$ is strictly decreasing on $(1/2,53/100]$.
\end{proof}

\section{Strict monotone decrease on the interior interval}\label{sec:mono-interior}
In this section we close the proof of Theorem~\ref{thm:mono} by establishing strict decrease of $\Es$ on the closed interval
\begin{equation}\label{eq:interior-interval}
	I_{0} \;:=\; [\underline{s},\overline{s}],\qquad \underline{s}=0.530,\quad \overline{s}=0.980,
\end{equation}
which meets the endpoint intervals of Propositions~\ref{prop:mono-lower} and~\ref{prop:mono-upper}. Throughout the section, all displayed numerical constants are obtained by interval arithmetic at 200-bit precision.

The argument proceeds as follows. In Section~\ref{ss:semiconc} we establish a profile-free bound on the second derivative in $s$ of the energy of a frozen layer, and deduce that $\Es$ is semiconcave on each cell of a grid $s_{0}<\underline{s}<s_{1}<\cdots<s_{N-1}<\overline{s}<s_{N}$. Section~\ref{ss:inverse-ap} constructs an approximation to $\Phi_s$ and uses a fixed-point argument to pass from this approximation to the true Cabr\'e--Sire layer. In Section~\ref{ss:realization} a finite system of inequalities, verified at the grid points by interval arithmetic, is shown in Proposition~\ref{prop:cells-I0} to imply strict decrease on all of $I_{0}$. The proof of Theorem~\ref{thm:mono} concludes by combining this result with Propositions~\ref{prop:mono-lower} and~\ref{prop:mono-upper} on the other two intervals of~\eqref{eq:three-interval-cover}.

\subsection{Semiconcavity of the energy}\label{ss:semiconc}

We use the derivative functional $J$ defined in~\eqref{eq:J-functional}. The interior argument rests on an upper bound, uniform over a cell, for the second derivative in $s$ of the energy of a frozen layer. Combined with the minimality of Theorem~\ref{thm:PSV}, such a bound makes $\Es$ semiconcave on the cell, and semiconcavity propagates a strict sign of $\Es'=J(\cdot,\Phi_{\cdot})$ from a single grid point to the whole cell, at the cost of a condition on the cell width. The next lemma provides the bound, with an explicit constant depending only on the cell endpoints and not on the profile, together with the resulting semiconcavity inequality.

\begin{lemma}\label{lem:semiconcave}
	Let $[a,b]\subset(1/2,1)$. Set
	\begin{equation}\label{eq:Mstar-def}
		M^{\ast}_{\mathrm{sc}}([a,b])\;:=\;\frac{16}{\pi\,(2a-1)^{3}}\;+\;\frac{256}{\pi\,(4a+1-2b)^{3}}.
	\end{equation}
	Then
	\begin{enumerate}
		\item[(i)] For every $\sigma\in[a,b]$ the map $s\mapsto E_{s}[\Phi_{\sigma}]$ belongs to $C^{2}([a,b])$, and
		      \begin{equation}\label{eq:phisigma-2}
			      \partial_{s}^{2} E_{s}[\Phi_{\sigma}] \;=\; \frac{1}{\pi}\int_{\R}(\log|\xi|)^{2}\,|\xi|^{2s-2}\,|\widehat{\Phi_{\sigma}'}(\xi)|^{2}\,d\xi\;\ge\;0.
		      \end{equation}
		\item[(ii)] The right-hand side of~\eqref{eq:phisigma-2} satisfies, uniformly for $\sigma,s\in[a,b]$,
		      \begin{equation*}
			      \partial_{s}^{2} E_{s}[\Phi_{\sigma}]\;\le\;M^{\ast}_{\mathrm{sc}}([a,b]).
		      \end{equation*}
		\item[(iii)] The energy $\Es$ is semiconcave on $[a,b]$, in the sense that
		      \begin{equation}\label{eq:semiconc-pt}
			      \Es(s)\;\le\;\Es(\sigma)+J(\sigma,\Phi_{\sigma})\,(s-\sigma)+\tfrac{1}{2}M^{\ast}_{\mathrm{sc}}([a,b])\,(s-\sigma)^{2}
			      \qquad(\sigma,s\in[a,b]).
		      \end{equation}
	\end{enumerate}
\end{lemma}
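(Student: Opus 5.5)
For (i) we start from the Fourier representation~\eqref{eq:fixed-layer-fourier-energy}, $E_s[\Phi_\sigma]=\frac{1}{4\pi}\int_\R|\xi|^{2s-2}|\widehat{\Phi_\sigma'}|^2\,d\xi+\Pot[\Phi_\sigma]$. The potential term does not depend on $s$, so we differentiate the kinetic term twice under the integral sign. Each derivative in $s$ produces a factor $2\log|\xi|$, which gives exactly~\eqref{eq:phisigma-2}. The integrand is visibly nonnegative.

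To justify the differentiation we use the pointwise bound~\eqref{eq:hf-fourier-bound-lem}, $|\widehat{\Phi_\sigma'}(\xi)|\le\min\{2,4|\xi|^{-2\sigma}\}$, exactly as in Lemma~\ref{lem:layer-energy-variation}, now with an extra logarithm. For all $s,\sigma\in[a,b]$ the majorant is
\[
4(\log|\xi|)^2|\xi|^{2a-2}\mathbf 1_{\{|\xi|<1\}}+16(\log|\xi|)^2|\xi|^{2b-2-4a}\mathbf 1_{\{|\xi|\ge1\}} .
\]
This is integrable because $2a-2>-1$ and $2b-2-4a<-1$; the latter holds since $b<1<2a$. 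Dominated differentiation then gives $C^2([a,b])$, including one-sided derivatives at the endpoints.

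The bound in (ii) comes from integrating this same majorant explicitly. Using $\int_0^1 t^{\alpha-1}(\log t)^2\,dt=2/\alpha^3$ with $\alpha=2a-1$, the low-frequency part (both signs of $\xi$) contributes
\[
\frac1\pi\cdot 4\cdot 2\cdot\frac{2}{(2a-1)^3}=\frac{16}{\pi(2a-1)^3}.
\]
For high frequencies, $\int_1^\infty t^{-\beta-1}(\log t)^2\,dt=2/\beta^3$ with $\beta=4a+1-2b$. Together with the factor $16$ and the two sides, this gives
\[
\frac1\pi\cdot16\cdot2\cdot\frac{2}{\beta^3}=\frac{64}{\pi\beta^3}\le\frac{256}{\pi\beta^3}.
\]
The stated constant therefore holds with room to spare. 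The only care needed is to take the worst exponent: $s=a$ at low frequency, and $s=b$, $\sigma=a$ at high frequency, where $|\xi|^{2s-2-4\sigma}\le|\xi|^{2b-2-4a}$ for $|\xi|\ge1$.

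For (iii) we combine minimality with the frozen-profile Taylor expansion. By Theorem~\ref{thm:PSV}, $\Es(s)\le E_s[\Phi_\sigma]$ for $\sigma,s\in[a,b]$. Taylor's theorem with remainder for $s\mapsto E_s[\Phi_\sigma]$ around $s=\sigma$ gives
\[
E_s[\Phi_\sigma]\le E_\sigma[\Phi_\sigma]+\partial_sE_s[\Phi_\sigma]\big|_{s=\sigma}(s-\sigma)+\tfrac12M^\ast_{\mathrm{sc}}(s-\sigma)^2 .
\]
Here $E_\sigma[\Phi_\sigma]=\Es(\sigma)$, and the first derivative equals $J(\sigma,\Phi_\sigma)$ by~\eqref{eq:D1-lem}. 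This yields~\eqref{eq:semiconc-pt}.

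I expect no real obstacle here. The only delicate point is getting the dominating function uniform in both $\sigma$ and $s$ (the high-frequency exponent mixes $b$ and $a$), and checking that the resulting integrals reproduce the constants in~\eqref{eq:Mstar-def}.
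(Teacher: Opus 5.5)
Your proof is correct and follows the paper's argument: dominated differentiation of the Fourier representation with low/high-frequency majorants, explicit evaluation of the $(\log|\xi|)^{2}$ moments for (ii), and Taylor's theorem combined with the minimality of Theorem~\ref{thm:PSV} for (iii). The only difference is that you keep the sharper bound $4|\xi|^{-2\sigma}$ from \eqref{eq:hf-fourier-bound-lem}, which gives the better constant $64/(\pi(4a+1-2b)^{3})$; the paper instead uses the weaker bound $8|\xi|^{-2a}$, which yields exactly the stated $256/(\pi(4a+1-2b)^{3})$.
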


\begin{proof}
	\emph{Step 1: Differentiation with respect to $s$.}
	Fix $\sigma\in[a,b]$.  By Lemma~\ref{lem:fourier-energy-nondecaying}, the kinetic part of $E_{s}[\Phi_{\sigma}]$ is
	$
		(4\pi)^{-1}\!\int_{\R}|\xi|^{2s-2}|\widehat{\Phi_{\sigma}'}(\xi)|^{2}\,d\xi,
	$
	whereas the potential part is independent of $s$.  The estimates established in the proof of Lemma~\ref{lem:layer-energy-variation} give, uniformly for $\tau\in[a,b]$,
	\begin{equation}\label{eq:hf-bound}
		|\widehat{\Phi_{\tau}'}(\xi)|\le2,\qquad
		|\widehat{\Phi_{\tau}'}(\xi)|\le8\,|\xi|^{-2a}\quad(|\xi|\ge1),
	\end{equation}
	the second of which is a weaker form of~\eqref{eq:hf-fourier-bound-lem}. Combining~\eqref{eq:hf-bound} with the monotonicity in $s$ of $|\xi|^{2s-2}$ on the regions $|\xi|<1$ and $|\xi|\ge1$, respectively, yields the majorants
	\begin{equation*}
		4(\log|\xi|)^{2}|\xi|^{2a-2}
		\qquad\text{and}\qquad
		64(\log|\xi|)^{2}|\xi|^{2b-2-4a}.
	\end{equation*}
	The first is integrable on $\{|\xi|<1\}$ because $2a-1>0$, and the second is integrable on $\{|\xi|\ge1\}$ provided that $4a+1-2b>0$. This condition is automatic because $a>1/2$ and $b<1$. The same integrability conclusions hold with the factor $(\log|\xi|)^2$ replaced by $|\log|\xi||$ or $1$. We therefore have integrable majorants for the Fourier integrand and its first two derivatives with respect to $s$, uniformly for $s,\sigma\in[a,b]$. Applying the dominated-convergence criterion for differentiation under the integral sign twice gives $ \partial_{s}^{2}E_{s}[\Phi_{\sigma}]=\pi^{-1}\!\int_{\R}(\log|\xi|)^{2}|\xi|^{2s-2}|\widehat{\Phi_{\sigma}'}(\xi)|^{2}\,d\xi, $ which is~\eqref{eq:phisigma-2}. A further application of dominated convergence, with the same majorant, gives continuity of the second derivative in $s$. The corresponding first differentiation yields $\partial_{s}E_{s}[\Phi_{\sigma}]=J(s,\Phi_{\sigma})$, and the nonnegativity in~\eqref{eq:phisigma-2} is immediate. This proves~(i).

	\emph{Step 2: The uniform bound.}
	We estimate the two frequency regions using the majorants from Step~1.  The elementary identities
	\begin{align*}
		 & \int_{0}^{1}(\log\xi)^{2}\,\xi^{c}\,d\xi \;=\; \frac{2}{(c+1)^{3}}\quad(c>-1),         \\
		 & \int_{1}^{+\infty}(\log\xi)^{2}\,\xi^{c}\,d\xi \;=\; \frac{2}{(-(c+1))^{3}}\quad(c<-1)
	\end{align*}
	follow from the substitutions $\xi=e^{-u}$ and $\xi=e^{u}$, respectively. Applying them with $c=2a-2$ and $c=2b-2-4a$ gives
	\begin{align*}
		\frac{1}{\pi}\int_{|\xi|<1}\!(\log|\xi|)^{2}|\xi|^{2a-2}\cdot 4\,d\xi
		 & \;=\; \frac{8}{\pi}\cdot\frac{2}{(2a-1)^{3}}\;=\;\frac{16}{\pi(2a-1)^{3}}, \\
		\frac{1}{\pi}\int_{|\xi|\ge 1}\!(\log|\xi|)^{2}|\xi|^{2b-2-4a}\cdot 64\,d\xi
		 & \;=\;\frac{256}{\pi(4a+1-2b)^{3}}.
	\end{align*}
	Adding these estimates shows that the right-hand side of~\eqref{eq:phisigma-2} is bounded above by~\eqref{eq:Mstar-def}, uniformly for $s,\sigma\in[a,b]$. This proves~(ii).

	\emph{Step 3: Semiconcavity.}
	Fix $\sigma\in[a,b]$ and set $\phi_{\sigma}(s):=E_{s}[\Phi_{\sigma}]$.  By~(i) and~(ii), $\phi_{\sigma}\in C^{2}([a,b])$ with $\phi_{\sigma}'(s)=J(s,\Phi_{\sigma})$ and $\phi_{\sigma}''\le M^{\ast}_{\mathrm{sc}}([a,b])$, so Taylor's theorem gives
	\begin{equation*}
		\phi_{\sigma}(s)\;\le\;\phi_{\sigma}(\sigma)+J(\sigma,\Phi_{\sigma})(s-\sigma)+\tfrac{1}{2}M^{\ast}_{\mathrm{sc}}([a,b])(s-\sigma)^{2}
		\qquad(s\in[a,b]).
	\end{equation*}
	By Theorem~\ref{thm:PSV}, applied at the parameter $s$ with competitor $\Phi_{\sigma}$, one has $\Es(s)\le E_{s}[\Phi_{\sigma}]=\phi_{\sigma}(s)$, while $\phi_{\sigma}(\sigma)=E_{\sigma}[\Phi_{\sigma}]=\Es(\sigma)$. This proves~(iii).
\end{proof}

For the inner interval~\eqref{eq:interior-interval} we apply Lemma~\ref{lem:semiconcave} separately on each cell, over $[a_{i},b_{i}] \supset [s_{i},s_{i+1}]$. This produces, for each $i$, the curvature bound $M^{\ast}_{\mathrm{sc}}([a_{i},b_{i}])$. No global constant is used because $M^{\ast}_{\mathrm{sc}}([a,b])$ grows without bound as $a\downarrow 1/2$.

\subsection{The fixed-point argument around the approximate solution}\label{ss:inverse-ap}

We next record the inverse estimate needed for the contraction argument. At a grid value $s=s_{i}$, the relevant operator is not the exact linearization $\fl+3\Phi_{s}^{2}-1$ at the true layer, but the linearization at the explicit approximate profile
\begin{equation*}
	G_{s}'(0)
	\;=\;
	\fl+V_{s}^{\mathrm{ap}},
	\qquad
	V_{s}^{\mathrm{ap}}(x):=3\bigl(\Phi_{s}^{\mathrm{ap}}(x)\bigr)^{2}-1,
\end{equation*}
acting from $H^{2s}_{\mathrm{odd}}(\R)$ to $L^{2}_{\mathrm{odd}}(\R)$. Since $V_{s}^{\mathrm{ap}}$ is an even real bounded function, $G_{s}'(0)$ is self-adjoint on the odd subspace with domain $H^{2s}_{\mathrm{odd}}(\R)$.

The following elementary proposition turns an $L^{2}$ coercivity bound into the $L^{2}\to H^{2s}$ inverse bound required in Proposition~\ref{prop:layer-radius}.

\begin{proposition}\label{prop:K0-from-coercivity}
	Let $s\in(1/2,\overline{s}]$, and assume that two numbers $\gamma>0$ and $M>0$ satisfy
	\begin{align}
		\langle G_{s}'(0)u,u\rangle_{L^{2}}
		 & \ge \gamma\,\|u\|_{L^{2}}^{2}
		\qquad\text{for every }u\in H^{2s}_{\mathrm{odd}}(\R), \label{eq:Gprime-coercive} \\
		\|V_{s}^{\mathrm{ap}}\|_{L^{\infty}(\R)}
		 & \le M. \label{eq:Vap-M}
	\end{align}
	Then $G_{s}'(0)\colon H^{2s}_{\mathrm{odd}}(\R)\to L^{2}_{\mathrm{odd}}(\R)$ is invertible and
	\begin{equation}\label{eq:K0-elliptic-bound}
		\bigl\|G_{s}'(0)^{-1}\bigr\|_{L^{2}\to H^{2s}}
		\;\le\;
		2^{s-\frac12}
		\left(\gamma^{-2}+\left(1+\frac{M}{\gamma}\right)^{2}\right)^{1/2}.
	\end{equation}
	In particular, if $\gamma\ge 9/10$ and $M\le4$, then the right-hand side of~\eqref{eq:K0-elliptic-bound} is $<9$ for every $s\in(1/2,\overline{s}]$.
\end{proposition}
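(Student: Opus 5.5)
The plan is to derive invertibility and the $L^{2}\to L^{2}$ bound from the coercivity hypothesis, then bound $\|(-\Delta)^{s}v\|_{L^{2}}$ through the equation, and finally convert these into an $H^{2s}$ bound with the multiplier inequality.

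Write $A:=G_{s}'(0)$, self-adjoint on $L^{2}_{\mathrm{odd}}(\R)$ with domain $H^{2s}_{\mathrm{odd}}(\R)$, as noted before the statement.

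\emph{Step 1: invertibility and the $L^{2}$ bound.} By \eqref{eq:Gprime-coercive}, Cauchy--Schwarz gives $\gamma\|u\|_{L^{2}}^{2}\le\langle Au,u\rangle\le\|Au\|_{L^{2}}\|u\|_{L^{2}}$, so $\|Au\|_{L^{2}}\ge\gamma\|u\|_{L^{2}}$. Hence $A$ is injective with closed range. By self-adjointness, $\operatorname{Ran}(A)^{\perp}=\ker A=\{0\}$ in $L^{2}_{\mathrm{odd}}$, so $A$ is onto. Equivalently, the spectrum of $A$ lies in $[\gamma,\infty)$ and the spectral theorem applies. In either form we get $\|A^{-1}\|_{L^{2}\to L^{2}}\le\gamma^{-1}$.

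\emph{Step 2: the fractional Laplacian term.} For $f\in L^{2}_{\mathrm{odd}}$ put $v=A^{-1}f$. Then $(-\Delta)^{s}v=f-V_{s}^{\mathrm{ap}}v$, and \eqref{eq:Vap-M} gives
\[
\|(-\Delta)^{s}v\|_{L^{2}}\le\|f\|_{L^{2}}+M\gamma^{-1}\|f\|_{L^{2}}=\Bigl(1+\frac{M}{\gamma}\Bigr)\|f\|_{L^{2}}.
\]

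\emph{Step 3: conversion to $H^{2s}$.} The target inequality is
\[
(1+|\xi|^{2})^{2s}\le 2^{2s-1}\bigl(1+|\xi|^{4s}\bigr),
\]
which is convexity of $t\mapsto t^{2s}$ (recall $2s\ge1$) applied to the midpoint of $1$ and $|\xi|^{2}$. This is the same step used for \eqref{eq:mult-comp}, but now we keep the sharp constant $2^{2s-1}$. Plancherel then gives
\[
\|v\|_{H^{2s}}^{2}\le 2^{2s-1}\bigl(\|v\|_{L^{2}}^{2}+\|(-\Delta)^{s}v\|_{L^{2}}^{2}\bigr).
\]
Inserting Steps 1 and 2 and taking square roots yields \eqref{eq:K0-elliptic-bound}.

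\emph{Step 4: the numerical consequence.} The right side of \eqref{eq:K0-elliptic-bound} increases with $s$, decreases with $\gamma$, and increases with $M$. It therefore suffices to evaluate it at $s=\overline{s}=0.98$, $\gamma=9/10$, $M=4$. There $2^{0.48}<1.4$, and
\[
\gamma^{-2}+\Bigl(1+\frac{M}{\gamma}\Bigr)^{2}=\frac{100}{81}+\Bigl(\frac{49}{9}\Bigr)^{2}=\frac{2501}{81}\approx30.9,
\]
whose square root is below $5.56$. The product is below $7.8<9$.

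The main point requiring care is Step 1. Surjectivity has to come from self-adjointness on the odd subspace, since coercivity alone only gives injectivity and closed range. Everything else is routine.
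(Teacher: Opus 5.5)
Your proposal is correct and follows the paper's proof step for step: the $L^{2}$ inverse bound $\gamma^{-1}$ from coercivity, the estimate $\|(-\Delta)^{s}v\|_{L^{2}}\le(1+M/\gamma)\|f\|_{L^{2}}$ obtained from the equation, and the convexity inequality $(1+t)^{2s}\le 2^{2s-1}(1+t^{2s})$ combined with Plancherel. Your closed-range and self-adjointness argument for surjectivity simply unpacks the paper's appeal to the spectral theorem, and your numerical check (a value below $7.8$) is a correct and slightly more explicit version of the paper's final inequality.
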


\begin{proof}
	The coercivity bound~\eqref{eq:Gprime-coercive} and the spectral theorem give invertibility on $L^{2}_{\mathrm{odd}}$, with
	\begin{equation}\label{eq:L2-inverse-from-gamma}
		\|u\|_{L^{2}}\le\gamma^{-1}\|f\|_{L^{2}},
		\qquad u:=G_{s}'(0)^{-1}f .
	\end{equation}
	The equation $G_{s}'(0)u=f$ gives
	\begin{equation*}
		\fl u=f-V_{s}^{\mathrm{ap}}u,
	\end{equation*}
	and therefore, by~\eqref{eq:Vap-M} and~\eqref{eq:L2-inverse-from-gamma},
	\begin{equation}\label{eq:fl-u-bound}
		\|\fl u\|_{L^{2}}
		\le \|f\|_{L^{2}}+M\|u\|_{L^{2}}
		\le \left(1+\frac{M}{\gamma}\right)\|f\|_{L^{2}} .
	\end{equation}
	Finally, for $s\in[1/2, 1)$ and $t\ge0$,
	\begin{equation*}
		(1+t)^{2s}\le 2^{2s-1}(1+t^{2s}).
	\end{equation*}
	With $t=|\xi|^{2}$, Plancherel gives
	\begin{equation*}
		\|u\|_{H^{2s}}^{2}
		\le
		2^{2s-1}\bigl(\|u\|_{L^{2}}^{2}+\|\fl u\|_{L^{2}}^{2}\bigr).
	\end{equation*}
	Substituting~\eqref{eq:L2-inverse-from-gamma} and~\eqref{eq:fl-u-bound} proves~\eqref{eq:K0-elliptic-bound}. For $s\le\overline{s}=0.980$, $\gamma\ge9/10$, and $M\le4$, the bound is at most
	\begin{equation*}
		2^{0.480}
		\left(\left(\frac{10}{9}\right)^{2}+\left(1+\frac{40}{9}\right)^{2}\right)^{1/2}
		<9 .
	\end{equation*}
\end{proof}

By Proposition~\ref{prop:K0-from-coercivity}, the inverse bound for $G_{s}'(0)$ reduces to a coercivity estimate on the odd subspace. We obtain this coercivity from a Schur inequality which we now set up.

Write $A:=\fl+\tfrac{11}{10}$, a positive Fourier multiplier with symbol $|\xi|^{2s}+\tfrac{11}{10}$. Its full-line convolution kernel is
\begin{equation}
	R_{s}(z):=
	\frac{1}{2\pi} \int_{-\infty}^{+\infty} \frac{e^{iz\xi}}{|\xi|^{2s} + \frac{11}{10}} \, d\xi
	= \frac{1}{\pi}\int_{0}^{+\infty}\frac{\cos(z\xi)}{\xi^{2s}+\frac{11}{10}}\,d\xi,
\end{equation}
and the associated odd half-line kernel is
\begin{equation}
	K_{s}(x,y):=R_{s}(x-y)-R_{s}(x+y)\qquad(x,y>0),
\end{equation}
so that, for $f\in L^{2}(0,+\infty)$ extended oddly to $\R$, the restriction to $x>0$ of $A^{-1}f$ equals $\int_{0}^{+\infty}K_{s}(x,y)f(y)\,dy$.

Use the principal branch of $\zeta^{2s}$. For $z\ge0$, integrate $e^{iz\zeta}/(\zeta^{2s}+\frac{11}{10})$ over an indented quarter-circle contour in the first quadrant, with radii $\varepsilon$ and $R$. Since $s\in(1/2,\overline{s}]\subset(1/2,1)$, the denominator has no zero in this quadrant. Letting $\varepsilon\to0$ and $R\to+\infty$, the two circular integrals vanish (the outer one is $O(R^{1-2s})$ when $z=0$, and Jordan's lemma applies when $z>0$). Thus the integral along the positive real axis equals the integral along the positive imaginary axis. Substituting $\zeta = ir$ ($r > 0$, $d\zeta = i \, dr$), we obtain:
\begin{equation*}
	\int_{0}^{+\infty} \frac{e^{iz\xi}}{\xi^{2s} + \frac{11}{10}} \, d\xi
	= i \int_{0}^{+\infty} \frac{e^{-zr}}{(ir)^{2s} + \frac{11}{10}} \, dr.
\end{equation*}
Taking real parts and using
\begin{equation*}
	\frac{i}{(ir)^{2s} + \frac{11}{10}}
	= \frac{r^{2s}\sin(\pi s) + i\left(r^{2s}\cos(\pi s) + \frac{11}{10}\right)}
	{r^{4s} + \frac{11}{5} r^{2s}\cos(\pi s) + \frac{121}{100}}
\end{equation*}
yields
\begin{equation*}
	R_s(z) = \frac{\sin(\pi s)}{\pi} \int_{0}^{+\infty}
	\frac{r^{2s}e^{-zr}}{r^{4s} + \frac{11}{5} r^{2s}\cos(\pi s) + \frac{121}{100}} \, dr.
\end{equation*}
Thus $R_s$ is positive, even, and strictly decreasing on $[0,+\infty)$. Since $|x-y|<x+y$ for $x,y>0$, we conclude that $K_s(x,y)>0$ for all $x,y>0$.

Given a bounded profile $\Phi$ and a positive weight $w(x)$, set $q(x):=\bigl(3(1-\Phi(x)^{2})_{+}\bigr)^{1/2}$. The only hypothesis below is the pointwise Schur inequality
\begin{equation}\label{eq:Gprime-cert-BS}
	q(x)\int_{0}^{+\infty}K_{s}(x,y)\,q(y)\,w(y)\,dy
	\;\le\;\theta\,w(x)
	\qquad(x>0),
\end{equation}
for some number $\theta>0$. For the explicit approximate profiles $\Phi^{\mathrm{ap}}_{s_{i}}$ at the grid points, \eqref{eq:Gprime-cert-BS} is verified with an explicit weight $w$ and a value $\theta<1$ by interval arithmetic. That verification is entirely finite and is carried out separately. Here we only use that, once~\eqref{eq:Gprime-cert-BS} holds with $\theta<1$, the operator $G_{s}'(0)$ is coercive and invertible with a controlled inverse.

The strategy now is to avoid inverting the variable-coefficient operator $G_{s}'(0)$ directly. Instead, we subtract the desired spectral gap and write
\begin{equation*}
	G_{s}'(0)-\frac{9}{10}=A-\bigl(2-V_{s}^{\mathrm{ap}}\bigr).
\end{equation*}
The constant coefficient operator $A$ has the explicit inverse kernel $K_s$, while the second term is treated as an error. Its part that can decrease the quadratic form is bounded by $q^{2}=3(1-(\Phi_{s}^{\mathrm{ap}})^{2})_{+}$. The Schur inequality says precisely that the operator $qA^{-1}q$ has norm at most $\theta<1$. Equivalently, the error $A^{-1/2}q^{2}A^{-1/2}$ has the same bound. The error can therefore be absorbed, leaving the spectral gap $9/10$ and hence the required inverse bound for $G_{s}'(0)$.

\begin{proposition}\label{prop:Gprime-coercivity-cert}
	Let $s\in(1/2,\overline{s}]$, let $\Phi^{\mathrm{ap}}_{s}$ be a bounded odd profile with
	\begin{equation*}
		\bigl\|V_{s}^{\mathrm{ap}}\bigr\|_{L^{\infty}(\R)}
		=\bigl\|3(\Phi^{\mathrm{ap}}_{s})^{2}-1\bigr\|_{L^{\infty}(\R)}\le4,
	\end{equation*}
	and suppose the Schur inequality~\eqref{eq:Gprime-cert-BS}, with $\Phi=\Phi^{\mathrm{ap}}_{s}$, holds for some positive weight $w$ and some $\theta\in(0,1)$. Then
	\begin{equation}\label{eq:Gprime-cert-targets}
		\langle G_{s}'(0)u,u\rangle_{L^{2}}
		\ge \frac{9}{10}\,\|u\|_{L^{2}}^{2}
		\qquad\text{for all }u\in H^{2s}_{\mathrm{odd}}(\R),
	\end{equation}
	and thus $G_{s}'(0)\colon H^{2s}_{\mathrm{odd}}(\R)\to L^{2}_{\mathrm{odd}}(\R)$ is invertible with $ K_{0}(s) =\bigl\|(G_{s}'(0))^{-1}\bigr\|_{L^{2}\to H^{2s}} \le 9 . $
\end{proposition}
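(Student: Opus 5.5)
The plan is to convert the pointwise Schur inequality~\eqref{eq:Gprime-cert-BS} into an operator-norm bound for $qA^{-1}q$ on the odd subspace, and then absorb the negative part of the potential. Three steps are needed.

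\emph{Step 1 (Schur test).} Let $T$ be the operator on $L^{2}(0,\infty)$ with kernel $q(x)K_{s}(x,y)q(y)$. The kernel is positive and symmetric, since $R_{s}$ is even. For $f\in L^{2}(0,\infty)$, apply Cauchy--Schwarz with the weight $w$:
\begin{equation*}
	|Tf(x)|^{2}\le\Bigl(\int_{0}^{\infty}qKq\,w\,dy\Bigr)\Bigl(\int_{0}^{\infty}qKq\,\frac{|f(y)|^{2}}{w(y)}\,dy\Bigr)\le\theta\,w(x)\int_{0}^{\infty}q(x)K_{s}(x,y)q(y)\frac{|f(y)|^{2}}{w(y)}\,dy .
\end{equation*}
Integrate in $x$, use Tonelli and the symmetry of the kernel, and apply~\eqref{eq:Gprime-cert-BS} once more in the variable $y$. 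This gives $\|Tf\|_{L^{2}}^{2}\le\theta^{2}\|f\|_{L^{2}}^{2}$, so $\|T\|\le\theta$. For odd functions, the kernel $K_{s}$ represents the restriction of $A^{-1}$ to the half-line, and $q$ is even. Hence $T$ is unitarily equivalent, up to the factor $\sqrt2$ that cancels, to $qA^{-1}q$ on $\Lodd(\R)$. This gives $\|qA^{-1}q\|_{\Lodd\to\Lodd}\le\theta$.

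\emph{Step 2 (transfer to the form).} We have $\|qA^{-1/2}\|^{2}=\|qA^{-1}q\|\le\theta$, because $(qA^{-1/2})(qA^{-1/2})^{*}=qA^{-1}q$. Both operators preserve parity, so the estimate restricts to odd functions. For $u\in H^{2s}_{\mathrm{odd}}$, set $v=A^{1/2}u$. Then
\begin{equation*}
	\int q^{2}|u|^{2}\,dx=\|qA^{-1/2}v\|^{2}\le\theta\|v\|^{2}=\theta\langle Au,u\rangle .
\end{equation*}

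\emph{Step 3 (absorption).} Write $G_{s}'(0)-\tfrac{9}{10}=A-(2-V_{s}^{\mathrm{ap}})$, and note that $2-V_{s}^{\mathrm{ap}}=3(1-(\Phi^{\mathrm{ap}}_{s})^{2})\le q^{2}$ pointwise. It follows that
\begin{equation*}
	\langle G_{s}'(0)u,u\rangle-\tfrac{9}{10}\|u\|^{2}\ge\langle Au,u\rangle-\int q^{2}|u|^{2}\,dx\ge(1-\theta)\langle Au,u\rangle\ge0 .
\end{equation*}
This is~\eqref{eq:Gprime-cert-targets}. Proposition~\ref{prop:K0-from-coercivity}, with $\gamma=9/10$ and $M=4$, then gives invertibility and $K_{0}(s)\le9$.

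The main care point is Step 1: justifying the Schur test with the weight, and the odd-to-half-line identification, where the $\sqrt2$ factors must be tracked. Also, $q$ is bounded because $\Phi^{\mathrm{ap}}_{s}$ is bounded, so every operator involved is well defined.
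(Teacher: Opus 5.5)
Your proposal is correct and follows the paper's proof essentially step for step: the weighted Schur test for the symmetric nonnegative kernel $q(x)K_s(x,y)q(y)$, the half-line/odd identification giving $\|qA^{-1}q\|_{\Lodd\to\Lodd}\le\theta$, the identity $\|qA^{-1}q\|=\|qA^{-1/2}\|^{2}$, and the absorption of $2-V^{\mathrm{ap}}_s\le q^{2}$ before invoking Proposition~\ref{prop:K0-from-coercivity} with $\gamma=9/10$ and $M=4$. One small correction: the nonnegativity of $K_s$, which your weighted Cauchy--Schwarz step needs, comes from $R_s$ being positive and decreasing on $[0,\infty)$ (shown before the statement), not merely from its evenness.
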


\begin{proof}
	Write $\Phi=\Phi^{\mathrm{ap}}_{s}$ and $V=3\Phi^{2}-1$. By hypothesis the pointwise inequality~\eqref{eq:Gprime-cert-BS} holds. We first turn it into an operator bound.

	Set
	\begin{equation*}
		H(x,y):=
		\left(3\bigl(1-\Phi(x)^{2}\bigr)_{+}\right)^{1/2}
		K_{s}(x,y)
		\left(3\bigl(1-\Phi(y)^{2}\bigr)_{+}\right)^{1/2}.
	\end{equation*}
	The kernel $H$ is symmetric and nonnegative. The pointwise inequality~\eqref{eq:Gprime-cert-BS} is exactly
	\begin{equation*}
		\int_{0}^{+\infty} H(x,y)w(y)\,dy\le\theta w(x)
		\qquad(x>0).
	\end{equation*}
	Schur's test gives $\|H\|_{L^{2}(0,+\infty)\to L^{2}(0,+\infty)}\le\theta$. Indeed for $f\in L^{2}(0,+\infty)$, Cauchy--Schwarz gives
	\begin{equation*}
		\left|\int_{0}^{+\infty}H(x,y)f(y)\,dy\right|^{2}
		\le
		\left(\int_{0}^{+\infty}H(x,y)w(y)\,dy\right)
		\left(\int_{0}^{+\infty}H(x,y)\frac{|f(y)|^{2}}{w(y)}\,dy\right).
	\end{equation*}
	After integration in $x$, the first factor is bounded by $\theta w(x)$. Since $H(x,y)=H(y,x)$, the same bound with $x$ and $y$ interchanged yields
	\begin{equation*}
		\|Hf\|_{L^{2}(0,+\infty)}^{2}
		\le
		\theta\int_{0}^{+\infty}\frac{|f(y)|^{2}}{w(y)}
		\left(\int_{0}^{+\infty}H(x,y)w(x)\,dx\right)dy
		\le
		\theta^{2}\|f\|_{L^{2}(0,+\infty)}^{2}.
	\end{equation*}
	Identifying an odd function on $\R$ with its restriction to $x>0$ multiplies both the domain and range norms by the same factor. Since $K_{s}$ is the odd half-line kernel of $A^{-1}$, this yields
	\begin{equation}\label{eq:Gprime-schur-operator-bound}
		\left\|
		\left(3(1-\Phi^{2})_{+}\right)^{1/2}
		A^{-1}
		\left(3(1-\Phi^{2})_{+}\right)^{1/2}
		\right\|_{L^{2}_{\mathrm{odd}}\to L^{2}_{\mathrm{odd}}}
		\le \theta<1 .
	\end{equation}

	It remains to derive the coercivity estimate. For $u\in H^{2s}_{\mathrm{odd}}(\R)$,
	\begin{equation*}
		\left\langle Au,u\right\rangle
		=
		\|(-\Delta)^{s/2}u\|_{L^{2}}^{2}
		+\frac{11}{10}\|u\|_{L^{2}}^{2}\ge0 .
	\end{equation*}
	Since $2-V=3(1-\Phi^{2})\le 3(1-\Phi^{2})_{+}$, we have
	\begin{align*}
		\left\langle\left(G_{s}'(0)-\frac{9}{10}\right)u,u\right\rangle
		 & =
		\left\langle Au,u\right\rangle
		-\int_{\R}\bigl(2-V(x)\bigr)|u(x)|^{2}\,dx \\
		 & \ge
		\left\langle Au,u\right\rangle
		-\int_{\R}3\bigl(1-\Phi(x)^{2}\bigr)_{+}|u(x)|^{2}\,dx .
	\end{align*}
	The last integral is controlled by~\eqref{eq:Gprime-schur-operator-bound}. Indeed, the pointwise definition gives $0\le q\le\sqrt{3}$; thus multiplication by $q$ is bounded, and the operators
	\begin{equation*}
		A^{-1/2}q^{2}A^{-1/2}
		\quad\text{and}\quad
		qA^{-1}q
	\end{equation*}
	have the same norm: if $T=qA^{-1/2}$, then they are $T^{*}T$ and $TT^{*}$, and both norms equal $\|T\|^{2}$. Hence~\eqref{eq:Gprime-schur-operator-bound} implies
	\begin{equation*}
		\int_{\R}3\bigl(1-\Phi(x)^{2}\bigr)_{+}|u(x)|^{2}\,dx
		=
		\langle A^{-1/2}q^{2}A^{-1/2}A^{1/2}u,A^{1/2}u\rangle
		\le
		\theta\,\langle Au,u\rangle .
	\end{equation*}
	Therefore
	\begin{equation*}
		\left\langle\left(G_{s}'(0)-\frac{9}{10}\right)u,u\right\rangle
		\ge
		(1-\theta)\langle Au,u\rangle
		\ge0,
	\end{equation*}
	and so
	\begin{equation*}
		\langle G_{s}'(0)u,u\rangle
		\ge
		\frac{9}{10}\|u\|_{L^{2}}^{2},
	\end{equation*}
	which is~\eqref{eq:Gprime-cert-targets}. Together with the bound $\|V_{s}^{\mathrm{ap}}\|_{L^{\infty}}\le4$, Proposition~\ref{prop:K0-from-coercivity} applied with $\gamma=9/10$ and $M=4$ gives $K_0(s) \leq 9$.
\end{proof}

The remaining task is to extract a usable sign bound for $J(s_{i},\Phi_{s_{i}})$ at each grid point. The true Cabr\'e--Sire layer $\Phi_{s_{i}}$ is not available in closed form, but the interval-arithmetic verification produces an approximate layer $\Phi^{\mathrm{ap}}_{s_{i}}$ together with a rigorous bound on its distance to the true layer. We first establish an explicit radius bound for the layer error. The value of $J$ is then transferred from the approximate profile to the true layer by the estimate of Lemma~\ref{lem:LJ-polarization}.

Because of the boundary conditions $\Phi_{s}(\pm\infty)=\pm1$, the layer itself does not belong to a Sobolev space. We therefore formulate the fixed-point problem for the deviation $\Phi_{s}-\Phi^{\mathrm{ap}}_{s}$ from a fixed odd reference profile $\Phi^{\mathrm{ap}}_{s}$ carrying the correct limits. The natural space for this deviation is $H^{2s}(\R)$, and restriction to the odd subspace removes the even translation mode of the linearized operator at the true layer. By Lemma~\ref{lem:sobolev-constants}, the sharp constant in the embedding $H^{2s}\hookrightarrow L^{\infty}$ is $C^{\ast}(2s)$. On $(1/2,\overline{s}]$ the function $s\mapsto C^{\ast}(2s)$ is bounded, decreasing, and explicitly evaluable in interval arithmetic.

\begin{proposition}\label{prop:layer-radius}
	Let $s\in(1/2,\overline{s}]$, and let $\Phi^{\mathrm{ap}}_{s}$ be an odd, smooth, bounded approximate profile with limits $\Phi^{\mathrm{ap}}_{s}(\pm\infty)=\pm1$ whose residual $F_{s}(\Phi^{\mathrm{ap}}_{s}):=\fl\Phi^{\mathrm{ap}}_{s}-\Phi^{\mathrm{ap}}_{s}+(\Phi^{\mathrm{ap}}_{s})^{3}$ lies in $L^{2}_{\mathrm{odd}}(\R)$. Define
	\begin{equation*}
		G_{s}\colon H^{2s}_{\mathrm{odd}}(\R)\longrightarrow L^{2}_{\mathrm{odd}}(\R),\qquad
		G_{s}(v)\;:=\;F_{s}(\Phi^{\mathrm{ap}}_{s}+v),
	\end{equation*}
	together with the constants
	\begin{equation}\label{eq:delta-K-def}
		\delta(s) \;:=\; \|G_{s}(0)\|_{L^{2}(\R)},\qquad
		K_{0}(s) \;:=\; \bigl\|G_{s}'(0)^{-1}\bigr\|_{L^{2}\to H^{2s}},
	\end{equation}
	the latter under the assumption $K_{0}(s)<+\infty$. Define
	\begin{equation}\label{eq:omega-def}
		\omega(s)\;:=\;6\,C^{\ast}(2s)\bigl(\|\Phi^{\mathrm{ap}}_{s}\|_{L^{\infty}}+C^{\ast}(2s)\bigr).
	\end{equation}
	Assume the smallness condition
	\begin{equation}\label{eq:contraction-condition}
		\alpha(s) \;:=\; 2\,K_{0}(s)^{2}\,\omega(s)\,\delta(s)\;<\;1.
	\end{equation}
	Set
	\begin{equation*}
		\tau(s)\;:=\;\frac{1-\sqrt{1-\alpha(s)}}{K_{0}(s)\,\omega(s)}.
	\end{equation*}
	Suppose also that $\tau(s)\le1$. Then $G_{s}$ has a unique zero $v_{s}$ in the closed ball $\{\|v\|_{H^{2s}}\le\tau(s)\}$, the function $\Phi_{s}:=\Phi^{\mathrm{ap}}_{s}+v_{s}$ is the Cabr\'e--Sire layer, and
	\begin{equation}\label{eq:layer-radius-conclusion}
		\|\Phi_{s}-\Phi^{\mathrm{ap}}_{s}\|_{H^{2s}(\R)}\;=\;\|v_{s}\|_{H^{2s}(\R)}\;\le\;\tau(s),
		\qquad
		\|\Phi_{s}-\Phi^{\mathrm{ap}}_{s}\|_{L^{\infty}(\R)}\;\le\;C^{\ast}(2s)\,\tau(s).
	\end{equation}
\end{proposition}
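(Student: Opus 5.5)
We will follow the Newton–Kantorovich scheme already used in Lemma~\ref{lem:fixed-point}, now with general constants. Write $L:=G_{s}'(0)=\fl+V_{s}^{\mathrm{ap}}$ and $u:=\Phi^{\mathrm{ap}}_{s}$. Because $W'$ is cubic, the exact expansion is
\begin{equation*}
	G_{s}(v)=G_{s}(0)+Lv+3u v^{2}+v^{3}.
\end{equation*}
So $G_{s}(v)=0$ is equivalent to the fixed-point equation $v=T(v):=-L^{-1}\bigl(G_{s}(0)+3uv^{2}+v^{3}\bigr)$ on $H^{2s}_{\mathrm{odd}}$. This map is well defined: $u$ is odd, so $uv^{2}$ and $v^{3}$ are odd; they lie in $L^{2}$ because $v\in H^{2s}\subset L^{2}\cap L^{\infty}$. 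The inverse $L^{-1}$ is bounded by $K_{0}(s)$ by assumption.

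The nonlinear estimate goes as follows. For $\|v_{j}\|_{H^{2s}}\le\tau\le1$, use the identity $N(v_{1})-N(v_{2})=\bigl(3u(v_{1}+v_{2})+v_{1}^{2}+v_{1}v_{2}+v_{2}^{2}\bigr)(v_{1}-v_{2})$. Bound one factor in $L^{\infty}$ via Lemma~\ref{lem:sobolev-constants} and the difference in $L^{2}\le H^{2s}$. This gives
\begin{equation*}
	\|N(v_{1})-N(v_{2})\|_{L^{2}}\le\bigl(6\|u\|_{L^{\infty}}C^{\ast}\tau+3C^{\ast2}\tau^{2}\bigr)\|v_{1}-v_{2}\|_{H^{2s}}\le\omega(s)\,\tau\,\|v_{1}-v_{2}\|_{H^{2s}},
\end{equation*}
where $C^{\ast}=C^{\ast}(2s)$ and we used $\tau\le1$ to absorb $\tau^{2}\le\tau$. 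The same bound with $v_{2}=0$ gives $\|N(v)\|_{L^{2}}\le\tfrac12\omega\tau^{2}$, since the quadratic and cubic terms carry factors $1/2$ and $1/3$ relative to the Lipschitz estimate. Hence $\|T(v)\|\le K_{0}\delta+\tfrac12K_{0}\omega\tau^{2}$. The chosen $\tau(s)$ is exactly the smaller root of $\tfrac12K_{0}\omega\tau^{2}-\tau+K_{0}\delta=0$, which exists by \eqref{eq:contraction-condition}. So $T$ maps the ball $B_{\tau}$ into itself, and its Lipschitz constant is $K_{0}\omega\tau=1-\sqrt{1-\alpha}<1$. Banach's theorem then yields a unique zero $v_{s}\in B_{\tau}$. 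The main bookkeeping obstacle is matching these constants with the factor $6$ in \eqref{eq:omega-def}. If the bound $\tfrac12\omega\tau^{2}$ fails as written, I would instead integrate the Lipschitz bound along the segment $[0,v]$, which gives precisely $\int_{0}^{1}\omega t\tau^{2}\,dt=\tfrac12\omega\tau^{2}$ after bounding the segment's Lipschitz modulus by $\omega t\tau$.

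It remains to identify $\Psi:=u+v_{s}$ with the Cabr\'e--Sire layer, as in the last paragraph of Lemma~\ref{lem:fixed-point}. Since $v_{s}\in H^{2s}\subset C_{0}$ and is odd, $\Psi$ is odd with $\Psi(0)=0$ and $\Psi(\pm\infty)=\pm1$, and it solves the layer equation in $L^{2}$. Interior regularity \cite{CabreSire2014I,CabreSire2015II} makes $\Psi$ smooth and classical. This step is delicate because Theorem~\ref{thm:CS} is stated for solutions with the limits $\pm1$, and we must check that its uniqueness does not require \emph{a priori} monotonicity or $|\Psi|<1$. The Cabr\'e--Sire uniqueness theorem holds among solutions with these limits, so uniqueness applies and gives $\Psi=\Phi_{s}$. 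Finally, the $L^{\infty}$ bound in \eqref{eq:layer-radius-conclusion} follows from Lemma~\ref{lem:sobolev-constants} applied to $v_{s}$.
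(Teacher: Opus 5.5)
Your proposal is correct and essentially matches the paper's proof. The paper uses the same map $T(v)=v-G_s'(0)^{-1}G_s(v)$ on the ball of radius $\tau(s)$, the same majorant $K_0\delta+\tfrac12K_0\omega\tau^2=\tau$, the same contraction factor $K_0\omega\tau=1-\sqrt{1-\alpha}$, and the same identification through the uniqueness in Theorem~\ref{thm:CS}. The one difference is minor: the paper first proves that $G_s'$ is $\omega(s)$-Lipschitz on the unit ball and obtains the quadratic remainder bound by integrating along the segment (your fallback), whereas you bound the cubic nonlinearity directly.
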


\begin{proof}
	Since $s>1/2$, the embedding $H^{2s}(\R)\hookrightarrow L^{\infty}(\R)$ makes $G_{s}$ a smooth map $H^{2s}_{\mathrm{odd}}(\R)\to L^{2}_{\mathrm{odd}}(\R)$; the preservation of oddness follows from the oddness of $\Phi^{\mathrm{ap}}_{s}$ and the reflection invariance of $\fl$. Moreover
	\begin{equation*}
		G_{s}'(0)=\fl-1+3(\Phi^{\mathrm{ap}}_{s})^{2},
	\end{equation*}
	and the inverse bound in~\eqref{eq:delta-K-def} is one of the hypotheses.

	We work on the closed unit ball $\{\|v\|_{H^{2s}}\le1\}$. For $v_{1},v_{2}$ in that ball, the difference $G_{s}'(v_{1})-G_{s}'(v_{2})$ acts by pointwise multiplication by $3(v_{1}-v_{2})(2\Phi^{\mathrm{ap}}_{s}+v_{1}+v_{2})$; bounding its operator norm $H^{2s}\to L^{2}$ by the $L^{\infty}$ norm of the multiplier and using Lemma~\ref{lem:sobolev-constants} gives
	\begin{align*}
		\|G_{s}'(v_{1})-G_{s}'(v_{2})\|_{H^{2s}\to L^{2}}
		 & \le 6\,C^{\ast}(2s)\bigl(\|\Phi^{\mathrm{ap}}_{s}\|_{L^{\infty}}+C^{\ast}(2s)\bigr)\,
		\|v_{1}-v_{2}\|_{H^{2s}} .
	\end{align*}
	Thus, by~\eqref{eq:omega-def}, $G_{s}'$ is $\omega(s)$-Lipschitz on the unit ball. Since $\tau(s)\le1$, the same estimate holds on the closed ball of radius $\tau(s)$.

	Now we write the contraction argument. Set
	\begin{equation*}
		X_{s}:=H^{2s}_{\mathrm{odd}}(\R),\qquad
		L_{s}:=G_{s}'(0),
	\end{equation*}
	and introduce the nonlinear remainder
	\begin{equation*}
		\mathcal R_{s}(v):=G_{s}(v)-G_{s}(0)-L_{s}v.
	\end{equation*}
	For $v$ in the closed unit ball of $X_s$, the fundamental theorem of calculus and the Lipschitz estimate above give
	\begin{equation}\label{eq:nonlinear-remainder-quadratic}
		\|\mathcal R_{s}(v)\|_{L^{2}}
		\le
		\int_{0}^{1}\!\bigl\|G_{s}'(tv)-G_{s}'(0)\bigr\|_{X_s\to L^{2}}\,
		\|v\|_{X_s}\,dt
		\le \frac{\omega(s)}{2}\,\|v\|_{X_s}^{2}.
	\end{equation}
	Define
	\begin{equation*}
		T_{s}(v):=v-L_{s}^{-1}G_{s}(v)
		=-L_{s}^{-1}\bigl(G_{s}(0)+\mathcal R_{s}(v)\bigr).
	\end{equation*}
	A fixed point of $T_s$ is equivalent to a zero of $G_s$. The definitions of $\alpha(s)$ and $\tau(s)$ yield the identities
	\begin{equation}\label{eq:tau-majorant-identities}
		K_{0}(s)\delta(s)
		+\frac{K_{0}(s)\omega(s)}{2}\,\tau(s)^{2}
		=\tau(s),
		\qquad
		K_{0}(s)\omega(s)\tau(s)
		=1-\sqrt{1-\alpha(s)}<1.
	\end{equation}
	It follows from~\eqref{eq:nonlinear-remainder-quadratic} and the first identity in~\eqref{eq:tau-majorant-identities} that $T_s$ maps the closed ball
	\begin{equation*}
		B_{\tau(s)}:=\{v\in X_s:\|v\|_{X_s}\le\tau(s)\}
	\end{equation*}
	into itself.

	To prove the contraction, let $v_1,v_2\in B_{\tau(s)}$. The line segment joining $v_1$ and $v_2$ lies in $B_{\tau(s)}$, and hence
	\begin{equation*}
		\mathcal R_s(v_1)-\mathcal R_s(v_2)
		=
		\int_0^1
		\bigl(G_s'(v_2+t(v_1-v_2))-L_s\bigr)(v_1-v_2)\,dt.
	\end{equation*}
	Therefore,
	\begin{equation*}
		\|T_s(v_1)-T_s(v_2)\|_{X_s}
		\le K_0(s)\omega(s)\tau(s)\,\|v_1-v_2\|_{X_s}.
	\end{equation*}
	By the second identity in~\eqref{eq:tau-majorant-identities}, the coefficient on the right is strictly smaller than $1$. Since $B_{\tau(s)}$ is complete, Banach's fixed-point theorem furnishes a unique $v_s\in B_{\tau(s)}$ satisfying $T_s(v_s)=v_s$, or equivalently $G_s(v_s)=0$. In particular, $\|v_s\|_{H^{2s}}\le\tau(s)$, which proves the first equality and inequality in~\eqref{eq:layer-radius-conclusion}.

	Finally, we identify the obtained solution with the Cabr\'e--Sire layer. Put $\Phi_{s}:=\Phi^{\mathrm{ap}}_{s}+v_{s}$. Then $F_{s}(\Phi_{s})=G_{s}(v_{s})=0$, i.e.\ $\fl\Phi_{s}=\Phi_{s}-\Phi_{s}^{3}$ on $\R$. Since $s>1/2$, the embedding $H^{2s}(\R)\hookrightarrow C_{0}(\R)$ gives $v_{s}(x)\to0$ as $|x|\to\infty$, so $\Phi_{s}$ inherits the limits $\Phi_{s}(\pm\infty)=\Phi^{\mathrm{ap}}_{s}(\pm\infty)=\pm1$. Moreover $v_{s}\in H^{2s}_{\mathrm{odd}}$ makes $\Phi_{s}$ odd, hence $\Phi_{s}(0)=0$. Thus $\Phi_{s}$ is an odd, bounded solution of the layer equation with the correct limits and normalization. By the uniqueness statement of Theorem~\ref{thm:CS}, it is the Cabr\'e--Sire layer. Finally the $H^{2s}\hookrightarrow L^{\infty}$ embedding with constant $C^{\ast}(2s)$ from Lemma~\ref{lem:sobolev-constants} turns $\|v_{s}\|_{H^{2s}}\le\tau(s)$ into the second inequality of~\eqref{eq:layer-radius-conclusion}.
\end{proof}

For any approximate layer satisfying the hypotheses of Proposition~\ref{prop:layer-radius}, set
\begin{equation}\label{eq:LJ-const-def}
	L_{J}(s):=2\,C_{\star}(s)\,\|\Phi^{\mathrm{ap}}_{s}\|_{\star}(s)\,+\,C_{\star}(s)^{2}\,\tau(s),
\end{equation}
where $\|\cdot\|_{\star}(s)$ and $C_{\star}(s)$ are defined in~\eqref{eq:starnorm} and~\eqref{eq:Cstar-LJ-def}, respectively.

\subsection{Strict decrease on the interior interval}\label{ss:realization}

We now state and prove the per-cell criterion on $I_{0}=[0.530,0.980]$. The interval-arithmetic verification of the conditions~\textup{(C1)--(C4)} below is described in Appendix~\ref{app:implementation}. These four conditions collect all the statements that we need to verify with computer assistance.

\begin{proposition}\label{prop:cells-I0}
	Let $I_{0}=[0.530,0.980]$, and let
	\begin{equation}\label{eq:grid-spec}
		s_{0}<\underline{s}<s_{1}<\cdots<s_{N-1}<\overline{s}<s_{N},
		\qquad N=55,
	\end{equation}
	be the fixed grid of $56$ points specified in the accompanying code. For every $i\in\{0,\dots,N-1\}$, let $[a_{i},b_{i}]\subset(1/2,1)$ be a closed subinterval containing $[s_{i},s_{i+1}]$ in its interior. Then for each $i\in\{0,\dots,N-1\}$, the interval-arithmetic certificate establishes:
	\begin{enumerate}
		\item[(C1)] The Schur inequality~\eqref{eq:Gprime-cert-BS} holds at $s=s_{i}$ with $\Phi=\Phi^{\mathrm{ap}}_{s_{i}}$, an explicit positive weight $w_{i}$, a constant $\theta_{i}<1$, and
		      $\|V^{\mathrm{ap}}_{s_{i}}\|_{L^{\infty}(\R)}\le4$.
		\item[(C2)] One has
		      \begin{equation*}
			      \alpha(s_i)<1,\qquad \tau(s_i)\le1.
		      \end{equation*}
		\item[(C3)] Defining
		      $
			      \eta_{i}\;:=\;|J(s_{i},\Phi^{\mathrm{ap}}_{s_{i}})|\,-\,L_{J}(s_{i})\,\tau(s_{i}),
		      $
		      one has
		      \begin{equation*}
			      J(s_{i},\Phi^{\mathrm{ap}}_{s_{i}})<0,\qquad \eta_{i}>0.
		      \end{equation*}
		\item[(C4)] One has
		      \begin{equation*}
			      \frac{2M^{\ast}_{\mathrm{sc}}([a_{i},b_{i}])\,(s_{i+1}-s_{i})}{\eta_i}<1.
		      \end{equation*}
	\end{enumerate}
	Consequently,
	\begin{equation}\label{eq:I0-slope}
		\Es(s')-\Es(s)\;\le\;-\,\tfrac{1}{2}\,\eta_{\min}\,(s'-s),\qquad \eta_{\min}:=\min_{0\le i<N}\eta_{i},
	\end{equation}
	for every $s,s'\in I_{0}$ with $s<s'$.
\end{proposition}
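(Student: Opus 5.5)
The plan has three stages. First, (C1)--(C3) give a strictly negative derivative at each grid point. Then semiconcavity (Lemma~\ref{lem:semiconcave}) spreads this sign over each cell. Finally the cell estimates are chained along the grid.

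\emph{Step 1 (sign at the grid points).} Fix $i$. By (C1) and Proposition~\ref{prop:Gprime-coercivity-cert}, $G_{s_i}'(0)$ is invertible with $K_0(s_i)\le 9$. Condition (C2) then verifies the hypotheses of Proposition~\ref{prop:layer-radius}, so $\|\Phi_{s_i}-\Phi^{\mathrm{ap}}_{s_i}\|_{H^{2s_i}}\le\tau(s_i)$.

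Next we transfer $J$ to the true layer using Lemma~\ref{lem:LJ-polarization}. Setting $u_1=\Phi_{s_i}$, $u_2=\Phi^{\mathrm{ap}}_{s_i}$ and $r=u_1-u_2$, the bound \eqref{eq:star-sobolev} gives $\|r\|_\star\le C_\star\tau$. The triangle inequality gives $\|u_1\|_\star\le\|u_2\|_\star+C_\star\tau$. Hence
\begin{equation*}
|J(s_i,\Phi_{s_i})-J(s_i,\Phi^{\mathrm{ap}}_{s_i})|\le(2\|u_2\|_\star+C_\star\tau)\,C_\star\tau=L_J(s_i)\tau(s_i).
\end{equation*}
Combining this with (C3), Lemma~\ref{lem:envelope-paper} yields
\begin{equation*}
\Es'(s_i)=J(s_i,\Phi_{s_i})\le J(s_i,\Phi^{\mathrm{ap}}_{s_i})+L_J\tau=-\eta_i<0.
\end{equation*}

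\emph{Step 2 (one cell).} Write $M_i:=M^{\ast}_{\mathrm{sc}}([a_i,b_i])$ and $h_i:=s_{i+1}-s_i$. A one-sided derivative bound on the cell comes from combining \eqref{eq:semiconc-pt} with the $C^1$ property of $\Es$.

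For $\sigma,s\in[a_i,b_i]$, apply \eqref{eq:semiconc-pt} with the roles of the points swapped. This gives $\Es(\sigma)\le\Es(s)+\Es'(s)(\sigma-s)+\tfrac12M_i(\sigma-s)^2$. Adding the original inequality to this one yields
\begin{equation*}
(\Es'(s)-\Es'(\sigma))(s-\sigma)\ge -M_i(s-\sigma)^2.
\end{equation*}
Thus $s\mapsto \Es'(s)-M_is$ is nondecreasing, that is, $\Es'$ is one-sided Lipschitz from above. Taking $\sigma=s_i\le s\le s_{i+1}$, we get $\Es'(s)\le\Es'(s_i)+M_i(s-s_i)\le-\eta_i+M_ih_i$. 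By (C4), $M_ih_i<\eta_i/2$, so $\Es'(s)<-\eta_i/2\le-\eta_{\min}/2$ on $[s_i,s_{i+1}]$.

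The semiconcavity only bounds $\Es'$ going forward from $s_i$. This is why the estimate is propagated from the left grid point of each cell. The factor $2$ in (C4) supplies a spare margin, in case one prefers to argue directly with the quadratic inequality \eqref{eq:semiconc-pt} rather than the derivative.

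\emph{Step 3 (chaining).} Since $s_0<\underline s$ and $s_N>\overline s$, the cells $[s_i,s_{i+1}]$ cover $I_0$. Hence $\Es'\le-\eta_{\min}/2$ on all of $I_0$, and integrating from $s$ to $s'$ (allowed because $\Es\in C^1$) gives \eqref{eq:I0-slope}.

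The main obstacle is Step 2. One has to check that \eqref{eq:semiconc-pt} genuinely yields an upper bound on the derivative of $\Es$ itself and not just on the frozen energies. Here both minimality and the envelope identity $\Es'(\sigma)=J(\sigma,\Phi_\sigma)$ are used, and all points must lie inside $[a_i,b_i]$. Step 1 is routine bookkeeping of constants, given the earlier propositions.
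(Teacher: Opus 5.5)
Your proposal is correct and is essentially the paper's proof. You obtain the grid-point bound $J(s_i,\Phi_{s_i})\le-\eta_i$ from (C1)--(C3) together with the $L_J(s_i)\tau(s_i)$ transfer. You then apply \eqref{eq:semiconc-pt} in both directions with $\sigma=s_i$ to get $\Es'(s)\le\Es'(s_i)+M_i(s-s_i)$ on $[s_i,s_{i+1}]$, and finish by covering $I_0$ and integrating.

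One sign slip in Step~2 needs fixing. Adding the two semiconcavity inequalities gives $(\Es'(s)-\Es'(\sigma))(s-\sigma)\le M_i(s-\sigma)^2$, so $s\mapsto\Es'(s)-M_is$ is non\emph{increasing}. The inequality you display (with $\ge -M_i(s-\sigma)^2$) and the word ``nondecreasing'' state the reverse, and as written they would only bound $\Es'(s)$ from below. The corrected version yields exactly the upper bound you then use.

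Also, the factor $2$ in (C4) is not a spare margin. It is precisely what produces the slope $-\tfrac12\eta_{\min}$ in \eqref{eq:I0-slope}, and you do use it when you write $M_ih_i<\eta_i/2$.
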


\begin{proof}
	Conditions~\textup{(C1)--(C4)} are verified by interval arithmetic; we show that they imply~\eqref{eq:I0-slope}. Fix $i\in\{0,\dots,N-1\}$ and set
	\begin{equation*}
		M_{i}:=M^{\ast}_{\mathrm{sc}}([a_{i},b_{i}]).
	\end{equation*}

	By~\textup{(C1)} and Proposition~\ref{prop:Gprime-coercivity-cert}, $K_0(s_i)\le9$. Thus~\textup{(C2)} and Proposition~\ref{prop:layer-radius} yield
	\begin{equation*}
		\|\Phi_{s_i}-\Phi^{\mathrm{ap}}_{s_i}\|_{H^{2s_i}}\le\tau(s_i).
	\end{equation*}
	Combining~\eqref{eq:star-sobolev} with Lemma~\ref{lem:LJ-polarization}(ii) and~\eqref{eq:LJ-const-def} gives
	\begin{equation*}
		\bigl|J(s_i,\Phi_{s_i})-J(s_i,\Phi^{\mathrm{ap}}_{s_i})\bigr|
		\le L_J(s_i)\tau(s_i),
	\end{equation*}
	so that, by~\textup{(C3)},
	\begin{equation}\label{eq:grid-sign}
		J(s_{i},\Phi_{s_{i}})\;\le\;-\eta_{i}.
	\end{equation}

	Applying the semiconcavity inequality~\eqref{eq:semiconc-pt} of Lemma~\ref{lem:semiconcave}(iii) on $[a_{i},b_{i}]$, first to the pair $(\sigma,s)$ and then to $(s,\sigma)$, gives, for $\sigma<s$ in $[a_{i},b_{i}]$,
	\begin{equation*}
		J(s,\Phi_s)(s-\sigma)-\tfrac12M_i(s-\sigma)^2
		\;\le\;\Es(s)-\Es(\sigma)
		\;\le\;J(\sigma,\Phi_\sigma)(s-\sigma)+\tfrac12M_i(s-\sigma)^2.
	\end{equation*}
	Comparing the outer terms with $\sigma=s_{i}$ and using~\eqref{eq:grid-sign} together with $M_{i}(s_{i+1}-s_{i})\le\eta_{i}/2$, which is~\textup{(C4)}, we obtain
	\begin{equation*}
		J(s,\Phi_s)
		\;\le\;J(s_i,\Phi_{s_i})+M_i(s-s_i)
		\;\le\;-\eta_i+M_i(s_{i+1}-s_i)
		\;\le\;-\tfrac12\eta_i
		\qquad(s\in[s_{i},s_{i+1}]).
	\end{equation*}

	The cells $[s_{i},s_{i+1}]$, $0\le i<N$, cover $I_{0}$ by~\eqref{eq:grid-spec}, and Lemma~\ref{lem:envelope-paper} gives $\Es'(s)=J(s,\Phi_{s})$ on $I_{0}$. Hence
	\begin{equation*}
		\Es'(s)\;\le\;-\tfrac12\eta_{\min}\qquad(s\in I_{0}),
	\end{equation*}
	and~\eqref{eq:I0-slope} follows from the fundamental theorem of calculus. In particular, $\Es$ is strictly decreasing on $I_{0}$.
\end{proof}

The choice~\eqref{eq:grid-spec} of grid is dictated by the constraints~(C2) and~(C4). The contraction radius~$\tau(s)$ and the curvature modulus~$M^{\ast}_{\mathrm{sc}}([a_{i},b_{i}])$ both grow as $s_{i}$ approaches the upper end of $I_{0}$, driving the cells to narrow there.

\begin{proof}[Proof of Theorem~\ref{thm:mono}]
	Theorem~\ref{thm:CS}, together with the explicit profile at $s=1$, gives existence and uniqueness of $\Phi_s$. Lemma~\ref{lem:layer-energy-variation} shows that $\Es(s)$ is finite for every $s\in(1/2,1]$, while Theorem~\ref{thm:cont} gives continuity on this interval. Propositions~\ref{prop:mono-lower} and~\ref{prop:mono-upper} give strict decrease on $(1/2,0.530]$ and $[0.980,1]$, respectively, and Proposition~\ref{prop:cells-I0} gives strict decrease on $[0.530,0.980]$. Thus, for any $s<s'$, chaining across the junction points $0.530$ and $0.980$ that lie between them yields $\Es(s)>\Es(s')$.

	Finally, Theorem~\ref{thm:asymp} gives
	\begin{equation*}
		\Es(1)=\frac{2\sqrt{2}}{3},
		\qquad
		\lim_{s\to(1/2)^+}\Es(s)=+\infty.
	\end{equation*}
	Consequently, the range of $\Es$ on $(1/2,1]$ is $[2\sqrt{2}/3,+\infty)$. Setting $\Es(1/2):=+\infty$ gives the continuous strictly decreasing bijection in~\eqref{eq:homeo}, which is a homeomorphism because a continuous strictly monotone bijection between intervals has continuous inverse.
\end{proof}

\appendix
\providecommand{\Fres}{F_{s}}
\providecommand{\Phiap}{\Phi^{\mathrm{ap}}_{s}}

\section{The computer-assisted validation}\label{app:implementation}

The interior monotonicity argument of Section~\ref{sec:mono-interior} reduces, through Proposition~\ref{prop:cells-I0}, to four conditions \textup{(C1)--(C4)} verified at an explicit grid of $56$ values of~$s$ covering $I_{0}=[0.530,0.980]$. Conditions \textup{(C2)-(C4)} are quantitative and direct once the quantities
\begin{equation*}
	\delta(s),\quad K_{0}(s),\quad \omega(s),\quad J(s,\Phiap),\quad
	\|\Phiap\|_{\star}(s),\quad L_{J}(s),\quad \tau(s)
\end{equation*}
are available as rigorous enclosures. The first, the coercivity estimate \textup{(C1)}, asks for the Schur inequality~\eqref{eq:Gprime-cert-BS}, whose finite verification is explained here. Our purpose here is to record the mathematics that turns each of these into a finite computation. We also explain here all the details of the Python implementation of these rigorous computations. We attach the code as supplementary material.

Computations are carried out using rigorous interval arithmetic in the \texttt{Arb} library~\cite{ArbJohansson} at $200$-bit working precision (about $60$ decimal digits). The appendix is organized as follows. Section~\ref{app:layer} constructs the explicit approximate layer $\Phiap$ and its residual $\delta(s)$. Section~\ref{app:schur} encloses the Green kernel $R_{s}$ entering \textup{(C1)} and assembles the finite certificate for~\eqref{eq:Gprime-cert-BS}. Section~\ref{app:Jeval} evaluates the derivative functional $J$ and the weighted seminorm $\|\cdot\|_{\star}$. Section~\ref{app:code} describes the organization of the code and reports the running times.

\subsection{The approximate layer and its residual}\label{app:layer}

The fixed-point argument of Proposition~\ref{prop:layer-radius} is carried out around an explicit odd profile $\Phiap$ carrying the correct limits $\Phiap(\pm\infty)=\pm1$. We build it from an arctan profile and a basis of functions on which $\fl$ acts in closed form. The idea of expanding in rational or algebraic families on the whole line whose fractional Laplacian is available analytically goes back to the spectral methods of~\cite{CayamaCuestaHoz2020}. The specific expression we use, $(1-ix)^{-\alpha}$, is the same one exploited in the analysis of nonlocal evolution equations in~\cite{LushnikovSilantyevSiegel2021}.

\subsubsection*{The $\sigma$-basis and the index-shift law}

For a parameter $\alpha>0$ set
\begin{equation}\label{eq:app-sigma-def}
	\sigma_{\alpha}(x)\;:=\;(1+x^{2})^{-\alpha/2}\,\sin\!\bigl(\alpha\arctan
	x\bigr)\;=\;\operatorname{Im}\bigl[(1-ix)^{-\alpha}\bigr],\qquad x\in\R.
\end{equation}
Each $\sigma_{\alpha}$ is odd, smooth, and decays like $\sin(\alpha\pi/2)\,|x|^{-\alpha}$ at infinity. For every integer $k\geq0$, its $k$-th derivative is also explicit. Indeed,
\begin{equation*}
	\frac{d}{dx}(1-ix)^{-\alpha}
	=i\alpha(1-ix)^{-\alpha-1},
\end{equation*}
and induction gives
\begin{equation*}
	\frac{d^{k}}{dx^{k}}(1-ix)^{-\alpha}
	=i^{k}\frac{\Gamma(\alpha+k)}{\Gamma(\alpha)}
	(1-ix)^{-\alpha-k}.
\end{equation*}
Since
\begin{equation*}
	1-ix=(1+x^{2})^{1/2}e^{-i\arctan x}
	\quad\text{and}\quad i^{k}=e^{ik\pi/2},
\end{equation*}
taking imaginary parts yields
\begin{equation*}
	\sigma_{\alpha}^{(k)}(x)
	=\frac{\Gamma(\alpha+k)}{\Gamma(\alpha)}
	(1+x^{2})^{-(\alpha+k)/2}
	\sin\!\left(\frac{k\pi}{2}+(\alpha+k)\arctan x\right).
\end{equation*}

The whole construction relies on the following Fourier-side expressions.

\begin{lemma}\label{lem:app-sigma-ft}
	For every $\alpha>0$,
	\begin{equation}\label{eq:app-sigma-ft}
		\widehat{\sigma_{\alpha}}(\xi)
		\;=\;-\,\frac{i\pi\,\sgn(\xi)\,|\xi|^{\alpha-1}e^{-|\xi|}}{\Gamma(\alpha)},
		\qquad
		\widehat{\sigma_{\alpha}'}(\xi)
		\;=\;\frac{\pi\,|\xi|^{\alpha}e^{-|\xi|}}{\Gamma(\alpha)} .
	\end{equation}
	Consequently $\fl$ acts as a pure index shift,
	\begin{equation}\label{eq:app-indexshift}
		\fl\sigma_{\alpha}\;=\;\frac{\Gamma(2s+\alpha)}{\Gamma(\alpha)}\,
		\sigma_{2s+\alpha},
	\end{equation}
	and, writing $g(x):=(2/\pi)\arctan x$, one has $\widehat{g'}(\xi)=2e^{-|\xi|}$ and $\fl g=(2/\pi)\,\Gamma(2s)\,\sigma_{2s}$.
\end{lemma}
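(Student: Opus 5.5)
The plan is to compute $\widehat{\sigma_{\alpha}'}$ first, since $\sigma_\alpha'$ is integrable, and then read off the remaining identities from it.

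\emph{Derivative transform.} By the derivative formula above, $\sigma_\alpha'=\operatorname{Im}[i\alpha(1-ix)^{-\alpha-1}]=\alpha\operatorname{Re}[(1-ix)^{-\alpha-1}]$. This decays like $|x|^{-\alpha-1}$, so it lies in $L^{1}$. The key step is the Gamma-function representation
\begin{equation*}
	(1-ix)^{-\beta}=\frac{1}{\Gamma(\beta)}\int_{0}^{\infty}t^{\beta-1}e^{-t(1-ix)}\,dt\qquad(\beta>0),
\end{equation*}
which holds because $\operatorname{Re}(1-ix)>0$ (rotate the contour, or argue by analytic continuation from real arguments). It exhibits $(1-ix)^{-\beta}$ as the inverse Fourier transform of
\begin{equation*}
	\frac{2\pi}{\Gamma(\beta)}\,\xi^{\beta-1}e^{-\xi}\mathbf 1_{\xi>0}.
\end{equation*}
With the convention $u(x)=\frac1{2\pi}\int\widehat u e^{i\xi x}d\xi$, this gives
\begin{equation*}
	\widehat{(1-ix)^{-\beta}}(\xi)=\frac{2\pi}{\Gamma(\beta)}\,\xi^{\beta-1}e^{-\xi}\mathbf 1_{\xi>0},
\end{equation*}
initially in $\mathcal S'$, and pointwise when $\beta>1$. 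Taking real parts corresponds to symmetrising $\xi\mapsto-\xi$ and halving, because the conjugate function has transform $\overline{\widehat u(-\xi)}$. Hence
\begin{equation*}
	\widehat{\sigma_\alpha'}=\alpha\cdot\frac{\pi}{\Gamma(\alpha+1)}|\xi|^{\alpha}e^{-|\xi|}=\frac{\pi|\xi|^\alpha e^{-|\xi|}}{\Gamma(\alpha)}.
\end{equation*}

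\emph{Transform of $\sigma_\alpha$.} Taking imaginary parts instead gives the odd combination
\begin{equation*}
	\widehat{\operatorname{Im}u}=\frac{\widehat u(\xi)-\overline{\widehat u(-\xi)}}{2i}.
\end{equation*}
With $\beta=\alpha$ this yields $\widehat{\sigma_\alpha}=-i\pi\sgn(\xi)|\xi|^{\alpha-1}e^{-|\xi|}/\Gamma(\alpha)$ as a tempered distribution. For $\alpha\le1$ the singularity $|\xi|^{\alpha-1}$ is locally integrable, so the formula still holds. As a consistency check, $i\xi\widehat{\sigma_\alpha}$ recovers $\widehat{\sigma_\alpha'}$.

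\emph{Index shift.} Multiplying by $|\xi|^{2s}$ gives
\begin{equation*}
	|\xi|^{2s}\widehat{\sigma_\alpha}=-\frac{i\pi\sgn\xi\,|\xi|^{2s+\alpha-1}e^{-|\xi|}}{\Gamma(\alpha)}=\frac{\Gamma(2s+\alpha)}{\Gamma(\alpha)}\widehat{\sigma_{2s+\alpha}}.
\end{equation*}
This is \eqref{eq:app-indexshift}, justified via \eqref{eq:fl-fourier} because $\sigma_\alpha$'s transform is an integrable function on which the multiplier makes sense. One can also compare with the singular-integral definition \eqref{eq:fl-singular}, which applies since $\sigma_\alpha$ is smooth and bounded.

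\emph{Arctan profile.} For $g$, note $g'=\frac{2}{\pi}(1+x^2)^{-1}$, whose transform is $2e^{-|\xi|}$. Equivalently, $g'=\frac{2}{\pi}\sigma_1'$ since $\sigma_1=x/(1+x^2)$; more precisely $\sigma_1'=\operatorname{Re}(1-ix)^{-2}$, while $(1+x^2)^{-1}=\operatorname{Re}(1-ix)^{-1}$. The main obstacle is that $g$ does not decay, so $\fl g$ should be computed via $\widehat{g'}$. I would use \eqref{eq:fl-singular} together with the relation $\widehat{\fl g}=|\xi|^{2s}\widehat{g'}/(i\xi)$, valid away from the origin. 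This gives
\begin{equation*}
	-2i\sgn\xi|\xi|^{2s-1}e^{-|\xi|}=\frac2\pi\Gamma(2s)\widehat{\sigma_{2s}}.
\end{equation*}
Both sides are odd and bounded and the identity holds as distributions because $2s-1>0$. Hence $\fl g=(2/\pi)\Gamma(2s)\sigma_{2s}$.
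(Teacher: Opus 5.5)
Your proposal is correct and follows essentially the paper's route. Both use the Gamma-integral representation of $(1\mp ix)^{-\beta}$, its Fourier transform $\frac{2\pi}{\Gamma(\beta)}|\xi|^{\beta-1}e^{-|\xi|}$ supported on one half-line, imaginary (or real) parts, and multiplication by $|\xi|^{2s}$.

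The differences are cosmetic. You obtain $\widehat{\sigma_\alpha'}$ directly from $\alpha\operatorname{Re}(1-ix)^{-\alpha-1}\in L^1$, whereas the paper computes it as $i\xi\,\widehat{\sigma_\alpha}$. You also justify the transform of $(1-ix)^{-\beta}$ by recognizing it as the inverse transform of an $L^1$ function. This is cleaner than the paper's formal use of $\int_{\R} e^{itx}e^{-i\xi x}\,dx=2\pi\delta(\xi-t)$.

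One slip should be removed: the claim $g'=\frac{2}{\pi}\sigma_1'$ is false. Indeed $\sigma_1'=(1-x^2)/(1+x^2)^2$, so $\frac{2}{\pi}\widehat{\sigma_1'}(\xi)=2|\xi|e^{-|\xi|}$. The correct relation, which your very next clause states, is $g'=\frac{2}{\pi}\operatorname{Re}(1-ix)^{-1}$. Since you only use $\widehat{g'}=2e^{-|\xi|}$ afterwards, nothing downstream is affected.
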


\begin{proof}
	From the Gamma integral, $(1-ix)^{-\alpha}=\Gamma(\alpha)^{-1}\int_{0}^{\infty} t^{\alpha-1}e^{-t}e^{itx}\,dt$. Transforming in $x$ and using $\int_{\R}e^{itx}e^{-i\xi x}\,dx=2\pi\delta(\xi-t)$ gives
	\begin{equation*}
		\widehat{(1-ix)^{-\alpha}}(\xi)
		=(2\pi/\Gamma(\alpha))\,\xi^{\alpha-1}e^{-\xi} \mathbf 1_{\{\xi>0\}},\end{equation*}
	and likewise
	\begin{equation*}
		\widehat{(1+ix)^{-\alpha}}(\xi)
		=(2\pi/\Gamma(\alpha))\,|\xi|^{\alpha-1}e^{-|\xi|}\mathbf 1_{\{\xi<0\}}.
	\end{equation*}
	Subtracting and dividing by $2i$, as in~\eqref{eq:app-sigma-def}, yields the first identity in~\eqref{eq:app-sigma-ft}. The second follows from $\widehat{\sigma_{\alpha}'}(\xi)=i\xi\,\widehat{\sigma_{\alpha}}(\xi)$. Multiplying $\widehat{\sigma_{\alpha}}$ by the multiplier $|\xi|^{2s}$ and comparing with the same formula at index $2s+\alpha$ gives~\eqref{eq:app-indexshift}. For $g$, the Fourier transform $\widehat{(1+x^{2})^{-1}}(\xi)=\pi e^{-|\xi|}$ gives $\widehat{g'}(\xi)=2e^{-|\xi|}$, and the same multiplier computation yields $\fl g = (2/\pi)\Gamma(2s)\sigma_{2s}$.
\end{proof}

\subsubsection*{The approximate layer.}

We fix a finite index set $A=\{1,2,\dots,24\}\cup\{2s+k:0\le k\le 11\}$ and define the \emph{approximate layer}
\begin{equation}\label{eq:app-phiap}
	\Phiap\;=\;g+\sum_{\alpha\in A}a_{\alpha}\,\sigma_{\alpha},
	\qquad g(x)=\tfrac{2}{\pi}\arctan x,
\end{equation}
with real coefficients $a_{\alpha}$. By construction $\Phiap$ is odd, smooth, and satisfies $\Phiap(0)=0$, $\Phiap(\pm\infty)=\pm1$. The base profile $g$ carries the boundary limits $\pm1$ since every $\sigma_{\alpha}$ decays to $0$.

The index set $A$ splits into two blocks of different role. The bulk modes $\{\sigma_{\alpha}:1\le\alpha\le24\}$, of integer index, have a fast decay that makes them efficient at fitting the profile close to zero. However, they are unable to reproduce the true slow algebraic tail. That tail is instead supplied by the tail modes $\sigma_{2s+k}$ ($0\le k\le11$), whose fractional index $2s$ matches the algebraic Cabr\'e--Sire decay $\Phi_{s}-\sgn=O(|x|^{-2s})$ of Theorem~\ref{thm:CS}.

The coefficients $\{a_{\alpha}\}$ are produced by a Gauss-Newton method used to solve the fractional Allen-Cahn equation $\Fres(\Phiap)=\fl\Phiap-\Phiap+(\Phiap)^{3}$ so that the error is minimized in $L^{2}$. The residual is measured at the $800$ positive points
\begin{equation*}
	\theta_j=\pi+\frac{\pi(j+1/2)}{800},
	\qquad x_j=-\cot(\theta_j/2),
	\qquad 0\le j<800,
\end{equation*}
and the Gauss-Newton method is used to solve the least squares problem
\begin{equation*}
	\min_{\left\{ a_\alpha \right\}} \sum_{0 \leq j < 800} \left( F_s(\Phiap) w_j \right)^{2},
\end{equation*}
where $w_j=(\tfrac12\csc^2(\theta_j/2))^{1/2}$ corresponds to the square root of the Jacobian of the change of variables $dx/d\theta$ that turns the collocation sum into a weighted $L^{2}$ residual in~$x$. Starting from $a_{\alpha}=0$, the iteration takes at most $40$ steps and stops when the weighted residual no longer decreases. Figure~\ref{fig:layer-profiles} displays the resulting profiles.

\begin{figure}[t]
	\centering
	\includegraphics[width=0.78\textwidth]{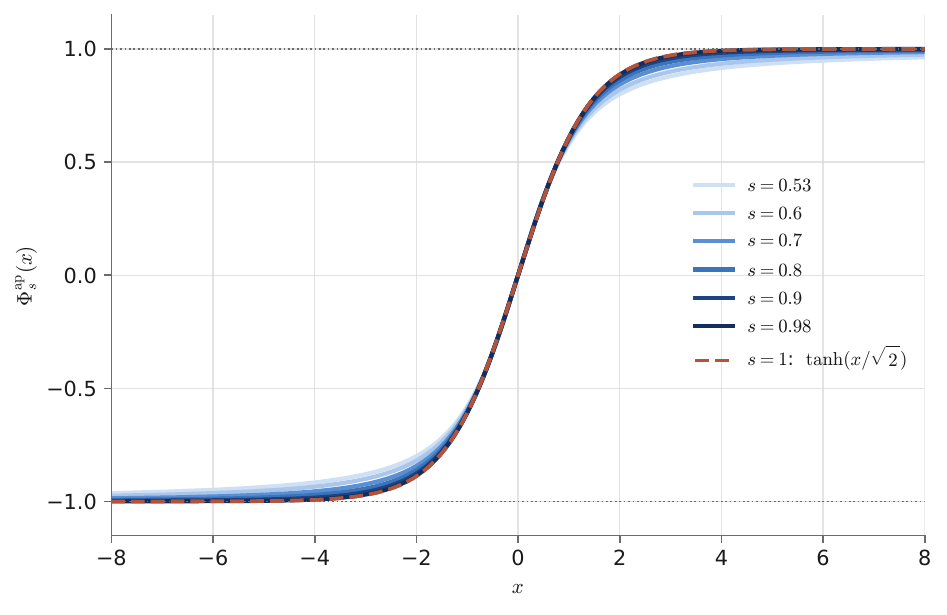}
	\caption{The approximate layer $\Phiap$ of~\eqref{eq:app-phiap},
		for several values of
		$s\in[0.53,0.98]$, against the exact profile
		$\Phi_{1}=\tanh(x/\sqrt2)$. The dependence on $s$ is carried mostly by the tails.}
	\label{fig:layer-profiles}
\end{figure}

By Lemma~\ref{lem:app-sigma-ft} the residual of the layer equation,
\begin{equation*}
	\Fres(\Phiap)\;=\;\frac{2}{\pi}\Gamma(2s)\,\sigma_{2s}
	+\sum_{\alpha\in A}a_{\alpha}\frac{\Gamma(2s+\alpha)}{\Gamma(\alpha)}\,
	\sigma_{2s+\alpha}
	-\Phiap+(\Phiap)^{3},
\end{equation*}
is an explicit function of $x$. The kinetic part is a finite sum and the algebraic nonlinear part is a polynomial in $g$ and the $\sigma_{\alpha}$. The residual is its $L^{2}$-norm, $\delta(s)=\|\Fres(\Phiap)\|_{L^{2}(\R)}$.

\subsubsection*{Validated computation of the residual}

We compute a rigorous enclosure of $\delta(s)$ by setting $G:=\Fres(\Phiap)$ and splitting its squared $L^{2}$ norm into an interior contribution and an analytic tail bound. Since $G$ is odd,
\begin{equation*}
	\|G\|_{L^{2}}^{2}=2\int_{0}^{x_{\max}}G(x)^{2}\,dx
	+2\int_{x_{\max}}^{\infty}G(x)^{2}\,dx.
\end{equation*}

For the interior contribution, divide $[0,x_{\max}]$ into cells $I=[c-r,c+r]$ of radius $r\le h/2$, with the final cell possibly going past $x_{\max}$, and write $a=c-r$ for the left endpoint of a given cell. At the midpoint $c$, we compute the normalized Taylor jets of $\Phiap$ and $\fl\Phiap$. The required coefficients are explicit: for $k\geq1$,
\begin{equation}\label{eq:app-g-jet}
	\frac{g^{(k)}(c)}{k!}
	=\frac{2}{\pi}\frac{(-1)^{k+1}}{k}
	(1+c^{2})^{-k/2}
	\sin\!\left(\frac{k\pi}{2}-k\arctan c\right),
\end{equation}
whereas, for $k\geq0$ and any index $\beta>0$,
\begin{equation}\label{eq:app-sigma-jet}
	\frac{\sigma_{\beta}^{(k)}(c)}{k!}
	=\frac{\Gamma(\beta+k)}{\Gamma(\beta)\Gamma(k+1)}
	(1+c^{2})^{-(\beta+k)/2}
	\sin\!\left(\frac{k\pi}{2}+(\beta+k)\arctan c\right).
\end{equation}
Products of normalized jets are computed by Cauchy convolution. We record the resulting recipe in detail, since it is what turns the interior contribution into a finite computation. For a function $u$ that is smooth near $c$, write
\begin{equation*}
	\mathcal T_{k}[u]:=\frac{u^{(k)}(c)}{k!},\qquad 0\le k\le N,
\end{equation*}
for its normalized Taylor jet at $c$, the dependence on the cell being suppressed from the notation. The Leibniz rule reads
\begin{equation}\label{eq:app-cauchy}
	\mathcal T_{k}[uv]=\sum_{j=0}^{k}\mathcal T_{j}[u]\,\mathcal T_{k-j}[v],
	\qquad 0\le k\le N.
\end{equation}

The two input jets are explicit. Indeed $\Phiap=g+\sum_{\alpha\in A}a_{\alpha}\sigma_{\alpha}$, while the index-shift law~\eqref{eq:app-indexshift} gives
\begin{equation*}
	\fl\Phiap
	=\frac{2}{\pi}\Gamma(2s)\,\sigma_{2s}
	+\sum_{\alpha\in A}a_{\alpha}\frac{\Gamma(2s+\alpha)}{\Gamma(\alpha)}\,
	\sigma_{2s+\alpha},
\end{equation*}
so that both functions are finite linear combinations of $g$ and of $\sigma$-modes. Their jets are therefore
\begin{equation}\label{eq:app-input-jets}
	\begin{aligned}
		\mathcal T_{k}[\Phiap]
		 & =\mathcal T_{k}[g]+\sum_{\alpha\in A}a_{\alpha}\,
		\mathcal T_{k}[\sigma_{\alpha}],                         \\
		\mathcal T_{k}[\fl\Phiap]
		 & =\frac{2}{\pi}\Gamma(2s)\,\mathcal T_{k}[\sigma_{2s}]
		+\sum_{\alpha\in A}a_{\alpha}\frac{\Gamma(2s+\alpha)}{\Gamma(\alpha)}\,
		\mathcal T_{k}[\sigma_{2s+\alpha}],
	\end{aligned}
\end{equation}
where $\mathcal T_{0}[g]=(2/\pi)\arctan c$, where $\mathcal T_{k}[g]$ is given by~\eqref{eq:app-g-jet} for $k\ge1$, and where each $\mathcal T_{k}[\sigma_{\beta}]$ is given by~\eqref{eq:app-sigma-jet}.

Three applications of~\eqref{eq:app-cauchy} now produce the jet of $G^{2}$. First
\begin{equation}\label{eq:app-cube-jet}
	\mathcal T_{k}[(\Phiap)^{2}]
	=\sum_{j=0}^{k}\mathcal T_{j}[\Phiap]\,\mathcal T_{k-j}[\Phiap],
	\qquad
	\mathcal T_{k}[(\Phiap)^{3}]
	=\sum_{j=0}^{k}\mathcal T_{j}[(\Phiap)^{2}]\,\mathcal T_{k-j}[\Phiap],
\end{equation}
then the residual jet, which is a linear combination of already computed quantities,
\begin{equation}\label{eq:app-G-jet}
	\mathcal T_{k}[G]
	=\mathcal T_{k}[\fl\Phiap]-\mathcal T_{k}[\Phiap]
	+\mathcal T_{k}[(\Phiap)^{3}],
	\qquad G=\fl\Phiap-\Phiap+(\Phiap)^{3},
\end{equation}
and finally one more convolution,
\begin{equation}\label{eq:app-G2-jet}
	\mathcal T_{k}[G^{2}]=\sum_{j=0}^{k}\mathcal T_{j}[G]\,\mathcal T_{k-j}[G],
	\qquad 0\le k\le N.
\end{equation}
Every step is carried out in interval arithmetic, so that~\eqref{eq:app-G2-jet} returns a vector of enclosures of the Taylor coefficients of $G^{2}$ at the midpoint $c$. In the certificate we take $N=40$, $h=1/50$, and $x_{\max}=200$, so that the interior contribution is assembled from $10^{4}$ cells.

Integrating the Taylor polynomial of $G^{2}$ over the symmetric cell $I$, where the odd-order terms drop out, Taylor's theorem gives
\begin{equation*}
	\left|\int_I G(x)^{2}\,dx
	-\sum_{0\leq 2m\leq N}\frac{2\,\mathcal T_{2m}[G^{2}]\,r^{2m+1}}{2m+1}\right|
	\leq \frac{2M_{N+1}r^{N+2}}{N+2},
	\qquad
	M_{N+1}:=\sup_{x\in I}\frac{|(G^{2})^{(N+1)}(x)|}{(N+1)!}.
\end{equation*}
The quantity $M_{N+1}$ is bounded without evaluating the basis functions on the entire interval. Indeed, for every $x\in I\subset[a,\infty)$ and every index $\beta>0$,
\begin{equation*}
	\frac{|\sigma_{\beta}^{(k)}(x)|}{k!}
	\leq\frac{\Gamma(\beta+k)}{\Gamma(\beta)\Gamma(k+1)}
	(1+a^{2})^{-(\beta+k)/2},
	\qquad
	\frac{|g^{(k)}(x)|}{k!}
	\leq\frac{2}{\pi k}(1+a^{2})^{-k/2}\quad(k\geq1),
\end{equation*}
together with $|g|\leq1$. These bounds may be combined exactly as in~\eqref{eq:app-input-jets}--\eqref{eq:app-G2-jet}, with every coefficient replaced by its absolute value and the difference in~\eqref{eq:app-G-jet} by a sum. Convolving them in this way, first for $G$ and then for $G^{2}$, yields nonnegative jets that dominate the normalized derivatives of $G$ and $G^{2}$. Their coefficient of order $N+1$ is a rigorous upper bound for $M_{N+1}$. Summing the resulting enclosures in interval arithmetic yields the desired enclosure of the interior contribution while avoiding the overestimation of a direct interval evaluation of $G^{2}$.

For the tail contribution, put
\begin{equation*}
	\eta:=\Phiap-1=(g-1)+\sum_{\alpha\in A}a_{\alpha}\sigma_{\alpha}.
\end{equation*}
The constant terms in $-\Phiap+(\Phiap)^{3}$ cancel, and therefore
\begin{equation}\label{eq:app-G-eta}
	G=\fl\Phiap+2\eta+3\eta^{2}+\eta^{3}.
\end{equation}
We shall separate only the $x^{-1}$ term of $G$. All remaining terms will be bounded together. This avoids computing the other coefficients in the asymptotic expansion of $G$.

We begin with a uniform estimate for a single mode. If $x\ge x_{\max}\ge1$ and $\beta\ge2s$, then
\begin{equation*}
	\arctan x=\frac{\pi}{2}-\arctan(x^{-1}),\qquad
	(1+x^{2})^{-\beta/2}\le x^{-\beta}.
\end{equation*}
Since the sine function is $1$-Lipschitz and $\arctan(x^{-1})\le x^{-1}$, it follows that
\begin{equation*}
	|\sigma_{\beta}(x)|
	\le x^{-\beta}
	\left(\left|\sin\frac{\pi\beta}{2}\right|+\frac{\beta}{x}\right)
	\le \mathcal B_{\beta}(x_{\max})x^{-2s},
\end{equation*}
where
\begin{equation}\label{eq:app-tail-mode-bound}
	\mathcal B_{\beta}(x_{\max})
	:=x_{\max}^{2s-\beta}
	\left(\left|\sin\frac{\pi\beta}{2}\right|
	+\frac{\beta}{x_{\max}}\right).
\end{equation}
The phase factor in this estimate is useful. For indices close to an even integer it records the additional decay caused by the small leading sine.

The index $1$ is the only index in $A$ below $2s$, so it must be treated separately. Set
\begin{equation*}
	A_{1}:=2\left(a_{1}-\frac{2}{\pi}\right).
\end{equation*}
We have $g(x)-1=-(2/\pi)\arctan(x^{-1})$ and $\sigma_{1}(x)=x/(1+x^{2})$. The elementary inequalities
\begin{equation*}
	0\le x^{-1}-\arctan(x^{-1})
	=\int_{0}^{x^{-1}}\frac{t^{2}}{1+t^{2}}\,dt\le\frac{x^{-3}}{3},
	\qquad
	\left|\frac{x}{1+x^{2}}-x^{-1}\right|
	=\frac{1}{x(1+x^{2})}\le x^{-3},
\end{equation*}
give
\begin{equation}\label{eq:app-tail-base}
	\left|(g(x)-1)+a_{1}\sigma_{1}(x)
	-\left(a_{1}-\frac{2}{\pi}\right)x^{-1}\right|
	\le \left(\frac{2}{3\pi}+|a_{1}|\right)x^{-3}.
\end{equation}

We next bound $\eta$. Every $\alpha\in A\setminus\{1\}$ satisfies $\alpha\ge2s$, so~\eqref{eq:app-tail-mode-bound} and~\eqref{eq:app-tail-base} imply
\begin{equation*}
	\left|\eta(x)-\left(a_{1}-\frac{2}{\pi}\right)x^{-1}\right|
	\le C_{\eta}x^{-2s},
	\qquad
	C_{\eta}:=\left(\frac{2}{3\pi}+|a_{1}|\right)x_{\max}^{2s-3}
	+\sum_{\alpha\in A\setminus\{1\}}
	|a_{\alpha}|\,\mathcal B_{\alpha}(x_{\max}).
\end{equation*}
In particular,
\begin{equation}\label{eq:app-tail-Meta}
	|\eta(x)|\le M_{\eta}x^{-1},
	\qquad
	M_{\eta}:=\left|a_{1}-\frac{2}{\pi}\right|
	+C_{\eta}x_{\max}^{1-2s}.
\end{equation}

It remains to bound the linear part of~\eqref{eq:app-G-eta}. The important point is to combine modes with the same index before taking absolute values. Let
\begin{equation*}
	D:=\{2s\}\cup\bigl(A\setminus\{1\}\bigr)\cup(2s+A),
	\qquad 2s+A:=\{2s+\alpha:\alpha\in A\},
\end{equation*}
and, for $\beta\in D$, define
\begin{equation}\label{eq:app-tail-dbeta}
	d_{\beta}:=
	\mathbf 1_{\{\beta=2s\}}\frac{2}{\pi}\Gamma(2s)
	+2\!\!\sum_{\substack{\alpha\in A\setminus\{1\}\\ \alpha=\beta}}\!\!a_{\alpha}
	+\!\!\sum_{\substack{\alpha\in A\\2s+\alpha=\beta}}\!\!
	a_{\alpha}\frac{\Gamma(2s+\alpha)}{\Gamma(\alpha)}.
\end{equation}
Empty sums are understood to be zero. By the index-shift law,
\begin{equation}\label{eq:app-tail-linear-identity}
	\fl\Phiap+2\eta-A_{1}x^{-1}
	=
	2\left((g-1)+a_{1}\sigma_{1}
	-\left(a_{1}-\frac{2}{\pi}\right)x^{-1}\right)
	+\sum_{\beta\in D}d_{\beta}\sigma_{\beta}.
\end{equation}
All indices in $D$ are at least $2s$. Hence~\eqref{eq:app-tail-mode-bound} and~\eqref{eq:app-tail-base} yield
\begin{equation}\label{eq:app-tail-Clin}
	\left|\fl\Phiap+2\eta-A_{1}x^{-1}\right|
	\le C_{\mathrm{lin}}x^{-2s},
	\qquad
	C_{\mathrm{lin}}
	:=2\left(\frac{2}{3\pi}+|a_{1}|\right)x_{\max}^{2s-3}
	+\sum_{\beta\in D}|d_{\beta}|\,\mathcal B_{\beta}(x_{\max}).
\end{equation}
The sums in~\eqref{eq:app-tail-dbeta} are formed before the absolute value in~\eqref{eq:app-tail-Clin}. Thus the estimate uses the cancellations between the shifted bulk modes and the tail modes.

The nonlinear terms require no expansion. From~\eqref{eq:app-tail-Meta}, and using $2s\le2$, we obtain
\begin{equation*}
	|3\eta^{2}+\eta^{3}|
	\le 3M_{\eta}^{2}x^{-2}+M_{\eta}^{3}x^{-3}
	\le C_{\mathrm{nl}}x^{-2s},
\end{equation*}
where
\begin{equation*}
	C_{\mathrm{nl}}
	:=3M_{\eta}^{2}x_{\max}^{2s-2}
	+M_{\eta}^{3}x_{\max}^{2s-3}.
\end{equation*}
Combining this with~\eqref{eq:app-tail-linear-identity}, we arrive at
\begin{equation*}
	G(x)=A_{1}x^{-1}+R(x),\qquad
	|R(x)|\le C_{R}x^{-2s}\quad(x\ge x_{\max}),
	\qquad
	C_{R}:=C_{\mathrm{lin}}+C_{\mathrm{nl}}.
\end{equation*}
Thus $A_{1}$ is the only asymptotic coefficient computed explicitly.

Finally,
\begin{equation*}
	G(x)^{2}
	\le A_{1}^{2}x^{-2}
	+2|A_{1}|C_{R}x^{-1-2s}
	+C_{R}^{2}x^{-4s}.
\end{equation*}
Since $s>1/2$, all three powers are integrable at infinity, and termwise integration gives
\begin{equation}\label{eq:app-tail-integral}
	\int_{x_{\max}}^{\infty}G(x)^{2}\,dx
	\le
	\frac{A_{1}^{2}}{x_{\max}}
	+\frac{|A_{1}|C_{R}}{s}\,x_{\max}^{-2s}
	+\frac{C_{R}^{2}}{4s-1}\,x_{\max}^{1-4s}.
\end{equation}
Every quantity in~\eqref{eq:app-tail-integral} is evaluated with interval arithmetic. In particular, each $d_{\beta}$ is first formed as an enclosure and only then replaced by an enclosure of its absolute value. Therefore the right-hand side of~\eqref{eq:app-tail-integral} is a rigorous upper bound. Adding it to the interior integral, restoring the factor $2$ for the two half-lines, and taking the square root yields the enclosure of $\delta(s)$.

The defect $\delta(s)$ depends only on the approximate layer $\Phiap$. The certificate evaluates the interior and tail bounds above at every grid point and then checks that the contraction parameter $\alpha(s)=2K_{0}(s)^{2}\omega(s)\delta(s)$ of~\eqref{eq:contraction-condition} is strictly smaller than~$1$. The resulting enclosures are recorded with the accompanying certificate data.

\subsection{The finite Schur certificate}\label{app:schur}\label{app:green-kernel}

We describe the finite computation used to certify the Schur bound in \textup{(C1)}. It combines rigorous enclosures of the Green kernel with an explicit positive weight, treating the compact core and the tail separately.

\subsubsection*{The Green kernel}
We now enclose the kernel underlying the coercivity certificate
\textup{(C1)}.  Recall from Section~\ref{ss:inverse-ap} the positive multiplier
$A=\fl+\tfrac{11}{10}$, its even convolution kernel
\begin{equation*}
	R_{s}(z)=\frac{1}{\pi}\int_{0}^{\infty}\frac{\cos(z\xi)}{\xi^{2s}+b}\,d\xi,
	\qquad b:=\tfrac{11}{10},
\end{equation*}
and the odd half-line kernel $K_{s}(x,y)=R_{s}(x-y)-R_{s}(x+y)$. The kernel $R_{s}$ has total mass $\int_{\R}R_{s}=\widehat{R_{s}}(0)=1/b=10/11$.

For $z>0$, choose $0<\delta<\Lambda$ and split the defining integral as
\begin{equation*}
	R_{s}(z)=\frac{1}{\pi}\left[
		\int_0^\delta\frac{\cos(z\xi)}{\xi^{2s}+b}\,d\xi
		+\int_\delta^\Lambda\frac{\cos(z\xi)}{\xi^{2s}+b}\,d\xi
		+\int_\Lambda^\infty\frac{\cos(z\xi)}{\xi^{2s}+b}\,d\xi
		\right].
\end{equation*}
The integrand is analytic on $[\delta,\Lambda]$, so the middle integral is enclosed using \texttt{Arb}'s rigorous integrator (\texttt{acb.integral} in \texttt{python-flint}). At the lower endpoint, $|\cos(z\xi)|\le 1$ and $\xi^{2s}+b\ge b$, and therefore
\begin{equation*}
	\Bigl|\int_{0}^{\delta}\frac{\cos(z\xi)}{\xi^{2s}+b}\,d\xi\Bigr|
	\le \int_{0}^{\delta}\frac{d\xi}{\xi^{2s}+b}
	\le \int_{0}^{\delta}\frac{d\xi}{b}
	=\frac{\delta}{b}.
\end{equation*}
At the upper region, integration by parts gives
\begin{equation*}
	\int_{\Lambda}^{\infty}\frac{\cos(z\xi)}{\xi^{2s}+b}\,d\xi
	=-\frac{\sin(z\Lambda)}{z(\Lambda^{2s}+b)}
	-\frac{1}{z}\int_{\Lambda}^{\infty}\sin(z\xi)
	\frac{d}{d\xi}\!\left(\frac{1}{\xi^{2s}+b}\right)d\xi.
\end{equation*}
Since $(\xi^{2s}+b)^{-1}$ is positive and decreasing to zero, the absolute integral of its derivative over $[\Lambda,\infty)$ equals $(\Lambda^{2s}+b)^{-1}$. Hence
\begin{equation*}
	\Bigl|\int_{\Lambda}^{\infty}\frac{\cos(z\xi)}{\xi^{2s}+b}\,d\xi\Bigr|
	\le\frac{1}{z(\Lambda^{2s}+b)}
	+\frac{1}{z(\Lambda^{2s}+b)}
	=\frac{2}{z(\Lambda^{2s}+b)}.
\end{equation*}
Together with the enclosure of the middle integral, these estimates give an enclosure of $R_{s}(z)$.

In the middle region, we need to compute integrals over intervals of $R_{s}$. Since $2s>1$, the function $(\xi^{2s}+b)^{-1}$ is integrable on $(0,\infty)$. Thus, for $0\le a_{1}\le a_{2}$, Fubini's theorem gives
\begin{equation}\label{eq:app-Rs-mass}
	\int_{a_{1}}^{a_{2}}R_{s}(u)\,du
	=\frac{1}{\pi}\int_{0}^{\infty}
	\left(\int_{a_{1}}^{a_{2}}\cos(u\xi)\,du\right)
	\frac{d\xi}{\xi^{2s}+b}
	=\frac{1}{\pi}\int_{0}^{\infty}
	\frac{\sin(a_{2}\xi)-\sin(a_{1}\xi)}{\xi(\xi^{2s}+b)}\,d\xi.
\end{equation}
Using the same cutoffs $0<\delta<\Lambda$ as above, we split the integral explicitly as
\begin{align*}
	\int_{a_{1}}^{a_{2}}R_{s}(u)\,du
	 & =\frac{1}{\pi}\int_{0}^{\delta}
	\frac{\sin(a_{2}\xi)-\sin(a_{1}\xi)}{\xi(\xi^{2s}+b)}\,d\xi
	\\
	 & \quad+\frac{1}{\pi}\int_{\delta}^{\Lambda}
	\frac{\sin(a_{2}\xi)-\sin(a_{1}\xi)}{\xi(\xi^{2s}+b)}\,d\xi
	\\
	 & \quad+\frac{1}{\pi}\int_{\Lambda}^{\infty}
	\frac{\sin(a_{2}\xi)-\sin(a_{1}\xi)}{\xi(\xi^{2s}+b)}\,d\xi.
\end{align*}
The integrand is analytic on $[\delta,\Lambda]$, so the middle integral is enclosed using \texttt{acb.integral}. At the lower region, the mean value theorem and the inequality $\xi^{2s}+b\ge b>1$ give
\begin{equation*}
	\left|\int_{0}^{\delta}
	\frac{\sin(a_{2}\xi)-\sin(a_{1}\xi)}{\xi(\xi^{2s}+b)}\,d\xi\right|
	\le (a_{2}-a_{1})\int_{0}^{\delta}\frac{d\xi}{\xi^{2s}+b}
	\le \delta(a_{2}-a_{1}).
\end{equation*}

It remains to bound the upper region. Suppose first that $0<a_{1}\le a_{2}$. Splitting the numerator into its two contributions gives
\begin{equation*}
	\int_{\Lambda}^{\infty}
	\frac{\sin(a_{2}\xi)-\sin(a_{1}\xi)}{\xi(\xi^{2s}+b)}\,d\xi
	=\int_{\Lambda}^{\infty}\frac{\sin(a_{2}\xi)}{\xi(\xi^{2s}+b)}\,d\xi
	-\int_{\Lambda}^{\infty}\frac{\sin(a_{1}\xi)}{\xi(\xi^{2s}+b)}\,d\xi.
\end{equation*}
Integration by parts gives, for $i=1,2$,
\begin{equation*}
	\int_{\Lambda}^{\infty}\frac{\sin(a_{i}\xi)}{\xi(\xi^{2s}+b)}\,d\xi
	=\frac{\cos(a_{i}\Lambda)}{a_{i}\Lambda(\Lambda^{2s}+b)}
	+\frac{1}{a_{i}}\int_{\Lambda}^{\infty}\cos(a_{i}\xi)
	\frac{d}{d\xi}\!\left(\frac{1}{\xi(\xi^{2s}+b)}\right)d\xi.
\end{equation*}
The function $[\xi(\xi^{2s}+b)]^{-1}$ is positive and decreasing to zero, so the absolute integral of its derivative over $[\Lambda,\infty)$ equals $[\Lambda(\Lambda^{2s}+b)]^{-1}$, which is at most $\Lambda^{-1-2s}$. Consequently,
\begin{equation*}
	\left|\int_{\Lambda}^{\infty}
	\frac{\sin(a_{i}\xi)}{\xi(\xi^{2s}+b)}\,d\xi\right|
	\le \frac{2}{a_{i}\Lambda^{1+2s}},
	\qquad i=1,2.
\end{equation*}
After including the factor $1/\pi$, the upper region contribution is thus bounded by
\begin{equation*}
	\frac{2}{\pi\Lambda^{1+2s}}
	\left(\frac{1}{a_{1}}+\frac{1}{a_{2}}\right).
\end{equation*}
If $a_{1}=0$, the implementation instead uses the uniform bound $|\sin(a_{2}\xi)-\sin(a_{1}\xi)|\le2$. Therefore,
\begin{equation*}
	\frac{1}{\pi}\left|\int_{\Lambda}^{\infty}
	\frac{\sin(a_{2}\xi)-\sin(a_{1}\xi)}{\xi(\xi^{2s}+b)}\,d\xi\right|
	\le\frac{2}{\pi}\int_{\Lambda}^{\infty}\xi^{-1-2s}\,d\xi
	=\frac{1}{\pi s\Lambda^{2s}}.
\end{equation*}
These bounds allow us to enclose $\int_{a_{1}}^{a_{2}}R_{s}(u)\,du$.

We can now verify the pointwise Schur inequality~\eqref{eq:Gprime-cert-BS} by a finite computation. Fix a grid value $s=s_{i}$, abbreviate $\Phi:=\Phiap$, $q(x):=(3(1-\Phi(x)^{2})_{+})^{1/2}$, and let $w$ be the explicit positive weight constructed below. The claim to be certified is
\begin{equation}\label{eq:app-schur-claim}
	\Theta_{s}(x)\;:=\;\frac{q(x)}{w(x)}\int_{0}^{\infty}
	K_{s}(x,y)\,q(y)\,w(y)\,dy\;\le\;\theta\qquad(x>0),
\end{equation}
for some $\theta<1$.

\subsubsection*{Design of the weight}

The only freedom in~\eqref{eq:app-schur-claim} is the weight~$w$, and the sharpness of the resulting bound depends on this choice. It is useful to introduce the integral operator
\begin{equation*}
	(Tf)(x)\;:=\;q(x)\int_{0}^{\infty}K_{s}(x,y)\,q(y)\,f(y)\,dy,
	\qquad T=q\,A^{-1}q,
\end{equation*}
in terms of which~\eqref{eq:app-schur-claim} reads $Tw\le\theta w$. We use the resulting ratio $Tw/w$ only to tune a one-parameter family of explicit weights.

For $a>0$, set
\begin{equation*}
	w_{a}(x):=\varepsilon+\kappa\,
	\frac{x/a}{\bigl(1+(x/a)^{2}\bigr)^{3}},
	\qquad \varepsilon:=\frac{1}{10},\qquad
	\kappa:=\frac{216}{25\sqrt{5}}.
\end{equation*}
The coefficient $\kappa$ normalizes the weight to have a maximum of $1+\varepsilon$. Indeed, if $b(z):=\kappa z/(1+z^{2})^{3}$, then
\begin{equation*}
	b'(z)=\kappa\frac{1-5z^{2}}{(1+z^{2})^{4}},
	\qquad b(1/\sqrt{5})=1.
\end{equation*}
Consequently $\varepsilon\le w_{a}\le1+\varepsilon$ on $[0,\infty)$, and $w_{a}$ is decreasing for $x\ge a/\sqrt{5}$. The constant $\varepsilon$ gives a uniform lower bound that will be needed in the tail estimate.

The scale $a$ is selected separately for every grid value $s=s_{i}$ by the following non-rigorous floating-point calculation. On the centered uniform grid of $N=2^{16}$ points in $[-L,L)$, with $L=1500$, the script evaluates $\Phiap$ from its $\sigma$-expansion and then forms the samples of~$q$. The action of $A^{-1}$ is computed by the fast Fourier transform since its Fourier multiplier is $ 1/(|\xi|^{2s}+11/10). $ To find the weight $w$ the code minimizes in $a$ the discrete Schur ratio
\begin{equation*}
	\rho(a):=\max_{x_j>0}
	\frac{(T w_a)(x_j)}{w_a(x_j)}.
\end{equation*}
This optimization is used only to find a suitable candidate. Figure~\ref{fig:schur-weight} shows the weights so obtained.

\begin{figure}[t]
	\centering
	\includegraphics[width=0.78\textwidth]{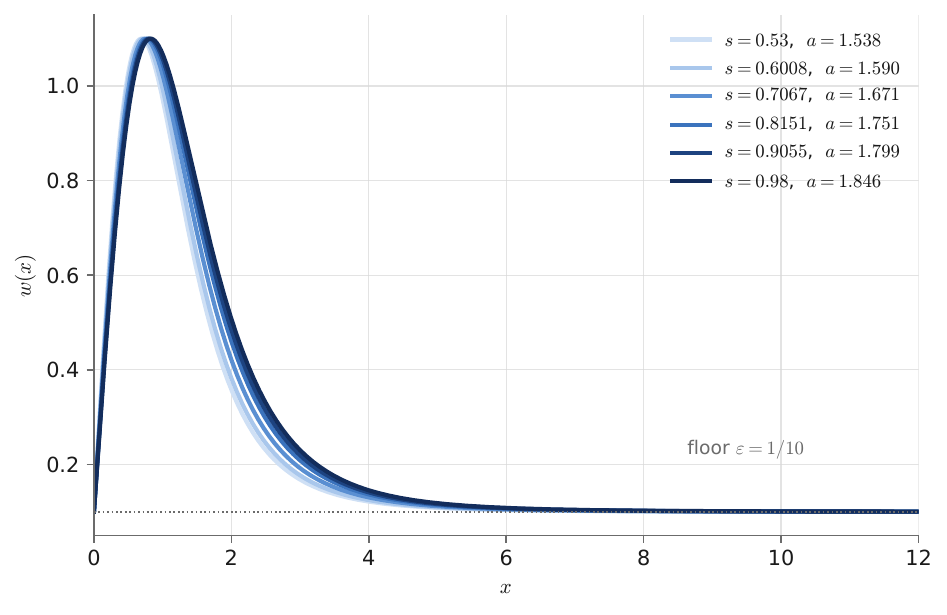}
	\caption{The weight $w_{a}$ on the positive half-line, at the values
		$a=a(s)$ selected by the minimization of $\rho(a)$, for six
		values of $s$.}
	\label{fig:schur-weight}
\end{figure}

We bound $\sup_{x>0}\Theta_{s}(x)$ by a partition of the $x$-axis into a compact part $(0,X]$ and a tail $[X,\infty)$.

				\subsubsection*{The compact part}

				Partition $(0,X]$ into cells $I_{j}=[jh,(j+1)h]$ and fix an $I_{j}$. We first need bounds on $q$ without making any monotonicity assumption. To do this, on each finite cell, an interval enclosure $[\Phi]_{k}$ is computed from the Taylor polynomial of degree $8$ at its midpoint and the explicit derivative bounds of Section~\ref{app:layer}. Thus, with
\begin{equation*}
	m_k:=\inf\{|z|:z\in[\Phi]_{k}\},
	\qquad
	\overline q_k:=\bigl(3(1-m_k^2)_{+}\bigr)^{1/2},
\end{equation*}
we have $\sup_{I_k}q\le\overline q_k$.

Since $q(x)\le\sup_{I_{j}}q$ and $w(x)\ge\inf_{I_{j}}w$ for $x\in I_{j}$, and the kernel integral is nonnegative, we may enclose the numerator and denominator of $\Theta_{s}$ separately over $I_{j}$:
\begin{equation*}
	\sup_{x\in I_{j}}\Theta_{s}(x)\;\le\;
	\frac{\sup_{I_{j}}q}{\inf_{I_{j}}w}\,N_{j},
	\qquad
	N_{j}\;:=\;\sup_{x\in I_{j}}\int_{0}^{\infty}K_{s}(x,y)\,q(y)\,w(y)\,dy .
\end{equation*}
It remains to bound the kernel integral $N_{j}$. We split the $y$-range into a near band of the $2K_{0}+1$ cells around $I_{j}$ and the complementary far region $F_{j}:=(0,\infty)\setminus\bigcup_{|k-j|\le K_{0}}I_{k}$:
\begin{equation*}
	\int_{0}^{\infty}K_{s}(x,y)\,q(y)\,w(y)\,dy
	\;=\;\sum_{|k-j|\le K_{0}}\int_{I_{k}}K_{s}(x,y)\,q(y)\,w(y)\,dy
	\;+\;\int_{F_{j}}K_{s}(x,y)\,q(y)\,w(y)\,dy ,
\end{equation*}
and estimate the two parts separately. Note that for $x\in I_{j}$ every $y\in F_{j}$ satisfies $|y-x|>K_{0}h$.

The estimates in the $y$-variable are continued beyond $X$ to a point $Y$, chosen so that the near band of every source row is covered. The remaining interval $[Y,\infty)$, which lies in the right-hand component of every $F_j$, is represented by a single terminal bound. To compute this bound, we make the change of variables $x=\tan\vartheta$, under which
\begin{equation*}
	\Phi(\tan\vartheta)
	=\frac{2}{\pi}\vartheta
	+\sum_{\alpha\in A}a_\alpha
	(\cos\vartheta)^\alpha\sin(\alpha\vartheta).
\end{equation*}
The compact interval $[\arctan Y,\pi/2]$ is divided into $10^4$ cells, and direct interval evaluation of this formula gives a bound $\overline q_\infty$ for $q$ on $[Y,\infty)$.

We start with the estimate for the first term. We first use that $K_{s}\ge0$. Indeed, as shown in Section~\ref{ss:inverse-ap}, $R_{s}$ is positive and strictly decreasing on $[0,\infty)$ and even, while $|x-y|<x+y$ for $x,y>0$. Thus
\begin{equation*}
	\int_{I_{k}}K_{s}(x,y)\,q(y)\,w(y)\,dy
	\;\le\;\Bigl(\sup_{I_{k}}q w\Bigr)\int_{I_{k}}K_{s}(x,y)\,dy ,
\end{equation*}
and enclose the remaining cell mass, uniformly for $x\in I_{j}$, by the mass bound
\begin{equation*}
	\int_{I_{k}}K_{s}(x,y)\,dy
	\;\le\;\int_{I_{k}}R_{s}(x-y)\,dy-\inf_{x\in I_{j}}\int_{I_{k}}R_{s}(x+y)\,dy.
\end{equation*}
Instead of computing all the quantities
\begin{equation*}
	\inf_{x\in I_{j}}\int_{I_{k}}R_{s}(x+y)\,dy \ge 0
\end{equation*}
for every $j, k$ in the partition, in the code implementation we fix a cutoff $L_{\mathrm{ref}} = 15$ and we set this quantity to zero when $j + k + 1\geq L_{\mathrm{ref}}/h$.

For the second term we use $K_{s}(x,y)\le R_{s}(x-y)$, so pulling the weight out at its far supremum leaves a single kernel mass beyond the band,
\begin{equation*}
	\int_{F_{j}}K_{s}(x,y)\,q(y)\,w(y)\,dy
	\;\le\;\Bigl(\sup_{F_{j}}q w\Bigr)\int_{F_{j}}R_{s}(x-y)\,dy
	\;\le\;\Bigl(\sup_{F_{j}}q w\Bigr)\int_{|u|>K_{0}h}R_{s}(u)\,du ,
\end{equation*}
where the last step used $F_{j}\subseteq\{|y-x|>K_{0}h\}$. The tail mass is controlled once, by
\begin{equation*}
	\int_{|u|>K_{0}h}R_{s}(u)\,du\;\le\;\frac{10}{11}-2\int_{0}^{K_{0}h}R_{s}(u)\,du ,
\end{equation*}
using $\int_{\R}R_{s}=10/11$ and the evenness of $R_{s}$. To evaluate the far weight, set $b_k:=\overline q_k\sup_{I_k}w$ and
\begin{equation*}
	P_k:=\max_{0\leq\ell\leq k}b_\ell,\qquad
	S_k:=\max_{\ell\geq k}b_\ell.
\end{equation*}
The finite list of bounds $b_k$ is completed by the bound $\overline q_\infty w(Y)$. The point $Y$ lies in the decreasing region of~$w$, so this bound is valid. The arrays $P_k$ and $S_k$ therefore bound $q w$ on $[0,(k+1)h]$ and $[kh,\infty)$, respectively. Since the two components of $F_j$ end at cell $j-K_0-1$ and begin at cell $j+K_0+1$, respectively, we obtain
\begin{equation*}
	\sup_{F_j}q w\leq
	\max\{P_{j-K_0-1},S_{j+K_0+1}\}=:B_j.
\end{equation*}
Here and below a term corresponding to an empty component is omitted.

In total, putting these estimates together, we have an upper bound $\theta_j$ for $\sup_{I_j} \Theta_s$ given by
\begin{align*}
	\theta_{j}:={} & \frac{\overline q_j}{\inf_{I_{j}}w}
	\Bigg\{
	\sum_{|k-j|\le K_{0}}b_k
	\left[
	\sup_{x\in I_{j}}\int_{I_{k}}R_{s}(x-y)\,dy
	-\mathbf 1_{j+k+1<\tfrac{15}{h}}
	\inf_{x\in I_{j}}\int_{I_{k}}R_{s}(x+y)\,dy
	\right] \notag                                       \\
	               & \hspace{4em}
	+B_j
	\left(\frac{10}{11}-2\int_{0}^{K_{0}h}R_{s}(u)\,du\right)
	\Bigg\}.
\end{align*}
Finally, the constant we use is denoted by
\begin{equation}\label{eq:app-core}
	\theta_{\mathrm{core}}:=\max_{j}\theta_{j},
	\qquad
	\sup_{0<x\le X}\Theta_{s}(x)\le\theta_{\mathrm{core}}.
\end{equation}

\subsubsection*{The tail}

For $r\in\{X/2,X\}$, let $Q_r$ be the maximum of the finite-cell bounds $\overline q_k$ over the cells meeting $[r,Y]$, together with the tail bound $\overline q_\infty$. Then
\begin{equation*}
	\sup_{x\ge r}q(x)\le Q_r.
\end{equation*}
For $x\ge X$ we bound the two factors of $\Theta_{s}(x)$ separately. Since $w\ge\varepsilon$, the first factor satisfies
\begin{equation}\label{eq:app-tail-prefactor}
	\frac{q(x)}{w(x)}\;\le\;\frac{Q_X}{\varepsilon}
	\qquad(x\ge X).
\end{equation}
Now we focus on the integral term. We split the $y$-integral at $y=X/2$,
\begin{equation}\label{eq:app-tail-split}
	\int_{0}^{\infty}K_{s}(x,y)\,q(y)\,w(y)\,dy
	\;=\;\int_{0}^{X/2}K_{s}(x,y)\,q(y)\,w(y)\,dy
	\;+\;\int_{X/2}^{\infty}K_{s}(x,y)\,q(y)\,w(y)\,dy,
\end{equation}
and bound each piece on its own scale.

On the far piece $y\ge X/2$ we use the full kernel mass. From $K_{s}(x,y)\le R_{s}(x-y)$ and $\int_{\R}R_{s}=10/11$,
\begin{equation*}
	\int_{X/2}^{\infty}K_{s}(x,y)\,dy\;\le\;\int_{\R}R_{s}\;=\;\frac{10}{11},
\end{equation*}
while $q(y)\le Q_{X/2}$ and $w\le\sup w$ give $q(y)w(y)\le Q_{X/2}\sup w$ there. Hence
\begin{equation}\label{eq:app-tail-far}
	\int_{X/2}^{\infty}K_{s}(x,y)\,q(y)\,w(y)\,dy
	\;\le\;Q_{X/2}\sup w\int_{X/2}^{\infty}K_{s}(x,y)\,dy
	\;\le\;\frac{10}{11}\,Q_{X/2}\sup w.
\end{equation}

On the near piece $y<X/2$, the constraint $x\ge X$ forces $x-y>X-X/2=X/2$, so the monotonicity of $R_{s}$ on $(0,\infty)$ yields
\begin{equation*}
	K_{s}(x,y)\;\le\;R_{s}(x-y)\;\le\;R_{s}(X/2).
\end{equation*}
Pulling this pointwise bound out of the integral gives
\begin{equation}\label{eq:app-tail-near}
	\int_{0}^{X/2}K_{s}(x,y)\,q(y)\,w(y)\,dy
	\;\le\;R_{s}(X/2)\int_{0}^{X/2}q(y)\,w(y)\,dy.
\end{equation}

Substituting \eqref{eq:app-tail-far} and~\eqref{eq:app-tail-near} into~\eqref{eq:app-tail-split}, and multiplying by the prefactor bound~\eqref{eq:app-tail-prefactor}, we obtain
\begin{equation}\label{eq:app-tail}
	\begin{aligned}
		\sup_{x\ge X}\Theta_{s}(x)
		 & \;\le\;\frac{Q_X}{\varepsilon}\Biggl(
		\frac{10}{11}\,Q_{X/2}\sup w
		+R_{s}(X/2)\int_{0}^{X/2}q(y)\,w(y)\,dy\Biggr)
		=:\theta_{\mathrm{tail}} .
	\end{aligned}
\end{equation}

At each grid point~$s_i$, the finite Schur certificate reduces to the two interval inequalities
\begin{equation*}
	\theta_{\mathrm{core}}<1,
	\qquad
	\theta_{\mathrm{tail}}<1.
\end{equation*}
Hence, with $\theta:=\max(\theta_{\mathrm{core}},\theta_{\mathrm{tail}})$, those two bounds give $\sup_{x>0}\Theta_s(x)\le\theta<1$, which proves the Schur inequality~\eqref{eq:Gprime-cert-BS}.

\subsection{Exact evaluation of the derivative functional}\label{app:Jeval}

Criteria \textup{(C3)} and \textup{(C4)} require the derivative functional $J(s,\Phiap)$ of~\eqref{eq:J-functional}, the weighted seminorm $\|\Phiap\|_{\star}(s)$ of~\eqref{eq:starnorm}, the Sobolev constant $C_{\star}(s)$ of~\eqref{eq:Cstar-LJ-def}, and the transfer constant $L_{J}(s)$ of~\eqref{eq:LJ-const-def}. For the approximate layer these are computed as exact modal sums, with no quadrature.

Differentiating~\eqref{eq:app-phiap} and using the linearity of the Fourier transform gives
\begin{equation*}
	\widehat{(\Phiap)'}(\xi)
	=\widehat{g'}(\xi)+\sum_{\alpha\in A}a_{\alpha}
	\widehat{\sigma_{\alpha}'}(\xi).
\end{equation*}
Recall that Lemma~\ref{lem:app-sigma-ft} gives $\widehat{g'}(\xi)=2e^{-|\xi|}$ and $\widehat{\sigma_{\alpha}'}(\xi)=(\pi/\Gamma(\alpha))|\xi|^{\alpha}e^{-|\xi|}$; therefore,
\begin{equation}\label{eq:app-phiap-ft}
	\widehat{(\Phiap)'}(\xi)
	=e^{-|\xi|}\left(2+\pi\sum_{\alpha\in A}
	\frac{a_{\alpha}}{\Gamma(\alpha)}|\xi|^{\alpha}\right)
	=e^{-|\xi|}P(|\xi|),
	\qquad
	P(t):=2+\pi\sum_{\alpha\in A}\frac{a_{\alpha}}{\Gamma(\alpha)}t^{\alpha}.
\end{equation}
Since $P$ is a finite sum, its square can be collected exactly as $P(t)^{2}=\sum_{\mu}c_{\mu}t^{\mu}. $ Substituting this expansion and \eqref{eq:app-phiap-ft} into the definition of $J$, and then using evenness, gives
\begin{equation*}
	J(s,\Phiap)
	=\frac{1}{2\pi}\int_{\R}\log|\xi|\,|\xi|^{2s-2}e^{-2|\xi|}
	\sum_{\mu}c_{\mu}|\xi|^{\mu}\,d\xi
	=\frac{1}{\pi}\sum_{\mu}c_{\mu}
	\int_{0}^{\infty}\log\xi\;\xi^{2s-2+\mu}e^{-2\xi}\,d\xi.
\end{equation*}
Set $\nu_{\mu}:=2s-1+\mu>0$. The integral in the last line can be evaluated by differentiating the Gamma integral with respect to $\nu > 0$,
\begin{equation*}
	L(\nu)
	:=\int_{0}^{\infty}\log\xi\;\xi^{\nu-1}e^{-2\xi}\,d\xi
	=\frac{d}{d\nu}\int_{0}^{\infty}\xi^{\nu-1}e^{-2\xi}\,d\xi
	=\frac{d}{d\nu}\bigl(2^{-\nu}\Gamma(\nu)\bigr)
	=2^{-\nu}\Gamma(\nu)\bigl(\psi(\nu)-\log2\bigr),
\end{equation*}
where $\psi(\nu) = \Gamma'(\nu)/\Gamma(\nu)$ denotes the digamma function. It follows that
\begin{equation}\label{eq:app-J-modal}
	J(s,\Phiap)=\frac{1}{\pi}\sum_{\mu}c_{\mu}L(\nu_{\mu}).
\end{equation}

The same substitution in the definition of the weighted seminorm yields
\begin{equation*}
	\|\Phiap\|_{\star}^{2}(s)
	=\frac{1}{\pi}\sum_{\mu}c_{\mu}
	\int_{0}^{\infty}|\log\xi|\;\xi^{\nu_{\mu}-1}e^{-2\xi}\,d\xi.
\end{equation*}
To evaluate this integral, use $|\log\xi|=\log\xi-2\log\xi\,\mathbf 1_{\{\xi<1\}}$ and expand the exponential on $[0,1]$. This gives
\begin{align*}
	I(\nu)
	 & :=\int_{0}^{\infty}|\log\xi|\;\xi^{\nu-1}e^{-2\xi}\,d\xi
	=L(\nu)-2\int_{0}^{1}\log\xi\;\xi^{\nu-1}e^{-2\xi}\,d\xi \notag \\
	 & =L(\nu)-2\sum_{k\ge0}\frac{(-2)^{k}}{k!}
	\int_{0}^{1}\xi^{\nu+k-1}\log\xi\,d\xi \notag                   \\
	 & =L(\nu)+2\sum_{k\ge0}\frac{(-2)^{k}}{k!\,(\nu+k)^{2}},
	\qquad \nu>0,
\end{align*}
where in the last step we have used integration by parts. Consequently,
\begin{equation*}
	\|\Phiap\|_{\star}^{2}(s)
	=\frac{1}{\pi}\sum_{\mu}c_{\mu}I(\nu_{\mu}).
\end{equation*}
Both $\psi$ (digamma) and $\Gamma$ are evaluated as enclosures, and the finite sum~\eqref{eq:app-J-modal} certifies that $J(s,\Phiap)<0$.

The constant $C_{\star}(s)$ of~\eqref{eq:Cstar-LJ-def} satisfies the bound
\begin{equation*}
	C_{\star}(s)\le \frac{1}{\sqrt{2se}}.
\end{equation*}
Indeed, the expression under the supremum in~\eqref{eq:Cstar-LJ-def} is invariant under $t\mapsto t^{-1}$, so it is enough to consider $t\ge1$. On that half-line, $(1+t)^{2s}\ge t^{2s}$, and hence
\begin{equation*}
	\frac{|\log t|\,t^{s}}{2(1+t)^{2s}}
	\le \frac{\log t}{2t^{s}}
	\le \frac{1}{2se},
\end{equation*}
because $t\mapsto(\log t)t^{-s}$ on $[1,\infty)$ has maximum $1/(se)$ at $t=e^{1/s}$. The certificate substitutes this explicit upper bound for every occurrence of $C_{\star}(s)$.

\subsection{Organization of the code}\label{app:code}

The accompanying code consists of ten Python modules. Four of them, \texttt{common.py}, \texttt{layer\_basis.py}, \texttt{layer\_approximation.py} and \texttt{green\_kernel.py}, are libraries used by the others and are not executed directly.

The remaining six modules are executable and are run in the order listed below. Each writes its output to a file that its successors read.

\begin{enumerate}
	\item \texttt{schur\_design.py} selects, in floating-point arithmetic, the
	      scale parameter of the positive weight $w$ of~\textup{(C1)}.
	\item \texttt{schur\_certificate.py} verifies~\textup{(C1)}, evaluating the two
	      Schur bounds of Section~\ref{app:schur} and the bound
	      $\|V^{\mathrm{ap}}_{s_{i}}\|_{L^{\infty}(\R)}\le4$ at each grid point.
	\item \texttt{contraction\_certificate.py} verifies~\textup{(C2)}. It builds
	      $\Phiap$, encloses the residual $\delta(s_{i})$, and forms the quantities
	      $\alpha(s_{i})$ and $\tau(s_{i})$ of the fixed-point argument of
	      Proposition~\ref{prop:layer-radius}.
	\item \texttt{j\_eval.py} verifies~\textup{(C3)}, evaluating
	      $J(s_{i},\Phiap)$ by the formula~\eqref{eq:app-J-modal}, together with
	      the seminorm $\|\cdot\|_{\star}$ and the resulting margin $\eta_{i}$.
	\item \texttt{semiconc\_margin.py} verifies~\textup{(C4)}, evaluating the
	      curvature constant $M^{\ast}_{\mathrm{sc}}([a_{i},b_{i}])$ and the
	      margin for each of the $55$ cells.
	\item \texttt{verify\_all.py} collects the certified constants of the four
	      preceding conditions into a single table.
\end{enumerate}

A complete run of the six steps took $2$ h $8$ min on an Intel Core i7-12700H with $14$ cores and $16$ GB of memory.

\bibliographystyle{plain}
\bibliography{refs.bib}

\end{document}